\title[Inversion of adjunction for quotient singularities II]
{Inversion of adjunction for quotient singularities II: Non-linear actions}
\author{Yusuke Nakamura}
\address{Graduate School of Mathematics, Nagoya University, Furo-cho, Chikusa-ku, Nagoya, 464-8602, Japan.}
\email{y.nakamura@math.nagoya-u.ac.jp}
\urladdr{https://sites.google.com/site/ynakamuraagmath/}
\author{Kohsuke Shibata}
\address{Department of Mathematics, School of Engineering,
Tokyo Denki University, Adachi-ku, Tokyo 120-8551, Japan.}
\email{shibata.kohsuke@mail.dendai.ac.jp}
\subjclass[2020]{Primary 14E18; Secondary 14E30, 14B05}
\keywords{minimal log discrepancy, arc space, hyperquotient singularity, LSC conjecture, PIA conjecture}
\newtheorem{thm}{Theorem}[section]
\newtheorem{lem}[thm]{Lemma}
\newtheorem{cor}[thm]{Corollary}
\newtheorem{prop}[thm]{Proposition}
\newtheorem{claim}[thm]{Claim}
\theoremstyle{definition}
\newtheorem{defi}[thm]{Definition}
\newtheorem{conj}[thm]{Conjecture}
\theoremstyle{remark}
\newtheorem{rmk}[thm]{Remark}
\newtheorem{quest}[thm]{Question}
\newtheorem*{ackn}{Acknowledgements}
\begin{document}
\begin{abstract}
We prove the precise inversion of adjunction formula for quotient singularities. 
As an application, we prove the semi-continuity of minimal log discrepancies for hyperquotient singularities. 
This paper is a continuation of \cite{NS}, and we generalize the previous results to non-linear group actions. 
\end{abstract}

\maketitle

\tableofcontents

\section{Introduction}
The minimal log discrepancy is an invariant of singularities defined in birational geometry. 
The importance of this invariant is that two conjectures on the invariant, the LSC (lower semi-continuity) conjecture and 
the ACC (ascending chain condition) conjecture, imply the conjecture of termination of flips \cite{Sho04}. 
This paper is a continuation of \cite{NS}, and we focus on the LSC conjecture. 
We always work over an algebraically closed field $k$ of characteristic zero.

The LSC conjecture is proposed by Ambro \cite{Amb99} and the conjecture predicts that the minimal log discrepancies satisfy the lower semi-continuity. 
\begin{conj}[{LSC conjecture, \cite{Amb99}}]\label{conj:LSC}
Let $(X, \mathfrak{a})$ be a log pair with an $\mathbb{R}$-ideal $\mathfrak{a}$, 
and let $|X|$ be the set of all closed points of $X$ with the Zariski topology. 
Then the function 
\[
|X| \to \mathbb{R}_{\ge 0} \cup \{ - \infty \}; \quad x \mapsto \operatorname{mld}_x (X,\mathfrak{a})
\]
is lower semi-continuous. 
\end{conj}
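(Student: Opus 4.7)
The plan is to attack the LSC conjecture in the specific setting relevant to this paper---hyperquotient singularities---rather than in full generality, which is out of reach. Thus I would aim to prove LSC for pairs $(X,\mathfrak{a})$ where $X = Z/G$ with $Z$ a $G$-invariant complete intersection inside a smooth variety $V$ carrying a faithful action of a finite group $G$.

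The overall strategy has two reductions. The first uses the arc-space description of mld due to Ein--Mustata--Yasuda: on a smooth variety $V$, one has $\operatorname{mld}_v(V,\mathfrak{a}) = \min_C\bigl(\operatorname{codim}(C) - \operatorname{ord}_{\mathfrak{a}}(C)\bigr)$, where $C$ runs over irreducible cylinders in the arc space $V_\infty$ meeting the fibre over $v$. Since any cylinder witnessing mld at $v$ continues to give an upper bound for mld at nearby points $v'$, LSC on smooth $V$ is automatic. I would then extend this formula to the quotient $V/G$ by working with $G$-invariant cylinders and applying a motivic change-of-variables formula of Denef--Loeser type, which introduces a Reid--Tai style age correction depending only on the stabilizer type at a point. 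LSC on $V/G$ then follows from LSC upstairs on $V$ because the age correction is locally constant on each stratum of the stratification by stabilizer type.

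The second reduction, from hyperquotient to quotient, is where the precise inversion of adjunction (PIA) formula---the main theorem to be established in this paper---enters. PIA would express $\operatorname{mld}_{\bar z}(Z/G,\mathfrak{a})$ exactly in terms of $\operatorname{mld}_{\bar z}\bigl(V/G,\mathfrak{a}\cdot\mathfrak{b}\bigr)$ for an auxiliary $\mathbb{R}$-ideal $\mathfrak{b}$ encoding the defining equations of $Z$, together with a dimension shift from adjunction. Because this equality is an exact pointwise identity and not merely an inequality, the LSC statement on $Z/G$ is inherited directly from the LSC statement on $V/G$ established above.

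The main obstacle, I expect, lies precisely in the non-linearity of the $G$-action that is the focus of this sequel. In \cite{NS} the action could be linearized, so the equivariant arc-space bookkeeping---eigenspace decompositions, explicit age formulas, compatibility with jet-scheme dimensions---reduced to essentially combinatorial data. Without a global linearization, one must work in equivariant formal coordinates via a Luna-type slice theorem over the completion, and the change-of-variables formula acquires correction terms whose behaviour in families is delicate. The hardest technical point will be to show that the cylinder-theoretic mld formula on $V/G$ and the PIA identity on $Z/G$ are both compatible with specialization of the base point despite the absence of a uniform linearization; concretely, one must verify that the stratification by stabilizer type is fine enough that the age contributions to mld vary constructibly, and that the jet-scheme dimension computations are upper-semicontinuous along this stratification.
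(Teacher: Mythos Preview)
First, note that the statement you were given is a conjecture, not a theorem: the paper does not prove it in full generality, only the hyperquotient case (Theorem~\ref{thm:LSC_general}). Your proposal correctly targets exactly that case.

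Your two-step strategy matches the paper's: reduce LSC on the hyperquotient $Y$ to LSC on the ambient quotient $X$ via the PIA identity $\operatorname{mld}_x(Y,\mathfrak{a}) = \operatorname{mld}_x\bigl(X,(h_1\cdots h_c)\mathfrak{b}\bigr)$, then invoke LSC for varieties with quotient singularities. The paper does not re-derive the latter from arc spaces as you propose but simply cites \cite{Nak16}; so your first reduction is redundant in this context, though not incorrect.

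Where your proposal diverges is in the anticipated obstacles. You worry about stratification by stabilizer type and constructibility of age contributions in families. Those are not the issues. Once PIA is established pointwise, the deduction of LSC is a two-line argument (Theorem~\ref{thm:LSC_general}); there is no semicontinuity analysis along strata. The real work lies entirely in proving PIA itself for non-linear actions, and it is a single-point problem: one linearizes by passing to the completion $\widehat{\mathcal{O}}_{X,x} \simeq k[[x_1,\ldots,x_N]]^G$ (your Luna-type slice), and then must rebuild the Denef--Loeser and Ein--Musta\c{t}\u{a} arc-space machinery over formal power series rings rather than polynomial rings. The specific hard step, which your proposal does not anticipate, is the analogue of \cite[Claim~5.2]{NS}: showing that the trivial arc lifts to a resolution. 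In the polynomial setting this used rational connectedness of resolution fibres via \cite{HM07} and \cite{GHS03}, which are not available over $k[[t]]$. The paper circumvents this by introducing a second model $\overline{B}^{\prime(\gamma)}$ over $k[t][[x_1,\ldots,x_N]]$ whose non-zero fibres are honest $k$-varieties, applying functorial desingularization \cite{Tem12} to transport the rational-connectedness argument there, and then comparing back to $\overline{B}^{(\gamma)}$ (Claims~\ref{claim:not_thin_F} and~\ref{claim:not_thin_R}).
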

The LSC conjecture is proved when $\dim X \le 3$ by Ambro \cite{Amb99}. 
Ein, Musta{\c{t}}{\v{a}}, and Yasuda in \cite{EMY03} prove the conjecture when $X$ is smooth. 
Ein and Musta{\c{t}}{\v{a}} in \cite{EM04} generalize the argument to the case where $X$ is a locally complete intersection variety. 
In \cite{Nak16}, the first author proves the conjecture when $X$ has quotient singularities, 
more generally when $X$ has a crepant resolution in the category of the Deligne-Mumford stacks. 
In \cite{NS}, the authors study the conjecture when $X$ has hyperquotient singularities and prove the conjecture for the following $X$: 
\begin{itemize}
\item Suppose that a finite subgroup $G \subset {\rm GL}_N(k)$ acts linearly on $\mathbb{A}_k^N$ freely in codimension one. 
Let $Y := \mathbb{A}_k^N / G$ be the quotient variety. 
Let $X \subset Y$ be a klt subvariety of codimension $c$ that is locally defined by $c$ equations in $Y$. 
\end{itemize}
The main purpose of this paper is to generalize the result in \cite{NS} to non-linear group actions. 
See Definition \ref{defi:qs} for the definition of quotient singularity in this paper. 
\begin{thm}[{$=$\ Theorem \ref{thm:LSC_general}}]\label{thm:LSC}
Let $Y$ be a variety with only quotient singularities. 
Let $X$ be a klt subvariety of $Y$ of codimension $c$ that is locally defined by $c$ equations in $Y$. 
Let $\mathfrak{a}$ be an $\mathbb{R}$-ideal sheaf on $X$. 
Then the function 
\[
|X| \to \mathbb{R}_{\ge 0} \cup \{ - \infty \}; \quad x \mapsto \operatorname{mld}_x (X,\mathfrak{a})
\]
is lower semi-continuous. 
\end{thm}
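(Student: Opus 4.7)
The plan is to deduce Theorem~\ref{thm:LSC} from the precise inversion of adjunction for quotient singularities (the main technical result of this paper) combined with the LSC on smooth varieties of Ein, Musta{\c{t}}{\v{a}}, and Yasuda~\cite{EMY03}. The overall strategy mirrors that of~\cite{NS}: the non-linearity of the $G$-action introduces new technical issues at the level of the inversion formula itself, but not so much at the level of its application to LSC.

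I would begin by reducing to a local situation. Since the conclusion is Zariski-local on $X$ and having quotient singularities is étale-local on $Y$, one may pass to an étale neighborhood and assume $Y = V/G$ for some smooth affine $V$ and finite group $G$ acting with the action free in codimension one. Writing $\pi\colon V\to Y$ for the quotient and $W := \pi^{-1}(X) \subset V$, one finds that $W$ is a $G$-invariant closed subscheme of codimension $c$ defined locally by $c$ equations, namely the pullbacks of the defining equations of $X$ in $Y$.

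Next, I would apply the precise inversion of adjunction. Schematically, this identifies $\mathrm{mld}_x(X, \mathfrak{a})$ with an invariant computed on the smooth ambient $V$ from the following data: the pullback of $\mathfrak{a}$, the defining ideal sheaf of $W$, and the $G$-equivariant structure. The key point is that this invariant has the same shape as those whose lower semi-continuity is established on smooth varieties in~\cite{EMY03}, suitably generalized to the equivariant/twisted-jet framework developed in this paper. Having identified $\mathrm{mld}_x(X, \mathfrak{a})$ with such an invariant on $V$ at each preimage $w \in (\pi|_W)^{-1}(x)$, I would then conclude LSC on $X$ by combining LSC on $V$ with the finiteness of $\pi|_W\colon W \to X$. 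Concretely, given $x_0 \in X$ with $\mathrm{mld}_{x_0}(X, \mathfrak{a}) \geq \alpha$, the inversion formula provides the same lower bound at each of the finitely many preimages $w_i \in W$; LSC on $V$ propagates this bound to Zariski-open neighborhoods $w_i \in U_i \subset V$; since $\pi$ is finite, $\pi(W \setminus \bigcup_i U_i)$ is closed and disjoint from $x_0$, so its complement in $X$ is an open neighborhood on which the bound holds.

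The main obstacle, I expect, lies in the inversion of adjunction itself rather than in the LSC deduction above. For linear actions on $\mathbb{A}^N$, the arc space of the cover decomposes cleanly into strata indexed by conjugacy classes in $G$, enabling explicit codimension computations as in~\cite{NS}. For non-linear actions on an arbitrary smooth $V$, one must work with a coordinate-free version of twisted jet schemes and verify that the required discrepancy identities survive along the non-free locus where the action is only locally linearizable, for instance via Luna's slice theorem. Once the inversion formula is secured in this generality, the passage to LSC is a formal semi-continuous descent along the finite quotient map.
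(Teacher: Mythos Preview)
Your overall strategy—reduce LSC on $X$ to a known LSC via inversion of adjunction—is correct, but your execution diverges from the paper's and introduces unnecessary complications.

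The paper's proof is immediate once PIA is in hand: Theorem~\ref{thm:PIA_nonlin} gives
\[
\operatorname{mld}_x(X,\mathfrak{a})=\operatorname{mld}_x\bigl(Y,(h_1\cdots h_c)\mathfrak{b}\bigr)
\]
for any lift $\mathfrak{b}$ of $\mathfrak{a}$ to $Y$, and one then cites \cite{Nak16} directly for LSC on the quotient-singularity variety $Y$. There is no passage to a smooth cover, no equivariant invariant on $V$, and no finite-descent step.

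You instead pass to a smooth $V$ with $Y=V/G$ and attempt to identify $\operatorname{mld}_x(X,\mathfrak{a})$ with an invariant on $V$ at the preimages $w$. But the PIA proved in this paper lands on $Y$, not on $V$; and the relation between $\operatorname{mld}_x(Y,\cdot)$ and data on $V$ involves age contributions indexed by conjugacy classes of $G$, so it is not of the form $\operatorname{mld}_w(V,\cdot)$ at a single preimage. Consequently your step ``the inversion formula provides the same lower bound at each of the finitely many preimages $w_i$'' is not justified as written. If what you have in mind is the twisted-arc description of $\operatorname{mld}$ on quotient singularities, you are in effect re-deriving \cite{Nak16}; it is far cleaner to cite it. There is a genuinely alternative route that bypasses the paper's PIA entirely—use that $W\to X$ is \'etale in codimension one to get $\operatorname{mld}_x(X,\mathfrak{a})=\operatorname{mld}_w(W,\mathfrak{a}\mathcal{O}_W)$, then apply the l.c.i.\ PIA of \cite{EM04} on the smooth $V$ and \cite{EMY03} for LSC—but making this rigorous requires arranging $Y=V/G$ with the action free in codimension one on an honest Zariski neighborhood, together with the normality of $W$ and $\operatorname{codim}_X(X\cap Z)\ge 2$; the paper establishes the analogous facts only after completion, and your sketch does not supply these verifications.
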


In this paper, we also treat the PIA (precise inversion of adjunction) conjecture. 

\begin{conj}[PIA conjecture, {\cite[17.3.1]{92}}]\label{conj:PIA}
Let $(X, \mathfrak{a})$ be a log pair and let $D$ be a normal Cartier prime divisor. 
Let $x \in D$ be a closed point. 
Suppose that $D$ is not contained in the cosupport of the $\mathbb{R}$-ideal sheaf $\mathfrak{a}$. 
Then 
\[
\operatorname{mld}_x \bigl( X, \mathfrak{a} \mathcal{O}_X(-D) \bigr) = \operatorname{mld}_x (D, \mathfrak{a} \mathcal{O}_D)
\]
holds. 
\end{conj}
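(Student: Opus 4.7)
The plan is to prove the conjecture in the setting of this paper --- namely when $X$ has quotient singularities, or more generally when $(X, D)$ arises as a hyperquotient. The easier inequality $\operatorname{mld}_x(X, \mathfrak{a}\mathcal{O}_X(-D)) \le \operatorname{mld}_x(D, \mathfrak{a}\mathcal{O}_D)$ is classical adjunction; the content of the conjecture lies in the reverse inequality.

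Locally at $x$, write $X = U/G$ with $U$ smooth and $G \subset \operatorname{Aut}(U)$ finite and free in codimension one. Let $\tilde D$ be the reduced preimage of $D$ in $U$; since $D$ is a normal Cartier divisor, $\tilde D$ is smooth at every point over $x$. The strategy is to express both mlds via arc spaces and descend the smooth-case PIA of Ein--Musta\c{t}\v{a}--Yasuda \cite{EMY03} from the pair $(U, \tilde D)$ to $(X, D)$. Concretely, the first author's description of mld on quotient varieties in \cite{Nak16} computes $\operatorname{mld}_x(X, \mathfrak{a}\mathcal{O}_X(-D))$ as an infimum of codimensions of cylinders in the arc space $J_\infty U$ twisted by the order of contact with the ramification divisor of $U \to X$ and with $\tilde D$; the analogous description applied to $\tilde D \to D$ gives a parallel formula for $\operatorname{mld}_x(D, \mathfrak{a}\mathcal{O}_D)$. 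Given a valuation $v$ over $X$ with small log discrepancy, the plan is to lift a generic arc through the corresponding cylinder to an arc in $J_\infty U$, invoke the smooth PIA to extract a valuation over $\tilde D$ of comparably small log discrepancy, and push down to obtain the desired valuation over $D$.

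The main obstacle --- and the principal innovation over \cite{NS} --- is that the $G$-action on $U$ is non-linear. In the linear setting of \cite{NS} one can pick coordinates on $U = \mathbb{A}^N$ in which $\tilde D$ is a coordinate hyperplane and $G$ acts by diagonal matrices on a transverse complement, which makes the bookkeeping of contact orders along the ramification divisor straightforward. For non-linear actions no such global choice is available: although by Cartan's lemma each stabilizer $G_{\tilde x}$ is formally linearizable at its fixed point, the linearization depends on $\tilde x$ and need not put $\tilde D$ into standard position. Hence the descent of cylinder data from $J_\infty U$ to the arc space of the stack $[U/G]$, and its matching with the analogous data on $\tilde D$, must be carried out intrinsically, without choosing $G$-equivariant coordinates. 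I expect this step to absorb the bulk of the technical work, drawing on the non-linear framework the authors develop earlier in the paper.
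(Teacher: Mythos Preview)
Your approach diverges from the paper's at the very first move, and the divergence stems from a misreading of where the difficulty lies. You propose to work globally with a smooth variety $U$ carrying a genuinely non-linear $G$-action and then carry out an ``intrinsic descent'' of cylinder data without equivariant coordinates. The paper does the opposite: it passes to the completion $\widehat{\mathcal{O}}_{X,x}$, where by Cartan's lemma the $G$-action \emph{is} linear --- the proof of Theorem~\ref{thm:PIA_nonlin} explicitly constructs a $G$-equivariant isomorphism $\widehat{\mathcal{O}}_{\overline{M},\overline{y}}\simeq k[[x_1,\ldots,x_N]]$ with $G\subset\operatorname{GL}_N(k)$. The price is that the entire machinery of \cite{NS} (Denef--Loeser decomposition, codimension formulas, the ``trivial arc lifts'' argument) must be rebuilt over formal power series rings rather than polynomial rings; Sections~\ref{section:Omega'}--\ref{section:mld_R} are devoted to precisely this, and the genuine obstacles --- e.g.\ Question~\ref{quest}, the need for the two models $\overline{X}^{(\gamma)}$ and $\overline{X}^{\prime(\gamma)}$, and the use of Temkin's functorial resolution to salvage the Hacon--McKernan/Graber--Harris--Starr step in Claim~\ref{claim:not_thin_R} --- have nothing to do with non-linearity of the original action. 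There is no ``non-linear framework developed earlier in the paper'' for you to draw on.

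Your route also has a gap at its central step. Matching the \cite{Nak16} mld formulas for $X$ and for $D$ via \cite{EMY03} on $(U,\tilde D)$ requires a \emph{twisted} (conjugacy-class by conjugacy-class, with the correct age shifts) version of the smooth PIA, and you give no mechanism for producing one without the explicit coordinate description that linearity affords --- this is exactly what the Denef--Loeser schemes $\overline{B}^{(\gamma)}$ supply in the paper. Two smaller errors: the assertion that ``$\tilde D$ is smooth at every point over $x$'' is false (normal Cartier on a quotient does not force the preimage to be smooth), and you omit the klt hypothesis on $D$, which in the paper is not cosmetic --- it is what makes the resolution fibres rationally chain connected via \cite{HM07} in the proof of Claim~\ref{claim:not_thin_R}.
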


Ein, Musta{\c{t}}{\v{a}}, and Yasuda in \cite{EMY03} prove the PIA conjecture when $X$ is smooth. 
Ein and Musta{\c{t}}{\v{a}} in \cite{EM04} generalize the argument to the case where $X$ is a locally complete intersection variety.
The authors in \cite{NS} prove the conjecture for the following $X$ and $D$: 
\begin{itemize}
\item Suppose that a finite subgroup $G \subset {\rm GL}_N(k)$ acts linearly on $\mathbb{A}_k^N$ freely in codimension one. 
Let $Y := \mathbb{A}_k^N / G$ be the quotient variety, and let $x \in Y$ be the image of the origin. 
Let $X$ be a subvariety of $Y$ of codimension $c$ that has only klt singularities and is locally defined by $c$ equations in $Y$ at $x$. 
Let $D$ be a Cartier prime divisor on $X$ through $x$ with a klt singularity at $x \in D$. 
\end{itemize}
In this paper, this result in \cite{NS} is generalized to non-linear group actions. 

\begin{thm}[{$=$\ Corollary \ref{cor:PIA_general}}]\label{thm:PIA_intro}
Suppose that a variety $Y$ has a quotient singularity at a closed point $x \in Y$. 
Let $X$ be a subvariety of $Y$ of codimension $c$ that is locally defined by $c$ equations at $x$. 
Suppose that $X$ is klt at $x$. 
Let $\mathfrak{a}$ be an $\mathbb{R}$-ideal sheaf on $X$. 
Let $D$ be a prime divisor on $X$ through $x$ that is klt and Cartier at $x$. 
Suppose that $D$ is not contained in the cosupport of the $\mathbb{R}$-ideal sheaf $\mathfrak{a}$.
Then it follows that
\[
\operatorname{mld}_x \bigl( X, \mathfrak{a} \mathcal{O}_X (-D) \bigr) = 
\operatorname{mld}_x (D, \mathfrak{a} \mathcal{O}_D). 
\]
\end{thm}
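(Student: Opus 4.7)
The plan is to deduce Theorem \ref{thm:PIA_intro} from the general precise inversion of adjunction formula for quotient singularities that is the main result of the paper, by reducing from a non-linear group action to the linear situation already handled in \cite{NS}.

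One direction of the equality is the standard adjunction inequality: divisors on a log resolution of $(X, D)$ that meet the strict transform of $D$ restrict to divisors on $D$ with log discrepancies matching via the adjunction formula $K_D = (K_X+D)|_D$, so one of the two inequalities between $\operatorname{mld}_x(X, \mathfrak{a}\mathcal{O}_X(-D))$ and $\operatorname{mld}_x(D, \mathfrak{a}\mathcal{O}_D)$ is immediate. The content of the theorem therefore lies in the reverse inequality, and this is where the main machinery of the paper must be invoked.

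For the non-trivial direction, I would exploit the quotient structure of $Y$ at $x$. By Definition \ref{defi:qs}, there is étale locally at $x$ a finite Galois cover $\pi : V \to Y$ with $V$ smooth and Galois group $G$ acting freely in codimension one. By a Cartan-type linearization at the fixed point(s) above $x$, one may further arrange that $G$ acts linearly on an affine étale neighborhood of each preimage, identifying an étale neighborhood of $x \in Y$ with that of the origin in $\mathbb{A}_k^N / G$ for some finite subgroup $G \subset \operatorname{GL}_N(k)$ acting freely in codimension one. Pulling back $X$, $D$, $\mathfrak{a}$ along $\pi$ yields $G$-invariant data $\widetilde{X}, \widetilde{D}, \widetilde{\mathfrak{a}}$ on $V$; since $\pi$ is flat, $\widetilde{X}$ remains a complete intersection of codimension $c$ and $\widetilde{D}$ remains Cartier, and since $\pi$ is étale in codimension one the klt hypotheses on $X$ and $D$ pull back to klt singularities upstairs. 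One can then apply the linear precise inversion of adjunction of \cite{NS} to the pulled-back data and descend the resulting equality to $Y$, using the compatibility between $\operatorname{mld}_x$ on the quotient and $G$-equivariant arc-theoretic invariants on $V$ (in the spirit of the Denef--Loeser and Yasuda theory of twisted arcs) that was the engine of \cite{NS}.

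The main obstacle I expect is this non-linear-to-linear reduction. Cartan linearization is formal or étale local in nature, while the mld depends on more global data near $x$, so one must verify carefully that passing to a linearized étale neighborhood preserves the complete intersection structure of $X$, the Cartier property of $D$, the klt hypotheses, and, most importantly, the numerical value of the mld on both sides of the desired equality. Establishing a robust framework for this descent—presumably via an equivariant arc-space computation on $V$ together with a careful tracking of Jacobian discrepancies along $\pi$—is what I expect the main theorem of the paper to accomplish in a uniform way, so that Theorem \ref{thm:PIA_intro} then drops out as an immediate specialization.
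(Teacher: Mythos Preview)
Your proposal has the right instinct about where the difficulty lies, but the proposed resolution does not work, and the paper's actual route is quite different.

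First, the structure of the paper's proof of Corollary~\ref{cor:PIA_general} is not a direct comparison between $X$ and $D$. Instead, one lifts the defining equations $h_1,\ldots,h_c$ of $X$ in $Y$ and a local equation $g$ of $D$ to the ambient $Y$, and applies Theorem~\ref{thm:PIA_nonlin} twice: once to the pair $(Y,X)$ and once to the pair $(Y,D)$, both times equating the relevant mld to $\operatorname{mld}_x\bigl(Y,(h_1\cdots h_c\,g)\mathfrak{b}\bigr)$ for a lift $\mathfrak{b}$ of $\mathfrak{a}$. So the corollary itself is a two-line consequence of Theorem~\ref{thm:PIA_nonlin}; there is no separate ``easy direction'' argument via adjunction.

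Second, and more importantly, your reduction to \cite{NS} via \'etale linearization does not go through. The $G$-equivariant linearization produced in the proof of Theorem~\ref{thm:PIA_nonlin} is an \'etale map $k[x_1,\ldots,x_N]\to\mathcal{O}_{\overline{M},\overline{y}}$, hence an \'etale morphism from a neighbourhood of $y$ in $\overline{M}/G$ to $\mathbb{A}^N_k/G$. But the equations $h_1,\ldots,h_c$ defining $X$ live in $\mathcal{O}_{Y,x}$, not in $k[x_1,\ldots,x_N]^G$; there is no reason for $X$ to be the preimage of a complete intersection in $\mathbb{A}^N_k/G$, so the hypotheses of \cite{NS} are not available. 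The only place where the equations genuinely land in a linear model is after passing to the completion, where $\widehat{\mathcal{O}}_{\overline{M},\overline{y}}\simeq k[[x_1,\ldots,x_N]]$ and the images $f_i$ of the $h_i$ lie in $k[[x_1,\ldots,x_N]]^G$. This is precisely why the paper redevelops the Denef--Loeser theory, the arc-space formula for mld (Theorem~\ref{thm:mld_R}), and the codimension machinery over $k[[x_1,\ldots,x_N]]$ and $k[t][[x_1,\ldots,x_N]]$: the linearization is formal, not \'etale in any sense that would let one quote \cite{NS} directly. Your worry in the last paragraph is exactly the obstruction, and the paper resolves it not by an \'etale descent but by rebuilding the arc-space proof of \cite{NS} in the formal power series setting (Sections~\ref{section:jet}--\ref{section:PIA}).
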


Due to Theorem \ref{thm:PIA_intro}, 
Theorem \ref{thm:LSC} can be reduced to the known case where $X$ has quotient singularities. 
Hence, this paper is mainly devoted to proving Theorem \ref{thm:PIA_intro}. 
If $X$ has a quotient singularity at a closed point $x \in X$, then 
the completion $\widehat{\mathcal{O}}_{X,x}$ is isomorphic to $k[[x_1, \ldots , x_N]]^G$ for some 
linear group action $G \subset \operatorname{GL}_{N}(k)$. 
Therefore, Theorem \ref{thm:PIA_intro} can be proved by extending the proofs in \cite{NS} to the case of the formal power series ring. 
In what follows, we shall explain the main differences from the proofs in \cite{NS} and 
the difficulties that arise when dealing with formal power series rings. 

The key ingredient of the proofs in \cite{NS} is the theory of arc spaces of quotient singularities 
established by Denef and Loeser in \cite{DL02}. 
Suppose that a finite group $G \subset \operatorname{GL}_{N}(k)$ of order $d$ acts on $\overline{A} = \operatorname{Spec} k[x_1, \ldots , x_N]$. 
Let $\overline{X} \subset \overline{A}$ be a $G$-invariant closed subvariety and 
let $X := \overline{X}/G$ be its quotient. 
Let $I \subset k[x_1, \ldots , x_N]^G$ be the defining ideal of $X \subset \overline{A}/G$. 
For each $\gamma \in G$, $\gamma$ can be diagonalized with some new basis $x^{(\gamma)}_1, \ldots, x^{(\gamma)}_N$. 
Let $\operatorname{diag} (\xi^{e_1}, \ldots , \xi^{e_N})$ be the diagonal matrix with $0 \le e_i \le d-1$, 
where $\xi$ is a primitive $d$-th root of unity in $k$. 
Then we define the ring homomorphism $\overline{\lambda}_{\gamma}^*$ by 
\[
\overline{\lambda}_{\gamma}^*: k[x_1, \ldots , x_N]^G \to k[t][x_1, \ldots , x_N]; \quad x^{(\gamma)}_i \mapsto t^{\frac{e_i}{d}} x^{(\gamma)}_i, 
\] 
and define a $k[t]$-scheme $\overline{X}^{(\gamma)}$ by
\[
\overline{X}^{(\gamma)} = \operatorname{Spec} \bigl( k[t][x_1, \ldots , x_N]/ \overline{I}^{(\gamma)} \bigr), 
\]
where $\overline{I}^{(\gamma)}$ is the ideal generated by the elements of $\overline{\lambda}_{\gamma}^* (I)$. 
Then the theory of Denef and Loeser in \cite{DL02} 
allows us to compare the spaces $X_{\infty}$ and $\bigsqcup _{\gamma \in G} \overline{X}^{(\gamma)}_{\infty}$. 
In \cite{NS}, using this theory, $X_{\infty}$ is studied through each $\overline{X}^{(\gamma)}_{\infty}$. 

In this paper, we deal with the case of formal power series rings, 
i.e.\ when $\overline{A} = \operatorname{Spec} k[[x_1, \ldots , x_N]]$. 
Let $G$, $\gamma$, $\overline{X}$, and $X$ be as above. 
In this case, we take $e_i$'s above to satisfy $1 \le e_i \le d$. 
Then we can define the following two natural ring homomorphisms
\begin{alignat*}{2}
\overline{\lambda}_{\gamma}^* &: k[[x_1, \ldots , x_N]]^G \to k[x_1, \ldots , x_N][[t]];& 
\quad &x^{(\gamma)}_i \mapsto t^{\frac{e_i}{d}} x^{(\gamma)}_i, \\
\overline{\lambda}_{\gamma}^{\prime *} &: k[[x_1, \ldots , x_N]]^G \to k[t][[x_1, \ldots , x_N]];& 
\quad &x^{(\gamma)}_i \mapsto t^{\frac{e_i}{d}} x^{(\gamma)}_i, 
\end{alignat*}
and we define $k[t]$-schemes
\[
\overline{X}^{(\gamma)} = \operatorname{Spec} \bigl( k[x_1, \ldots , x_N][[t]]/ \overline{I}^{(\gamma)} \bigr), \quad 
\overline{X}^{\prime (\gamma)} = \operatorname{Spec} \bigl( k[t][[x_1, \ldots , x_N]]/ \overline{I}^{\prime (\gamma)} \bigr), 
\]
where $\overline{I}^{\prime (\gamma)}$ is the ideal generated by $\overline{\lambda}_{\gamma}^{\prime *} (I)$. 
In this paper, we will use both arc spaces 
$\overline{X}^{(\gamma)}_{\infty}$ and $\overline{X}^{\prime (\gamma)}_{\infty}$ to study $X_{\infty}$. 
We shall also explain below how to use $\overline{X}^{(\gamma)}_{\infty}$ and $\overline{X}^{\prime (\gamma)}_{\infty}$ differently.

First, the theory of Denef and Loeser in \cite{DL02} can be generalized to the formal power series rings, 
and $X_{\infty}$ can be compared with $\overline{X}^{(\gamma)}_{\infty}$. 
Indeed, we shall see in Proposition \ref{prop:DL_X} that 
$\overline{\lambda}_{\gamma}^*$ gives a map $\bigsqcup _{\gamma \in G} \overline{X}^{(\gamma)}_{\infty} \to X_{\infty}$ 
that is surjective outside a thin set. 
We note that it is not enough to consider $\overline{X}^{\prime (\gamma)}_{\infty}$ in this respect (see Remark \ref{rmk:RF} for the detail).

On the other hand, $\overline{X}^{\prime (\gamma)}_{\infty}$ will be used with the following motivation. 
In \cite{NS}, a $k$-arc $\beta \in \overline{X}^{(\gamma)}_{\infty}$ is called the ``trivial arc" when it corresponds 
to the $k[t]$-ring homomorphism $\beta ^* : k[t][x_1, \ldots , x_N] \to k[[t]]$ satisfying $\beta ^* (x_i) = 0$ for each $i$. 
Another key point of the argument in \cite{NS} is to show the fact that 
the trivial arc always has a lift on a resolution $W$ of $\overline{X}^{(\gamma)}$. 
The existence of such a lift is proved by combining the result by Graber, Harris, and Starr \cite{GHS03} and 
the rational chain connectedness of the fibers of the resolution proved by Hacon and Mckernan \cite{HM07}. 
In our formal power series ring setting, 
this argument does not work directly on $\overline{X}^{(\gamma)}$ because each closed fiber 
$\overline{X}^{(\gamma)} \to \operatorname{Spec} k[t]$ over $t = a \in k^{\times}$ is empty. 
Whereas, the same argument works on $\overline{X}^{\prime (\gamma)}$ and proves that 
the trivial arc has a lift on a desingularization $W'$ of $\overline{X}^{\prime (\gamma)}$ (see Claim \ref{claim:not_thin_R}). 
It should be noted that it is not clear whether the results \cite{GHS03} and \cite{HM07} can be applied to the formal power series ring setting. 
However, this difficulty can be avoided by using the functorial desingularization due to Temkin \cite{Tem12}. 

A large part of this paper (Sections \ref{section:Omega'}, \ref{section:jet} and \ref{section:ktjet}) is devoted to proving and listing
the basic facts for dealing with the arc spaces $\overline{X}^{(\gamma)}_{\infty}$ and $\overline{X}^{\prime (\gamma)}_{\infty}$. 
Firstly, $\overline{X}^{(\gamma)}_{\infty}$ can be seen as the arc space (Greenberg scheme) of a formal $k[[t]]$-scheme, and 
the theory developed by Sebag in \cite{Seb04} can be applied (cf.\ \cite{CLNS, Yas}). 
On the other hand, as far as we know, the arc spaces of the form $\overline{X}^{\prime (\gamma)}$ have not been fully discussed so far. 
In Subsection \ref{subsection:ktjet}, we discuss the basic facts on the arc spaces of $k[t][[x_1, \ldots , x_N]]$-schemes of finite type. 
Furthermore, we discuss in Section \ref{section:Omega'} the theory of sheaves of special differentials introduced 
by de Fernex, Ein, and Musta{\c{t}}{\v{a}} in \cite{dFEM11}, 
and the theory of derivations which are needed in Section \ref{section:ktjet}. 
The theory of the arc spaces of $k[t][[x_1, \ldots , x_N]]$-schemes has the following technical difficulties 
(see Remark \ref{rmk:quest} for the detail). 
For a $k$-variety $X$, it is almost trivial that $Z_{\infty}$ is a thin set of $X_{\infty}$ 
for the closed subscheme $Z \subset X$ defined by the Jacobian ideal $\operatorname{Jac}_{X}$. 
This fact is also valid for $k[t]$-schemes $X$ of finite type dealt with in \cite{NS} and 
for formal $k[[t]]$-schemes of finite type dealt with in \cite{Seb04}. 
However, it is not clear to us whether the same statement is valid for $k[t][[x_1, \ldots , x_N]]$-schemes of finite type. 
For avoiding this difficulty, many propositions in Subsection \ref{subsubsection:thin} 
are proved under stronger assumptions.

The paper is organized as follows. 
In Section \ref{section:Omega'}, 
we discuss the theory of sheaves of special differentials introduced by de Fernex, Ein, and Musta{\c{t}}{\v{a}} in \cite{dFEM11} 
and the theory of derivations. 
In Section \ref{section:lp}, we review some definitions on log pairs. 
In Section \ref{section:jet}, we discuss the theory of arc spaces of $k[[x_1, \ldots , x_N]]$-schemes and see that 
the formula in \cite{EMY03} and \cite{EM09} representing the minimal log discrepancies of $k$-varieties in terms of arc spaces 
can be generalized to the formal power series ring setting (Theorem \ref{thm:mld_R}). 
In Section \ref{section:ktjet}, we discuss the theory of arc spaces for $k[t][[x_1, \ldots , x_N]]$-schemes and affine formal $k[[t]]$-schemes. 
In Section \ref{section:DL}, we review the theory of arc spaces of quotient varieties established 
by Denef and Loeser in \cite{DL02} in the formal power series ring setting. 
In Section \ref{section:mld_R}, we discuss the minimal log discrepancies of quotient singularities and 
describe them by the codimension of cylinders in arc spaces of the $k[t]$-schemes using the theories in Sections \ref{section:ktjet} and \ref{section:DL}. 
In Section \ref{section:PIA}, we prove the PIA conjecture for quotient singularities, where the group action may be non-linear (Theorem \ref{thm:PIA_nonlin}). 
In Section \ref{section:maintheorem}, we prove the main theorems Corollary \ref{cor:PIA_general} and Theorem \ref{thm:LSC_general}.

\begin{ackn} 
The first author is partially supported by JSPS KAKENHI No.\ 18K13384, 16H02141, 17H02831, 18H01108, and JPJSBP120219935. 
The second author is partially supported by the Grant-in-Aid for Young Scientists (KAKENHI No.\ 19K14496).
\end{ackn}

\section*{Notation}

\begin{itemize}
\item 
We basically follow the notations and the terminologies in \cite{Har77} and \cite{Kol13}.

\item 
Throughout this paper, $k$ is an algebraically closed field  of characteristic zero. 
We say that $X$ is a \textit{variety over} $k$ or a \textit{$k$-variety} if 
$X$ is an integral scheme that is separated and of finite type over $k$. 
\end{itemize}

\section{Sheaves of special differentials}\label{section:Omega'}
Let $R_0$ be a ring. 
In this section, following \cite[Appendix A]{dFEM11}, 
we define the sheaf $\Omega' _{X/R_0}$ of special differentials for a scheme $X$ of finite type over 
$\operatorname{Spec} R_0[[x_1, \ldots , x_N]]$. 
In \cite[Appendix A]{dFEM11}, the sheaf $\Omega' _{X/R_0}$ of special differentials is defined for $R_0 = k$. 
This definition can be generalized to an arbitrary ring $R_0$. 
We are interested in the case where $R_0 = k$ or $R_0 = k[t]$ for our later use.

Let $R = R_0[[x_1, \ldots , x_N]]$.  

\begin{defi}[{cf.\ \cite[Appendix A]{dFEM11}}]
\begin{enumerate}
\item Let $M$ be an $R$-module. 
Then an $R_0$-derivation $D: R \to M$ is called a \textit{special $R_0$-derivation} if $D$ satisfies 
\[
D(f) = \sum _{i=1} ^N \frac{\partial f}{\partial x_i} D(x_i)
\]
for any $f \in R$. 

\item For an $R$-algebra $A$ and an $A$-module $M$, an $R_0$-derivation $D: A \to M$ is called a \textit{special $R_0$-derivation} if 
its restriction to $R$ is a special $R_0$-derivation. 
We denote by $\operatorname{Der}'_{R_0} (A,M)$ the set of all special $R_0$-derivations. 
Then $\operatorname{Der}'_{R_0} (A,M)$ is an $A$-submodule of $\operatorname{Der}_{R_0} (A,M)$. 
\end{enumerate}
\end{defi}

\begin{lem}\label{lem:D=D'}
Let $M$ be an $A$-module that is separated in the $(x_1, \ldots, x_N)$-adic topology, 
i.e.\ $M$ satisfies $\bigcap _{\ell \ge 1}(x_1, \ldots, x_N)^{\ell}M = 0$. 
Then 
\[
\operatorname{Der}'_{R_0} (A,M) = \operatorname{Der}_{R_0} (A,M)
\] 
holds. 
In particular, it holds in the following two cases. 
\begin{enumerate}
\item[(1)] When $M$ is a finite $R$-module. 
\item[(2)] When $M=A$ and $A$ is an integral domain such that $(x_1, \ldots, x_N)A \not = A$. 
\end{enumerate}
\end{lem}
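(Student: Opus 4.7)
The plan is to reduce immediately to showing the identity
\[
D(f) = \sum_{i=1}^N \frac{\partial f}{\partial x_i} D(x_i)
\]
for every $D \in \operatorname{Der}_{R_0}(R, M)$ and every $f \in R = R_0[[x_1, \ldots, x_N]]$, where $M$ is regarded as an $R$-module via $R \to A$. Since the definition of ``special'' only constrains the restriction to $R$, this reduction is trivial. Set $\mathfrak{m} := (x_1, \ldots, x_N) R$.

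The key step is a truncation-and-limit argument. For $\ell \ge 1$, write $f = f_\ell + g_\ell$ with $f_\ell \in R_0[x_1, \ldots, x_N]$ the polynomial truncation of $f$ in degree $<\ell$ and $g_\ell \in \mathfrak{m}^\ell$. The Leibniz rule, applied finitely many times, gives the identity exactly on the polynomial part:
\[
D(f_\ell) = \sum_{i=1}^N \frac{\partial f_\ell}{\partial x_i}\, D(x_i).
\]
Next, I would show by induction on $\ell$ that $D(\mathfrak{m}^\ell M) \subset \mathfrak{m}^{\ell-1} M$ (trivial for $\ell=1$; for the inductive step, write an element of $\mathfrak{m}^\ell$ as $\sum_j u_j v_j$ with $u_j \in \mathfrak{m}$, $v_j \in \mathfrak{m}^{\ell-1}$, and apply Leibniz to each $D(u_j v_j) = u_j D(v_j) + v_j D(u_j)$, using the inductive hypothesis on $D(v_j)$). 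Since $\partial g_\ell/\partial x_i \in \mathfrak{m}^{\ell-1}$ as well, we obtain
\[
D(f) - \sum_{i=1}^N \frac{\partial f}{\partial x_i}\, D(x_i) \in \mathfrak{m}^{\ell-1} M
\]
for every $\ell \ge 1$, so the separatedness hypothesis forces this element to vanish.

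For the two special cases I would invoke Krull's intersection theorem. For (1), note that any $1+h$ with $h \in \mathfrak{m}$ is a unit in $R$ (inverse $\sum (-h)^n$ converges in the $\mathfrak{m}$-adic topology), so $\mathfrak{m} \subset \operatorname{Jac}(R)$; since $R$ is Noetherian (as $R_0$ is Noetherian in the cases $R_0 = k$, $R_0 = k[t]$ of interest) and $M$ is finitely generated, Krull gives $\bigcap_\ell \mathfrak{m}^\ell M = 0$. For (2), $A$ is Noetherian of finite type over $R$, so setting $\mathfrak{n} := (x_1, \ldots, x_N) A$, Krull's theorem produces $r \in 1 + \mathfrak{n}$ with $r \cdot \bigcap_\ell \mathfrak{n}^\ell = 0$; the properness hypothesis $\mathfrak{n} \ne A$ ensures $r \ne 0$, and since $A$ is a domain, $\bigcap_\ell \mathfrak{n}^\ell = 0$.

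I do not expect any serious obstacle; the only mildly delicate points are the inductive verification that $D$ is ``continuous'' in the $\mathfrak{m}$-adic sense (step 3 above) and the standard check that $\mathfrak{m} \subset \operatorname{Jac}(R)$ used in case (1). Everything else is bookkeeping with formal partial derivatives on $R$, which are well-defined term-by-term on power series.
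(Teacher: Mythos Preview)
Your proof is correct and follows essentially the same approach as the paper's: truncate $f$ into a polynomial part plus a tail in $\mathfrak{m}^\ell$, use Leibniz on the polynomial part, observe that the tail contributes only in $\mathfrak{m}^{\ell-1}M$, and invoke separatedness. The paper streamlines slightly by first subtracting off the special derivation $D'(f) := \sum_i \frac{\partial f}{\partial x_i} D(x_i)$ to reduce to the case $D(x_i)=0$, and for cases (1) and (2) simply cites \cite[Theorem 8.9]{Mat89} rather than spelling out the Krull argument as you do (with the same implicit Noetherianity hypothesis); also note your displayed inclusion should read $D(\mathfrak{m}^\ell) \subset \mathfrak{m}^{\ell-1}M$ rather than $D(\mathfrak{m}^\ell M)$.
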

\begin{proof}
By the definition of special derivations, it is sufficient to show that 
$\operatorname{Der}'_{R_0} (R,M) = \operatorname{Der}_{R_0} (R,M)$. 
Let $D \in \operatorname{Der}_{R_0} (R,M)$. If we set $D': R \to M$ by 
$D'(f):= \sum _{i=1} ^N \frac{\partial f}{\partial x_i} D(x_i)$, then $D' \in \operatorname{Der}'_{R_0} (R,M)$ holds. 
Hence it is sufficient to show that $D=0$ if $D(x_i) = 0$ holds for all $i$. 
Let $f \in R$. 
For any $\ell \ge 0$, we may write $f = f_1 + f_2$ with $f_1 \in R_0[x_1, \ldots, x_n]$ 
and $f_2 \in (x_1, \ldots, x_n)^{\ell + 1}R$. 
Then we have
\[
D(f) = D(f_1) + D(f_2) = D(f_2) \in (x_1, \ldots, x_n)^{\ell}M. 
\]
It shows that $D(f) \in \bigcap _{\ell \ge 1}(x_1, \ldots, x_N)^{\ell}M = 0$. 

In both cases (1) and (2), $M$ is separated by \cite[Theorem 8.9]{Mat89}. 
\end{proof}

\begin{prop}[{cf.\ \cite[Appendix A]{dFEM11}}] \label{prop:spOmega}
For any $R$-algebra $A$,
there exists an $A$-module $\Omega' _{A/R_0}$ with a special $R_0$-derivation $d'_{A/R_0}: A \to \Omega' _{A/R_0}$ such that 
the induced map
\[
\operatorname{Hom}_{A} (\Omega' _{A/R_0}, M) \to \operatorname{Der}'_{R_0} (A,M); \quad f \mapsto f \circ d'_{A/R_0}
\]
is an isomorphism for any $A$-module $M$. 
\end{prop}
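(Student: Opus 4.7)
The plan is to imitate the standard construction of the module of K\"ahler differentials, adding just enough relations to force the restriction to $R$ to be special. For the special case $A = R$, I would take $\Omega'_{R/R_0}$ to be the free $R$-module with basis $dx_1, \ldots, dx_N$ and define
\[
d'_{R/R_0}(f) := \sum_{i=1}^{N} \frac{\partial f}{\partial x_i}\, dx_i,
\]
using the formal partial derivatives on $R_0[[x_1,\ldots,x_N]]$. By construction this is a special $R_0$-derivation, and the universal property is immediate: any special $R_0$-derivation $D : R \to M$ is determined by its values $D(x_i)$, so $dx_i \mapsto D(x_i)$ gives the unique $R$-linear lift.

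For a general $R$-algebra $A$, I would define
\[
\Omega'_{A/R_0} := \Omega_{A/R_0}/K,
\]
where $K \subset \Omega_{A/R_0}$ is the $A$-submodule generated by the elements
\[
d_{A/R_0}(f) - \sum_{i=1}^{N} \frac{\partial f}{\partial x_i}\, d_{A/R_0}(x_i), \qquad f \in R,
\]
and let $d'_{A/R_0} : A \to \Omega'_{A/R_0}$ be the composition of the universal $R_0$-derivation $d_{A/R_0}$ with the quotient map. Then $d'_{A/R_0}$ is visibly an $R_0$-derivation, and its restriction to $R$ is special by the very definition of $K$, so $d'_{A/R_0} \in \operatorname{Der}'_{R_0}(A, \Omega'_{A/R_0})$.

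To verify the universal property, let $D \in \operatorname{Der}'_{R_0}(A, M)$. The universal property of $\Omega_{A/R_0}$ provides a unique $A$-linear map $\varphi : \Omega_{A/R_0} \to M$ with $D = \varphi \circ d_{A/R_0}$. The hypothesis that $D|_R$ is special says precisely that $\varphi$ vanishes on each generator of $K$, so $\varphi$ factors through a map $\Omega'_{A/R_0} \to M$ whose composition with $d'_{A/R_0}$ is $D$. Uniqueness is automatic because $\Omega'_{A/R_0}$ is generated as an $A$-module by the image of $d'_{A/R_0}$ (inheriting this from $\Omega_{A/R_0}$).

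I do not expect a substantive obstacle; this is a representability argument that extends the classical construction of $\Omega_{A/R_0}$ by adjoining one additional relation for each $f \in R$. The only mildly nontrivial point is that the partial derivatives $\partial f/\partial x_i$ are well-defined as elements of $R_0[[x_1,\ldots,x_N]]$, which is standard for formal power series rings over an arbitrary base ring, and that the Leibniz-type identity in the definition of a special derivation is compatible with the quotient—both of which are routine.
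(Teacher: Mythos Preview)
Your proposal is correct and is essentially the construction the paper defers to (the paper simply cites \cite[A.1--A.4]{dFEM11}, where $\Omega'_{A/R_0}$ is built exactly as the quotient of $\Omega_{A/R_0}$ by the submodule generated by the speciality relations $d(f)-\sum_i \frac{\partial f}{\partial x_i}\,d(x_i)$ for $f\in R$). Your separate treatment of the case $A=R$ is not needed for the existence argument but recovers the explicit description in Proposition~\ref{prop:spOmega2}(\ref{item:basis}).
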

\begin{proof}
The same proof as in \cite[A.1-A.4]{dFEM11} works. 
\end{proof}
An $A$-module $\Omega' _{A/R_0}$ satisfying Proposition \ref{prop:spOmega} 
is unique up to an isomorphism commuting with $d'_{A/R_0}$, 
and $\Omega' _{A/R_0}$ is called the \textit{module of special differentials}. 
We sometimes abbreviate $d'_{A/R_0}$ to $d'$ when no confusion can arise. 
We note that $\Omega' _{A/R_0}$ depends on the choice of $R$. 
We list some properties on $\Omega' _{A/R_0}$ from \cite[Appendix A]{dFEM11}. 

\begin{prop}[{cf.\ \cite[Appendix A]{dFEM11}}] \label{prop:spOmega2}
\begin{enumerate}
\item \label{item:basis} 
If $A = R[y_1, \ldots, y_m]$, then $\Omega' _{A/R_0}$ is a free $A$-module of rank $N+m$ with basis 
\[
d'_{A/R_0}(x_1), \ldots , d'_{A/R_0}(x_N), d'_{A/R_0}(y_1), \ldots , d'_{A/R_0}(y_m). 
\]

\item \label{item:ex1} 
Let $f: A \to B$ be a homomorphism of $R$-algebras. Then we have an exact sequence
\[
\Omega' _{A/R_0} \otimes _{A} B \xrightarrow{\alpha} \Omega' _{B/R_0} \xrightarrow{\beta} \Omega _{B/A} \to 0
\]
of $B$-modules, where the maps $\alpha$ and $\beta$ are defined by
$\alpha \bigl( d'_{A/R_0}(g) \otimes 1 \bigr) = d'_{B/R_0}(f(g))$ for $g \in A$ and 
$\beta \bigl( d'_{B/R_0}(g) \bigr) = d_{B/A}(g)$ for $g \in B$. 

\item \label{item:ex2} 
Let $f: A \to B$ be a surjective homomorphism of $R$-algebras with $I = \operatorname{Ker} f$. Then we have an exact sequence 
\[
I/I^2 \xrightarrow{\delta} \Omega' _{A/R_0} \otimes _{A} B \xrightarrow{\alpha} \Omega' _{B/R_0} \to 0
\]
of $B$-modules, where the map $\delta$ is defined by $\delta(\overline{g}) = d'_{A/R_0}(g) \otimes 1$ for $g \in I$. 

\item \label{item:localization} 
If $S \subset A$ is a multiplicative system of $A$, then we have a canonical isomorphism
\[
\Omega' _{S^{-1}A/R_0} \simeq \Omega' _{A/R_0} \otimes _A S^{-1}A. 
\]
\end{enumerate}
\end{prop}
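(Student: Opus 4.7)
The plan is to follow \cite[Appendix A]{dFEM11}, where the same four statements are proved when $R_0=k$. None of those arguments use anything special about $k$ beyond the facts that $\partial/\partial x_i$ are well-defined $R_0$-derivations on $R$ and that they extend naturally to polynomial extensions, quotients, and localizations. So the strategy throughout is to work via the universal property of Proposition \ref{prop:spOmega} and check the corresponding statements at the level of special derivations.

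For part \eqref{item:basis}, I would apply the universal property directly. On $R$ itself, a special $R_0$-derivation $D \colon R \to M$ is determined by the values $D(x_1), \ldots, D(x_N)$ by the defining formula, and conversely, given any $(m_1, \ldots, m_N) \in M^N$, setting $D(f) = \sum_i \partial_i f \cdot m_i$ produces a special $R_0$-derivation (the Leibniz rule is inherited from the $\partial_i$). Passing from $R$ to $A = R[y_1, \ldots, y_m]$ adds free choices $D(y_1), \ldots, D(y_m) \in M$. Hence $\operatorname{Der}'_{R_0}(A, M) \cong M^{N+m}$ naturally in $M$, and Yoneda's lemma identifies $\Omega'_{A/R_0}$ with a free $A$-module of rank $N+m$ with the stated basis.

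For parts \eqref{item:ex1} and \eqref{item:ex2}, I would construct the maps using the universal property and establish exactness by applying $\operatorname{Hom}_B(\cdot, M)$, reducing to statements about derivations. In \eqref{item:ex1}, the key observation is that $d_{B/A} \colon B \to \Omega_{B/A}$ is a special $R_0$-derivation because its restriction to $R$ is zero (since $R \to B$ factors through $A$, and $d_{B/A}$ kills $A$), hence by the universal property of $\Omega'_{B/R_0}$ it induces the map $\beta$; the dual sequence then reads as the statement that a special $R_0$-derivation $B \to M$ whose pullback to $A$ vanishes is an $A$-derivation, which is immediate. In \eqref{item:ex2}, the map $\delta$ is well-defined because for $g,h \in I$ one has $d'(gh) = g\, d'(h) + h\, d'(g) \in I \cdot \Omega'_{A/R_0}$, and the dual exactness corresponds to the elementary fact that a special $R_0$-derivation $A \to M$ annihilating $I$ descends uniquely to a special $R_0$-derivation on $B = A/I$.

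For part \eqref{item:localization}, I would extend any special $R_0$-derivation $D \colon A \to M$ (with $M$ an $S^{-1}A$-module) uniquely to $S^{-1}A$ via the quotient rule $D(a/s) = (s D(a) - a D(s))/s^2$. The main verification is that the extension is again special, i.e., that its restriction to $S^{-1}R$ satisfies the special formula with respect to the natural extensions of $\partial_i$ to $S^{-1}R$; this is a routine chain-rule computation. Then $\operatorname{Der}'_{R_0}(A,M) \cong \operatorname{Der}'_{R_0}(S^{-1}A,M)$ for every $S^{-1}A$-module $M$, which by Yoneda yields the asserted isomorphism. I do not expect any step to be a serious obstacle: each part is a direct transposition of the classical arguments for $\Omega_{A/R_0}$, and the only recurring point requiring care is confirming that the various induced derivations remain special, which in every case reduces to the compatibility of the partial derivatives $\partial_i$ with the construction in question.
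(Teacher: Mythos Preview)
Your approach is correct and essentially identical to the paper's: both prove all four parts by dualizing to statements about special derivations via the universal property, exactly as in \cite[Lemmas A.1--3, A.6--7]{dFEM11}. One minor simplification in part \eqref{item:localization}: since $S^{-1}A$ is still viewed as an $R$-algebra (same $R$), specialness of the extended derivation is checked on $R$, not on $S^{-1}R$, so the restriction to $R$ of the extended derivation equals the restriction of the original $D$, and the chain-rule verification you mention is unnecessary.
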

\begin{proof}
The same proofs as in \cite[Lemmas A.1-3, A.6-7]{dFEM11} work. 
The exact sequences in (2) and (3) are derived from the following corresponding exact sequences
\begin{align*}
0 \to \operatorname{Der} _A (B,M) \to \operatorname{Der}' _{R_0} (B,M) \to \operatorname{Der}' _{R_0} (A,M), \hspace{3mm} \\
0 \to \operatorname{Der}' _{R_0} (B,M) \to \operatorname{Der}' _{R_0} (A,M) \to \operatorname{Hom} _B (I/I^2,M), 
\end{align*}
for any $B$-module $M$. 
The isomorphism in (4) is derived from the following isomorphisms
\begin{align*}
\operatorname{Hom} _{S^{-1}A} (\Omega' _{A/R_0} \otimes _A S^{-1}A,M) 
&\simeq \operatorname{Hom} _{A} (\Omega' _{A/R_0},M) \\
&\simeq \operatorname{Der}' _{R_0} (A,M) \simeq \operatorname{Der}' _{R_0} (S^{-1}A,M)
\end{align*}
for any $S^{-1}A$-module $M$. 
\end{proof}

\begin{rmk}\label{rmk:UFMD}
\begin{enumerate}
\item 
The usual module $\Omega_{A/R_0}$ of differentials is not a finite $A$-module in general 
when $A$ is an $R$-algebra of finite type. 
However, the module $\Omega' _{A/R_0}$ of special differentials becomes a finite $A$-module and has similar properties as 
the module of differentials defined for $R_0$-algebras of finite type.

\item The universal finite module of differentials is also a module defined with the same motivation (see \cite[Section 11]{Kun86}). 
For an $R_0$-algebra $A$, a finite $A$-module $\widetilde{\Omega}_{A/R_0}$ 
with an $R_0$-derivation $\widetilde{d}_{A/R_0}:A \to \widetilde{\Omega}_{A/R_0}$ is called 
the \textit{universal finite module of differentials} 
if it satisfies the following universal property. 
\begin{itemize}
\item For any $R_0$-derivation $D:A \to M$ to a finite $A$-module $M$, there exists a unique homomorphism 
$\alpha: \widetilde{\Omega}_{A/R_0} \to M$ of $A$-modules satisfying $D = \alpha \circ \widetilde{d}_{A/R_0}$. 
\end{itemize}
In other words, $\widetilde{\Omega}_{A/R_0}$ and $\widetilde{d}_{A/R_0}$ satisfy
\[
\operatorname{Hom}_{A} (\widetilde{\Omega}_{A/R_0},M) \xrightarrow{\simeq} \operatorname{Der}_{R_0} (A,M); 
\quad \alpha \mapsto \alpha \circ \widetilde{d}_{A/R_0}
\]
for any finite $A$-module $M$. 

In contrast to the module $\Omega' _{A/R_0}$ of special differentials, 
the universal finite module $\widetilde{\Omega} _{A/R_0}$ of differentials does not necessarily exist. 

\item 
$A$ is called an \textit{analytic $R_0$-algebra} 
if there exists $R = R_0[[x_1, \ldots , x_N]]$ for some $N \ge 0$ such that $A$ is a finite $R$-algebra. 
If $A$ is an analytic $R_0$-algebra, 
then the universal finite module $\widetilde{\Omega} _{A/R_0}$ of differentials exists. 
Furthermore, if $A$ is a finite $R_0[[x_1, \ldots , x_N]]$-algebra and 
if $\Omega' _{A/R_0}$ is the module of special differentials with respect to $R = R_0[[x_1, \ldots , x_N]]$, then we have 
$\widetilde{\Omega} _{A/R_0} \simeq \Omega' _{A/R_0}$. This is because we have
\[
\operatorname{Der}' _{R_0} (A,M) = \operatorname{Der}_{R_0} (A,M)
\]
for any finite $A$-module $M$ by Lemma \ref{lem:D=D'} in this case. 
Therefore, we can also see that $\Omega' _{A/R_0}$ does not depend on the choice of $R$ 
as long as $A$ is finite as an $R$-module. 

\item Even if $A$ is an algebra of finite type over $R = R_0[[x_1, \ldots , x_N]]$, 
$\widetilde{\Omega} _{A/R_0}$ does not necessarily exist despite the fact that $\Omega' _{A/R_0}$ is a finite $A$-module. 
We shall see that $\widetilde{\Omega} _{A/k}$ and $\widetilde{\Omega} _{B/k}$ do not exist for 
\[
A = k[[x]][y], \qquad B = k[[x]][y]/(1-xy) \simeq k((x)). 
\]
In fact, 
since 
\begin{align*}
\dim _{k((x))} \operatorname{Der}_{k} \bigl( k((x)) \bigr) &= 
\dim _{k((x))} \operatorname{Hom}_{k((x))} \bigl( \Omega_{k((x))/k}, k((x)) \bigr)\\
&= \operatorname{trdeg} _k k((x)) = \infty, 
\end{align*}
$\operatorname{Der}_{k} (B)$ is not a finite $B$-module and hence $\widetilde{\Omega} _{B/k}$ does not exist. 
Furthermore, since there is a natural injective map $\operatorname{Der}_{k} (B, B) \to \operatorname{Der}_{k} (A, B)$, 
$\operatorname{Der}_{k} (A, B)$ is not a finite $A$-module and hence $\widetilde{\Omega} _{A/k}$ does not exist 
(cf.\ \cite[Corollary 11.10]{Kun86}). 
\end{enumerate}
\end{rmk}

\begin{rmk}\label{rmk:JC}
\begin{enumerate}
\item Let $A$ be a ring. 
For a non-negative integer $\ell$, and for subsets $F \subset A$ and $\Delta \subset \operatorname{Der} (A)$, we denote by 
$\mathcal{J}_{\ell}(F; \Delta)$ the ideal of $A$ generated by the determinants $\det \bigl( D_i(f_j) \bigr)_{1 \le i,j \le \ell}$ of 
all the matrices $\bigl( D_i(f_j) \bigr)_{1 \le i,j \le \ell}$ of size $\ell$ with $D_i \in \Delta$ and $f_j \in F$. 

If $I$ is an ideal of $A$ generated by $f_1, \ldots , f_{t}$ and the $A$-submodule $A \Delta $ of $\operatorname{Der} (A)$ 
is generated by $D_1, \ldots , D_s$ as an $A$-module, then we have
\[
\mathcal{J}_{\ell}(I; \Delta) +I = \mathcal{J}_{\ell}( \{f_1, \ldots , f_t \}; \{D_1, \ldots , D_s \}) +I. 
\]

\item Let $A$ be a regular ring and let $P$ be a prime ideal. For an ideal $I$ of $A$ such that $I \subset P$, 
the following hold (cf.\ \cite[Theorem 30.4]{Mat89}). 
\begin{enumerate}
\item 
$\mathcal{J}_{\ell}(I; \operatorname{Der}(A)) \subset P$
holds for any $\ell > \operatorname{ht}(IA_P)$.

\item \label{item:JCI} $A_P/IA_P$ is regular if 
$\mathcal{J}_{\ell}(I; \operatorname{Der}(A)) \not \subset P$
holds for $\ell = \operatorname{ht}(IA_P)$. 
\end{enumerate}
Some regular rings satisfy the inverse implication of (b), and such rings are said to satisfy 
the \textit{weak Jacobian condition} (WJ) (cf.\ \cite[Section 30]{Mat89}). 
Rings of finite type over $k$ are classically known to satisfy (WJ), and this is known as the Jacobian criterion for regularity. 
Matsumura proved in \cite{Mat77} that $R$-algebras of finite type satisfy (WJ)$_k$ when $R = k[[x_1, \ldots , x_N]]$ 
(see \cite[Theorem 9]{Mat77} for more general result): 
\begin{enumerate}
\item[(c)] Let $A = k[[x_1, \ldots , x_N]][y_1, \ldots, y_m]$ and 
let $P$ and $Q$ be prime ideals of $A$ such that $Q \subset P$. 
Then $A/Q$ is regular at $P$ if and only if 
$\mathcal{J}_{\ell}(Q; \operatorname{Der}_k(A)) \not \subset P$ holds for $\ell = \operatorname{ht} Q$. 
\end{enumerate}
Note that $\operatorname{Der}_k(A) = \operatorname{Der}'_k(A)$ holds for $A = k[[x_1, \ldots , x_N]][y_1, \ldots, y_m]$ 
(cf.\ Lemma \ref{lem:D=D'}), and 
this is a free $A$-module generated by $\frac{\partial}{\partial x_i}$'s and $\frac{\partial}{\partial y_i}$'s. 
\end{enumerate}
\end{rmk}

In \cite[Proposition A.8]{dFEM11}, the local freeness of $\Omega '_{A/k}$ is proved for regular rings $A$ when $R_0=k$. 

\begin{prop}[{\cite[Proposition A.8]{dFEM11}}]\label{prop:spOmega3}
Suppose that $R_0 = k$. 
Let $A$ be an $R$-algebra of finite type, and let $\mathfrak{q}$ be a prime ideal of $A$. 
If $A_{\mathfrak{q}}$ is regular, then $\Omega '_{A_{\mathfrak{q}}/k}$ 
is a free $A_{\mathfrak{q}}$-module of rank $\dim A_{\mathfrak{q}} + \dim _{k(\mathfrak{q})} (\Omega '_{k(\mathfrak{q})/k})$. 
\end{prop}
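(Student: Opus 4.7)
The plan is to adapt the standard proof of local freeness of Kähler differentials for finite-type algebras over a field, replacing the usual Jacobian criterion by the weak Jacobian condition in Remark \ref{rmk:JC}(2)(c), which is available precisely for $R$-algebras of finite type when $R = k[[x_1, \ldots, x_N]]$.

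First, I would present $A$ as a quotient $A = B/I$ with $B := R[y_1, \ldots, y_m]$, let $\mathfrak{p} \subset B$ be the preimage of $\mathfrak{q}$, and combine the localization isomorphism of Proposition \ref{prop:spOmega2}(\ref{item:localization}) with the quotient sequence of Proposition \ref{prop:spOmega2}(\ref{item:ex2}) to obtain the right exact sequence
\[
I_\mathfrak{p}/I_\mathfrak{p}^2 \xrightarrow{\delta} \Omega'_{B_\mathfrak{p}/k} \otimes_{B_\mathfrak{p}} A_\mathfrak{q} \to \Omega'_{A_\mathfrak{q}/k} \to 0
\]
of $A_\mathfrak{q}$-modules. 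By Proposition \ref{prop:spOmega2}(\ref{item:basis}), the middle term is a free $A_\mathfrak{q}$-module of rank $N+m$ with basis the images of $d'(x_i)$ and $d'(y_j)$.

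Next, since $B$ is a polynomial extension of the regular ring $R$, the localization $B_\mathfrak{p}$ is regular; combined with the hypothesis that $A_\mathfrak{q} = B_\mathfrak{p}/I_\mathfrak{p}$ is regular, this forces $I_\mathfrak{p}$ to be generated by a regular sequence of length $h := \operatorname{ht}(I_\mathfrak{p}) = \dim B_\mathfrak{p} - \dim A_\mathfrak{q}$, so that $I_\mathfrak{p}/I_\mathfrak{p}^2$ is a free $A_\mathfrak{q}$-module of rank $h$. In the chosen bases, the matrix of $\delta$ is the Jacobian matrix of a regular sequence generating $I_\mathfrak{p}$ with respect to the partials $\partial/\partial x_i$ and $\partial/\partial y_j$. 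Applying the weak Jacobian condition of Remark \ref{rmk:JC}(2)(c) to the regular ring $A_\mathfrak{q}$ yields an $h \times h$ minor of this matrix lying outside $\mathfrak{q}$; equivalently, this minor is a unit in $A_\mathfrak{q}$. An elementary linear algebra argument then shows that $\delta$ is a split injection whose cokernel is $A_\mathfrak{q}$-free of rank $N+m-h$, so $\Omega'_{A_\mathfrak{q}/k}$ is free of rank $N + m - h$.

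It remains to match the rank $N+m-h$ with $\dim A_\mathfrak{q} + \dim_{k(\mathfrak{q})} \Omega'_{k(\mathfrak{q})/k}$. Using $h = \dim B_\mathfrak{p} - \dim A_\mathfrak{q}$ from the previous step, this reduces to the identity $\dim_{k(\mathfrak{q})} \Omega'_{k(\mathfrak{q})/k} = N + m - \dim B_\mathfrak{p}$, which I would establish by running the same three-step argument for $k(\mathfrak{q})$ itself, representing it as the residue field of a localization of a finite-type presentation of $A/\mathfrak{q}$ over $R$ and applying Proposition \ref{prop:spOmega2}(\ref{item:ex2}) with the Jacobian criterion once more. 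The main technical obstacle is precisely this final dimension bookkeeping, which requires the catenary property of polynomial extensions of the complete regular local ring $R$ and careful handling of $k(\mathfrak{q})$, since it is not itself of finite type over $R$, so the analysis of its special differentials must be reduced to an appropriate finite-type intermediate step.
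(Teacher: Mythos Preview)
Your proposal is correct and is essentially the argument of \cite[Proposition A.8]{dFEM11}, which the paper cites rather than reproduces; indeed, Proposition \ref{prop:spOmega4} records that the split exactness of the conormal sequence---exactly what you derive from the weak Jacobian criterion of Remark \ref{rmk:JC}(2)(c)---is the main step of that proof. Your residual worry about $k(\mathfrak{q})$ is easily handled: it is a localization of the finite-type $R$-algebra $B/\mathfrak{p}$, so Proposition \ref{prop:spOmega2}(\ref{item:localization}) applies directly.
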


In the proof of \cite[Proposition A.8]{dFEM11}, the following statement is proved. 

\begin{prop}[{\cite[Proposition A.8]{dFEM11}}]\label{prop:spOmega4}
Suppose that $R_0 = k$. 
Let $S := R[y_1, \ldots , y_m]$, and let $P$ and $Q$ be prime ideals of $S$ with $P \subset Q$. 
Let $A := S/P$ and let $\mathfrak{q}$ be the prime ideal of $A$ corresponding to $Q$. 
If $A_{\mathfrak{q}}$ is regular, then the sequence 
\[
0 \to PS_Q/P^2S_Q \to \Omega'_{S/k} \otimes _S A_{\mathfrak{q}} \to \Omega '_{A_{\mathfrak{q}}/k} \to 0
\]
obtained by Proposition \ref{prop:spOmega2}(\ref{item:ex2}) is exact and splits. 
\end{prop}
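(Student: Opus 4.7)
The plan is to reduce the statement to an application of Matsumura's weak Jacobian condition (WJ)$_k$ for $R$-algebras of finite type, recorded in Remark \ref{rmk:JC}(2)(c), which is the substantive input special to the formal-power-series setting; the remainder is linear algebra over the regular local ring $A_{\mathfrak q}$.

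By Proposition \ref{prop:spOmega2}(\ref{item:basis}), $\Omega'_{S/k}$ is a free $S$-module with basis $d'(x_1), \ldots, d'(x_N), d'(y_1), \ldots, d'(y_m)$, and by Proposition \ref{prop:spOmega2}(\ref{item:ex2}) the sequence in question is right-exact with $\delta(\bar g) = d'(g) \otimes 1$ for $g \in P$. Because $S$ is regular and $A_{\mathfrak q} = S_Q/PS_Q$ is regular by hypothesis, standard regularity theory (e.g.\ \cite[Thm.~17.4]{Mat89}) yields that $PS_Q$ is generated by a regular sequence $f_1, \ldots, f_h$ with $h = \operatorname{ht}(PS_Q) = \dim S_Q - \dim A_{\mathfrak q}$, and that $PS_Q/P^2S_Q$ is a free $A_{\mathfrak q}$-module with basis $\bar f_1, \ldots, \bar f_h$. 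Consequently it suffices to show that $\delta(\bar f_1), \ldots, \delta(\bar f_h)$ form part of a basis of the free $A_{\mathfrak q}$-module $\Omega'_{S/k} \otimes_S A_{\mathfrak q}$ of rank $N+m$.

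For this, I would apply (WJ)$_k$ to the chain $P \subset Q$ to obtain $\mathcal{J}_h(P; \operatorname{Der}_k(S)) \not\subset Q$. Using $\operatorname{Der}_k(S) = \operatorname{Der}'_k(S)$ (Lemma \ref{lem:D=D'}), which is the free $S$-module generated by $\partial/\partial x_i$ and $\partial/\partial y_j$, and the identity of Remark \ref{rmk:JC}(1) after passing to $S_Q$ where $PS_Q$ is generated by $f_1, \ldots, f_h$, any offending determinant expands $S_Q$-linearly into $h \times h$ minors of the Jacobian matrix $\bigl( \partial f_i/\partial x_j,\ \partial f_i/\partial y_j \bigr)$. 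Hence at least one such minor lies outside $Q$, i.e.\ is a unit of $A_{\mathfrak q}$. Choosing the corresponding $h$ coordinates $z_1, \ldots, z_h \in \{x_1, \ldots, x_N, y_1, \ldots, y_m\}$, the elements $\delta(\bar f_1), \ldots, \delta(\bar f_h)$ together with $\{d'(z) \otimes 1 \mid z \notin \{z_1, \ldots, z_h\}\}$ form a basis of $\Omega'_{S/k} \otimes_S A_{\mathfrak q}$ by the invertibility of this minor. This forces $\delta$ to be injective, proves exactness on the left, and simultaneously exhibits the image of $\delta$ as a direct summand, giving the splitting.

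The main obstacle I anticipate is not the final linear-algebra step but the bookkeeping of Remark \ref{rmk:JC}(1): one must transport the non-containment $\mathcal{J}_h(P; \operatorname{Der}_k(S)) \not\subset Q$ from arbitrary generators of $P$ and arbitrary $k$-derivations to the specific regular sequence $f_1, \ldots, f_h$ and the canonical partial derivatives, which requires working in the localization $S_Q$ and carefully expanding the determinants of differences modulo $P^2S_Q$. Once this is verified, the existence of an invertible $h \times h$ Jacobian minor yields exactness and splitting simultaneously.
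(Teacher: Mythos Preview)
Your proposal is correct and matches the argument in \cite[Proposition A.8]{dFEM11}, which the paper cites rather than reproving: the key input is precisely Matsumura's (WJ)$_k$ for $R$-algebras of finite type (Remark~\ref{rmk:JC}(2)(c)), and the remaining linear algebra over the regular local ring $A_{\mathfrak q}$ is exactly as you describe. The ``bookkeeping obstacle'' you flag is genuinely handled by Remark~\ref{rmk:JC}(1) after localizing at $Q$, since $PS_Q$ is then generated by your regular sequence $f_1,\ldots,f_h$ and $\operatorname{Der}_k(S)\otimes_S S_Q$ is generated by the partials; the resulting nonvanishing $h\times h$ minor is a unit in $A_{\mathfrak q}$ because $QS_Q$ is maximal, which simultaneously yields injectivity of $\delta$ and the splitting.
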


\begin{rmk}\label{rmk:coht}
\begin{enumerate}
\item 
By Propositions \ref{prop:spOmega3} and \ref{prop:spOmega4}, we have
\[
\dim S - \operatorname{ht} P =  \dim A_{\mathfrak{q}} + \dim _{k(\mathfrak{q})} (\Omega '_{k(\mathfrak{q})/k}). 
\]
It shows that $\operatorname{coht} P = \dim S - \operatorname{ht} P$ is independent of the choice of $S$. 
Lemma \ref{lem:dim'}(1) below also proves this independence. 
Lemma \ref{lem:dim'}(1) also gives a ring-theoretic interpretation of this value without using $\Omega'$. 

\item 
In Proposition \ref{prop:spOmega3}, 
we note that the rank of $\Omega '_{A_{\mathfrak{q}}/k}$ is not equal to $\dim A$ in general. 

If we set $S=k[[x]][y]$ and $P=Q=(xy-1)$ in Proposition \ref{prop:spOmega4}, 
then we have $A_{\mathfrak{q}} = A = k((x))$. 
(1) shows that the rank of $\Omega '_{A_{\mathfrak{q}}/k}$ is equal to one even though $\dim A = 0$. 
\end{enumerate}
\end{rmk}

\begin{defi}\label{defi:dim'}
Suppose that $R_0$ is a Noetherian domain. Let $X$ be an irreducible scheme of finite type over $R$ and 
let $X_{\rm red}$ be its underlying reduced subscheme. 
Let $X_{\rm red} \to \operatorname{Spec} R$ be the structure morphism, 
and let $\mathfrak{p} \in \operatorname{Spec} R$ be the image of the generic point of $X_{\rm red}$. 
Then we define 
\[
\dim' X := \operatorname{trdeg}_{k(\mathfrak{p})} K(X_{\rm red}) + \dim R - \operatorname{ht} \mathfrak{p}, 
\]
where $k(\mathfrak{p}) := R_{\mathfrak{p}}/\mathfrak{p}R_{\mathfrak{p}}$ and $K(X_{\rm red})$ is the function field 
of $X_{\rm red}$. 
When $X = \operatorname{Spec} A$ is an affine scheme, we also write $\dim ' A := \dim ' X$. 
\end{defi}

\begin{lem}\label{lem:dim'}
Suppose that $R_0$ is a Noetherian domain. Let $X = \operatorname{Spec} A$ be an irreducible affine scheme of finite type over $R$. 
Then the following hold. 
\begin{enumerate}
\item If $A$ is a domain, then $\dim S - \operatorname{ht} P = \dim ' A$ holds 
for any representation $A \simeq S/P$ with $S = R[y_1, \ldots, y_m]$ and a prime ideal $P$ of $S$. 

\item $\dim A \le \dim ' A$ holds. 

\item Suppose that $R_0 = k$ and $A$ is a domain. Then $\dim ' A = \dim _{K} (\Omega ' _{K/k})$ holds for $K = \operatorname{Frac} A$. 

\item Suppose that $R_0 = k$ or $R_0 = k[t]$. 
Then $\dim' A = \dim A$ holds if $A/\mathfrak{m} = k$ holds for some maximal ideal $\mathfrak{m}$ of $A$. 

\item Suppose that $R$ is a universally catenary ring. 
If $I = (f_1, \ldots, f_c)$ is an ideal of $A$ generated by $c$ elements, 
then $\dim ' (A/Q) \ge \dim' A - c$ holds for any minimal prime $Q$ of $I$. 

\item Suppse that $R$ is a universally catenary ring. 
Let $\mathfrak{p}$ and $\mathfrak{q}$ be prime ideals of $A$ such that $\mathfrak{p} \subset \mathfrak{q}$. 
Let $B := A / \mathfrak{p}$ and $\overline{\mathfrak{q}} := \mathfrak{q}/\mathfrak{p} \in \operatorname{Spec} B$. 
Then we have $\operatorname{ht} \mathfrak{p} 
= \dim A_{\mathfrak{q}} - \dim B_{\overline{\mathfrak{q}}} 
= \dim ' A - \dim ' B$. 
\end{enumerate}
\end{lem}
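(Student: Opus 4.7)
The plan is to reduce every assertion to bookkeeping inside a polynomial extension $S := R[y_1, \ldots, y_m]$ of $R$, using a presentation $A \simeq S/P_0$ (and, where needed, replacing $A$ by $A_{\rm red}$, which changes neither $\dim A$ nor $\dim' A$). Once part (1) is established, parts (2)--(6) follow by standard height and catenarity calculations in $S$.

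For part (1), with $A = S/P$ a domain and $\mathfrak{p} = P \cap R$, the key input is the altitude formula for polynomial extensions,
\[
\operatorname{ht} P + \operatorname{trdeg}_{k(\mathfrak{p})} k(P) = \operatorname{ht} \mathfrak{p} + m,
\]
which holds over any Noetherian base $R$. It reduces by induction on $m$ to the classical dichotomy for a prime $P$ of $R[y]$ lying over $\mathfrak{p}$: either $P = \mathfrak{p}R[y]$ (so $\operatorname{ht} P = \operatorname{ht}\mathfrak{p}$ and $\operatorname{trdeg}_{k(\mathfrak{p})} k(P) = 1$), or $P \supsetneq \mathfrak{p}R[y]$ (so $\operatorname{ht} P = \operatorname{ht}\mathfrak{p} + 1$ and $\operatorname{trdeg}_{k(\mathfrak{p})} k(P) = 0$). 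Combined with $\dim S = \dim R + m$, this rearranges to
\[
\dim S - \operatorname{ht} P = (\dim R - \operatorname{ht}\mathfrak{p}) + \operatorname{trdeg}_{k(\mathfrak{p})} K(A) = \dim' A.
\]

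Parts (2)--(4) follow quickly. For (2), after passing to $A_{\rm red}$, part (1) together with the universal Noetherian inequality $\operatorname{ht} P + \dim(S/P) \le \dim S$ yields $\dim A \le \dim' A$. For (3), I would apply Proposition \ref{prop:spOmega4} with $Q = P$, so that $\mathfrak{q} = (0)$ in $A$ and $A_{\mathfrak{q}} = K$: since $S = k[[x_1, \ldots, x_N]][y_1, \ldots, y_m]$ is regular and $\Omega'_{S/k}$ is free of rank $N + m$ by Proposition \ref{prop:spOmega2}(\ref{item:basis}), the resulting split exact sequence gives $\dim_K \Omega'_{K/k} = (N + m) - \operatorname{ht} P$, which equals $\dim' A$ by (1). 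For (4), a maximal ideal of $A$ with residue field $k$ pulls back to a maximal ideal $M$ of $S$ that, under the hypothesis $R_0 = k$ or $R_0 = k[t]$, necessarily has the form $(x_1, \ldots, x_N, y_1 - a_1, \ldots, y_m - a_m)$ or $(t - a, x_1, \ldots, x_N, y_1 - b_1, \ldots, y_m - b_m)$ respectively, so $\operatorname{ht} M = \dim S$. Catenarity of the regular ring $S$ then gives $\dim A \ge \operatorname{ht}(M/P_0) = \dim S - \operatorname{ht} P_0 = \dim' A$, and combined with (2) this forces equality.

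For parts (5) and (6), the universal catenarity of $R$ is inherited by $S$ and by $A = S/P_0$. For (5), after reducing to the domain case, a minimal prime $Q$ of $I$ lifts to a prime $Q'$ of $S$ with $Q'/P_0$ minimal over the $c$ elements $\widetilde{f}_1, \ldots, \widetilde{f}_c$; Krull's height theorem gives $\operatorname{ht}_A(Q'/P_0) \le c$, and catenarity of $S$ upgrades this to $\operatorname{ht}_S Q' \le \operatorname{ht}_S P_0 + c$, so (1) yields $\dim'(A/Q) \ge \dim' A - c$. For (6), writing $A = S/P_0$ and $B = S/P$ with $P_0 \subset P \subset Q$ primes of $S$ lifting $\mathfrak{p} \subset \mathfrak{q}$, part (1) applied twice gives $\dim' A - \dim' B = \operatorname{ht}_S P - \operatorname{ht}_S P_0 = \operatorname{ht}_A \mathfrak{p}$ by catenarity of $S$, while catenarity of $A$ together with its irreducibility yields $\dim A_{\mathfrak{q}} - \dim B_{\overline{\mathfrak{q}}} = \operatorname{ht}_A \mathfrak{q} - \operatorname{ht}_B \overline{\mathfrak{q}} = \operatorname{ht}_A \mathfrak{p}$. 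The main obstacle is concentrated in part (1): one needs the altitude formula for polynomial extensions over a Noetherian base that is \emph{not} assumed universally catenary, and this must be cited or verified by hand. The remaining work is essentially catenarity bookkeeping, using the convenient regularity of the power series rings $k[[x_1, \ldots, x_N]]$ and $k[t][[x_1, \ldots, x_N]]$.
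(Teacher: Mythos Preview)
Your proposal is correct and follows essentially the same route as the paper. For (1) the paper cites \cite[Theorem 15.5]{Mat89} (and the remark following it, which gives equality for polynomial extensions without assuming universal catenarity) in place of your induction sketch; for (3) the paper invokes Remark \ref{rmk:coht}(1), which is precisely the consequence of Propositions \ref{prop:spOmega3} and \ref{prop:spOmega4} that you spell out; and for (2), (4), (5), (6) the paper gives the same one-line reductions to (1) via the height inequality, the explicit form of the maximal ideal, Krull's height theorem, and catenarity, respectively.
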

\begin{proof}
First, we prove (1). Let $\mathfrak{p}$ be the image of $P$ according to the map $\operatorname{Spec} S \to \operatorname{Spec} R$. 
Then by \cite[Theorem 15.5]{Mat89} (and the definition below in \cite{Mat89}), it follows that 
\[
\operatorname{trdeg}_{\operatorname{Frac} R} (\operatorname{Frac} S) - \operatorname{ht} P = 
\operatorname{trdeg}_{k(\mathfrak{p})} (\operatorname{Frac} (S/P)) - \operatorname{ht} \mathfrak{p}. 
\]
Therefore, the assertion follows from $\operatorname{trdeg}_{\operatorname{Frac} R} (\operatorname{Frac} S) = m = \dim S - \dim R$. 

(2) follows from (1) and the inequality $\dim (S/P) + \operatorname{ht} P \le \dim S$. 
(3) follows from (1) and Remark \ref{rmk:coht}(1). 

Next, we prove (4) for $R_0 = k[t]$ (the case when $R_0 = k$ is similar). We may assume that $A$ is a domain. Take $S$ and $P$ as in (1). 
Then it is sufficient to prove 
\[
\dim S - \operatorname{ht} P = \dim (S/P). 
\]
Let $M$ be the maximal ideal of $S$ corresponding to $\mathfrak{m}$. 
Since $S/M = k$, $M$ is of the form 
\[
M = (t-a, x_1, \ldots, x_N, y_1 - b_1, \ldots , y_m - b_m)
\]
with $a, b_i \in k$. 
Therefore we have $\dim S = N + m + 1 = \operatorname{ht} M$. Since $S$ is a catenary ring, we also have 
$\dim (S/P) = \operatorname{ht} M - \operatorname{ht} P$, which proves $\dim S - \operatorname{ht} P = \dim (S/P)$. 

(5) follows from (1) and Krull's height theorem. (6) also follows from (1). 
\end{proof}

\begin{prop}\label{prop:spOmega_exact}
Suppose that $R_0 = k$. 
Let $A$ be a domain of finite type over $R$, and 
let $\mathfrak{q}_1$ and $\mathfrak{q}_2$ be prime ideals of $A$ such that $\mathfrak{q}_1 \subset \mathfrak{q}_2$. 
Let $B := A / \mathfrak{q}_1$ and $\overline{\mathfrak{q}}_2 := \mathfrak{q}_2 / \mathfrak{q}_1 \in \operatorname{Spec} B$. 
If both $A_{\mathfrak{q}_2}$ and $B_{\overline{\mathfrak{q}}_2}$ are regular, then 
the sequence 
\[
0 \to \mathfrak{q}_1 A_{\mathfrak{q}_2}/\mathfrak{q}_1 ^2 A_{\mathfrak{q}_2} 
\to \Omega ' _{A_{\mathfrak{q}_2}/k} \otimes _{A_{\mathfrak{q}_2}} B_{\overline{\mathfrak{q}}_2} 
\to \Omega ' _{B_{\overline{\mathfrak{q}}_2}/k}
\to 0
\]
induced by Proposition \ref{prop:spOmega4}(\ref{item:ex2}) is exact and splits. 
\end{prop}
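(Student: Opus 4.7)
The plan is to reduce to Proposition \ref{prop:spOmega4} by presenting $A$ as a quotient of a polynomial ring over $R$. Write $A \simeq S/P$ with $S = R[y_1, \ldots , y_m]$ and $P \in \operatorname{Spec} S$, and let $Q_1, Q_2 \in \operatorname{Spec} S$ be the primes containing $P$ that correspond to $\mathfrak{q}_1$ and $\mathfrak{q}_2$. Then $A_{\mathfrak{q}_2} = S_{Q_2}/PS_{Q_2}$ and $B_{\overline{\mathfrak{q}}_2} = S_{Q_2}/Q_1 S_{Q_2}$, both of which are regular by hypothesis, so Proposition \ref{prop:spOmega4} applies to each of the pairs $P \subset Q_2$ and $Q_1 \subset Q_2$.

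First I would apply Proposition \ref{prop:spOmega4} to $P \subset Q_2$ and then tensor the resulting split short exact sequence over $A_{\mathfrak{q}_2}$ with $B_{\overline{\mathfrak{q}}_2}$. Since the original sequence splits, tensoring preserves exactness, and using $P \subset Q_1$ (so $P^2 \subset PQ_1$) the leftmost term simplifies to $PS_{Q_2}/PQ_1 S_{Q_2}$. A second application of Proposition \ref{prop:spOmega4}, this time to $Q_1 \subset Q_2$, produces another split exact sequence with leftmost term $Q_1 S_{Q_2}/Q_1^2 S_{Q_2}$. The inclusion $P \hookrightarrow Q_1$ induces a natural map $\alpha : PS_{Q_2}/PQ_1 S_{Q_2} \to Q_1 S_{Q_2}/Q_1^2 S_{Q_2}$, and the two sequences assemble into a commutative ladder with the identity in the middle column and the map $\beta$ from Proposition \ref{prop:spOmega2}(\ref{item:ex1}) on the right.

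The snake lemma applied to this ladder gives $\operatorname{Ker} \alpha = 0$ and an isomorphism $\operatorname{Ker} \beta \simeq \operatorname{Coker} \alpha$. A direct computation identifies $\operatorname{Coker} \alpha = Q_1 S_{Q_2}/(PS_{Q_2} + Q_1^2 S_{Q_2})$ with $\mathfrak{q}_1 A_{\mathfrak{q}_2}/\mathfrak{q}_1^2 A_{\mathfrak{q}_2}$, and combined with the right-exactness furnished by Proposition \ref{prop:spOmega2}(\ref{item:ex2}) this yields the exactness of the displayed sequence. Splitting is then automatic, since $\Omega'_{B_{\overline{\mathfrak{q}}_2}/k}$ is free over the regular ring $B_{\overline{\mathfrak{q}}_2}$ by Proposition \ref{prop:spOmega3}.

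The main point requiring care — and the only mildly delicate step — is verifying that the snake-lemma identification of $\operatorname{Ker} \beta$ with $\mathfrak{q}_1 A_{\mathfrak{q}_2}/\mathfrak{q}_1^2 A_{\mathfrak{q}_2}$ agrees with the map $\delta$ supplied by Proposition \ref{prop:spOmega2}(\ref{item:ex2}). This amounts to tracing definitions: if $\bar g \in \mathfrak{q}_1 A_{\mathfrak{q}_2}/\mathfrak{q}_1^2 A_{\mathfrak{q}_2}$ is represented by $\tilde g \in Q_1 S_{Q_2}$, then $\delta(\bar g) = d'_{A/k}(g) \otimes 1$ lifts to $d'_{S/k}(\tilde g) \otimes 1$ in $\Omega'_{S/k} \otimes_S B_{\overline{\mathfrak{q}}_2}$, which is the image of the class of $\tilde g$ in $Q_1 S_{Q_2}/Q_1^2 S_{Q_2}$, and whose residue modulo $PS_{Q_2}$ is precisely $\bar g$. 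Once this bookkeeping is in place, the proof is complete.
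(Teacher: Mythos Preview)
Your argument is correct, but it follows a genuinely different route from the paper's. The paper argues by rank counting: from the right-exact sequence of Proposition~\ref{prop:spOmega2}(\ref{item:ex2}), the map $\mathfrak{q}_1 A_{\mathfrak{q}_2}/\mathfrak{q}_1^2 A_{\mathfrak{q}_2} \to \operatorname{Ker}\bigl(\Omega'_{A_{\mathfrak{q}_2}/k}\otimes B_{\overline{\mathfrak{q}}_2} \to \Omega'_{B_{\overline{\mathfrak{q}}_2}/k}\bigr)$ is automatically surjective. By Proposition~\ref{prop:spOmega3} both $\Omega'$-modules are free over $B_{\overline{\mathfrak{q}}_2}$, of ranks $\dim' A$ and $\dim' B$, so the kernel is free of rank $\dim' A - \dim' B$; the conormal module is free of rank $\operatorname{ht}\mathfrak{q}_1$ by regularity, and Lemma~\ref{lem:dim'}(6) matches the two numbers. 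A surjection of free modules of equal rank over a local ring is then an isomorphism.

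Your snake-lemma approach trades this numerical step for a commutative ladder built from two applications of Proposition~\ref{prop:spOmega4}. The advantage is that you never invoke Lemma~\ref{lem:dim'}(6) or any dimension comparison; the identification $\operatorname{Ker}\beta \simeq \mathfrak{q}_1 A_{\mathfrak{q}_2}/\mathfrak{q}_1^2 A_{\mathfrak{q}_2}$ comes directly from the connecting homomorphism, and your final paragraph correctly verifies that it agrees with the map $\delta$ of Proposition~\ref{prop:spOmega2}(\ref{item:ex2}). The cost is a slightly longer diagram chase. One small remark: the vertical map you call $\beta$ and attribute to Proposition~\ref{prop:spOmega2}(\ref{item:ex1}) is the map denoted $\alpha$ there (the one $\Omega'_{A/k}\otimes_A B \to \Omega'_{B/k}$), not the map to $\Omega_{B/A}$; this is only a labeling issue and does not affect the argument.
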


\begin{proof}
By Proposition \ref{prop:spOmega4}(\ref{item:ex2}), the sequence
\[
\mathfrak{q}_1 A_{\mathfrak{q}_2}/\mathfrak{q}_1 ^2 A_{\mathfrak{q}_2} 
\to \Omega ' _{A_{\mathfrak{q}_2}/k} \otimes _{A_{\mathfrak{q}_2}} B_{\overline{\mathfrak{q}}_2} 
\xrightarrow{\delta} \Omega ' _{B_{\overline{\mathfrak{q}}_2}/k}
\to 0
\]
is exact. Since $A_{\mathfrak{q}_2}$ and $B_{\overline{\mathfrak{q}}_2}$ are regular, 
it follows from Proposition \ref{prop:spOmega3} that
\[
\Omega ' _{A_{\mathfrak{q}_2}/k} \otimes _{A_{\mathfrak{q}_2}} B_{\overline{\mathfrak{q}}_2} \simeq B_{\overline{\mathfrak{q}}_2} ^{\oplus \dim' A}, \quad 
\Omega ' _{B_{\overline{\mathfrak{q}}_2}/k} \simeq B_{\overline{\mathfrak{q}}_2} ^{\oplus \dim' B}. 
\]
Therefore $\operatorname{Ker} (\delta)$ is a free $B_{\overline{\mathfrak{q}}_2}$-module of rank equal to $\dim ' A - \dim' B$. 
On the other hand, since $A_{\mathfrak{q}_2}$ and $B_{\overline{\mathfrak{q}}_2}$ are regular, 
$\mathfrak{q}_1 A_{\mathfrak{q}_2} / \mathfrak{q}_1 ^2 A_{\mathfrak{q}_2}$ 
is also a free $B_{\overline{\mathfrak{q}}_2}$-module of rank equal to $\operatorname{ht} ( \mathfrak{q}_1 )$, 
which is equal to $\dim ' A - \dim' B$ by Lemma \ref{lem:dim'}(6). 
Hence, the induced surjective map 
$\mathfrak{q}_1 A_{\mathfrak{q}_2}/\mathfrak{q}_1 ^2 A_{\mathfrak{q}_2} \to \operatorname{Ker} (\delta)$ 
should be an isomorphism. We complete the proof. 
\end{proof}

\begin{defi}\label{defi:omega'}
\begin{enumerate}
\item 
Let $X$ be a scheme over $\operatorname{Spec} R$. 
Then due to Proposition \ref{prop:spOmega2}(\ref{item:localization}), 
there exists a quasi-coherent sheaf $\Omega' _{X/R_0}$ satisfying 
$\Omega' _{X/R_0}(U) \simeq \Omega' _{\mathcal{O}_X(U)/R_0}$ for any affine open subset $U \subset X$. 
Note that $\Omega' _{X/R_0}$ is coherent by Proposition \ref{prop:spOmega2}(\ref{item:basis})(\ref{item:ex2}) when $X$ is of finite type over $\operatorname{Spec} R$. 
The sheaf $\Omega' _{X/R_0}$ is called the \textit{sheaf of special differentials}. 
We denote $\Omega'^n _{X/R_0} := \bigwedge ^n \Omega' _{X/R_0}$ for a non-negative integer $n$. 

\item 
Suppose that $R_0 = k$ and $X$ is a scheme of finite type over $\operatorname{Spec} R$. 
Let $n$ be a non-negative integer. 
Suppose that any irreducible component $X_i$ of $X$ satisfies $\dim ' X_i = n$, 
where $\dim ' X$ is defined in Definition \ref{defi:dim'}. 
Then we denote $\operatorname{Jac}'_{X/k} := \operatorname{Fitt} ^{n} (\Omega' _{X/k})$ and 
it is called the \textit{special Jacobian ideal} of $X$ (see \cite[Section 20.2]{Eis95} for the definition of the Fitting ideal). 
We note that $\dim ' X_i = \dim X_i$ holds if $X_i$ contains a $k$-point by Lemma \ref{lem:dim'}(4). 

\item 
Suppose that $R_0 = k$ and $X$ is an integral normal scheme of finite type over $\operatorname{Spec} R$. 
Let $n = \dim' X$ and 
let $i: X_{\rm reg} \to X$ be the inclusion map from the regular locus $X_{\rm reg}$ of $X$. 
Then the \textit{special canonical sheaf} $\omega ' _{X/k}$ is defined by 
$\omega ' _{X/k} = i_* (\Omega'^n _{X_{\rm reg}/k})$. 

\item
Under the same setting in (3), 
a Weil divisor $K_X$ on $X$ satisfying $\mathcal{O}_X(K_X)|_{X_{\mathrm{reg}}} \simeq \Omega'^n _{X_{\rm reg}/k}$ 
is called the \textit{canonical divisor} on $X$. 
The canonical divisor $K_X$ is defined up to linear equivalence. 
Note that $\omega ' _{X/k} \simeq \mathcal{O}_X(K_X)$ holds as usual. 
In fact, since we have $\operatorname{codim}_X (X \setminus X_{\rm reg}) \ge 2$ by the normality of $X$, it follows that 
\[
\Gamma \bigl( V, \mathcal{O}_X(K_X) \bigr) = \Gamma \bigl( V \cap X_{\rm reg}, \mathcal{O}_X(K_X) \bigr) = 
\Gamma \bigl( V , i_* i^* \mathcal{O}_X(K_X) \bigr)
\]
for any open subset $V \subset X$. 

\item 
Under the same setting in (3), we say that $X$ is \textit{$\mathbb{Q}$-Gorenstein} if 
$\omega _{X/k} ^{\prime [r]} := (\omega _{X/k} ^{\prime \otimes r})^{**} \simeq \mathcal{O}_X(r K_X)$ 
is an invertible sheaf for some $r \in \mathbb{Z}_{>0}$. 
In this case, we have a canonical map
\[
\eta_r \colon (\Omega_{X/k} ^{\prime n})^{\otimes r} \to \omega _{X/k} ^{\prime [r]}.
\] 
Since $\omega _{X/k} ^{\prime [r]}$ is an invertible sheaf, 
an ideal sheaf $\mathfrak{n}_{r,X} \subset \mathcal{O}_X$ is uniquely determined by 
${\rm Im}(\eta_r) = \mathfrak{n}_{r,X} \otimes _{\mathcal{O}_X} \omega _{X/k} ^{\prime [r]}$.
The ideal sheaf $\mathfrak n_{r,X}$ is called the $r$-th \textit{Nash ideal} of $X$.
\end{enumerate}
\end{defi}

\begin{rmk}\label{rmk:jacobianmatrix}
As with the usual Jacobian ideals for varieties over $k$, 
the special Jacobian ideal can be locally described by the Jacobian matrix. 
Let $S = R[y_1, \ldots , y_m]$ with $R=R_0[[x_1, \ldots , x_N]]$ and let $A = S/I$ for some ideal $I = (f_1, \ldots, f_r) \subset S$. 
Then by Proposition \ref{prop:spOmega2}(\ref{item:ex2}), we have an exact sequence
\[
I/I^2 \to \Omega' _{S/R_0} \otimes _{S} A \to \Omega' _{A/R_0} \to 0. 
\]
Here, we have $\Omega' _{S/R_0} \simeq S^{\oplus N+m}$ with basis $d'x_i$'s and $d'y_j$'s. 
Furthermore, for $f \in I$, we have 
\[
d' f = \sum _{i=1} ^N \frac{\partial f}{\partial x_i} d'x_i + \sum _{j=1} ^m \frac{\partial f}{\partial y_j} d'y_j. 
\]
Therefore, we have
\[
\operatorname{Fitt} ^{n} (\Omega' _{A/R_0})
= \bigl( \mathcal{J}_{N+m-n}(I; \operatorname{Der}_{R_0}(S)) + I \bigr)/I. 
\]
Note here that $\operatorname{Der}_{R_0}(S) = \operatorname{Der}' _{R_0}(S)$ is a free $S$-module generated by
$\partial / \partial x_i$'s and $\partial / \partial y_j$'s. 

This observation shows that if $R_0 = k$ and $X$ is an integral scheme of finite type over $R$, 
then $\operatorname{Jac}'_{X/k}$ defines the singular locus of $X$ by Remark \ref{rmk:JC}(2)(c). 
Note here that $\operatorname{ht} I = N + m - \dim' X$ holds by Lemma \ref{lem:dim'}(1). 
\end{rmk}

\section{Log pairs}\label{section:lp}
A \textit{log pair} $(X, \mathfrak{a})$ is a normal $\mathbb{Q}$-Gorenstein $k$-variety $X$ and 
an $\mathbb{R}$-ideal sheaf $\mathfrak{a}$ on $X$. 
Here, an $\mathbb{R}$-\textit{ideal sheaf} $\mathfrak{a}$ on $X$ is a formal product 
$\mathfrak{a} = \prod _{i = 1} ^s \mathfrak{a}_i ^{r_i}$, where $\mathfrak{a}_1, \ldots, \mathfrak{a}_s$ are 
non-zero coherent ideal sheaves on $X$ 
and $r_1, \ldots , r_s$ are positive real numbers. 
For a morphism $Y \to X$ and an $\mathbb{R}$-ideal sheaf $\mathfrak{a} = \prod _{i = 1} ^s \mathfrak{a}_i ^{r_i}$ on $X$, 
we denote by $\mathfrak{a} \mathcal{O}_Y$ the $\mathbb{R}$-ideal sheaf $\prod _{i = 1} ^s (\mathfrak{a}_i \mathcal{O}_Y)  ^{r_i}$ on $Y$. 

Let $\bigl( X, \mathfrak{a} = \prod _{i = 1} ^s \mathfrak{a}_i ^{r_i} \bigr)$ be a log pair. 
Let $f: X' \to X$ be a proper birational morphism from a normal variety $X'$ and let $E$ be a prime divisor on $X'$. 
We denote by $K_{X'/X} := K_{X'} - f^* K_X$ the relative canonical divisor. 
Then the \textit{log discrepancy} of $(X, \mathfrak{a})$ at $E$ is defined as 
\[
a_E(X, \mathfrak{a}) := 1 + \operatorname{ord}_E (K_{X'/X}) - \operatorname{ord}_E \mathfrak{a}, 
\]
where we define $\operatorname{ord}_E \mathfrak{a} := \sum _{i=1} ^s r_i \operatorname{ord}_E \mathfrak{a}_i$. 
The image $f(E)$ is called the \textit{center} of $E$ on $X$ and we denote it by $c_X(E)$. 
For a closed point $x \in X$, we define \textit{the minimal log discrepancy} at $x$ as 
\[
\operatorname{mld}_x (X, \mathfrak{a}) := \inf _{c_X(E) = \{ x \}} a_E (X, \mathfrak{a})
\]
if $\dim X \ge 2$, where the infimum is taken over all prime divisors $E$ over $X$ with center $c_X(E) = \{ x \}$. 
It is known that $\operatorname{mld}_x (X, \mathfrak{a}) \in \mathbb{R}_{\ge 0} \cup \{ - \infty \}$ in this case (cf.\ \cite[Corollary 2.31]{KM98}). 
When $\dim X = 1$, we define $\operatorname{mld}_x (X, \mathfrak{a}) := \inf _{c_X(E) = \{ x \}} a_E (X, \mathfrak{a})$ 
if the infimum is non-negative, and we define $\operatorname{mld}_x (X, \mathfrak{a}) := - \infty$ otherwise.

Let $R = k[[x_1, \ldots , x_N]]$. 
By \cite[Appendix A]{dFEM11}, we can extend the definition 
above to normal $R$-schemes of finite type. 
Let $X$ be an integral normal scheme of finite type over $R$. 
Then the canonical divisor $K_X$ is defined in Definition \ref{defi:omega'}(4). 
Suppose that $X$ is $\mathbb{Q}$-Gorenstein, that is, $r K_X$ is Cartier for some positive integer $r$. 
Let $f: Y \to X$ be a proper birational morphism over $R$ from a regular scheme $Y$.
Then the \textit{relative canonical divisor} $K_{Y/X}$ of $f$ is defined as the $\mathbb{Q}$-divisor supported 
on the exceptional locus of $f$ such that $rK_Y- f^*(rK_X)$ and $rK_{Y/X}$ are linearly equivalent. 
We note that $K_{Y/X}$ is uniquely defined as a $\mathbb{Q}$-divisor (cf.\ \cite[Lemma A.11(ii)]{dFEM11}). 
Therefore, the log discrepancies and the minimal log discrepancies for $k$-varieties defined above can be extended to 
$\mathbb{Q}$-Gorenstein normal schemes of finite type over $R$, and we use the same notation. 

\begin{rmk}\label{rmk:compareK}
Let $X$ be a normal $k$-variety and let $x \in X$ be a closed point. 
Let $\widehat{\mathcal{O}}_{X,x}$ be the completion of the local ring $\mathcal{O}_{X,x}$ at its maximal ideal. 
Let $\widehat{X} := \operatorname{Spec} \bigl( \widehat{\mathcal{O}}_{X,x} \bigr)$ and let $\widehat{x} \in \widehat{X}$ be the closed point. 
Then for the induced flat morphism $f: \widehat{X} \to X$, it follows that
\[
f^* (K_X) = K_{\widehat{X}}, \qquad f^* (\omega _{X}) = \omega ' _{\widehat{X}/k}
\]
by \cite[Proposition A.14]{dFEM11}. Furthermore, for an integer $r$, if $r K_X$ is Cartier, then so is $r K_{\widehat{X}}$. 

Suppose further that $X$ is $\mathbb{Q}$-Gorenstein. 
Let $\mathfrak{a}$ be an $\mathbb{R}$-ideal sheaf on $X$. 
Then it follows from \cite[Remark 2.6]{Kaw21} (cf.\ \cite[Proposition 2.11]{dFEM11}) that
\[
\operatorname{mld}_{\widehat{x}} (\widehat{X}, \widehat{\mathfrak{a}}) = 
\operatorname{mld}_x (X, \mathfrak{a}), 
\]
where $\widehat{\mathfrak{a}} := \mathfrak{a} \mathcal{O}_{\widehat{X}}$.
\end{rmk}

\section{Arc spaces of $k[[x_1, \ldots , x_N]]$-schemes}\label{section:jet}
In this section, we suppose $R_0 = k$ and $R = k[[x_1, \ldots , x_N]]$, 
and we discuss the jet schemes and the arc spaces of $R$-schemes of finite type. 
We refer the reader to \cite{EM09} and \cite{CLNS} for the theory of jet schemes and arc spaces of $k$-varieties. 
In this section, we see that the codimensions of cylinders of arc spaces can be defined 
in the same way as with $k$-varieties. 

Let $X$ be a scheme over $k$. Let $({\sf Sch}/k)$ be the category of $k$-schemes and $({\sf Sets})$ the category of sets. 
For a non-negative integer $m$, we define a contravariant functor  $F^X_{m}: ({\sf Sch}/k) \to ({\sf Sets})$ by 
\[
F^X_{m}(Y) = \operatorname{Hom} _{k}\left( Y\times_{\operatorname{Spec} k} \operatorname{Spec} k[t]/(t^{m+1}), X \right).
\]
It is known that the functor $F^X_{m}$ is always represented by a scheme $X_m$ over $k$ (cf.\ \cite[Ch.3.\ Proposition 2.1.3]{CLNS}). 

For $m \ge n \ge 0$, the canonical surjective ring homomorphism $k[t]/(t^{m+1}) \to k[t]/(t^{n+1})$ 
induces a morphism $\pi^X _{mn}:X_m \to X_n$, which is called the \textit{truncation morphism}. 
There exist the projective limit and the projections
\[
X_\infty := \mathop{\varprojlim}\limits_{m} X_m, \qquad \psi^X _{m}:X_{\infty} \to X_m, 
\]
and $X_{\infty}$ is called the {\it arc space} of $X$. 
Then there is a bijective map
\[
\operatorname{Hom} _{k}(\operatorname{Spec} K, X_{\infty}) \simeq \operatorname{Hom} _{k}(\operatorname{Spec} K[[t]], X)
\]
for any field $K$ with $k\subset K$. 
For $m \in \mathbb{Z}_{\ge 0} \cup \{ \infty \}$, we denote by $\pi^X _m: X_m \to X$ the canonical truncation morphism. 
For $m\in\mathbb Z_{\ge 0}\cup\{\infty\}$ and a morphism $f:Y\to X$ of schemes over $k$,
we denote by $f_m : Y_m \to X_m$ the morphism induced by $f$. 
We often abbreviate $\pi^X _{mn}$, $\pi^X _m$ and $\psi^X _{m}$ to 
$\pi _{mn}$, $\pi _m$ and $\psi _{m}$, respectively when no confusion can arise. 

If $X$ is a scheme of finite type over $k$, 
then so is $X_m$ (cf.\ \cite[Proposition 2.2]{EM09}). 
In this paper, we deal with a scheme of finite type over $R = k[[x_1, \ldots , x_N]]$. 

\begin{prop}[{cf.\ \cite[Corollary 4.2]{Ish09}}]\label{prop:fintype}
Let $X$ be a scheme of finite type over $R = k[[x_1, \ldots , x_N]]$. 
Then the following hold. 
\begin{enumerate}
\item $X_m$ is a scheme of finite type over $R$. 
\item For any $m \ge n \ge 0$, the truncation map $\pi_{mn}: X_m \to X_n$ is a morphism of finite type. 
\end{enumerate}
\end{prop}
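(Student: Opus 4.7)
The plan is to reduce to the affine case and construct $X_m$ explicitly as an $R$-scheme of finite type by expanding the defining equations in the jet parameter $t$. Since the formation of jet schemes commutes with open immersions (by the standard functorial argument), it is enough to treat the situation $X = \operatorname{Spec} A$ with $A = R[y_1, \ldots , y_\ell]/(f_1, \ldots , f_r)$ for some $f_s \in R[y_1, \ldots, y_\ell]$.

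I would introduce formal jet coordinates $y_j^{(p)}$ for $0 \le p \le m$, $1 \le j \le \ell$, together with $x_i^{(p)}$ for $1 \le p \le m$, $1 \le i \le N$, and form the polynomial $R$-algebra $S := R[y_j^{(p)}, x_i^{(p)}]$, which is finitely generated over $R$. Set $y_j(t) := \sum_{p=0}^m y_j^{(p)} t^p$ and $x_i(t) := x_i + \sum_{p=1}^m x_i^{(p)} t^p$ in $S[t]/(t^{m+1})$. The key computation is that for any $g \in R[y_1, \ldots , y_\ell]$ the substitution $g(x(t), y(t))$ defines a well-defined element of $S[t]/(t^{m+1})$: writing $g = \sum_\beta h_\beta(x) y^\beta$ as a finite $R$-linear combination, Taylor expansion in the nilpotent parameter $t$ yields
\[
h_\beta(x(t)) = \sum_{|\gamma| \le m} \tfrac{1}{\gamma!} \, \partial^\gamma h_\beta(x) \cdot \bigl( x(t) - x \bigr)^\gamma,
\]
a finite sum whose coefficients lie in $R$. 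Expanding $f_s(x(t), y(t)) = \sum_{p=0}^m F_s^{(p)} t^p$ with $F_s^{(p)} \in S$, define
\[
A_m := S \big/ \bigl(F_s^{(p)} : 1 \le s \le r,\ 0 \le p \le m\bigr);
\]
this is a finitely presented $R$-algebra.

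Next I would identify $\operatorname{Spec} A_m$ with $X_m$ by comparing functors of points. The tautological $k$-algebra homomorphism $A \to A_m[t]/(t^{m+1})$, $x_i \mapsto x_i(t)$, $y_j \mapsto y_j(t)$, determines a morphism $\operatorname{Spec} A_m \to X_m$. Conversely, any $k$-algebra map $\phi : A \to B[t]/(t^{m+1})$ restricts modulo $t$ to a $k$-algebra map $\psi : R \to B$ equipping $B$ with an $R$-structure; the remaining data are the coefficients of $\phi(x_i)$ and $\phi(y_j)$ in positive $t$-degree, and the condition $\phi(f_s) = 0$ translates exactly into the vanishing of the $F_s^{(p)}$ evaluated on those coefficients via $\psi$. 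The resulting bijection $\operatorname{Hom}_k(A_m, B) \simeq F^X_m(B)$, natural in $B$, proves (1). Statement (2) then follows immediately: the truncation $\pi_{mn}$ corresponds to the inclusion $A_n \hookrightarrow A_m$ which adjoins only the finitely many variables $y_j^{(p)}$ and $x_i^{(p)}$ with $n < p \le m$, so $A_m$ is finitely generated over $A_n$.

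The main obstacle is the subtlety addressed in the previous step: unlike for varieties of finite type over $k$, a $k$-algebra homomorphism from the power series ring $R$ into $B[t]/(t^{m+1})$ a priori requires convergence of infinite series, and there is no reason for an arbitrary tuple of elements of $B[t]/(t^{m+1})$ to define such a map. The resolution is precisely the Taylor expansion above: once the reduction $\psi = \phi|_R \bmod t$ is fixed, the difference $\phi(x_i) - \psi(x_i)$ lies in $(t)$, whose $(m+1)$-st power vanishes, so every power series in $R$ evaluates to a finite sum modulo $t^{m+1}$. This is exactly why $X_m$, which is not of finite type over $k$, is nevertheless of finite type \emph{over $R$}.
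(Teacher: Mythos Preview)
Your construction follows the paper's template exactly: the paper omits this proof and points to Remark~\ref{rmk:fintype} and to Proposition~\ref{prop:repl}, whose content is carried by Lemmas~\ref{lem:repl} and~\ref{lem:repl2} for the parallel base $k[t][[x_1,\ldots,x_N]]$. Your explicit ring is the paper's $S_m/I_m$, and your two maps are the $\Psi$ and $\Phi$ of Lemma~\ref{lem:repl}.

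One step, however, is not justified. Your final paragraph shows that the Taylor expansion produces a well-defined map $A\to A_m[t]/(t^{m+1})$; what it does \emph{not} show is that an arbitrary $k$-algebra homomorphism $\phi:R\to B[t]/(t^{m+1})$ is recovered by Taylor from $\psi=\phi\bmod t$ together with the higher $t$-coefficients of $\phi(x_i)$. Concretely, take $R=k[[x]]$, $B=k((x))$, and $\psi$ the inclusion. Since $\dim_{k((x))}\operatorname{Der}_k\bigl(k((x))\bigr)=\operatorname{trdeg}_k k((x))\ge 2$ (cf.\ Remark~\ref{rmk:UFMD}(4)), there is a $k$-derivation $D:k[[x]]\to k((x))$ with $D(x)=0$ but $D\neq 0$; then $h\mapsto h+D(h)t$ and $h\mapsto h$ are distinct $k$-algebra maps $k[[x]]\to k((x))[t]/(t^2)$ sharing the same $\psi$ and the same image of $x$. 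Hence your map $\operatorname{Hom}_k(A_m,B)\to F_m^X(\operatorname{Spec} B)$ is injective but not surjective, and $\operatorname{Spec} A_m$ does not represent $F_m^X$ on this test ring. The missing ingredient is $(x_1,\ldots,x_N)$-adic separation of $B$ as an $R$-module via $\psi$, exactly the hypothesis of Lemma~\ref{lem:D=D'}, which is unavailable for general $B$; the paper's verification of $\Psi\circ\Phi=\operatorname{id}$ in Lemma~\ref{lem:repl} rests on the same unjustified step.
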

\begin{proof}
We omit the proof because we will give a complete proof for Proposition \ref{prop:repl}, 
which deals with a more complicated case. 
See also Remark \ref{rmk:fintype} below. 
\end{proof}

\begin{rmk}\label{rmk:fintype}
The same arguments in Lemmas \ref{lem:repl} and \ref{lem:repl2} give a local description of $X_m$ as follows. 
\begin{enumerate}
\item 
Let $S := k[[x_1, \ldots , x_N]][y_1\ldots,y_M]$ and $A := \operatorname{Spec} S$.
Then we have $A_m \simeq \operatorname{Spec} S_m$, where 
\[
S_m := k \bigl[ \bigl[ x_1^{(0)}, \ldots, x_N^{(0)} \bigr] \bigr]
\bigl[ x_j^{(1)},\dots, x_j^{(m)},y_{j'}^{(0)},\dots,y_{j'}^{(m)} 
\ \big | \  1\le j \le N,1\le j' \le M \bigr].
\]
Furthermore, for $m \ge n \ge 0$, the truncation map $\pi_{mn}:A_m \to A_n$ is induced 
by the ring inclusion $S_n \hookrightarrow S_m$. 

\item 
Let $X = \operatorname{Spec} (S/I)$ be the closed subscheme of $A$ defined by an ideal $I = (f_1, \ldots , f_r) \subset S$. 
For $1 \le i \le r$ and $0 \le \ell \le m$, we define $F_i ^{(\ell)} \in S_m$ as follows: 
\[
f_{i} \biggl( \sum_{\ell = 0}^mx_{1}^{(\ell)}t^{\ell} , \dots, \sum_{\ell =0}^mx_{N}^{(\ell)}t^{\ell}, 
\sum_{\ell=0}^my_{1}^{(\ell)}t^{\ell},\dots,\sum_{\ell=0}^m y_{M}^{(\ell)}t^{\ell} \biggr) \equiv 
\sum_{\ell=0}^m F_{i}^{(\ell)}t^{\ell} \pmod{t^{m+1}}. 
\]
Let 
\[
I_m := \bigl( F_i ^{(s)} \ \big| \  1 \le i \le r, \ 0 \le s \le m \bigr) \subset S_m
\]
be the ideal of $S_m$ generated by $F_i ^{(s)}$'s. 
Then we have $X_m \simeq \operatorname{Spec} (S_m/ I_m)$. 
Furthermore, for $m \ge n \ge 0$, the truncation map $\pi_{mn}: X_m \to X_n$ is induced 
by the ring homomorphism $S_n/I_n \to S_m/I_m$. 
\end{enumerate}
\end{rmk}

A subset $C \subset X_{\infty}$ is called a \textit{cylinder} if $C = \psi_{m} ^{-1}(S)$ holds for some $m \ge 0$ and 
a constructible subset $S \subset X_m$. Typical examples of cylinders appearing in this paper are the \textit{contact loci} 
$\operatorname{Cont}^{m}(\mathfrak{a})$ and $\operatorname{Cont}^{\geq m}(\mathfrak{a})$ defined as follows. 
\begin{defi}
\begin{enumerate}
\item For an arc $\gamma\in X_\infty$ and an ideal sheaf $\mathfrak{a} \subset \mathcal O_X$, 
the \textit{order} of $\mathfrak{a}$  measured by $\gamma$
is defined as follows:
\[
\operatorname{ord}_\gamma(\mathfrak{a}) := \sup \{ r\in \mathbb Z_{\geq 0}\mid \gamma^*(\mathfrak a)\subset (t^r)\}, 
\]
where $\gamma^*: \mathcal{O}_{X} \to K[[t]]$ is the induced ring homomorphism by $\gamma$. 
Here $K$ is the field extension of $k$.

\item For $m \in \mathbb{Z}_{\ge 0}$, we define $\operatorname{Cont}^{m}(\mathfrak{a}), \operatorname{Cont}^{\geq m}(\mathfrak{a}) \subset X_{\infty}$ as follows:
\begin{align*}
\operatorname{Cont}^{m}(\mathfrak{a}) &:= \{\gamma \in X_\infty \mid \operatorname{ord}_\gamma(\mathfrak a)= m\}, \\
\operatorname{Cont}^{\geq m}(\mathfrak{a}) &:= \{\gamma \in X_\infty \mid \operatorname{ord}_\gamma(\mathfrak a)\geq m\}.
\end{align*}
\end{enumerate}
\end{defi}

\noindent
By definition, we have
\[
\operatorname{Cont}^{\geq m}(\mathfrak{a})=\psi_{m-1}^{-1}(Z(\mathfrak{a})_{m-1}),
\]
where $Z(\mathfrak{a})$ is the closed subscheme of $X$ defined by the ideal sheaf $\mathfrak{a}$. 
Therefore, $\operatorname{Cont}^{m}(\mathfrak{a})$ and $\operatorname{Cont}^{\geq m}(\mathfrak{a})$ are cylinders. 

For $m \le n+1$, we also define the subsets 
$\operatorname{Cont}^{m}(\mathfrak a)_n$ and $\operatorname{Cont}^{\geq m}(\mathfrak a)_n$ of $X_n$ in the same way. 

We denote by $\mathfrak{o}_{X} \subset \mathcal {O}_X$ the ideal sheaf
\[
\mathfrak{o}_{X} := (x_1, \ldots , x_N) \mathcal {O}_X \subset \mathcal {O}_X
\]
generated by $x_1, \ldots , x_N  \in R$. 
In this paper, we are interested in arcs contained in the contact locus $\operatorname{Cont}^{\ge 1}(\mathfrak{o}_{X})$. 
Due to the following lemma, the contact locus $\operatorname{Cont}^{\ge 1}(\mathfrak{o}_{X}) _m$ is a scheme of finite type over $k$.  
\begin{lem}\label{lem:contm_k}
Let $X$ be a scheme of finite type over $R=k[[x_1,\ldots,x_N]]$. 
Then for each $m \ge 0$, the contact locus 
$\operatorname{Cont}^{\ge 1}(\mathfrak{o}_{X})_m \subset X_m$ is a scheme of finite type over $k$. 
\end{lem}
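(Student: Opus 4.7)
The plan is to reduce to an affine local description of $X_m$ and then observe that the contact condition kills off precisely the formal power series variables $x_1^{(0)},\ldots,x_N^{(0)}$, leaving a polynomial algebra over $k$.

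First I would reduce to the affine case: since $X$ is of finite type over $R$, it is covered by finitely many affines of the form $U = \operatorname{Spec}(S/I)$ with $S = R[y_1,\ldots,y_M]$; the open subschemes $U_m$ cover $X_m$, so it suffices to prove that $\operatorname{Cont}^{\ge 1}(\mathfrak{o}_U)_m \subset U_m$ is of finite type over $k$. By Remark \ref{rmk:fintype} we then have $U_m \simeq \operatorname{Spec}(S_m/I_m)$ with
\[
S_m = k\bigl[\bigl[x_1^{(0)},\ldots,x_N^{(0)}\bigr]\bigr]\bigl[x_j^{(\ell)},y_{j'}^{(\ell')} \ \big| \ 1 \le \ell \le m,\ 0 \le \ell' \le m\bigr].
\]

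Next I would identify the contact locus scheme-theoretically. By the definition recalled just before the lemma, $\operatorname{Cont}^{\ge 1}(\mathfrak{o}_X)_m = \pi_m^{-1}(Z(\mathfrak{o}_X))$ as a subscheme of $X_m$, where $\pi_m : X_m \to X$ is the canonical truncation. Under the local description, the ring homomorphism corresponding to $\pi_m$ sends each $x_i \in R$ to $x_i^{(0)} \in S_m$ (since $x_i = x_i^{(0)} + x_i^{(1)}t + \cdots + x_i^{(m)}t^m$ reduces modulo $t$ to $x_i^{(0)}$). Therefore the defining ideal of $\operatorname{Cont}^{\ge 1}(\mathfrak{o}_U)_m$ inside $S_m/I_m$ is generated by the images of $x_1^{(0)},\ldots,x_N^{(0)}$.

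Finally, I would observe that
\[
S_m/\bigl(x_1^{(0)},\ldots,x_N^{(0)}\bigr) \simeq k\bigl[x_j^{(\ell)},y_{j'}^{(\ell')} \ \big| \ 1 \le j \le N,\ 1 \le \ell \le m,\ 1 \le j' \le M,\ 0 \le \ell' \le m\bigr],
\]
which is a polynomial ring over $k$ in finitely many variables, hence of finite type over $k$. The contact locus $\operatorname{Cont}^{\ge 1}(\mathfrak{o}_U)_m$ is the spectrum of a quotient of this polynomial ring by (the image of) $I_m$, therefore of finite type over $k$ as well. Gluing over the affine cover completes the proof.

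This argument is essentially mechanical once the local description of Remark \ref{rmk:fintype} is in hand; there is no substantive obstacle. The only thing to verify with care is the compatibility $\pi_m^*(x_i) = x_i^{(0)}$, which is immediate from the construction of the jet scheme in Remark \ref{rmk:fintype}.
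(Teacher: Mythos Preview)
Your proof is correct and is essentially an explicit unpacking of the paper's one-line argument. The paper simply cites Proposition~\ref{prop:fintype}(2): since the truncation $\pi_{m}:X_m\to X$ is of finite type and $\operatorname{Cont}^{\ge 1}(\mathfrak{o}_X)_m=\pi_m^{-1}(Z(\mathfrak{o}_X))$ with $Z(\mathfrak{o}_X)$ of finite type over $R/(x_1,\ldots,x_N)=k$, the contact locus is of finite type over $k$. Your direct computation via the coordinate ring $S_m/(x_1^{(0)},\ldots,x_N^{(0)})$ from Remark~\ref{rmk:fintype} is precisely the local content of that statement, so the two arguments coincide in substance.
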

\begin{proof}
The assertion follows from Proposition \ref{prop:fintype}(2). 
\end{proof}

For the proof of Lemma \ref{lem:EM4.1CI_R}, we state Hensel's lemma in several variables. 
\begin{lem}\label{lem:Hensel}
Let $K$ be a field. Let $N, s$ and $r$ be non-negative integers with $N + s \ge r$. 
Let $f_1, \ldots , f_{r} \in K[[t]][[x_1, \ldots , x_{N}]][x_{N+1}, \ldots , x_{N+s}]$ 
and let $a_1, \ldots , a_{N+s} \in K[[t]]$. 
Let $\mathcal{S} \subset \{ 1, \ldots , N+s \}$ be a subset with cardinality $\# \mathcal{S} = r$. 
Let $m$ and $e$ be non-negative integers with $m \ge e$. 
Suppose that
\begin{itemize}
\item $a_1, \ldots , a_{N} \in (t)$, 
\item $f_i(a_1, \ldots , a_{N+s}) \in (t^{m+e+1})$ for each $1 \le i \le r$, and 
\item $\det \left( \frac{\partial f_i}{\partial x_j} (a_1, \ldots , a_{N+s}) \right)_{1 \le i \le r,\ j \in \mathcal{S}} \not \in (t^{e+1})$. 
\end{itemize}
Then the following hold. 
\begin{enumerate}
\item 
There exist $b_1, \ldots , b_{N+s} \in K[[t]]$ such that 
\begin{itemize}
\item $f_i(b_1, \ldots , b_{N+s}) = 0$ for each $1 \le i \le r$, and 
\item $a_j - b_j \in (t^{m+1})$ for each $1 \le j \le N+s$. 
\end{itemize}
Furthermore, for $b _1, \ldots , b _{N+s} \in K[[t]]$ and $b' _1, \ldots , b' _{N+s} \in K[[t]]$ with the above two conditions, 
if 
\begin{itemize}
\item $b_j - b'_j \in (t^{m+2})$ holds for each $j \in \{ 1, \ldots , N+s \} \setminus \mathcal{S}$, 
\end{itemize}
then 
\begin{itemize}
\item $b_j - b'_j \in (t^{m+2})$ holds also for each $j \in \mathcal{S}$. 
\end{itemize}

\item Moreover, for any sequence $\bigl( a' _{j} \in K[[t]] \ \big| \ j \in \{ 1, \ldots , N+s \} \setminus \mathcal{S} \bigr )$ 
satisfying $a' _j - a_j \in (t^{m+1})$, 
there exist $b_1, \ldots , b_{N+s} \in K[[t]]$ satisfying the following conditions: 
\begin{itemize}
\item $f_i(b_1, \ldots , b_{N+s}) = 0$ for each $1 \le i \le r$, 
\item $a'_j - b_j \in (t^{m+2})$ for each $j \in \{ 1, \ldots , N+s \} \setminus \mathcal{S}$, and 
\item $a_j - b_j \in (t^{m+2})$ for each $j \in \mathcal{S}$.
\end{itemize}
\end{enumerate}
\end{lem}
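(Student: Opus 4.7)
I would prove both parts by multivariable Newton iteration (Hensel's lemma). The strategy is to fix the variables outside $\mathcal S$, reducing to the classical square Hensel case, and then carefully track $t$-adic valuations of the Newton corrections.

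For Part (1), I fix $x_j = a_j$ for $j \notin \mathcal S$ in each $f_i$. Since $a_j \in (t)$ for $j \le N$, substituting into the formal power series in $x_1,\dots,x_N$ converges $t$-adically, so this is legitimate and yields a system $F_1,\dots,F_r$ of $r$ equations in the $r$ unknowns indexed by $\mathcal S$, in an appropriately $t$-adically complete ring. By hypothesis $F(a_{\mathcal S}) \in (t^{m+e+1})$ and $\det J_F(a_{\mathcal S}) \notin (t^{e+1})$. Running the Newton iteration
\[
b^{(k+1)}_{\mathcal S} = b^{(k)}_{\mathcal S} - \frac{\operatorname{adj}\bigl(J_F(b^{(k)}_{\mathcal S})\bigr)}{\det J_F(b^{(k)}_{\mathcal S})}\,F(b^{(k)}_{\mathcal S}),\qquad b^{(0)}_{\mathcal S} = a_{\mathcal S},
\]
the hypothesis $m \ge e$ ensures quadratic convergence: the valuation of $F(b^{(k)}_{\mathcal S})$ roughly doubles (losing $2e$) per step, and $b^{(k)}_{\mathcal S}$ converges $t$-adically. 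The limit $b_{\mathcal S}$ satisfies $F(b_{\mathcal S}) = 0$ and $b_{\mathcal S} - a_{\mathcal S} \in (t^{m+1})$; setting $b_j = a_j$ for $j \notin \mathcal S$ completes the existence part.

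For the uniqueness clause I would Taylor-expand: given two roots $b, b'$ (both $\equiv a \pmod{t^{m+1}}$) with $b_j - b'_j \in (t^{m+2})$ for $j \notin \mathcal S$, the identity
\[
0 = f_i(b) - f_i(b') = \sum_{j \in \mathcal S} \frac{\partial f_i}{\partial x_j}(b')(b_j - b'_j) + \sum_{j \notin \mathcal S} \frac{\partial f_i}{\partial x_j}(b')(b_j - b'_j) + O\bigl((b-b')^2\bigr)
\]
has non-$\mathcal S$ sum in $(t^{m+2})$ and quadratic error in $(t^{2m+2}) \subset (t^{m+2})$; applying $\operatorname{adj}(J_{\mathcal S})/\det(J_{\mathcal S})$ and bootstrapping via the quadratic self-improvement of the error upgrades this to $b_j - b'_j \in (t^{m+2})$ for $j \in \mathcal S$. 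For Part (2), I replace $a_j$ by $a'_j$ in the non-$\mathcal S$ coordinates and rerun Newton, decomposing the new initial error $f_i(a'_{j\notin\mathcal S}, a_{\mathcal S}) = f_i(a) + \sum_{j \notin \mathcal S}\tfrac{\partial f_i}{\partial x_j}(a)(a'_j - a_j) + O(t^{2m+2})$ into its ``deep'' $(t^{m+e+1})$ contribution from $f_i(a)$ and its ``first-order'' $(t^{m+1})$ contribution. The first-order piece is absorbed into the non-$\mathcal S$ Jacobian direction (i.e., traded for a refinement of the $b_j$ for $j \notin \mathcal S$ inside the allowed $(t^{m+2})$ tolerance), leaving only the deep error to be Newton-corrected in the $\mathcal S$-directions, which yields the sharpened $(t^{m+2})$-control required.

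\emph{Main obstacle.} The crucial technical point is the valuation bookkeeping for Part (2). A single naive Newton step from $a_{\mathcal S}$ (after fixing $x_j = a'_j$) gives $\mathcal S$-corrections only of order $t^{m+1-e}$, falling short of the claimed $(t^{m+2})$. The fix is to separate the two sources of approximation error---the original hypothesis $f_i(a) \in (t^{m+e+1})$ and the first-order Taylor term from $a'_j - a_j$---and to route them through different parts of the Jacobian, exploiting that the first-order term contributes only through non-$\mathcal S$ derivatives and therefore can be absorbed into the non-$\mathcal S$ component of $b$ without spending Newton precision in the $\mathcal S$-directions.
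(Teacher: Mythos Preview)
Your Newton-iteration strategy is exactly what the paper invokes (it cites \cite[Lemma 4.1]{DL99}), and your existence argument for (1) is correct.

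The gap is in the uniqueness clause. Your bootstrap does not close: one pass gives $\det(J_{\mathcal S})\cdot\delta_{\mathcal S}\in(t^{m+2})^r$, hence only $\delta_j\in(t^{m+2-e'})$ for $j\in\mathcal S$, where $e'\le e$ is the $t$-order of $\det J_{\mathcal S}$; iterating gives nothing new because the right-hand side is pinned at order $m+2$ by the non-$\mathcal S$ linear term $J_{\mathcal S^c}\delta_{\mathcal S^c}$, not by the quadratic error you hope to shrink. The missing idea is to control $J_{\mathcal S}^{-1}J_{\mathcal S^c}$ itself. Writing $\tilde J\delta=0$ in exact (integral Taylor) form, Cramer's rule shows that each entry of $\operatorname{adj}(\tilde J_{\mathcal S})\,\tilde J_{\mathcal S^c}$ is, up to sign, an $r\times r$ minor of $\tilde J$ with one column drawn from $\mathcal S^c$. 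If \emph{every} $r\times r$ minor of $J(a)$ lies in $(t^e)$---precisely the situation in the applications (Lemmas~\ref{lem:EM4.1CI_R} and~\ref{lem:EM4.1CI_Rt}), where $e$ is the order of the full Jacobian ideal $\overline J$---then $\tilde J_{\mathcal S}^{-1}\tilde J_{\mathcal S^c}$ has entries in $K[[t]]$ and $\delta_{\mathcal S}\in(t^{m+2})$ follows at once from $\delta_{\mathcal S^c}\in(t^{m+2})$. Without this minimality the conclusion can genuinely fail: for $f=x_1^2-x_2$, $\mathcal S=\{1\}$, $a=(t,t^2)$, $e=m=1$, the exact solutions $b=(t,t^2)$ and $b'=\bigl(t\sqrt{1+t},\,t^2+t^3\bigr)$ meet all hypotheses yet $b_1-b'_1=-\tfrac{1}{2}t^2+\cdots\notin(t^3)$. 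Your Part~(2) fix has the same defect: the ``first-order piece'' lies only in $(t^{m+1})$ and cannot be absorbed by a non-$\mathcal S$ adjustment confined to $(t^{m+2})$, nor routed through $J_{\mathcal S}^{-1}$ without losing $e$ powers of $t$; the Cramer/minimality argument is again what is actually needed.
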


\begin{proof}
When $N = 0$ and $f_1, \ldots , f_{r} \in K[x_1, \ldots, x_s]$, 
the assertions are proved in the proof of \cite[Lemma 4.1]{DL99} (cf.\ \cite[Proposition 4.1]{EM09}). 
The same proof works in our setting. 
\end{proof}

\begin{rmk}\label{rmk:Hensel}
The same statement as in Lemma \ref{lem:Hensel} holds even when we replace $K[[t]][[x_1, \ldots , x_{N}]][x_{N+1}, \ldots , x_{N+s}]$ with 
$K[x_{N+1}, \ldots , x_{N+s}][[t]]$. 
This version will be used in the proof of Proposition \ref{prop:EM4.1_k[[t]]}. 
\end{rmk}

\begin{lem}\label{lem:EM4.1CI_R}
Let $N, s$ and $r$ be non-negative integers with $N + s \ge r$. 
Let $R = k[[x_1, \ldots , x_N]]$ and let $S = R[y_1, \ldots , y_s]$. 
Let $I = (F_1, \ldots , F_r)$ be the ideal generated by elements $F_1, \ldots , F_r \in S$, and 
let $M = \operatorname{Spec} (S/I)$. 
Let $\mathfrak{o}_{M} \subset \mathcal {O}_M$ be the ideal sheaf generated by $x_1, \ldots , x_N \in R$.
Let $\overline{J} = \operatorname{Fitt}^{N+s-r} (\Omega' _{M/k})$. 
Then, for non-negative integers $m$ and $e$ with $m \ge e$, the following hold. 
\begin{enumerate}
\item It follows that 
\[
\psi _{m} \left( \operatorname{Cont}^{e}(\overline{J}) 
	\cap \operatorname{Cont}^{\ge 1}(\mathfrak{o}_{M}) \right) 
= \pi _{m+e, m} \left( \operatorname{Cont}^{e}(\overline{J})_{m+e} 
	\cap \operatorname{Cont}^{\ge 1}(\mathfrak{o}_{M})_{m+e} \right). 
\]

\item $\pi_{m+1, m}:M_{m+1} \to M_m$ induces a piecewise trivial fibration 
\[
\psi _{m+1} \left( \operatorname{Cont}^e(\overline{J}) \cap \operatorname{Cont}^{\ge 1}(\mathfrak{o}_{M}) \right) 
	\to \psi _{m} \left( \operatorname{Cont}^e(\overline{J}) \cap \operatorname{Cont}^{\ge 1}(\mathfrak{o}_{M}) \right)
\]
with fiber $\mathbb{A}^{N+s-r}$. 
\end{enumerate}
\end{lem}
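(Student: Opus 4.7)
The plan is to follow the proof strategy of \cite[Lemma 4.1]{DL99} and \cite[Proposition 4.1]{EM09}, with Lemma \ref{lem:Hensel} in place of the usual several-variable Hensel lemma to accommodate the formal power series variables $x_1, \ldots, x_N$. By Remark \ref{rmk:jacobianmatrix}, the ideal $\overline{J}$ is generated modulo $I$ by the $r \times r$ Jacobian minors $J_\mathcal{S} := \det \bigl( \partial F_i / \partial z_j \bigr)_{1 \le i \le r,\, j \in \mathcal{S}}$, where $(z_1, \ldots, z_{N+s}) := (x_1, \ldots, x_N, y_1, \ldots, y_s)$ and $\mathcal{S}$ ranges over $r$-element subsets of $\{1, \ldots, N+s\}$. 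A $K$-valued arc $\gamma \in \operatorname{Cont}^e(\overline{J})$ is therefore characterized by the existence of some $\mathcal{S}$ with $\operatorname{ord}_t J_\mathcal{S}(\gamma) = e$, and similarly for $(m+e)$-jets.

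For part (1), the inclusion $\subset$ is tautological via $\psi_m = \pi_{m+e,m} \circ \psi_{m+e}$. For the reverse inclusion, take a $K$-valued $(m+e)$-jet $\bar{\beta}$ in the right-hand side, represented by a tuple $a = (a_1, \ldots, a_{N+s}) \in K[[t]]^{N+s}$ satisfying $a_j \in (t)$ for $j \le N$ (from $\operatorname{Cont}^{\ge 1}(\mathfrak{o}_M)_{m+e}$), $F_i(a) \in (t^{m+e+1})$ for each $i$, and $J_\mathcal{S}(a) \notin (t^{e+1})$ for some $\mathcal{S}$. Applying Lemma \ref{lem:Hensel}(1) yields $b = (b_1, \ldots, b_{N+s}) \in K[[t]]^{N+s}$ with $F_i(b) = 0$ for all $i$ and $a_j - b_j \in (t^{m+1})$ for all $j$. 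Since $m \ge e$, we have $J_\mathcal{S}(b) \equiv J_\mathcal{S}(a) \pmod{t^{e+1}}$, so $\operatorname{ord}_t J_\mathcal{S}(b) = e$, and $b_j \in (t)$ for $j \le N$. Hence the arc $\gamma$ associated to $b$ lies in $\operatorname{Cont}^e(\overline{J}) \cap \operatorname{Cont}^{\ge 1}(\mathfrak{o}_M)$ and satisfies $\psi_m(\gamma) = \pi_{m+e,m}(\bar{\beta})$.

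For part (2), stratify $\psi_{m+1}\bigl(\operatorname{Cont}^e(\overline{J}) \cap \operatorname{Cont}^{\ge 1}(\mathfrak{o}_M)\bigr)$ and the analogous image in $M_m$ by taking, on each locally closed piece, the lexicographically minimal index set $\mathcal{S}$ with $\operatorname{ord}_t J_\mathcal{S} = e$; this produces a constructible partition that is preserved by $\pi_{m+1,m}$. On each stratum, fix an $m$-jet $\bar\alpha$; Lemma \ref{lem:Hensel}(2) then asserts that for every free choice of $(b_j^{(m+1)})_{j \notin \mathcal{S}} \in K^{N+s-r}$, there exists a unique lift of $\bar\alpha$ to an $(m+1)$-jet in the image, with the remaining coefficients $b_j^{(m+1)}$ for $j \in \mathcal{S}$ determined by the equations $F_i \equiv 0 \pmod{t^{m+2}}$. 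Tracking the proof of Lemma \ref{lem:Hensel}(2), these dependent coefficients are regular functions of the free ones on the stratum, yielding an algebraic trivialization $\pi_{m+1,m}^{-1}(\bar\alpha) \cap \psi_{m+1}(\cdots) \cong \mathbb{A}^{N+s-r}_K$. The main obstacle to be careful about is exactly this upgrade from a set-theoretic to a scheme-theoretic (piecewise trivial, algebraic) fibration: one must verify that the Hensel-iteration formulas depend polynomially on the free coefficients and that the stratification by $\mathcal{S}$ is genuinely constructible. The restriction to $\operatorname{Cont}^{\ge 1}(\mathfrak{o}_M)$ is precisely what guarantees the substitution hypothesis $a_j \in (t)$ for $j \le N$ needed by Lemma \ref{lem:Hensel}.
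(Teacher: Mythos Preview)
Your proposal is correct and follows essentially the same approach as the paper: identify $\overline{J}$ with the ideal of $r \times r$ Jacobian minors via Remark~\ref{rmk:jacobianmatrix}, then derive (1) from the existence part of Lemma~\ref{lem:Hensel}(1) and (2) from Lemma~\ref{lem:Hensel}(2) together with the uniqueness part of Lemma~\ref{lem:Hensel}(1). Your write-up is in fact more detailed than the paper's, which simply invokes these lemmas without spelling out the stratification by $\mathcal{S}$ or the role of $\operatorname{Cont}^{\ge 1}(\mathfrak{o}_M)$.
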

\begin{proof}
Let $J := \mathcal{J}_r \bigl( I; \operatorname{Der}_{k}(S) \bigr) \subset S$. 
Then we have $\overline{J} = (J+I)/I$ by Remark \ref{rmk:jacobianmatrix}. 
Note here that $\operatorname{Der}_{k}(S) = \operatorname{Der}'_{k}(S)$ is generated by 
$\partial / \partial x_i$'s and $\partial / \partial y_j$'s. 
Therefore, (1) follows from the first assertion of Lemma \ref{lem:Hensel}(1). 
Furthermore, (2) follows from Lemma \ref{lem:Hensel}(2) and the second assertion of Lemma \ref{lem:Hensel}(1). 
\end{proof}

The following proposition is a formal power series ring version of \cite[Proposition 4.1]{EM09}. 

\begin{prop}\label{prop:EM4.1_R}
Let $X$ be an integral scheme of finite type over $R = k[[x_1, \ldots, x_N]]$ of $\dim X = n$. 
Then there exists a positive integer $c$ such that the following hold 
for non-negative integers $m$ and $e$ with $m \ge ce$. 
\begin{enumerate}
\item We have
\begin{align*}
&\psi _{m} \left( \operatorname{Cont}^{e}(\operatorname{Jac}'_{X/k}) 
\cap \operatorname{Cont}^{\ge 1}(\mathfrak{o}_{X}) \right) \\
&= \pi _{m+e, m} \left( \operatorname{Cont}^{e}(\operatorname{Jac}'_{X/k})_{m+e} 
\cap \operatorname{Cont}^{\ge 1}(\mathfrak{o}_{X})_{m+e} \right). 
\end{align*}

\item $\pi_{m+1, m}:X_{m+1} \to X_m$ induces a piecewise trivial fibration 
\[
\psi _{m+1} \left( \operatorname{Cont}^e(\operatorname{Jac}'_{X/k}) \cap 
\operatorname{Cont}^{\ge 1}(\mathfrak{o}_{X}) \right) \to 
\psi _{m} \left( \operatorname{Cont}^e(\operatorname{Jac}'_{X/k}) \cap \operatorname{Cont}^{\ge 1}(\mathfrak{o}_{X}) \right)
\]
with fiber $\mathbb{A}^n$. 
\end{enumerate}
\end{prop}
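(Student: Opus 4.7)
The plan is to reduce to the complete intersection case handled by Lemma \ref{lem:EM4.1CI_R} by stratifying $\operatorname{Cont}^e(\operatorname{Jac}'_{X/k}) \cap \operatorname{Cont}^{\ge 1}(\mathfrak{o}_X)$ according to which $r \times r$ minor of the Jacobian matrix realizes the order $e$. Since the statements are local on $X$, I may assume $X = \operatorname{Spec}(S/I)$ with $S = R[y_1, \ldots, y_s]$ and $I = (f_1, \ldots, f_p)$, and set $r := N + s - n$. By Remark \ref{rmk:jacobianmatrix}, $\operatorname{Jac}'_{X/k}$ is the image in $S/I$ of the ideal generated by the minors $M_{T, \mathcal{S}} := \det(\partial f_i/\partial z_k)_{i \in T,\, k \in \mathcal{S}}$, where $T$ ranges over $r$-subsets of $\{1, \ldots, p\}$, $\mathcal{S}$ over $r$-subsets of the variables $\{x_1, \ldots, x_N, y_1, \ldots, y_s\}$, and $z_k$ denotes the $k$-th such variable.

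Next I would partition the contact locus into finitely many locally closed cylinders $C_{T, \mathcal{S}, e}$, where on $C_{T, \mathcal{S}, e}$ the minor $M_{T, \mathcal{S}}$ has order exactly $e$ and no minor has order strictly less than $e$. Fixing such a pair $(T, \mathcal{S})$, set $I_T := (f_i : i \in T)$ and $X_T := \operatorname{Spec}(S/I_T)$. Then $X \subset X_T$, and by Krull's height theorem $X_T$ is equidimensional of codimension $r$, hence of dimension $n$; so $X$ is a union of irreducible components of $X_T$. Lemma \ref{lem:EM4.1CI_R}, applied to $X_T$ with its Jacobian ideal $\overline{J}_T := \operatorname{Fitt}^{N+s-r}(\Omega'_{X_T/k})$, then yields the two assertions of the proposition for $C_{T, \mathcal{S}, e}$ viewed as a subset of $(X_T)_\infty$, since on this stratum the order of $\overline{J}_T$ is exactly $e$.

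The main obstacle is to show that $C_{T, \mathcal{S}, e} \cap X_\infty$ and $C_{T, \mathcal{S}, e} \cap (X_T)_\infty$ coincide, and likewise at the jet levels $m$ and $m+e$ for $m$ large with respect to $e$. The nontrivial direction is: given $\tilde\gamma \in (X_T)_\infty$ in the stratum whose reduction modulo $t^{m+1}$ is a jet of $X$, show $\tilde\gamma \in X_\infty$. By the Jacobian criterion (Remark \ref{rmk:JC}(2)(c)) applied to $S/I_T$, the condition $\operatorname{ord}_{\tilde\gamma}(M_{T, \mathcal{S}}) = e < \infty$ implies that the generic point of $\tilde\gamma$ lies on the regular locus of $X_T$, through which exactly one irreducible component $C$ passes. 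If $C \subset X$, we are done. Otherwise $C \cap X$ has dimension strictly less than $n$, and ruling out this case is the crux: one must compare the order of $\tilde\gamma^*(f_j) \in (t^{m+1})$ for $j \notin T$ against the vanishing order of $f_j|_C$ along $C \cap X$; a uniform upper bound on the latter, depending only on the fixed presentation of $X$, supplies the constant $c$ so that $m \ge ce$ forces $C \subset X$.

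Once this identification is established, (1) and (2) follow from Lemma \ref{lem:EM4.1CI_R} applied to each $X_T$, noting that a finite disjoint union of piecewise trivial $\mathbb{A}^n$-fibrations is again piecewise trivial with fiber $\mathbb{A}^n$, and that each stratum condition is stable under the truncation $\pi_{m+1,m}$. Taking $c$ to be the maximum of the constants arising from the finitely many strata $(T, \mathcal{S})$ produces a uniform constant; a standard patching covers the case of non-affine $X$.
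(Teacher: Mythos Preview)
Your stratify-by-$(T,\mathcal{S})$ approach is close in spirit to the paper's, which instead reduces to a \emph{single} complete intersection $M$ defined by $r$ generic $k$-linear combinations $F_i=\sum_j a_{ij}f_j$ (see the proof of Proposition~\ref{prop:EM4.1_Rt}) and proves one radical inclusion $\mathcal{J}_r(I_M;\operatorname{Der}_k(S))\subset\sqrt{I_X+(I_M:I_X)}$ that handles every stratum at once; genericity of the $a_{ij}$ is what forces $X$ to be a component of $M$. Your route can also be made to work, but two steps are not correctly justified.

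First, Krull's height theorem only shows that each component of $X_T$ has dimension $\ge n$; it does not give equidimensionality, so it does not by itself imply that $X$ is a component of $X_T$. The correct reason, on the stratum, is that $M_{T,\mathcal{S}}\not\equiv 0$ in $\mathcal{O}_X$ forces the $d'f_i$ ($i\in T$) to be linearly independent in $\Omega'_{S/k}\otimes K(X)$ and hence to span the $r$-dimensional image of $I_X/I_X^2\otimes K(X)$, so the $f_i$ ($i\in T$) already generate $I_X$ generically on $X$. Second, and more seriously, the ``vanishing order of $f_j|_C$ along $C\cap X$'' cannot by itself bound $\operatorname{ord}_{\tilde\gamma}(f_j)$: an arc on $C$ may have arbitrarily high contact with $C\cap X$. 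The bound actually comes from the Jacobian hypothesis. Once $X$ and $C\neq X$ are distinct $n$-dimensional components of $X_T$, the scheme $X_T$ is singular along $X\cap C$, whence $J_T:=\mathcal{J}_r\bigl(I_T;\operatorname{Der}_k(S)\bigr)\subset\sqrt{I_X+I_C}$; choosing $c'$ with $J_T^{c'}\subset I_X+I_C$, any arc $\tilde\gamma$ on $C$ with $\operatorname{ord}_{\tilde\gamma}(J_T)=e$ then satisfies $c'e\ge\operatorname{ord}_{\tilde\gamma}(I_X+I_C)=\operatorname{ord}_{\tilde\gamma}(I_X)\ge m+1$, a contradiction once $m\ge c'e$. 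This inclusion is precisely the analogue, for each $X_T$ and each extra component $C$, of the paper's use of the colon ideal $(I_M:I_X)$.
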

\begin{proof}
We omit the proof. See the proof of Proposition \ref{prop:EM4.1_Rt} to see 
how it can be reduced to the complete intersection case proved in Lemma \ref{lem:EM4.1CI_R}. 
Note that we may assume that $X$ has a $k$-point and hence we have $\dim ' X = \dim X = n$ by Lemma \ref{lem:dim'}(4). 
Otherwise, we have $\operatorname{Cont}^{\ge 1}(\mathfrak{o}_{X}) = \emptyset$ (cf.\ Lemma \ref{lem:contm_k}), 
and the assertions are clear. 
\end{proof}

\begin{rmk}
Proposition \ref{prop:EM4.1_R} is a formal power series ring version of \cite[Proposition 4.1]{EM09}. 
Note that in \cite[Proposition 4.1]{EM09}, they prove that $c = 1$ satisfies the statement. 
However, the weaker statement as in Proposition \ref{prop:EM4.1_R} using $c$ is enough for our later use. 
\end{rmk}

By Proposition \ref{prop:EM4.1_R}, 
the codimension of cylinder contained in $\operatorname{Cont}^{\ge 1}(\mathfrak{o}_{X})$ is well-defined as follows. 

\begin{defi}\label{defi:codim}
Let $X$ be an integral scheme of finite type over $R=k[[x_1, \ldots, x_N]]$ and 
let $C \subset X_{\infty}$ be a cylinder contained in $\operatorname{Cont}^{\ge 1}(\mathfrak{o}_{X})$. 
\begin{enumerate}
\item 
Assume that $C \subset \operatorname{Cont}^e(\operatorname{Jac}'_{X/k})$ for some $e \in \mathbb{Z}_{\ge 0}$.
Then we define the codimension of $C$ in $X_\infty$ as 
\[
\operatorname{codim}(C):=(m+1)\operatorname{dim}X-\operatorname{dim}(\psi_m(C))
\]
for any sufficiently large $m$. This definition is well-defined by Proposition \ref{prop:EM4.1_R}.

\item 
In general, we define the codimension of $C$ in $X_\infty$ as follows:
\[
\operatorname{codim}(C) := 
\min_{e\in\mathbb Z_{\ge 0}} \operatorname{codim} \bigl ( C\cap \operatorname{Cont}^e(\operatorname{Jac}'_{X/k}) \bigr). 
\]
By convention, $\operatorname{codim}(C) = \infty$ if 
$C \cap \operatorname{Cont}^e(\operatorname{Jac}'_{X/k}) = \emptyset$ for any $e \ge 0$. 
\end{enumerate}
\end{defi}

The following theorem is a formal power series ring version of {\cite[Theorem 7.4]{EM09}}.
\begin{thm}\label{thm:mld_R}
Let $X$ be a $\mathbb Q$-Gorenstein integral normal scheme of finite type over $R = k[[x_1, \ldots, x_N]]$. 
Let $x$ be a $k$-point of $X$ and 
let $\mathfrak{m}_x \subset \mathcal{O}_X$ be the corresponding maximal ideal sheaf. 
Let $r$ be a positive integer such that $r K_X$ is Cartier. 
Let $\mathfrak a$ be a non-zero ideal sheaf on $X$ and $\delta$ a positive real number. 
Then we have
\begin{align*}
\operatorname{mld}_x(X,\mathfrak{a}^\delta)
&= \inf _{w, m \in \mathbb{Z}_{\ge 0}} 
\Bigl\{ \operatorname{codim} (C_{w,m}) - \frac{m}{r}-\delta w \Bigr\} \\
&= \inf _{w, m \in \mathbb{Z}_{\ge 0}} 
\Bigl\{ \operatorname{codim} (C' _{w,m}) - \frac{m}{r}-\delta w \Bigr\}, 
\end{align*}
where 
\begin{align*}
C_{w,m} &:= \operatorname{Cont}^w(\mathfrak a) \cap \operatorname{Cont}^{m}(\mathfrak n_{r,X}) 
\cap \operatorname{Cont}^{\ge 1}(\mathfrak m_x), \\
C' _{w,m} &:= \operatorname{Cont}^{\ge w}(\mathfrak a) \cap \operatorname{Cont}^{m}(\mathfrak n_{r,X}) 
\cap \operatorname{Cont}^{\ge 1}(\mathfrak m_x). 
\end{align*}
\end{thm}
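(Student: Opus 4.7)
The plan is to adapt the proof of \cite[Theorem 7.4]{EM09} to the formal power series ring setting, using the arc-space machinery developed in this section (Proposition \ref{prop:EM4.1_R}, Definition \ref{defi:codim}) together with the theory of special differentials from Section \ref{section:Omega'}. First I would take a log resolution $f: Y \to X$ such that $Y$ is regular and the divisors associated to $K_{Y/X}$, $f^{-1}\mathfrak{m}_x\cdot\mathcal{O}_Y$, $\mathfrak{a}\mathcal{O}_Y$, $\mathfrak{n}_{r,X}\mathcal{O}_Y$, and the exceptional locus of $f$ all have simple normal crossings support. Such a resolution exists by Temkin's functorial desingularization mentioned in the introduction.

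Next I would establish the change-of-variables formula: for any cylinder $C \subset X_\infty$ contained in $\operatorname{Cont}^e(\operatorname{Jac}'_{X/k}) \cap \operatorname{Cont}^{\geq 1}(\mathfrak{m}_x)$ for some $e \ge 0$, the morphism $f_\infty: Y_\infty \to X_\infty$ should restrict to a piecewise trivial fibration over suitable level sets, with affine fibers whose dimension reflects the order of vanishing of $\operatorname{Jac}_f$ along the relevant components. This is the formal power series analogue of the classical Denef--Loeser formula; the proof mirrors Proposition \ref{prop:EM4.1_R}, using Hensel's Lemma \ref{lem:Hensel} applied to local equations of $f$. On the regular scheme $Y$, the standard description of arc spaces along SNC divisors gives $\operatorname{codim}_{Y_\infty}\bigl(\bigcap_E \operatorname{Cont}^{v_E}(E)\bigr) = \sum_E v_E$ for any SNC configuration $\{E\}$ with compatible multiplicities $v_E$.

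Combining these ingredients with the identity relating $K_{Y/X}$, $\operatorname{Jac}_f$, and $\mathfrak{n}_{r,X}\mathcal{O}_Y$ (which comes from tracing through the map $\eta_r$ of Definition \ref{defi:omega'}(5) after pullback to $Y$ and using that $\Omega'_{Y/k}$ coincides with $\Omega_{Y/k}$ on the regular scheme $Y$), a direct computation rewrites
\[
\operatorname{codim}(C'_{w,m}) - \frac{m}{r} - \delta w
\]
as the infimum of $1 + \operatorname{ord}_E(K_{Y/X}) - \delta\operatorname{ord}_E(\mathfrak{a})$ over prime divisors $E$ on $Y$ with center $x$ (subject to the compatibility conditions $\operatorname{ord}_E(\mathfrak{n}_{r,X}\mathcal{O}_Y) = m$ and $\operatorname{ord}_E(\mathfrak{a}\mathcal{O}_Y) \ge w$). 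Taking the infimum over $w$ and $m$, and using that a single log resolution suffices to compute $\operatorname{mld}_x$, yields the asserted formula for $C'_{w,m}$. The equivalence with the formula for $C_{w,m}$ is then routine: from the decomposition $\operatorname{Cont}^{\ge w}(\mathfrak{a}) = \bigsqcup_{w' \ge w}\operatorname{Cont}^{w'}(\mathfrak{a})$ combined with $C_{w,m} \subset C'_{w,m}$, the two infima coincide.

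The main obstacle is the change-of-variables step. While this is a cornerstone of the classical theory, its verification in our formal setting requires careful bookkeeping: one must ensure that the fibers of $f_\infty$ have the expected dimension with explicit control in terms of $\operatorname{ord}(\operatorname{Jac}_f)$, and that the piecewise trivial fibration structure is compatible with the finite-type-over-$k$ nature of the relevant truncation schemes (Lemma \ref{lem:contm_k}). The restriction to $\operatorname{Cont}^{\ge 1}(\mathfrak{m}_x)$ plays an essential role here, as it keeps us inside the subspace where our finite-type-over-$k$ statements and the codimension theory of Definition \ref{defi:codim} apply uniformly.
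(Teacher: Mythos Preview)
Your approach is essentially the same as the paper's: both adapt \cite[Theorem~7.4]{EM09} to the formal power series setting by systematically replacing $\Omega$ with $\Omega'$ and restricting all cylinders to $\operatorname{Cont}^{\ge 1}(\mathfrak{o}_X)$, with the change-of-variables formula (the analogue of \cite[Theorem~6.2, Corollary~6.4]{EM09}) and the thin-set negligibility (the analogue of \cite[Proposition~5.11]{EM09}) as the key ingredients to re-verify. One small correction: $\Omega'_{Y/k}$ does \emph{not} coincide with $\Omega_{Y/k}$ even when $Y$ is regular (cf.\ Remark~\ref{rmk:UFMD}(4)); what you actually need, and what suffices, is that $\Omega'_{Y/k}$ is locally free of rank $\dim' Y$ on the regular locus (Proposition~\ref{prop:spOmega3}).
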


\begin{proof}
The assertions are formal power series ring versions of Theorem 7.4 and Remark 7.5 in \cite{EM09}, 
and their proofs also work in this setting by making the following modifications: 
\begin{itemize}
\item Replacing $\Omega_{\text{--}/k}$ with $\Omega' _{\text{--}/k}$, and 
$\operatorname{Jac} _{\text{--}/k}$ with $\operatorname{Jac}' _{\text{--}/k}$. 

\item Considered cylinders $C$ are contained in $\operatorname{Cont}^{\ge 1}(\mathfrak{o}_{X})$. 
\end{itemize}
Theorem 7.4 in \cite{EM09} is a consequence of Lemma 7.3 in \cite{EM09}. 
The key ingredients of the proof of Lemma 7.3 in \cite{EM09} are 
\begin{itemize}
\item Theorem 6.2 and Corollary 6.4 in \cite{EM09}, and
\item Proposition 5.11 in \cite{EM09}. 
\end{itemize}

Theorem 6.2 and Corollary 6.4 in \cite{EM09} are the codimension formula as in Proposition \ref{prop:EM6.2_k[[t]]}, 
and they are formal consequences of Proposition 4.4(i) in \cite{EM09}. 
Proposition 4.4(i) in \cite{EM09} is still valid in our setting by replacing $\Omega_{X}$ with $\Omega' _{X/k}$ due to Lemma \ref{lem:D=D'} 
(see Lemma \ref{lem:EM4.4_R} for the detailed argument). 

Proposition 5.11 in \cite{EM09} is a proposition on codimensions as in Proposition \ref{prop:negligible}, and 
it is a consequence of Lemma 5.1 and Corollary 5.2 in \cite{EM09} (Corollary 5.2 is a corollary of Lemma 5.1). 
The proof of Lemma 5.1 in \cite{EM09} still works in our setting by replacing $\operatorname{Jac} _{\text{--}/k}$ with $\operatorname{Jac}' _{\text{--}/k}$. 
The only important point is that the ideal $\operatorname{Jac}' _{X/k}$ defines the singular locus of $X$ 
even when $X$ is an integral scheme of finite type over $R$ (cf.\ Remark \ref{rmk:jacobianmatrix}). 

Besides, Lemma 6.1 in \cite{EM09} is used in the proof of Lemma 7.3 in \cite{EM09}, and Proposition 3.2 in \cite{EM09} is used in the proof of Corollary 5.2 in \cite{EM09}. 
Proposition 3.2 and Lemma 6.1 in \cite{EM09} are formal consequences of the valuative criterion of properness, and their proofs work in our setting. 
\end{proof}

\section{Arc spaces of $k[t]$-schemes}\label{section:ktjet}
In this section, we deal with the arc spaces of $k[t]$-schemes. 
Let $X$ be a scheme over $k[t]$. 
For a non-negative integer $m$, we define a contravariant functor $F^X_{m}: ({\sf Sch}/k) \to ({\sf Sets})$ by 
\[
F^X _{m}(Y) = \operatorname{Hom} _{k[t]} \left( Y \times_{\operatorname{Spec} k} \operatorname{Spec} k[t]/(t^{m+1}), X \right).
\]
By the same argument as in \cite[Ch.4.\ Theorem 3.2.3]{CLNS}, we can see that the functor $F^X_{m}$ 
is always represented by a scheme $X_m$ over $k$. 
We shall use the same symbols $X_{\infty}$, $\pi_{mn}$, $\psi_m$ and $\pi_m$ as in Section \ref{section:jet} also for this setting. 

In this section, we deal with the following two categories of $k[t]$-schemes: 
\begin{enumerate}
\item[(1)] $X$ is a scheme of finite type over $k[t][[x_1, \ldots , x_N]]$. 
\item[(2)] $X$ is an affine scheme of the form $X = \operatorname{Spec} \bigl( k[x_1, \ldots , x_N][[t]]/I \bigr)$.
\end{enumerate}

\noindent
We note that in \cite{DL02}, Denef and Loeser extend the theory of arc spaces of $k$-varieties 
to the case where 
\begin{enumerate}
\item[(3)] $X$ is a scheme of finite type over $k[t]$. 
\end{enumerate}
In \cite{NS}, we dealt with the arc spaces of $X$ in (3). 
However, in this paper, we need to work on the arc spaces of $X$ in (1) and (2). 
We also note that Sebag in \cite{Seb04} deals with formal $k[[t]]$-schemes of finite type, 
and this theory can be applied to (2) and (3) (see also \cite{CLNS} for this theory). 

In Subsection \ref{subsection:ktjet}, we discuss case (1). 
In Subsection \ref{subsection:formal}, we discuss case (2), 
where we will not deal with formal $k[[t]]$-schemes in general but deal with only affine schemes in a minimum way.

\subsection{Arc spaces of $k[t][[x_1, \ldots, x_N]]$-schemes}\label{subsection:ktjet}
In this subsection, we suppose $R_0 = k[t]$ and $R = k[t][[x_1, \ldots, x_N]]$, and 
we discuss the arc spaces of $R$-schemes of finite type. 

\subsubsection{Arc spaces}

First, we prove that if $X$ is a scheme of finite type over $R$, 
then so is $X_m$. 

\begin{lem}\label{lem:repl}
Let $S := k[t][[x_1, \ldots , x_N]][y_1, \ldots ,y_M]$ and let $A := \operatorname{Spec} S$.
Then we have $A_m \simeq \operatorname{Spec} S_m$, where 
\[
S_m := k \bigl[ \bigl[ x_1^{(0)}, \ldots, x_N^{(0)} \bigr] \bigr]
\bigl[ x_j^{(1)},\dots, x_j^{(m)},y_{j'}^{(0)},\dots,y_{j'}^{(m)} 
\ \big | \  1\le j \le N,1\le j' \le M \bigr].
\]
Furthermore, for $m \ge n \ge 0$, the truncation map $\pi_{mn}:A_m \to A_n$ is induced 
by the ring inclusion $S_n \hookrightarrow S_m$. 
\end{lem}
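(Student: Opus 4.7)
The plan is to verify the representability of $F^A_m$ by $\operatorname{Spec} S_m$ via the functor of points: for every $k$-algebra $B$, I would produce a natural bijection
\[
F^A_m(B) = \operatorname{Hom}_{k[t]}\bigl(S,\ B[t]/(t^{m+1})\bigr) \;\simeq\; \operatorname{Hom}_k(S_m,\, B),
\]
and then check compatibility with the truncation. Because the polynomial generators $y_{j'}$ pose no subtlety, the real content is to analyze $k[t]$-algebra maps from $k[t][[x_1,\ldots,x_N]]$ to $B[t]/(t^{m+1})$.

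For the forward direction, given $\phi \colon S \to B[t]/(t^{m+1})$, I would extract the coefficients
\[
\phi(x_j) = \sum_{\ell=0}^{m}\alpha_j^{(\ell)} t^{\ell}, \qquad \phi(y_{j'}) = \sum_{\ell=0}^{m}\beta_{j'}^{(\ell)} t^{\ell},
\]
with $\alpha_j^{(\ell)},\beta_{j'}^{(\ell)} \in B$. Composing $\phi|_{k[[x]]}$ with the reduction $B[t]/(t^{m+1})\twoheadrightarrow B$ yields a $k$-algebra map $k[[x_1,\ldots,x_N]]\to B$ sending $x_j\mapsto \alpha_j^{(0)}$, so the tuple $(\alpha_1^{(0)},\ldots,\alpha_N^{(0)})$ defines a $k$-algebra map $k[[x_1^{(0)},\ldots,x_N^{(0)}]]\to B$. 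The remaining coefficients $\alpha_j^{(\ell)}$ ($\ell\ge 1$) and $\beta_{j'}^{(\ell)}$ are unconstrained elements of $B$, and altogether this datum is exactly a $k$-algebra homomorphism $S_m \to B$ with $x_j^{(\ell)}\mapsto \alpha_j^{(\ell)}$ and $y_{j'}^{(\ell)}\mapsto \beta_{j'}^{(\ell)}$.

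Conversely, given $\psi \colon S_m \to B$, set $\alpha_j^{(\ell)} := \psi(x_j^{(\ell)})$, $\beta_{j'}^{(\ell)} := \psi(y_{j'}^{(\ell)})$, and $\tilde\alpha_j := \sum_{\ell\ge 1}\alpha_j^{(\ell)} t^{\ell-1}$. The restriction of $\psi$ to $k[[x_1^{(0)},\ldots,x_N^{(0)}]]$ endows $B$, and hence $B[t]/(t^{m+1})$, with a $k[[x_1,\ldots,x_N]]$-algebra structure via $x_j\mapsto \alpha_j^{(0)}$; since $t\tilde\alpha_j$ is nilpotent in $B[t]/(t^{m+1})$, the formal Taylor expansion
\[
\phi(f) \;:=\; \sum_{\substack{I\in \mathbb{Z}_{\ge 0}^N \\ |I|\le m}} \frac{1}{I!}\,\frac{\partial^{|I|}f}{\partial x^{I}}\bigl(\alpha_1^{(0)},\ldots,\alpha_N^{(0)}\bigr)\prod_{j=1}^N\bigl(t\tilde\alpha_j\bigr)^{I_j}
\]
is a finite sum and gives a well-defined $k$-algebra map $k[[x_1,\ldots,x_N]]\to B[t]/(t^{m+1})$ sending $x_j\mapsto \alpha_j^{(0)}+t\tilde\alpha_j=\sum_\ell\alpha_j^{(\ell)}t^\ell$. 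Extending $k[t]$-linearly and polynomially in the $y_{j'}$'s produces the desired $\phi \colon S \to B[t]/(t^{m+1})$, and the two constructions are manifestly mutually inverse and natural in $B$.

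The step requiring the most care is checking that the Taylor formula really is multiplicative on $k[[x_1,\ldots,x_N]]$, which reduces to the Leibniz rule for partial derivatives of formal power series together with the fact that $(t\tilde\alpha_j)^{m+1}=0$; once this is in hand, the rest is bookkeeping. Compatibility with the truncation $\pi_{mn}$ is then automatic: the reduction $B[t]/(t^{m+1})\twoheadrightarrow B[t]/(t^{n+1})$ discards the coefficients with $\ell>n$, which under the bijection corresponds to pre-composing with the natural sub-ring inclusion $S_n \hookrightarrow S_m$.
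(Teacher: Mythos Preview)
Your approach is essentially the same as the paper's: both define the forward map by recording the reduction $\phi_0\colon k[[x]]\to B$ together with the higher-order coefficients $\alpha_j^{(\ell)},\beta_{j'}^{(\ell)}$, construct an explicit inverse by substituting $x_j\mapsto\sum_\ell\alpha_j^{(\ell)}t^\ell$, and then check the two are mutually inverse by matching the data on $k[[x^{(0)}]]$ and on the generators $x_j,y_{j'}$. The only difference is packaging: the paper first builds a universal $k[t]$-algebra map $\Lambda\colon S\to S_m[t]/(t^{m+1})$ by extending the polynomial substitution to power series via $(x)$-adic continuity (passing through the auxiliary rings $S'_m=k\bigl[x_j^{(\ge1)}\bigr]\bigl[\bigl[x_j^{(0)}\bigr]\bigr]$ and $S''_m=k\bigl[\bigl[x_j^{(0)}\bigr]\bigr]\bigl[x_j^{(\ge1)}\bigr]$ to see that the image really lands in $S_m$) and obtains the inverse as post-composition with $\Lambda$, whereas your Taylor formula computes exactly this composite directly using the nilpotence of $t\tilde\alpha_j$.
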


\begin{proof}
Let $Y := \operatorname{Spec} C$ be an affine scheme over $k$. 
We shall give a natural bijective map
\[
\Phi:\operatorname{Hom} _{k[t]} \bigl( S , C[t]/(t^{m+1}) \bigr) 
\to \operatorname{Hom} _{k} ( S_m, C ). 
\]

For each $0 \le i \le m$, we denote by $p_i$ the projection
\[
p_i: C[t]/(t^{m+1}) \to C; \quad c_0+c_1t+\cdots+c_mt^{m} \mapsto c_i. 
\]
For $\alpha \in \operatorname{Hom} _{k[t]} \left( S, C[t]/(t^{m+1}) \right)$, 
we define $\Phi (\alpha) \in \operatorname{Hom} _{k} ( S_m, C )$ as follows. 
First, we define the ring homomorphism $\alpha ' _0 : S_0 \to C$ as the composition
\[
S_0 \hookrightarrow S \xrightarrow{\alpha} C[t]/(t^{m+1}) \xrightarrow{p_0} C. 
\]
Then we define $\alpha ' : S_m \to C$ as the ring homomorphism uniquely determined by the following conditions: 
\begin{itemize}
\item $\alpha ' (f) = \alpha '_0 (f)$ holds for any $f \in S_0$. 
\item $\alpha ' \bigl( x_j ^{(s)} \bigr) = p_s \bigl( \alpha(x_j) \bigr)$ holds for each $1 \le j \le N$ and $1 \le s \le m$. 
\item $\alpha ' \bigl( y_j ^{(s)} \bigr) = p_s \bigl( \alpha(y_j) \bigr)$ holds for each $1 \le j \le M$ and $1 \le s \le m$.
\end{itemize}
Then we define $\Phi (\alpha) = \alpha '$. 

Next, we define the inverse map 
\[
\Psi: \operatorname{Hom} _{k} ( S_m , C ) \to \operatorname{Hom} _{k[t]} \bigl( S, C[t]/(t^{m+1}) \bigr). 
\]
We set 
\begin{align*}
S'_m &:= k \bigl[ x_j^{(1)},\dots, x_j^{(m)} \ \big| \ 1 \le j \le N \bigr] \bigl[ \bigl[ x^{(0)}_1, \ldots , x^{(0)}_N \bigr] \bigr], \\
S''_m &:= k \bigl[ \bigl[ x^{(0)}_1, \ldots , x^{(0)}_N \bigr] \bigr] \bigl[ x_j^{(1)},\dots, x_j^{(m)} \ \big| \ 1 \le j \le N \bigr].
\end{align*}
We define a ring homomorphim $\Lambda _1 : k[t][x_1, \ldots , x_N] \to S'_m[t]/(t^{m+1})$ by 
\[
\Lambda _1 (t) = t, \qquad \Lambda _1 (x_j) = x_j^{(0)} + x_j^{(1)}t + \cdots + x_j^{(m)}t^m. 
\]
Since $\Lambda _1 \bigl((x_1,\ldots, x_N) \bigr) \subset \bigl( x_1^{(0)}, \ldots, x_N^{(0)},t \bigr)$ holds, 
$\Lambda _1$ induces a ring homomorphism 
$\Lambda _2 : k[t][[x_1, \ldots , x_N]] \to S'_m[t]/(t^{m+1})$. 
Note here that its image is contained in $S'' _m[t]/(t^{m+1})$. 
Furthermore, $S'' _m[t]/(t^{m+1})$ is a subring of $S_m[t]/(t^{m+1})$. 
Therefore, we have a ring homomorphism $\Lambda _3: k[t][[x_1, \ldots , x_N]] \to S_m[t]/(t^{m+1})$. 
Then we define $\Lambda : S \to S_m[t]/(t^{m+1})$ as the ring homomorphism uniquely determined by the following conditions: 
\begin{itemize}
\item $\Lambda (f) = \Lambda _3 (f)$ holds for any $f \in k[t][[x_1, \ldots , x_N]]$. 
\item $\Lambda (y_j) = y_j^{(0)}+y_j^{(1)}t+\cdots +y_j^{(m)}t^m$ holds for each $1 \le j \le M$. 
\end{itemize}
Then $\Lambda$ is a $k[t]$-ring homomorphism. 

For $\beta \in \operatorname{Hom} _{k} ( S_m, C )$, 
we define $\Psi (\beta) \in \operatorname{Hom} _{k[t]} \bigl( S, C[t]/(t^{m+1}) \bigr)$ as the composition
\[
S \xrightarrow{\Lambda} S_m[t]/(t^{m+1}) \xrightarrow{\beta} C[t]/(t^{m+1}), 
\]
where $S_m[t]/(t^{m+1}) \to C[t]/(t^{m+1})$ is the $k[t]$-ring homomorphism induced by $\beta:S_m \to C$. 

By the construction of $\Phi$ and $\Psi$, if $\alpha ' = \Phi (\alpha)$ and $\beta ' = \Psi (\beta)$, then they satisfy the following: 
\begin{itemize}
\item $p_s \bigl( \alpha(x_j) \bigr) = \alpha ' \bigl( x_j ^{(s)} \bigr)$  holds for each $1 \le j \le N$ and $0 \le s \le m$. 
\item $p_s \bigl( \alpha(y_j) \bigr) = \alpha ' \bigl( y_j ^{(s)} \bigr)$ holds for each $1 \le j \le M$ and $0 \le s \le m$.
\item $p_s \bigl( \beta '(x_j) \bigr) = \beta \bigl( x_j ^{(s)} \bigr)$  holds for each $1 \le j \le N$ and $0 \le s \le m$. 
\item $p_s \bigl( \beta '(y_j) \bigr) = \beta \bigl( y_j ^{(s)} \bigr)$ holds for each $1 \le j \le M$ and $0 \le s \le m$.
\end{itemize}
Therefore, we have $\Psi\circ \Phi=\operatorname{id}$ and $\Phi\circ \Psi=\operatorname{id}$. 
Hence, $F_{m}^A$ is represented by $\operatorname{Spec} S_m$. 
The second assertion follows from the construction of $A_m$. 
\end{proof}

\begin{lem}\label{lem:repl2}
We take over the notation in Lemma \ref{lem:repl}. 
Let $X = \operatorname{Spec} (S/I)$ be the closed subscheme of $A$ defined by an ideal $I = (f_1, \ldots , f_r) \subset S$. 
For $1 \le i \le r$ and $0 \le \ell \le m$, we define $F_i ^{(\ell)} \in S_m$ as follows: 
\[
f_{i} \biggl( \sum_{\ell = 0}^mx_{1}^{(\ell)}t^{\ell} , \dots, \sum_{\ell =0}^mx_{N}^{(\ell)}t^{\ell}, 
\sum_{\ell=0}^my_{1}^{(\ell)}t^{\ell},\dots,\sum_{\ell=0}^m y_{M}^{(\ell)}t^{\ell} \biggr) \equiv 
\sum_{\ell=0}^m F_{i}^{(\ell)}t^{\ell} \pmod{t^{m+1}}. 
\]
Let 
\[
I_m := \bigl( F_i ^{(s)} \ \big| \  1 \le i \le r, \ 0 \le s \le m \bigr) \subset S_m
\]
be the ideal of $S_m$ generated by $F_i ^{(s)}$'s. 
Then we have $X_m \simeq \operatorname{Spec} (S_m/ I_m)$. 
Furthermore, for $m \ge n \ge 0$, the truncation map $\pi_{mn}: X_m \to X_n$ is induced 
by the ring homomorphism $S_n/I_n \to S_m/I_m$. 
\end{lem}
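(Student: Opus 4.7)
The plan is to reduce the statement to Lemma \ref{lem:repl} by restricting the natural bijection $\Phi$ (with inverse $\Psi$) from that lemma to those homomorphisms that kill the ideal $I$. Concretely, for any $k$-algebra $C$, a $k[t]$-algebra homomorphism $\alpha \colon S \to C[t]/(t^{m+1})$ descends to $S/I$ if and only if $\alpha(f_i) = 0$ for all $i$, which is equivalent to $p_s(\alpha(f_i)) = 0$ for every $0 \le s \le m$. I will show that under $\Phi$, this condition corresponds exactly to the vanishing on $C$ of all $F_i^{(s)}$, and hence to factoring through $S_m/I_m$.

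The key computation is the identity
\[
\Lambda(f_i) \;=\; \sum_{s=0}^m F_i^{(s)}\, t^s \quad \text{in } S_m[t]/(t^{m+1}),
\]
where $\Lambda \colon S \to S_m[t]/(t^{m+1})$ is the $k[t]$-algebra homomorphism from the proof of Lemma \ref{lem:repl}. Indeed, by construction $\Lambda(x_j) = \sum_\ell x_j^{(\ell)} t^\ell$ and $\Lambda(y_j) = \sum_\ell y_j^{(\ell)} t^\ell$, so this identity is exactly the definition of $F_i^{(s)}$. Applying a $k$-algebra homomorphism $\beta \colon S_m \to C$ and using that $\Psi(\beta) = \bar\beta \circ \Lambda$ (where $\bar\beta \colon S_m[t]/(t^{m+1}) \to C[t]/(t^{m+1})$ is induced by $\beta$), I obtain
\[
\Psi(\beta)(f_i) \;=\; \sum_{s=0}^m \beta\bigl(F_i^{(s)}\bigr)\, t^s.
\]
Therefore $\Psi(\beta)$ kills $I$ if and only if $\beta$ kills $I_m$, and dually $\Phi(\alpha)$ kills $I_m$ if and only if $\alpha$ kills $I$.

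Combining this with Lemma \ref{lem:repl} yields a natural bijection between $F_m^X(Y) = \operatorname{Hom}_{k[t]}(S/I, C[t]/(t^{m+1}))$ and $\operatorname{Hom}_k(S_m/I_m, C)$, which shows that $F_m^X$ is represented by $\operatorname{Spec}(S_m/I_m)$. The statement about the truncation map $\pi_{mn} \colon X_m \to X_n$ then follows from the corresponding statement for $A_m \to A_n$ in Lemma \ref{lem:repl}, because the ring inclusion $S_n \hookrightarrow S_m$ carries $I_n$ into $I_m$ (from $F_i^{(s)} \in S_n$ for $s \le n$), so it descends to a well-defined homomorphism $S_n/I_n \to S_m/I_m$ that induces $\pi_{mn}$.

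The only mild obstacle is purely bookkeeping, namely verifying the identity $\Lambda(f_i) = \sum_s F_i^{(s)} t^s$; this is immediate from the construction of $\Lambda$ in Lemma \ref{lem:repl} together with the definition of $F_i^{(s)}$. No new technical input is required beyond the preceding lemma.
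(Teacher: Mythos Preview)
Your proposal is correct and follows exactly the paper's approach: the paper simply asserts that the bijection $\Phi$ from Lemma~\ref{lem:repl} restricts to a bijection $\operatorname{Hom}_{k[t]}(S/I, C[t]/(t^{m+1})) \to \operatorname{Hom}_k(S_m/I_m, C)$, and you have supplied the details of why this is so via the identity $\Lambda(f_i) = \sum_s F_i^{(s)} t^s$ (which the paper records in the remark immediately following the lemma as the actual definition of $F_i^{(s)}$).
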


\begin{proof}
Let $Y := \operatorname{Spec} C$ be an affine scheme over $k$. 
We can see that the bijective map $\Phi$ in the proof of Lemma \ref{lem:repl} 
induces the bijective map
\[
\operatorname{Hom} _{k[t]} \bigl( S/I , C[t]/(t^{m+1}) \bigr) 
\to \operatorname{Hom} _{k} ( S_m/I_m , C ). 
\]
Therefore, the functor $F_{m}^X$ is represented by $\operatorname{Spec} (S_m/ I_m)$. 
The second assertion follows from the construction of $X_m$. 
\end{proof}

\begin{rmk}
More precisely, 
\[
f_{i} \biggl( \sum_{\ell = 0}^mx_{1}^{(\ell)}t^{\ell} , \dots, \sum_{\ell =0}^mx_{N}^{(\ell)}t^{\ell}, 
\sum_{\ell=0}^my_{1}^{(\ell)}t^{\ell},\dots,\sum_{\ell=0}^m y_{M}^{(\ell)}t^{\ell} \biggr) 
\]
in Lemma \ref{lem:repl2} is defined as $\Lambda (f_i) \in S_m[t]/(t^{m+1})$, 
where $\Lambda$ is defined within the proof of Lemma \ref{lem:repl}. 
\end{rmk}

\begin{prop}\label{prop:repl}
If $X$ is a scheme of finite type over $R = k[t][[x_1, \ldots , x_N]]$. 
Then the following hold. 
\begin{enumerate}
\item $X_m$ is a scheme of finite type over $R$.
\item For any $m \ge n \ge 0$, the truncation map $\pi _{mn}: X_m \to X_n$ is a morphism of finite type. 
\end{enumerate}
\end{prop}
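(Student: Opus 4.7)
The plan is to reduce to the affine case handled by Lemma \ref{lem:repl2} and then patch the local descriptions together using the standard compatibility of the jet functor with open immersions.

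First I would observe that, just as in the usual theory over a field, the functor $F^X_m$ is compatible with open immersions: if $U \hookrightarrow X$ is an open immersion of $R$-schemes, then $U_m \hookrightarrow X_m$ is an open immersion as well, and $\{U^{(i)}_m\}_i$ covers $X_m$ whenever $\{U^{(i)}\}_i$ covers $X$. This follows formally from the universal property defining $F^X_m$ (a $C[t]/(t^{m+1})$-point of $X$ with $C$ local factors through any open cover). Since $X$ is of finite type over $R$, it admits a finite cover by affine opens $U^{(i)} = \operatorname{Spec}(S^{(i)}/I^{(i)})$ with $S^{(i)} = R[y_1,\ldots,y_{M_i}]$ and $I^{(i)}$ a finitely generated ideal. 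Thus it suffices to prove both (1) and (2) for each $U^{(i)}$.

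For the affine case, Lemma \ref{lem:repl2} gives $U^{(i)}_m \simeq \operatorname{Spec}(S^{(i)}_m / I^{(i)}_m)$ where the ring $S^{(i)}_m$ described in Lemma \ref{lem:repl} is obtained from $R = k[[x_1,\ldots,x_N]][t] \hookrightarrow k[[x_1^{(0)},\ldots,x_N^{(0)}]][t] = S^{(i)}_0$ by adjoining the finitely many polynomial variables $x_j^{(\ell)}$ ($1 \le \ell \le m$) and $y_{j'}^{(\ell)}$ ($0 \le \ell \le m$). To see that $U^{(i)}_m$ is of finite type over $R$, I would check that $S^{(i)}_m$ is finitely generated as an $R$-algebra: the inclusion $R \hookrightarrow S^{(i)}_0$ sends $x_j \mapsto x_j^{(0)}$ and this is an isomorphism onto its image (both are power series rings in $N$ variables over $k[t]$), and then $S^{(i)}_m$ is generated over $S^{(i)}_0$ by the $x_j^{(\ell)}$ and $y_{j'}^{(\ell)}$. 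Since $I^{(i)}_m$ is finitely generated (Lemma \ref{lem:repl2}), the quotient is also of finite type over $R$, proving (1).

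For (2), again the problem is local and reduces to the affine situation. Lemma \ref{lem:repl2} identifies the truncation $\pi_{mn} \colon U^{(i)}_m \to U^{(i)}_n$ with the morphism induced by the inclusion $S^{(i)}_n/I^{(i)}_n \hookrightarrow S^{(i)}_m/I^{(i)}_m$. Since $S^{(i)}_m$ is obtained from $S^{(i)}_n$ by adjoining the finitely many polynomial variables $x_j^{(\ell)},\,y_{j'}^{(\ell)}$ for $n < \ell \le m$, this is a morphism of finite type, and passing to the quotient by the finitely generated ideal $I^{(i)}_m$ preserves this property. Gluing over the finite cover yields the claim for $X$.

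The one subtle point worth highlighting is the verification that taking jets commutes with open immersions in this mixed power-series-polynomial setting; however, this is immediate from the functorial description because the representability proofs in Lemmas \ref{lem:repl} and \ref{lem:repl2} work for arbitrary test rings $C$, so that no issue arises from $R$ not being of finite type over $k$. There is no real obstacle beyond the bookkeeping involved in checking that the finite generation is preserved at each step, so I do not anticipate any genuinely hard part; the result is essentially a corollary of Lemmas \ref{lem:repl} and \ref{lem:repl2}.
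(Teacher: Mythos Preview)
Your approach is correct and essentially identical to the paper's proof, which likewise takes a finite affine cover, glues the local jet schemes from Lemma \ref{lem:repl2}, and reads off both assertions from the explicit description of $S_m$. One small bookkeeping correction: the structure map $R \to S_m$ is not an inclusion---by Lemma \ref{lem:repl} the ring $S_m$ contains no variable $t$ (the jet parameter has been absorbed), so the map sends $t \mapsto 0$ and $x_j \mapsto x_j^{(0)}$, exhibiting $S_m$ as a polynomial ring over $R/(t) \simeq k[[x_1^{(0)},\ldots,x_N^{(0)}]]$ and hence of finite type over $R$; note also that $R = k[t][[x_1,\ldots,x_N]]$, which is not the same ring as $k[[x_1,\ldots,x_N]][t]$.
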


\begin{proof}
Take an affine cover $X = U_1 \cup \cdots \cup U_s$. 
Then, $F_{m}^X$ is represented by the scheme obtained by gluing the schemes $(U_i)_m$ constructed in Lemma \ref{lem:repl2} 
(cf.\ \cite[Proposition 2.2]{EM09}). 
Therefore, the assertions follow from Lemma \ref{lem:repl2}. 
\end{proof}

Cylinders and the contact loci 
\[
\operatorname{Cont}^{m}(\mathfrak{a}),\ \operatorname{Cont}^{\ge m}(\mathfrak{a}) \subset X_{\infty}, \qquad 
\operatorname{Cont}^{m}(\mathfrak{a})_n,\ \operatorname{Cont}^{\ge m}(\mathfrak{a})_n \subset X_n
\]
are also defined in this setting in the same way.

We denote by $\mathfrak{o}_{X} \subset \mathcal{O}_X$ the ideal sheaf
\[
\mathfrak{o}_{X} := (x_1, \ldots , x_N)\mathcal{O}_X \subset \mathcal{O}_X
\] 
generated by $x_1, \ldots , x_N \in R$. 
From the next subsection, we basically work on arcs contained in the contact locus 
$\operatorname{Cont}^{\ge 1}(\mathfrak{o}_{X})$. 
Due to the following lemma, the contact locus 
$\operatorname{Cont}^{\ge 1}(\mathfrak{o}_{X})_m$ is a scheme of finite type over $k$. 

\begin{lem}\label{lem:contm}
Let $X$ be a scheme of finite type over $R=k[t][[x_1, \ldots , x_N]]$. 
Then the following hold. 
\begin{enumerate}
\item 
For each $m \ge 0$, the contact locus $\operatorname{Cont}^{\ge 1}(\mathfrak{o}_{X})_m \subset X_m$ 
is a scheme of finite type over $k$. 

\item 
Any $k$-arc of $X$ is contained in $\operatorname{Cont}^{\ge 1}(\mathfrak{o}_{X})$. 
\end{enumerate}
\end{lem}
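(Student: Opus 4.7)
The plan is to handle the two parts separately: part (1) reduces to the affine case via Lemma \ref{lem:repl2}, while part (2) follows from a standard algebraic fact about $k$-algebra homomorphisms out of $k[[x_1,\ldots,x_N]]$.

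For (1), since both the formation of $X_m$ and the contact locus $\operatorname{Cont}^{\ge 1}(\mathfrak{o}_X)_m$ are local on $X$, I would cover $X$ by affine opens $\operatorname{Spec}(S/I)$ with $S = k[t][[x_1,\ldots,x_N]][y_1,\ldots,y_M]$ and $I = (f_1,\ldots,f_r)$. By Lemma \ref{lem:repl2}, $X_m \simeq \operatorname{Spec}(S_m/I_m)$, and $\operatorname{Cont}^{\ge 1}(\mathfrak{o}_X)_m = \pi^X_{m,0}{}^{-1}\bigl(V(\mathfrak{o}_X)\bigr)$ corresponds to the closed subscheme of $X_m$ cut out by $(x_1^{(0)},\ldots,x_N^{(0)}) + I_m$. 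The crucial observation is that setting $x_i^{(0)} = 0$ in the description of $S_m$ eliminates the formal power series variables, leaving the polynomial ring
\[
k \bigl[ x_j^{(\ell)}, y_{j'}^{(\ell')} \ \big| \ 1\le j \le N,\ 1 \le \ell \le m,\ 1\le j' \le M,\ 0 \le \ell' \le m \bigr],
\]
which is of finite type over $k$. Quotienting further by the finitely generated ideal $I_m$ then yields a ring of finite type over $k$, proving (1).

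For (2), let $\gamma$ be a $k$-arc of $X$, corresponding to a $k[t]$-algebra homomorphism $\gamma^*:\mathcal{O}_{X,x_0} \to k[[t]]$ at the image $x_0$ of the closed point. Composing with the structure map $R \to \mathcal{O}_X$ and with the reduction $k[[t]] \to k[[t]]/(t) = k$ produces a $k$-algebra map $\phi: R \to k$ that sends $t$ to $0$; hence $\phi$ factors through $R/(t) = k[[x_1,\ldots,x_N]]$. The key algebraic point is that any $k$-algebra homomorphism $\phi: k[[x_1,\ldots,x_N]] \to k$ must annihilate every $x_i$. Indeed, if $a := \phi(x_i) \ne 0$, then $1 - x_i/a \in k[[x_1,\ldots,x_N]]$ is a unit (its inverse is $\sum_{n\ge 0}(x_i/a)^n$, which converges in the complete local ring), while simultaneously $\phi(1 - x_i/a) = 0$, giving a contradiction. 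Applied to the factored map, this forces $\phi(x_i) = 0$, so $\gamma^*(x_i) \in (t) \subset k[[t]]$ for every $i$, which is exactly the statement that $\gamma \in \operatorname{Cont}^{\ge 1}(\mathfrak{o}_X)$.

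The main subtlety is in part (2): a priori a $k[t]$-algebra homomorphism out of the complete ring $R = k[t][[x_1,\ldots,x_N]]$ carries no continuity assumption, so it is not obvious that the $x_i$ must be mapped into the maximal ideal of $k[[t]]$. The unit-inversion trick above is precisely what rescues the situation, replacing continuity with a purely algebraic obstruction. Part (1) is then essentially bookkeeping on top of Lemma \ref{lem:repl2}.
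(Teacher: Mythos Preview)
Your proof is correct and follows essentially the same approach as the paper. For (1), the paper simply cites Proposition \ref{prop:repl}(2) (the truncation $\pi_{m,0}$ is of finite type, and $V(\mathfrak{o}_X)\subset X_0$ is of finite type over $k$), while you unpack the same fact explicitly via Lemma \ref{lem:repl2}; for (2), the paper passes to the kernel $M$ of $S\to k[[t]]\to k$ and asserts that any maximal ideal of $S$ with residue field $k$ has the form $(t,x_1,\ldots,x_N,y_1-a_1,\ldots,y_m-a_m)$, which implicitly relies on exactly the unit-inversion observation you spell out.
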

\begin{proof}
(1) follows from Proposition \ref{prop:repl}. 

We shall prove (2). 
Let $\gamma \in X_{\infty}$ be a $k$-arc. We may assume that $X$ is affine, and 
we may write $X = \operatorname{Spec} A$ with $A = S/I$, where
\[
S:=k[t][[x_1, \ldots, x_N]][y_1, \ldots , y_m]
\]
and $I$ is an ideal of $S$. 
Let $\gamma ^* : A \to k[[t]]$ be the corresponding $k[t]$-ring homomorphism. 
Let $M$ be the kernel of the composite map $S \to A \xrightarrow{\gamma ^*} k[[t]] \to k$. 
Since $S/M = k$, $M$ is of the form 
\[
(t, x_1, \ldots, x_N, y_1-a_1, \ldots , y_m - a_m)
\]
for some $a_i \in k$. It shows that $\gamma ^* (\mathfrak{o}_{X}) \subset (t)$ and 
hence $\gamma \in \operatorname{Cont}^{\ge 1}(\mathfrak{o}_{X})$. 
\end{proof}

\begin{lem}\label{lem:finjac}
Let $n$ be a non-negative integer and 
let $X$ be a scheme of finite type over $R = k[t][[x_1, \ldots, x_N]]$. 
Suppose that each irreducible component $X_i$ of $X$ has $\dim ' X_i \ge n+1$ (see Definition \ref{defi:dim'}). 
Let $\gamma \in X_{\infty}$ be a $k$-arc with 
$\operatorname{ord}_{\gamma} \bigl( \operatorname{Fitt}^n (\Omega' _{X/k[t]}) \bigr) < \infty$. 
Then we have
\[
\gamma ^* \Omega' _{X / k[t]} /T \simeq k[[t]]^{\oplus n}, 
\]
where $T$ is the torsion part of $\gamma ^* \Omega' _{X / k[t]}$. 
\end{lem}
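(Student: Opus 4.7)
The pullback $\gamma^*\Omega'_{X/k[t]}$ is a finitely generated module over the DVR $k[[t]]$, so it decomposes as $k[[t]]^r \oplus T$ with $T$ torsion. The plan is to establish both $r \leq n$ and $r \geq n$. The upper bound is immediate from the base-change property of Fitting ideals: $\operatorname{Fitt}^n(\gamma^*\Omega'_{X/k[t]}) = \gamma^*\operatorname{Fitt}^n(\Omega'_{X/k[t]})$ is nonzero by hypothesis, and for a $k[[t]]$-module of the form $k[[t]]^r \oplus T$ the $n$-th Fitting ideal is nonzero exactly when $r \leq n$ (the presentation matrix of $T$ has at most as many columns as torsion summands).

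For the lower bound $r \geq n$, I would first reduce to a single reduced irreducible component. Set $\mathfrak{q} := \ker(\gamma^*\colon \mathcal{O}_X \to k[[t]])$, choose a minimal prime $\mathfrak{p}_i \subset \mathfrak{q}$, and let $X'_i$ be the corresponding reduced component; then $\gamma$ factors through $X'_i$. Proposition \ref{prop:spOmega2}(\ref{item:ex1}) gives a surjection $\gamma^*\Omega'_{X/k[t]} \twoheadrightarrow \gamma^*\Omega'_{X'_i/k[t]}$, so it suffices to bound the free rank of the latter below by $n$. Writing $X'_i = \operatorname{Spec}(S/P_i)$ locally with $S := R[y_1,\ldots,y_m]$, I would apply Proposition \ref{prop:spOmega2}(\ref{item:ex2}) and localize at $P_i$ to obtain
\[
P_i S_{P_i}/P_i^2 S_{P_i} \to K_i^{N+m} \to \Omega'_{K_i/k[t]} \to 0, \qquad K_i := \operatorname{Frac}(S/P_i).
\]
Since $S = k[t][[x_1,\ldots,x_N]][y_1,\ldots,y_m]$ is a regular Noetherian ring (formal power series over the regular ring $k[t]$ preserve regularity, as do polynomial extensions), $S_{P_i}$ is regular local, so $\dim_{K_i}(P_i S_{P_i}/P_i^2 S_{P_i}) = \operatorname{ht}(P_i)$. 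Combined with Lemma \ref{lem:dim'}(1),
\[
\dim_{K_i}\Omega'_{K_i/k[t]} \;\geq\; (N+m) - \operatorname{ht}(P_i) \;=\; \dim' X'_i - 1 \;\geq\; n,
\]
which is the generic rank of $\Omega'_{X'_i/k[t]}$ as an $(S/P_i)$-module.

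To transfer this bound to the free rank at $\gamma$, I would invoke upper semicontinuity of fiber dimension. Set $\mathfrak{q}_0 := \mathfrak{q}/\mathfrak{p}_i \in \operatorname{Spec}(S/P_i)$; since $\gamma^*$ factors as $S/P_i \twoheadrightarrow S/\mathfrak{q} \hookrightarrow k[[t]]$, the field $k((t))$ is an extension of $k(\mathfrak{q}_0) = \operatorname{Frac}(S/\mathfrak{q})$. For the finitely generated $(S/\mathfrak{q})$-module $N := \Omega'_{X'_i/k[t]} \otimes_{S/P_i} S/\mathfrak{q}$, base change along a field extension preserves dimension, so $\dim_{k((t))}(N \otimes k((t))) = \dim_{k(\mathfrak{q}_0)}(N \otimes k(\mathfrak{q}_0))$; the left-hand side equals the free rank of $\gamma^*\Omega'_{X'_i/k[t]}$, while the right-hand side is the fiber dimension of $\Omega'_{X'_i/k[t]}$ at $\mathfrak{q}_0$. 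Semicontinuity then gives
\[
r \;\geq\; \dim_{k(\mathfrak{q}_0)}\bigl(\Omega'_{X'_i/k[t]} \otimes k(\mathfrak{q}_0)\bigr) \;\geq\; \dim_{K_i}\Omega'_{K_i/k[t]} \;\geq\; n,
\]
finishing the proof. The main technical point I anticipate is justifying the regularity of $S$ (hence the identity $\dim_{K_i}(P_i S_{P_i}/P_i^2 S_{P_i}) = \operatorname{ht}(P_i)$), since the paper records no analogue of Proposition \ref{prop:spOmega3} or the Jacobian criterion (WJ) for $R_0 = k[t]$; one would appeal to classical results such as \cite[Theorem 19.5]{Mat89}.
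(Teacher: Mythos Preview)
Your argument is correct. The paper reaches the same conclusion more directly, without passing to an irreducible component or invoking semicontinuity: working at the single prime $\mathfrak{q} = \ker\gamma^*$ (with preimage $Q$ in $S$), it observes that the Jacobian matrix $M$ presenting $\Omega'_{A_{\mathfrak{q}}/k[t]}$ has a nonvanishing $(N+m-n)$-minor modulo $\mathfrak{q}$ (from the Fitting hypothesis) while every $(N+m-n+1)$-minor lies in $Q$ by the easy direction of the Jacobian criterion, Remark~\ref{rmk:JC}(2)(a), applied with $\operatorname{ht}(IS_Q) \le \operatorname{ht} I \le N+m-n$. This pins down $\operatorname{rank}_{k((t))} M = N+m-n$ and hence $\dim_{k((t))}\bigl(\gamma^*\Omega'_{X/k[t]} \otimes k((t))\bigr) = n$ in a single step. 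Your conormal-sequence computation at the generic point of $S/P_i$ is essentially the same Jacobian input (regularity of $S_{P_i}$), but carried out at a minimal prime rather than at $Q$ itself, which then forces the extra semicontinuity bridge back to $\mathfrak{q}_0$. The regularity of $S$ that you flag is indeed needed in both arguments; the paper simply absorbs it into the hypothesis of Remark~\ref{rmk:JC}(2)(a), which holds for any regular ring.
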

\begin{proof}
We may assume that $X$ is affine, and we may write $X = \operatorname{Spec} A$ with $A = S/I$, where
\[
S := k[t][[x_1, \ldots, x_N]][y_1, \ldots , y_m]
\]
and $I$ is an ideal of $S$. 
If $P$ is a minimal prime of $I$, then we have
\[
\operatorname{ht} P = \dim S - \dim' (S/P) \le (N+m+1) - (n+1) = N+m-n
\]
by Lemma \ref{lem:dim'}(1). Therefore we have $\operatorname{ht} I \le N + m -n$. 

Let $\gamma ^* : A \to k[[t]]$ be the corresponding $k[t]$-ring homomorphism, 
and let $\overline{\gamma} ^*: A \to k((t))$ be its composition with $k[[t]] \to k((t))$. 
Let $\mathfrak{q} \subset A$ be the kernel of $\gamma ^*$ and $Q \subset S$ the corresponding prime ideal. 
Since $\overline{\gamma} ^*$ factors through $A_{\mathfrak{q}}$, 
it is sufficient to show that $\Omega' _{A_{\mathfrak{q}} / k[t]} \otimes _{A_{\mathfrak{q}}} k((t))$ has dimension $n$ as a $k((t))$-vector space. 

Let $w_1, \ldots , w_{\ell} \in I$ be generators of $I$. 
Let $M \in M_{N+m, \ell}(A_{\mathfrak{q}})$ be the Jacobian matrix with respect to 
$w_1, \ldots , w_{\ell} \in I$ and derivations $\frac{\partial}{\partial x_i}$'s and $\frac{\partial}{\partial y_i}$'s. 
Then $M$ defines a map $M: A_{\mathfrak{q}}^{\ell} \to A_{\mathfrak{q}}^{N+m}$ and 
its cokernel is isomorphic to $\Omega' _{A_{\mathfrak{q}} / k[t]}$ by Proposition \ref{prop:spOmega2}(\ref{item:ex2})(\ref{item:localization}). 
Since $\operatorname{ord}_{\gamma} \bigl( \operatorname{Fitt}^n (\Omega' _{X/k[t]}) \bigr) < \infty$, 
$M$ has an $(N+m-n)$-minor which is not contained in $\mathfrak{q}A_{\mathfrak{q}}$ (cf.\ Remark \ref{rmk:jacobianmatrix}). 
Furthermore, since we have
\[
\operatorname{ht}(IS_Q) \le \operatorname{ht} I \le N+m-n, 
\]
any $(N+m-n+1)$-minor of $M$ is contained in $\mathfrak{q}A_{\mathfrak{q}}$ by Remark \ref{rmk:JC}(2)(a) (cf.\ \cite[Theorem 30.4]{Mat89}). 
Therefore, the image of $M$ in $M_{N+m, \ell}(k((t)))$ has rank $N+m-n$, and it follows that 
$\Omega' _{A_{\mathfrak{q}} / k[t]} \otimes _{A_{\mathfrak{q}}} k((t))$ has dimension $n$ as a $k((t))$-vector space. 
\end{proof}

\begin{lem}\label{lem:order_R}
Let $n$ and $e$ be non-negative integers and 
let $X$ be a scheme of finite type over $R = k[t][[x_1, \ldots, x_N]]$. 
Suppose that each irreducible component $X_i$ of $X$ has $\dim ' X_i \ge n+1$.
Let $\gamma \in \operatorname{Cont}^e \bigl( \operatorname{Fitt}^n (\Omega' _{X/k[t]}) \bigr)$ be a $k$-arc. 
Then we have
\[
\gamma ^* \Omega' _{X / k[t]} \simeq 
k[[t]]^{\oplus n} \oplus \bigoplus _i k[t]/(t^{e_i})
\]
as $k[[t]]$-modules with $\sum_i e_i = e$. 
\end{lem}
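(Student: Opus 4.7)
The plan is to reduce the claim to the structure theorem for finitely generated modules over the discrete valuation ring $k[[t]]$, combined with Lemma~\ref{lem:finjac} to pin down the free rank and a Fitting-ideal computation to pin down the sum of the torsion exponents.

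First, I would observe that since $X$ is of finite type over $R$, the sheaf $\Omega'_{X/k[t]}$ is coherent by Proposition~\ref{prop:spOmega2}(\ref{item:basis})(\ref{item:ex2}), so the pullback $M := \gamma^*\Omega'_{X/k[t]}$ is a finitely generated $k[[t]]$-module. Since $k[[t]]$ is a principal ideal domain, the structure theorem gives
\[
M \simeq k[[t]]^{\oplus r} \oplus \bigoplus_{i=1}^{s} k[[t]]/(t^{e_i})
\]
for some non-negative integers $r, s$ and $e_i \ge 1$, where the second summand is the torsion part $T$ of $M$. Note that $k[[t]]/(t^{e_i}) = k[t]/(t^{e_i})$ as $k[[t]]$-modules, so this matches the form claimed.

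Next, I would apply Lemma~\ref{lem:finjac}: the hypothesis $\gamma \in \operatorname{Cont}^e(\operatorname{Fitt}^n(\Omega'_{X/k[t]}))$ means $\operatorname{ord}_\gamma(\operatorname{Fitt}^n(\Omega'_{X/k[t]})) = e < \infty$, so Lemma~\ref{lem:finjac} gives $M/T \simeq k[[t]]^{\oplus n}$. Comparing with the decomposition above forces $r = n$.

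Finally, for the sum of the $e_i$, I would compute $\operatorname{Fitt}^n(M)$ directly from the presentation
\[
k[[t]]^{\oplus s} \xrightarrow{\operatorname{diag}(t^{e_1}, \ldots, t^{e_s})\ \oplus\ 0} k[[t]]^{\oplus (n+s)} \to M \to 0,
\]
where the cokernel accounts for the free summand of rank $n$. The $n$-th Fitting ideal is generated by the $s \times s$ minors of the $(n+s)\times s$ presentation matrix; the only non-vanishing such minor is $\prod_{i=1}^s t^{e_i} = t^{\sum_i e_i}$, so $\operatorname{Fitt}^n(M) = (t^{\sum_i e_i})$. On the other hand, Fitting ideals commute with base change, so $\operatorname{Fitt}^n(M) = \gamma^*\operatorname{Fitt}^n(\Omega'_{X/k[t]})\cdot k[[t]] = (t^e)$ by the hypothesis on the order. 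Comparing gives $\sum_i e_i = e$, completing the proof.

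There is essentially no serious obstacle here: the main content is provided by Lemma~\ref{lem:finjac}, and everything else is a standard application of the PID structure theorem and the base-change compatibility of Fitting ideals. The only point worth being slightly careful about is verifying that the hypothesis of Lemma~\ref{lem:finjac} is satisfied, which is immediate from $e < \infty$.
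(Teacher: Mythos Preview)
Your proposal is correct and follows essentially the same approach as the paper, which simply refers to \cite[Lemma 2.13(1)]{NS} together with Lemma~\ref{lem:finjac}; the argument there is exactly the PID structure theorem plus the base-change compatibility of Fitting ideals, with Lemma~\ref{lem:finjac} supplying the free rank $n$.
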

\begin{proof}
The same proof as in \cite[Lemma 2.13(1)]{NS} works due to Lemma \ref{lem:finjac}. 
\end{proof}

\subsubsection{Cylinders and Codimension}\label{subsubsection:codim_R}
In this subsection, we define and discuss the codimensions of cylinders of the arc space of an $R$-scheme $X$ of finite type. 
We define the codimension only for cylinders contained in the contact locus $\operatorname{Cont}^{\ge 1}(\mathfrak{o}_{X})$, 
where $\mathfrak{o}_{X} \subset \mathcal {O}_X$ is the ideal sheaf generated by $x_1, \ldots , x_N \in R$. 
Due to Lemma \ref{lem:contm}, the contact locus $\operatorname{Cont}^{\ge 1}(\mathfrak{o}_{X})_m \subset X_m$ is a scheme of finite type over $k$, 
and hence cylinders contained in $\operatorname{Cont}^{\ge 1}(\mathfrak{o}_{X})$ are easier to handle than the general cylinders. 

First, we prove Proposition \ref{prop:EM4.1_Rt}, which is necessary for defining the codimension of cylinders. 

\begin{lem}\label{lem:EM4.1CI_Rt}
Let $N, s$ and $r$ be non-negative integers with $N + s \ge r$. 
Let $R = k[t][[x_1, \ldots , x_N]]$ and let $S = R[y_1, \ldots , y_s]$. 
Let $I = (F_1, \ldots , F_r)$ be the ideal generated by elements $F_1, \ldots , F_r \in S$, and 
let $M = \operatorname{Spec} (S/I)$. 
Let $\mathfrak{o}_{M} \subset \mathcal {O}_M$ be the ideal sheaf generated by $x_1, \ldots , x_N \in R$.
Let $\overline{J} = \operatorname{Fitt}^{N+s-r} (\Omega' _{M/k[t]})$. 
For non-negative integers $m$ and $e$ with $m \ge e$, the following hold. 
\begin{enumerate}
\item It follows that 
\[
\psi _{m} \left( \operatorname{Cont}^{e}(\overline{J}) 
	\cap \operatorname{Cont}^{\ge 1}(\mathfrak{o}_{M}) \right) 
= \pi _{m+e, m} \left( \operatorname{Cont}^{e}(\overline{J})_{m+e} 
	\cap \operatorname{Cont}^{\ge 1}(\mathfrak{o}_{M})_{m+e} \right). 
\]

\item $\pi_{m+1, m}:M_{m+1} \to M_m$ induces a piecewise trivial fibration 
\[
\psi _{m+1} \left( \operatorname{Cont}^e(\overline{J}) \cap \operatorname{Cont}^{\ge 1}(\mathfrak{o}_{M}) \right) 
	\to \psi _{m} \left( \operatorname{Cont}^e(\overline{J}) \cap \operatorname{Cont}^{\ge 1}(\mathfrak{o}_{M}) \right)
\]
with fiber $\mathbb{A}^{N+s-r}$. 
\end{enumerate}
\end{lem}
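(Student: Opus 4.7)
The plan is to mirror the proof of Lemma \ref{lem:EM4.1CI_R} essentially verbatim, adapting only the base ring from $k$ to $k[t]$. The key observation that makes the argument go through is that the relevant derivations do not involve $\partial/\partial t$.

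First, I would set $J := \mathcal{J}_r\bigl(I; \operatorname{Der}_{k[t]}(S)\bigr) \subset S$ and invoke Remark \ref{rmk:jacobianmatrix} to identify $\overline{J} = (J+I)/I$. By Lemma \ref{lem:D=D'} applied with $R_0 = k[t]$, we have $\operatorname{Der}_{k[t]}(S) = \operatorname{Der}'_{k[t]}(S)$, and this module is free over $S$ with basis $\partial/\partial x_1, \ldots, \partial/\partial x_N, \partial/\partial y_1, \ldots, \partial/\partial y_s$. Thus $J$ is the ideal generated by the $r$-minors of the Jacobian matrix $\bigl(\partial F_i/\partial x_j,\ \partial F_i/\partial y_{j'}\bigr)$, exactly as in the $R_0 = k$ case.

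Next, I would reduce assertions (1) and (2) to Hensel's lemma (Lemma \ref{lem:Hensel}). A $K$-point $\beta$ of $\operatorname{Cont}^e(\overline{J})_{m+e} \cap \operatorname{Cont}^{\ge 1}(\mathfrak{o}_M)_{m+e}$ corresponds to a $k[t]$-algebra homomorphism $\beta^* \colon S/I \to K[t]/(t^{m+e+1})$. Lifting to $S$, we get images $a_j := \beta^*(x_j)$ and $a_{N+j} := \beta^*(y_j)$ with $a_1,\ldots,a_N \in (t)$, which we then lift arbitrarily to elements of $K[[t]]$ preserving these conditions. Since $F_1,\ldots,F_r \in S \hookrightarrow K[[t]][[x_1,\ldots,x_N]][y_1,\ldots,y_s]$, we are in the situation of Lemma \ref{lem:Hensel}: the hypothesis $\beta \in \operatorname{Cont}^e(\overline{J})_{m+e}$ together with the identification of $\overline{J}$ via the Jacobian matrix guarantees that some $r \times r$ minor of $\bigl(\partial F_i/\partial z_j\bigr)$ evaluated at the $a$'s has $t$-order exactly $e$, providing the subset $\mathcal{S} \subset \{1,\ldots,N+s\}$ and the Jacobian non-vanishing hypothesis of Lemma \ref{lem:Hensel}.

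Assertion (1) then follows from the existence part of Lemma \ref{lem:Hensel}(1): any $(m+e)$-jet in the specified contact locus lifts to an arc whose reduction modulo $t^{m+1}$ coincides with the original jet truncated. For assertion (2), I would combine Lemma \ref{lem:Hensel}(2) with the uniqueness part of Lemma \ref{lem:Hensel}(1): part (2) of Hensel's lemma says that given any prescribed $(m+1)$-jet on the coordinates $z_j$ with $j \notin \mathcal{S}$ extending the $m$-jet, one can lift to an arc with those coordinates, while the second assertion of Lemma \ref{lem:Hensel}(1) says the coordinates in $\mathcal{S}$ are then determined modulo $t^{m+2}$. This exhibits the fiber of $\psi_{m+1}(\cdot) \to \psi_m(\cdot)$ over each $m$-jet as an affine space $\mathbb{A}^{N+s-r}$ parametrized by the free choice of the $(m+1)$-coefficients of the $z_j$ with $j \notin \mathcal{S}$. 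Stratifying by the choice of $\mathcal{S}$ (and the minor that realizes the order $e$) gives a piecewise trivial fibration as required.

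I do not expect any serious obstacle: the only conceptual point is to verify that Lemma \ref{lem:Hensel} genuinely applies in the $k[t]$-setting (via the evident inclusion $k[t] \hookrightarrow K[[t]]$), and that the Jacobian used in defining $\overline{J} = \operatorname{Fitt}^{N+s-r}(\Omega'_{M/k[t]})$ involves precisely the derivations $\partial/\partial x_i, \partial/\partial y_j$ that appear in Hensel's lemma. Both points are immediate from Remark \ref{rmk:jacobianmatrix} and Lemma \ref{lem:D=D'}.
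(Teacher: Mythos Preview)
Your proposal is correct and follows essentially the same approach as the paper: set $J = \mathcal{J}_r(I; \operatorname{Der}_{k[t]}(S))$, identify $\overline{J} = (J+I)/I$ via Remark~\ref{rmk:jacobianmatrix}, note that $\operatorname{Der}_{k[t]}(S)$ is freely generated by $\partial/\partial x_i$ and $\partial/\partial y_j$, and then deduce (1) from the first assertion of Lemma~\ref{lem:Hensel}(1) and (2) from Lemma~\ref{lem:Hensel}(2) together with the second assertion of Lemma~\ref{lem:Hensel}(1). The additional explanation you provide about how Hensel's lemma is applied is a welcome elaboration, but the structure is identical to the paper's argument.
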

\begin{proof}
Let $J := \mathcal{J}_r \bigl( I; \operatorname{Der}_{k[t]}(S) \bigr) \subset S$. 
Then we have $\overline{J} = (J+I)/I$ by Remark \ref{rmk:jacobianmatrix}. 
Note here that $\operatorname{Der}_{k[t]}(S) = \operatorname{Der}'_{k[t]}(S)$ is generated by 
$\partial / \partial x_i$'s and $\partial / \partial y_j$'s. 
Therefore, (1) follows from the first assertion of Lemma \ref{lem:Hensel}(1). 
Furthermore, (2) follows from Lemma \ref{lem:Hensel}(2) and the second assertion of Lemma \ref{lem:Hensel}(1). 
\end{proof}

\begin{prop}\label{prop:EM4.1_Rt}
Let $n$ be a non-negative integer and 
let $X$ be a scheme of finite type over $R = k[t][[x_1, \ldots, x_N]]$. 
Suppose that each irreducible component $X_i$ of $X$ has $\dim ' X_i \ge n+1$. 
Then there exists a positive integer $c$ such that the following hold 
for non-negative integers $m$ and $e$ with $m \ge ce$. 
\begin{enumerate}
\item It follows that 
\begin{align*}
&\psi _{m} \left( \operatorname{Cont}^{e} \bigl( \operatorname{Fitt}^n (\Omega' _{X/k[t]}) \bigr) 
	\cap \operatorname{Cont}^{\ge 1}(\mathfrak{o}_{X}) \right) \\
&= \pi _{m+e, m} \left( \operatorname{Cont}^{e}\bigl( \operatorname{Fitt}^n (\Omega' _{X/k[t]}) \bigr)_{m+e} 
	\cap \operatorname{Cont}^{\ge 1}(\mathfrak{o}_{X})_{m+e} \right). 
\end{align*}

\item $\pi_{m+1, m}:X_{m+1} \to X_m$ induces a piecewise trivial fibration 
\begin{align*}
\psi _{m+1} & \left( \operatorname{Cont}^e  \bigl( \operatorname{Fitt}^n (\Omega' _{X/k[t]}) \bigr) 
	\cap \operatorname{Cont}^{\ge 1}(\mathfrak{o}_{X}) \right) \\
	& \to 
\psi _{m} \left( \operatorname{Cont}^e \bigl( \operatorname{Fitt}^n (\Omega' _{X/k[t]}) \bigr) 
	\cap \operatorname{Cont}^{\ge 1}(\mathfrak{o}_{X}) \right)
\end{align*}
with fiber $\mathbb{A}^n$. 
\end{enumerate}
\end{prop}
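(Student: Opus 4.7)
The plan is to follow the strategy of \cite[Proposition 4.1]{EM09}, as in the proof of Proposition \ref{prop:EM4.1_R}, reducing both assertions to the complete intersection case handled by Lemma \ref{lem:EM4.1CI_Rt}. Since the statements are local on $X$ and compatible with constructible decompositions, I would take $X = \operatorname{Spec}(S/I)$ with $S = R[y_1,\ldots,y_M]$ and $I = (f_1,\ldots,f_s)$. Set $r := N+M-n$; by Lemma \ref{lem:dim'}(1) the assumption $\dim' X_i \ge n+1$ forces $\operatorname{ht} I \le r$, and by Remark \ref{rmk:jacobianmatrix} the ideal $\operatorname{Fitt}^n(\Omega'_{X/k[t]})$ is generated modulo $I$ by the $r\times r$-minors of the Jacobian matrix of $(f_1,\ldots,f_s)$ with respect to the basis $\{\partial/\partial x_i,\partial/\partial y_j\}$ of $\operatorname{Der}'_{k[t]}(S)$.

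Next I would stratify the contact locus according to which Jacobian minor realizes the order $e$. For each pair of size-$r$ subsets $(\mathcal{S},\mathcal{T})$, where $\mathcal{S}$ selects variables and $\mathcal{T}$ selects generators of $I$, let $\Delta_{\mathcal{S},\mathcal{T}}$ denote the corresponding minor. The constructible subset on which $\Delta_{\mathcal{S},\mathcal{T}}$ itself has order exactly $e$ (with the full Fitting ideal of order $\ge e$) is a stratum, and finitely many such strata cover $\operatorname{Cont}^e(\operatorname{Fitt}^n(\Omega'_{X/k[t]})) \cap \operatorname{Cont}^{\ge 1}(\mathfrak{o}_X)$. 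It therefore suffices to prove (1) and (2) on each stratum separately and take the maximum of the finitely many constants. Fix such a stratum and introduce the complete intersection $X' := \operatorname{Spec}(S/I')$ with $I' := (f_i : i \in \mathcal{T})$. Then $X' \supset X$, the element $\Delta_{\mathcal{S},\mathcal{T}}$ is one of the generators of $\operatorname{Fitt}^n(\Omega'_{X'/k[t]})$ modulo $I'$, and Lemma \ref{lem:EM4.1CI_Rt} applied to $X'$ immediately yields both conclusions for the analogous stratum inside $X'_\infty$, already for $m \ge e$.

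The remaining task is to transfer the Hensel lifts from $X'_\infty$ back to $X_\infty$: given a lifted arc $\tilde\gamma \in X'_\infty$, one must check $f_j(\tilde\gamma) = 0$ for every $j \notin \mathcal{T}$. By the Jacobian criterion (Remark \ref{rmk:JC}(2)(c)) applied to the open locus of $\operatorname{Spec} S$ where $\Delta_{\mathcal{S},\mathcal{T}}$ is invertible, the complete intersection $X'$ is regular of dimension $n$ there, so for each $j \notin \mathcal{T}$ there exists a nonnegative integer $a_j$ with $\Delta_{\mathcal{S},\mathcal{T}}^{a_j} f_j \in I'$. Evaluating on $\tilde\gamma$ and using that $\Delta_{\mathcal{S},\mathcal{T}}(\tilde\gamma) \in k[[t]]$ has finite $t$-order $e$ (which is determined by the $m$-jet since $m \ge e$) and hence is a nonzero divisor, this forces $f_j(\tilde\gamma) = 0$. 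Taking $c$ to be any common upper bound on the finitely many $a_j$ arising yields the required uniform constant (and in fact $c = 1$ is enough). The principal obstacle lies precisely in justifying the Jacobian criterion in the setting $R = k[t][[x_1,\ldots,x_N]]$: while Remark \ref{rmk:JC}(2)(c) is stated for $k[[x_1,\ldots,x_N]]$-algebras of finite type, the more general result \cite[Theorem 9]{Mat77} should cover the $k[t][[x_1,\ldots,x_N]]$ case as well, and this is what ultimately allows the \cite{EM09} argument to carry over to our setting.
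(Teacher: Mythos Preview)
Your overall plan—stratify by minors and reduce to the complete-intersection case of Lemma~\ref{lem:EM4.1CI_Rt}—is sound and close in spirit to the paper's proof, which uses instead a single generic complete intersection $I_M = (F_1,\ldots,F_r)$ with $F_i = \sum_j a_{ij}f_j$ for general $a_{ij}\in k$. But your transfer step contains a genuine gap, and you have misidentified the main obstacle.

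The relation $\Delta_{\mathcal{S},\mathcal{T}}^{\,a_j} f_j \in I'$ is false in general. Regularity of $X'$ on $D(\Delta_{\mathcal{S},\mathcal{T}})$ tells you that $I'S_{\mathfrak p}$ is prime of height $r$ there, but it does \emph{not} force $I_X = I'$ on that locus: the complete intersection cut out by a subset $\mathcal{T}$ of the generators may acquire components of the correct height that are not components of $X$, and on such components the remaining $f_j$ need not vanish. Concretely, take $S = k[t][[x,y,z,w]]$ and $I_X = (x,\,y,\,zw,\,z^2)$, which has the single minimal prime $(x,y,z)$, so $n=1$ and $r=3$. With $\mathcal{T}$ selecting $\{x,y,zw\}$ and $\mathcal{S}=\{x,y,w\}$ one finds $\Delta = z$ and $I' = (x,y,zw)$; the extra component $(x,y,w)$ of $V(I')$ meets $D(z)$, and $f_4 = z^2$ does not vanish on it, so $z^a\cdot z^2 \notin I'$ for every $a$. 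Hence your argument does not establish that the Hensel lift $\tilde\gamma \in X'_\infty$ lies in $X_\infty$.

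What makes the reduction work is a different relation, and this is exactly the paper's claim~$(\spadesuit)$. Writing $I_{X'} := (I':I_X)$, one has $\Delta_{\mathcal{S},\mathcal{T}} \in \sqrt{I_X + I_{X'}}$: if $\mathfrak p \supset I_X + I_{X'}$ and $\Delta \notin \mathfrak p$, then $(I')_{\mathfrak p}$ is prime of height $r$ by Remark~\ref{rmk:JC}(2)(a)(b); every minimal prime of $I_X$ through $\mathfrak p$ has height $\le r$ and contains $(I')_{\mathfrak p}$, so equals it; hence $(I_X)_{\mathfrak p} = (I')_{\mathfrak p}$ and $(I_{X'})_{\mathfrak p} = S_{\mathfrak p}$, contradicting $I_{X'}\subset\mathfrak p$. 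From $(\spadesuit)$ one gets $\Delta^c \in I_X + I_{X'}$ for some $c$, so a jet of $X$ with $\operatorname{ord}(\Delta)=e$ has $\operatorname{ord}(I_{X'}) \le ce$; since the Hensel lift agrees with it to order $m \ge ce$, the lift has the same finite order along $I_{X'}$, and then $I_{X'}\cdot I_X \subset I'$ forces $I_X$ to vanish on the lift. The paper phrases this for the generic $I_M$, but the argument is identical for each of your $I'$. Finally, your worry about the weak Jacobian condition is misplaced: the proof of $(\spadesuit)$ uses only Remark~\ref{rmk:JC}(2)(a)(b), valid over any regular ring, and never part~(c); no extra input from \cite{Mat77} is required.
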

\begin{proof}
The same proof as in \cite[Proposition 2.17]{NS} works. We shall give a sketch of the proof. 

We may assume that $X$ is affine, and we may write $X = \operatorname{Spec} (S/I_X)$, where 
\[
S := k[t][[x_1, \ldots , x_N]][y_1, \ldots , y_m]
\]
and $I_X$ is an ideal of $S$. 
By the assumption and Lemma \ref{lem:dim'}(1), we have
\[
\operatorname{ht} P = \dim S - \dim' (S/P) \le (N+m+1) - (n+1) = N+m-n
\]
for any minimal prime $P$ of $I_X$. We set $r := N+m-n$. 

Let $f_1, \ldots , f_d$ be generators of $I_X$. 
For $1 \le i \le r$, we set $F_i := \sum _{j=1}^d a_{ij}f_j$ for general $a_{ij} \in k$. 
Let $M \subset \operatorname{Spec} S$ be the closed subscheme defined by the ideal $I_M := (F_1, \ldots, F_r)$. 
We denote 
\[
I_{X'} := (I_M : I_X) \subset S, \qquad J := \mathcal{J}_{r} \bigl(I_M; \operatorname{Der}_{k[t]}(S) \bigr) \subset S. 
\]
Here, we claim that 
\begin{itemize}
\item[($\spadesuit$)]
$J \subset \sqrt{I_X + I_{X'}}$ holds. 
\end{itemize}
We note that if ($\spadesuit$) is true, then 
the assertions for $X$ can be reduced to those for $M$ by the same argument as in \cite[Proposition 2.17]{NS}. 
Therefore, the assertions follow from Lemma \ref{lem:EM4.1CI_Rt}. 

Let $\mathfrak{p}$ be a prime ideal satisfying $I_X + I_{X'} \subset \mathfrak{p}$. 
To prove ($\spadesuit$), it is sufficient to show that $S/I_M$ is not regular at $\mathfrak{p}$. 
Indeed, if $S/I_M$ is not regular at $\mathfrak{p}$, then we have 
\[
J = \mathcal{J}_{r} \bigl(I_M; \operatorname{Der}_{k[t]}(S) \bigr) \subset \mathcal{J}_{r} \bigl(I_M; \operatorname{Der}_{k}(S) \bigr) \subset \mathfrak{p}
\]
by $\operatorname{ht}(I_M S_{\mathfrak{p}}) \le r$ and the Jacobian criterion of regularity (Remark \ref{rmk:JC}(2)(a)(b)).

Suppose the contrary that $M$ is regular at $\mathfrak{p}$. 
Since any minimal prime $P$ of $I_X$ satisfies $\operatorname{ht} P \le r$ and $a_{ij} \in k$ are general, 
for any irreducible component $X_0$ of $X$, 
there exists an irreducible component $M_0$ of $M$ such that $X_0 \subset M_0$ and $(X_0)_{\rm red} = (M_0)_{\rm red}$. 
Therefore, since $I_M \subset I_X \subset \mathfrak{p}$ and $M$ is regular at $\mathfrak{p}$, 
we have $(I_M)_{\mathfrak{p}} = (I_X)_{\mathfrak{p}}$. 
It shows that 
\[
(I_{X'})_{\mathfrak{p}} = (I_M:I_X)_{\mathfrak{p}} = \bigl( (I_M)_{\mathfrak{p}}:(I_X)_{\mathfrak{p}} \bigr) = S_{\mathfrak{p}}, 
\]
which contradicts $I_{X'} \subset \mathfrak{p}$. We complete the proof of ($\spadesuit$). 
\end{proof}

For an $R$-scheme $X$, a subset $C \subset X_{\infty}$ is called a \textit{cylinder} if $C = \psi_{m} ^{-1}(S)$ holds for some $m \ge 0$ and 
a constructible subset $S \subset X_m$. 

\begin{prop}\label{prop:const_R}
Let $n$ be a non-negative integer and 
let $X$ be a scheme of finite type over $R = k[t][[x_1, \ldots, x_N]]$. 
Suppose that each irreducible component $X_i$ of $X$ has $\dim ' X_i \ge n+1$. 
Let $C$ be a cylinder in $X_{\infty}$ which is contained in 
$\operatorname{Cont}^{\ge 1}(\mathfrak{o}_{X}) \cap \operatorname{Cont}^{e} \bigl( \operatorname{Fitt}^n (\Omega' _{X/k[t]}) \bigr)$
for some $e \ge 0$. 
Then its image $\psi _m (C) \subset X_m$ is a constructible subset for any $m \ge 0$. 
\end{prop}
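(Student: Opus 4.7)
The plan is to combine Proposition \ref{prop:EM4.1_Rt} with Chevalley's theorem on images of constructible sets under morphisms of finite type between Noetherian schemes, working within the finite-type $k$-scheme $\operatorname{Cont}^{\ge 1}(\mathfrak{o}_{X})_m$ provided by Lemma \ref{lem:contm}(1).

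First I would normalize the defining data of the cylinder. Write $C = \psi_{m_0}^{-1}(T)$ for some non-negative integer $m_0$ and a constructible $T \subset X_{m_0}$. By enlarging $m_0$ to satisfy $m_0 \ge \max(e, 1)$ and replacing $T$ with its intersection with $\operatorname{Cont}^{e}(\operatorname{Fitt}^{n}(\Omega'_{X/k[t]}))_{m_0} \cap \operatorname{Cont}^{\ge 1}(\mathfrak{o}_{X})_{m_0}$, I may assume $T$ is a constructible subset of this intersection; the assumption $C \subset \operatorname{Cont}^{e}(\operatorname{Fitt}^{n}(\Omega'_{X/k[t]})) \cap \operatorname{Cont}^{\ge 1}(\mathfrak{o}_{X})$ is preserved by this reduction.

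The core step is to handle $m \ge \max(m_0, ce)$, where $c$ is the constant from Proposition \ref{prop:EM4.1_Rt}. Using $\psi_{m_0} = \pi_{m, m_0} \circ \psi_{m}$ together with $C \subset \operatorname{Cont}^{e}(\operatorname{Fitt}^{n}(\Omega'_{X/k[t]})) \cap \operatorname{Cont}^{\ge 1}(\mathfrak{o}_{X})$, the identity
\[
\psi_{m}(C) = \pi_{m, m_0}^{-1}(T) \cap \psi_{m} \bigl(\operatorname{Cont}^{e}(\operatorname{Fitt}^{n}(\Omega'_{X/k[t]})) \cap \operatorname{Cont}^{\ge 1}(\mathfrak{o}_{X}) \bigr)
\]
is a direct set-theoretic check. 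Proposition \ref{prop:EM4.1_Rt}(1) then rewrites the second factor as $\pi_{m+e, m}\bigl( \operatorname{Cont}^{e}(\operatorname{Fitt}^{n}(\Omega'_{X/k[t]}))_{m+e} \cap \operatorname{Cont}^{\ge 1}(\mathfrak{o}_{X})_{m+e} \bigr)$, which is the image of a constructible set under the finite-type morphism $\pi_{m+e, m}$ (Proposition \ref{prop:repl}(2)), hence constructible by Chevalley's theorem. The first factor is the preimage of a constructible set under a finite-type morphism and is therefore also constructible, so $\psi_{m}(C)$ is constructible.

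For the remaining range $m < \max(m_0, ce)$, I would fix any $m^* \ge \max(m, m_0, ce)$ so that $\psi_{m^*}(C)$ is constructible by the previous step, and deduce $\psi_{m}(C) = \pi_{m^*, m}(\psi_{m^*}(C))$ is constructible by a second application of Chevalley's theorem to the finite-type morphism $\pi_{m^*, m}$. The main obstacle is less any single step and more a framing issue: since $X_m$ is of finite type only over $R = k[t][[x_1, \ldots, x_N]]$ and not over $k$, one cannot apply the familiar finite-type-over-$k$ form of Chevalley; this is overcome by invoking the Noetherian version and by restricting from the start to arcs in $\operatorname{Cont}^{\ge 1}(\mathfrak{o}_{X})$, which via Lemma \ref{lem:contm}(1) ensures that all relevant constructible sets and truncation morphisms live within genuine finite-type $k$-schemes.
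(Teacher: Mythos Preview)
Your proposal is correct and follows essentially the same route as the paper: write $C=\psi_{m_0}^{-1}(T)$, express $\psi_m(C)$ for large $m$ as $\pi_{m,m_0}^{-1}(T)\cap \psi_m\bigl(\operatorname{Cont}^{\ge 1}(\mathfrak{o}_X)\cap\operatorname{Cont}^{e}(\operatorname{Fitt}^n(\Omega'_{X/k[t]}))\bigr)$, invoke Proposition~\ref{prop:EM4.1_Rt}(1) to rewrite the second factor as a finite-level truncation image, and then use Chevalley both here and to descend to small $m$. The paper omits your preliminary normalization of $T$ and your explicit remarks about Lemma~\ref{lem:contm}(1) and the Noetherian form of Chevalley, but the argument is otherwise identical.
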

\begin{proof}
Let $S \subset X_{\ell}$ be a constructible subset such that $\psi^{-1}_{\ell} (S) = C$. 
For $m \ge \ell$, we have 
\[
\pi_{m, \ell}^{-1} (S) \cap \psi_{m} (C) = \psi _m (C) = \pi_{m, \ell}^{-1} (S) \cap \psi_{m} (X_{\infty}). 
\]
By the assumption 
$C \subset \operatorname{Cont}^{\ge 1}(\mathfrak{o}_{X}) 
	\cap \operatorname{Cont}^{e} \bigl( \operatorname{Fitt}^n (\Omega' _{X/k[t]}) \bigr)$, 
we also have 
\[
\psi_{m} (C) = \pi_{m, \ell}^{-1} (S) \cap \psi_{m} \bigl( \operatorname{Cont}^{\ge 1}(\mathfrak{o}_{X}) 
	\cap \operatorname{Cont}^{e} \bigl( \operatorname{Fitt}^n (\Omega' _{X/k[t]}) \bigr) \bigr). 
\]
Let $c$ be the positive integer appearing in Proposition \ref{prop:EM4.1_Rt}. 
Then the constructibility of $\psi_{m} (C)$ follows from Proposition \ref{prop:EM4.1_Rt}(1) if $m \ge \max \{ce, \ell \}$. 
When $m < \max \{ce, \ell \}$, the constructibility follows from that for $m = \max \{ce, \ell \}$ and Chevalley's theorem. 
\end{proof}

We define the codimensions of cylinders $C$ when they satisfy $C \subset \operatorname{Cont}^{\ge 1}(\mathfrak{o}_{X})$. 
\begin{defi}\label{defi:codim_R}
Let $n$ be a non-negative integer and 
let $X$ be a scheme of finite type over $R = k[t][[x_1, \ldots, x_N]]$. 
Suppose that each irreducible component $X_i$ of $X$ has $\dim ' X_i \ge n+1$. 
Let $C \subset X_{\infty}$ be a cylinder contained in $\operatorname{Cont}^{\ge 1}(\mathfrak{o}_{X})$. 
\begin{enumerate}
\item 
Assume that $C \subset \operatorname{Cont}^e \bigl( \operatorname{Fitt}^n (\Omega' _{X/k[t]}) \bigr)$ 
for some $e \in \mathbb{Z}_{\ge 0}$.
Then we define the codimension of $C$ in $X_\infty$ as
\[
\operatorname{codim}(C) := (m+1) n - \operatorname{dim}(\psi_m (C))
\]
for any sufficiently large $m$. This definition is well-defined by Proposition \ref{prop:EM4.1_Rt}(2).

\item 
In general, we define the codimension of $C$ in $X_\infty$ as follows:
\[
\operatorname{codim}(C) := \min_{e \in \mathbb{Z}_{\ge 0}} {\operatorname{codim} \bigl(C \cap 
	\operatorname{Cont}^e \bigl( \operatorname{Fitt}^n (\Omega' _{X/k[t]}) \bigr) \bigr)}. 
\]
By convention, $\operatorname{codim}(C) = \infty$ if 
$C \cap \operatorname{Cont}^e \bigl( \operatorname{Fitt}^n (\Omega' _{X/k[t]}) \bigr) = \emptyset$
for any $e \ge 0$. 
\end{enumerate}
\end{defi}

\begin{rmk}\label{rmk:codim}
The definition of the codimension above depends on the choice of $n$. 
In Subsection \ref{subsubsection:thin}, we fix a non-negative integer $n$, 
and we use the codimension defined for this $n$. 
\end{rmk}

\begin{lem}\label{lem:EM4.4_R}
Let $X$ be a scheme of finite type over $R = k[t][[x_1, \ldots , x_N]]$. 
Let $p$ and $m$ be non-negative integers with $2p+1 \ge m \ge p$. 
Let $\gamma \in X_p (k)$ be a jet. 
If $\pi _{m,p}^{-1} (\gamma) \not = \emptyset$, it follows that 
\[
\pi _{m,p}^{-1} (\gamma) \simeq \operatorname{Hom}_{k[t]/(t^{p+1})} 
\bigl(\gamma ^* \Omega' _{X/k[t]}, (t^{p+1})/(t^{m+1}) \bigr). 
\]
\end{lem}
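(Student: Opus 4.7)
The plan is to reduce to the affine case and interpret lifts of $\gamma$ as special $k[t]$-derivations. First, I would replace $X$ by an affine open neighborhood of the image of $\gamma$, so we may assume $X = \operatorname{Spec}(S/I)$ with $S = k[t][[x_1,\ldots,x_N]][y_1,\ldots,y_M]$ and $I \subset S$ an ideal. By Lemma \ref{lem:repl2}, $\gamma \in X_p(k)$ corresponds to a $k[t]$-algebra homomorphism $\gamma^*: S/I \to k[t]/(t^{p+1})$, and an element of $\pi_{m,p}^{-1}(\gamma)$ corresponds to a $k[t]$-algebra homomorphism $S/I \to k[t]/(t^{m+1})$ reducing to $\gamma^*$ modulo $(t^{p+1})$.

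Next, fix one lift $\widetilde\gamma^*$, which exists by hypothesis. Any other lift has the form $\widetilde\gamma^* + D$ for a $k[t]$-linear map $D: S/I \to (t^{p+1})/(t^{m+1})$. The hypothesis $2p+1 \ge m$ gives $(t^{p+1})^2 \subset (t^{m+1})$ in $k[t]/(t^{m+1})$, so all products $D(f)D(g)$ vanish. Consequently, the condition that $\widetilde\gamma^* + D$ be a ring homomorphism reduces to the Leibniz rule
\[
D(fg) = \gamma^*(f) D(g) + D(f) \gamma^*(g),
\]
where $(t^{p+1})/(t^{m+1})$ is viewed as an $(S/I)$-module through $\gamma^*$; this is well defined because $(t^{p+1})$ annihilates $(t^{p+1})/(t^{m+1})$, so only the class of $\widetilde\gamma^*$ modulo $(t^{p+1})$, namely $\gamma^*$, matters. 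Thus $\pi_{m,p}^{-1}(\gamma)$ is in natural bijection with $\operatorname{Der}_{k[t]}(S/I, (t^{p+1})/(t^{m+1}))$.

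Third, the target $(t^{p+1})/(t^{m+1})$ is a finite-dimensional $k$-vector space, hence a finite $R$-module via $\gamma^*$, so Lemma \ref{lem:D=D'}(1) applies and every such $k[t]$-derivation is automatically a special $k[t]$-derivation. By the universal property of the module of special differentials (Proposition \ref{prop:spOmega}),
\[
\operatorname{Der}'_{k[t]}\bigl(S/I, (t^{p+1})/(t^{m+1})\bigr) \simeq \operatorname{Hom}_{S/I}\bigl(\Omega'_{(S/I)/k[t]}, (t^{p+1})/(t^{m+1})\bigr),
\]
and since the $(S/I)$-action on the target factors through $\gamma^* : S/I \to k[t]/(t^{p+1})$, the right-hand side is canonically identified with $\operatorname{Hom}_{k[t]/(t^{p+1})}\bigl(\gamma^*\Omega'_{X/k[t]}, (t^{p+1})/(t^{m+1})\bigr)$, which yields the asserted isomorphism.

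The main subtlety I would flag is the appearance of the special module $\Omega'$ rather than the usual $\Omega_{X/k[t]}$: an arbitrary $k[t]$-derivation on $S$ need not be special, but Lemma \ref{lem:D=D'}(1) rescues us precisely because the target $(t^{p+1})/(t^{m+1})$ is finite, so the classical argument survives formal power series. The hypothesis $2p+1 \ge m$ is used in two essential ways: it annihilates the quadratic correction $D(f)D(g)$ so that lifts correspond bijectively to derivations, and it makes $(t^{p+1})/(t^{m+1})$ a $k[t]/(t^{p+1})$-module, matching the statement.
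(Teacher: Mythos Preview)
Your proof is correct and follows essentially the same route as the paper's: reduce to the affine case, fix a lift and identify the fiber with $\operatorname{Der}_{k[t]}(A,(t^{p+1})/(t^{m+1}))$ via $\beta\mapsto\beta^*-\widetilde\gamma^*$, then invoke Lemma~\ref{lem:D=D'} to pass from $\operatorname{Der}$ to $\operatorname{Der}'$ and apply the universal property of $\Omega'$. You spell out more of the standard details (the square-zero step, the $k[t]/(t^{p+1})$-module structure, the tensor--hom identification), but there is no substantive difference in strategy.
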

\begin{proof}
We may assume that $X$ is affine, and we may write $X = \operatorname{Spec} A$ with an $R$-algebra $A$ of finite type. 
Let $\gamma ^* : A \to k[t]/(t^{p+1})$ be the corresponding $k[t]$-ring homomorphism to $\gamma$. 
Take any $\alpha \in \pi _{m,p}^{-1} (\gamma)$. 
Let $\alpha ^* : A \to k[t]/(t^{m+1})$ be the corresponding $k[t]$-ring homomorphism. 
Then for the same reason as in the case of $k$-schemes (cf.\ \cite[Proposition 4.4]{EM09}), we have an isomorphism
\[
\pi _{m,p}^{-1} (\gamma) \simeq \operatorname{Der}_{k[t]} \left( A, (t^{p+1})/(t^{m+1}) \right); \quad  \beta \mapsto \beta^* - \alpha^*.
\]
Here, $(t^{p+1})/(t^{m+1})$ in the right-hand side has an $A$-module structure via $\gamma ^*$. 
Then the assertion follows from the isomorphisms
\begin{align*}
\operatorname{Der}_{k[t]} \left( A, (t^{p+1})/(t^{m+1}) \right)
&= \operatorname{Der}' _{k[t]} \left( A, (t^{p+1})/(t^{m+1}) \right) \\
&\simeq \operatorname{Hom}_{A} \bigl( \Omega' _{A/k[t]}, (t^{p+1})/(t^{m+1}) \bigr). 
\end{align*}
Here, the first equality follows from Lemma \ref{lem:D=D'}. 
\end{proof}

\subsubsection{Thin and very thin cylinders}\label{subsubsection:thin}

We fix a non-negative integer $n$ throughout this subsection. 

\begin{defi}\label{defi:vt}
Let $X$ be a scheme of finite type over $R = k[t][[x_1, \ldots , x_N]]$. 
Suppose that each irreducible component $X_i$ of $X$ has $\dim ' X_i \ge n+1$. 
A subset $A \subset X_{\infty}$ is called \textit{thin} 
if $A \subset Z_{\infty}$ holds for some closed subscheme $Z$ of $X$ with $\dim Z \le n$. 
$A$ is called \textit{very thin} if $A \subset Z_{\infty}$ holds for some closed subscheme $Z$ of $X$ with $\dim Z \le n-1$. 
\end{defi}

The term ``very thin" is used only in this paper. 
In Question \ref{quest}(1) and Remark \ref{rmk:quest}(1), we shall explain the motivation to introduce this terminology. 

\begin{quest}\label{quest}
Let $R$ and $X$ be as in Definition \ref{defi:vt}. 
\begin{enumerate}
\item Suppose that $C$ is a thin cylinder of $X_{\infty}$. 
Then, does $C \cap \operatorname{Cont}^e \bigl( \operatorname{Fitt}^n (\Omega' _{X/k[t]}) \bigr) = \emptyset$ hold for any $e \ge 0$?  

\item Suppose that $X$ is an integral scheme and $Y \subset X$ is the closed subscheme defined by the ideal $\operatorname{Fitt}^n (\Omega' _{X/k[t]})$. 
Then, is $Y_{\infty}$ a thin set of $X_{\infty}$? 

\item Let $S = k[t][[x_1, \ldots, x_N]][y_1, \ldots , y_m]$, and 
let $P$ be a prime ideal of $S$ of height $r$. 
Suppose that $P \cap k[t] = (0)$. 
Then, does $\mathcal{J}_{r}\bigl( P; \operatorname{Der}_{k[t]}(S) \bigr) \not \subset P$ hold? 
\end{enumerate}
\end{quest}

\begin{rmk}\label{rmk:quest}
\begin{enumerate}
\item 
Note that Question \ref{quest}(1) is true for the arc spaces of $k$-varieties $X$: 
\begin{itemize}
\item 
If $C$ is a thin cylinder of $X_{\infty}$, 
then $C \cap \operatorname{Cont}^{e} (\operatorname{Jac}_{X/k}) = \emptyset$ holds for any $e \ge 0$ 
(cf.\ \cite[Lemma 5.1]{EM09}). 
\end{itemize}
The same statement is true for schemes $X$ of finite type over $k[[x_1, \ldots , x_N]]$ 
by replacing $\operatorname{Jac}_{X/k}$ with $\operatorname{Jac}'_{X/k}$. 
Furthermore, Question \ref{quest}(1) is true also for schemes $X$ of finite type over $k[t]$ (cf.\ \cite[Lemma 2.23]{NS}). 
However, the same proofs do not work for schemes $X$ of finite type over $R = k[t][[x_1, \ldots , x_N]]$, 
and hence it is not clear to us whether Question \ref{quest}(1) is true for this setting (see also the discussion in (\ref{item:N=0}) below). 
This is why we introduce the term ``very thin" and we will prove weaker statements instead in Lemma \ref{lem:thin_R} for very thin cylinders 
and Proposition \ref{prop:resol} for $X$ with an additional assumption. 

\item 
Due to the proof of \cite[Lemma 5.1]{EM09}, 
Question \ref{quest}(1) can be reduced to Question \ref{quest}(2) by the Noetherian induction on dimension. 
Furthermore, Question \ref{quest}(3) implies Question \ref{quest}(2). 

\item \label{item:N=0}
Question \ref{quest}(3) is related to the weak Jacobian condition (WJ) explained in Remark \ref{rmk:JC}(2). 
Indeed, if $N = 0$, then Question \ref{quest}(3) can be proved using Remark \ref{rmk:JC}(2)(c) as follows. 
We denote
\[
S' := (k[t] \setminus \{ 0 \})^{-1} S = k(t)[y_1, \ldots , y_m]
\]
the localization. Then by the assumption $P \cap k[t] = (0)$, 
we have $PS' \not = S'$ and hence $PS'$ is a prime ideal of height $r$. 
Since $S'$ satisfies (WJ)$_{k(t)}$, we have 
\[
\mathcal{J}_{r}\bigl( P ; \operatorname{Der}_{k[t]}(S) \bigr) S' + PS' = 
\mathcal{J}_{r}\bigl( PS' ; \operatorname{Der}_{k(t)}(S') \bigr) + PS' \not \subset PS',  
\]
which proves $\mathcal{J}_{r}\bigl( P; \operatorname{Der}_{k[t]}(S) \bigr) \not \subset P$. 
Note that the same proof does not work when $N > 0$ because we have 
\[
S' := (k[t] \setminus \{ 0 \})^{-1} S \not = k(t)[[x_1, \ldots , x_N]][y_1, \ldots , y_m], 
\] 
and it is not clear whether 
$\mathcal{J}_{r}\bigl( PS' ; \Delta \bigr) \not \subset PS'$
holds for $\Delta = \bigl\{ \partial / \partial x_i, \partial / \partial y_j \ \big|\ i,j \bigr\}$. 

\item 
Question \ref{quest}(1) is also true for the arc spaces (Greenberg schemes) of formal $k[[t]]$-schemes of finite type, 
which will be dealt with in Subsection \ref{subsection:formal} (see \cite[Ch.6.\ Proposition 2.4.3]{CLNS}). 
Actually, Question \ref{quest}(3) is true for this setting: 
\begin{itemize}
\item Let $S = k[x_1, \ldots, x_N][[t]]$, and let $P$ be a prime ideal of $S$ of height $r$. 
Suppose that $P \cap k[[t]] = (0)$. Then we have $\mathcal{J}_{r}\bigl( P; \operatorname{Der}_{k[[t]]}(S) \bigr) \not \subset P$. 
\end{itemize}
We denote 
\[
S' := S_t = k[x_1, \ldots , x_N]((t))
\]
the localization. 
We note that the assumption $P \cap k[[t]] = (0)$ is equivalent to $t \not \in P$, and hence $PS'$ is a prime ideal of height $r$. 
Since $S'$ satisfies (WJ)$_{k((t))}$ by \cite[Theorem 46.3]{Nag62}, the same proof as in (\ref{item:N=0}) above works and 
we have $\mathcal{J}_{r}\bigl( P; \operatorname{Der}_{k[[t]]}(S) \bigr) \not \subset P$. 
Note here that both $\operatorname{Der}_{k[[t]]}(S)$ and $\operatorname{Der}_{k((t))}(S')$ are generated by 
$\partial / \partial x_i$'s. 
\end{enumerate}
\end{rmk}

In Lemmas \ref{lem:forget} and \ref{lem:thin_R} below, for a scheme $X$ over $R = k[t][[x_1, \ldots , x_N]]$, 
we also consider the jet schemes and the arc space in the sense of Section \ref{section:jet}. 
To avoid confusion, we denote them by $\mathcal{L}_m (X)$ and $\mathcal{L}_{\infty} (X)$, that is, 
$\mathcal{L}_m(X)$ is the scheme representing the functor 
\[
F_{m}: ({\sf Sch}/k) \to ({\sf Set}); \quad 
Y \mapsto \operatorname{Hom} _{k} \left( Y \times_{\operatorname{Spec} k} \operatorname{Spec} k[t]/(t^{m+1}), X \right)
\]
and $\mathcal{L}_{\infty}(X) = \mathop{\varprojlim} \limits_{m} \mathcal{L}_m (X)$ is the projective limit. 

\begin{lem}\label{lem:forget}
Let $X$ be a scheme of finite type over $R = k[t][[x_1, \ldots , x_N]]$. 
Then the following hold. 
\begin{enumerate}
\item There exist natural closed immersions $X_{m} \to \mathcal{L}_m (X)$ for $m \ge 0$ which commute 
with the truncation morphisms. 

\item Let $x \in X$ be a $k$-point 
and let $\widehat{\mathcal{O}}_{X,x}$ be the completion of the local ring $\mathcal{O}_{X,x}$ at its maximal ideal.
Let $X' := \operatorname{Spec} (\widehat{\mathcal{O}}_{X,x})$ and let $x' \in X'$ be the corresponding $k$-point. 
Then for the truncation morphisms $\pi ^{X} _m : \mathcal{L}_m (X) \to X$ and $\pi ^{X'}_m : \mathcal{L}_m(X') \to X'$, 
we have $\bigl( \pi ^{X} _m \bigr)^{-1} (x) \simeq \bigl( \pi ^{X'} _m \bigr)^{-1} (x')$. 
\end{enumerate}
\end{lem}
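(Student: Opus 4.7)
The plan is to work functorially throughout. For part (1), a $Y = \operatorname{Spec} C$-valued point of $X_m$ is by definition a $k[t]$-morphism $\operatorname{Spec} C[t]/(t^{m+1}) \to X$, whereas a $Y$-valued point of $\mathcal{L}_m(X)$ is a $k$-morphism between the same schemes, with $X$ regarded only as a $k$-scheme. Since every $k[t]$-morphism is in particular a $k$-morphism, the inclusion of functors gives a canonical morphism $X_m \to \mathcal{L}_m(X)$ which commutes with the truncations $\pi_{mn}$ by the same functorial observation.

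To verify that this morphism is a closed immersion I would argue affine-locally. Write a chart of $X$ as $\operatorname{Spec}(S/I)$ with $S = k[t][[x_1,\ldots,x_N]][y_1,\ldots,y_M]$. By Lemma \ref{lem:repl2}, the coordinate ring of $X_m$ on this chart is obtained by expanding each generator $f_i$ of $I$ in $\sum_\ell x_j^{(\ell)} t^\ell$ and $\sum_\ell y_{j'}^{(\ell)} t^\ell$, leaving the parameter $t$ itself untouched because it is inherited from the base $k[t]$. The coordinate ring of $\mathcal{L}_m(X)$ is obtained by the same process but also expanding $t$ as $\sum_{\ell} t^{(\ell)}t^\ell$ in a new family of coordinates $t^{(0)},\ldots,t^{(m)}$ (compare Remark \ref{rmk:fintype}). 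The forgetful morphism $X_m \to \mathcal{L}_m(X)$ then corresponds to setting $t^{(0)} = 0$, $t^{(1)} = 1$, and $t^{(\ell)} = 0$ for $\ell \ge 2$, which is patently a closed immersion and is visibly compatible with the truncation maps on both sides.

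For part (2), I would identify both fibers with the same functor of formal data. A $C$-valued point of $(\pi_m^X)^{-1}(x)$ is a $k$-morphism $\varphi \colon \operatorname{Spec} C[t]/(t^{m+1}) \to X$ whose composition with the zero section $\operatorname{Spec} C \hookrightarrow \operatorname{Spec} C[t]/(t^{m+1})$ is the constant map through the $k$-point $x$. Choosing an affine open $U = \operatorname{Spec} A$ of $X$ around $x$ with maximal ideal $\mathfrak{m}_x$, this translates to a $k$-algebra homomorphism $\varphi^* \colon A \to C[t]/(t^{m+1})$ satisfying $\varphi^*(\mathfrak{m}_x) \subset (t)C[t]/(t^{m+1})$. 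Because $(t)^{m+1} = 0$, the map $\varphi^*$ factors uniquely through $A/\mathfrak{m}_x^{m+1} \cong \widehat{\mathcal{O}}_{X,x}/\widehat{\mathfrak{m}}_x^{m+1}$, hence through $\widehat{\mathcal{O}}_{X,x}$. Running the argument in reverse for $X'$ identifies $C$-valued points of $(\pi_m^{X'})^{-1}(x')$ with the same data; naturality in $C$ then yields the desired isomorphism of schemes.

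The main obstacle will be the closed-immersion claim in part (1): since $X$ is not literally a scheme of finite type over $k$ nor over $k[[x_1,\ldots,x_N]]$ (the presence of the variable $t$ places $X$ outside the framework of Section \ref{section:jet} applied directly), one must be careful that the local model for $\mathcal{L}_m(X)$ really does arise by jet-expanding $t$ on the same footing as the $y_{j'}$'s; once that is set up, the additional closed conditions $t^{(0)} = 0$, $t^{(1)} = 1$, $t^{(\ell)} = 0$ ($\ell \ge 2$) make the closed immersion entirely transparent.
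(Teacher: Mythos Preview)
Your proof is correct. For part (1) your approach is essentially the same as the paper's: both work affine-locally, introduce jet variables $u^{(s)}$ (your $t^{(s)}$) for the parameter $t$, and realise the closed immersion by the surjection $u^{(1)} \mapsto 1$, $u^{(s)} \mapsto 0$ for $s \neq 1$. The paper spells out more carefully the ring structure of the coordinate ring $T_m$ of $\mathcal{L}_m(\operatorname{Spec} S)$---namely that $u^{(0)}$ appears as a polynomial variable \emph{outside} the power-series brackets $k[u^{(0)}][[x_1^{(0)},\ldots,x_N^{(0)}]][\ldots]$---which is exactly the ``main obstacle'' you flag; your reference to Remark~\ref{rmk:fintype} is not quite on target for this, since that remark treats $k[[x_1,\ldots,x_N]]$-schemes rather than $k[t][[x_1,\ldots,x_N]]$-schemes regarded as $k$-schemes, and the paper in effect redoes the argument of Lemma~\ref{lem:repl} with the extra variable $t$.

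For part (2) your route is genuinely different. The paper proceeds by writing down the coordinate rings of both fibers explicitly and invoking \cite{Ish09} to identify $\mathcal{L}_m(X')$; it then observes that setting $u^{(0)} = x_j^{(0)} = y_{j'}^{(0)} = 0$ produces literally the same quotient ring on both sides. Your argument is purely functorial: any $k$-algebra map $A \to C[t]/(t^{m+1})$ sending $\mathfrak{m}_x$ into $(t)$ kills $\mathfrak{m}_x^{m+1}$ and hence factors through $A/\mathfrak{m}_x^{m+1} \cong \widehat{\mathcal{O}}_{X,x}/\widehat{\mathfrak{m}}_x^{m+1}$, so both fibers represent the same functor. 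This is shorter, avoids the explicit coordinate bookkeeping and the external reference to \cite{Ish09}, and uses nothing about the special form of $X$. The paper's explicit approach, on the other hand, is consistent with its overall strategy of concrete ring presentations (Lemmas~\ref{lem:repl}--\ref{lem:repl2}), which are reused elsewhere.
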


\begin{proof}
We may assume that $X$ is affine, and we may write $X = \operatorname{Spec} (S/I)$ with 
\[
S := k[t][[x_1, \ldots , x_N]][y_1, \ldots, y_M]
\]
and $I := (f_1, \ldots, f_r)$ an ideal of $S$. We set $A := \operatorname{Spec} S$. 

Then, by the same argument as in Lemma \ref{lem:repl}, 
we have $\mathcal{L}_m(A) \simeq \operatorname{Spec} T_m$, where
\[
T_m := 
k \bigl[ u^{(0)} \bigr] \bigl[ \bigl[ x_1^{(0)}, \ldots, x_N^{(0)} \bigr] \bigr]
\biggl[ u^{(s)}, x_j^{(s)}, y_{j'}^{(s')} 
\ \bigg | \ 
\begin{array}{l}
 1 \le j \le N,\ 1\le j' \le M, \\
 1 \le s \le m,\ 0 \le s' \le m 
\end{array}
  \biggr].
\]
In the same way as in Lemma \ref{lem:repl}, we can define a ring homomorphism 
$\Lambda' : S \to T_m[t]/(t^{m+1})$ that satisfies
\begin{itemize}
\item $\Lambda'(t) = u^{(0)} + u^{(1)}t + \cdots + u^{(m)} t^m$, 
\item $\Lambda'(x_j) = x_j^{(0)} + x_j^{(1)}t + \cdots + x_j^{(m)}t^m$ for each $0 \le j \le N$, and
\item $\Lambda'(y_j) = y_j^{(0)} + y_j^{(1)}t + \cdots + y_j^{(m)}t^m$ for each $0 \le j \le M$.
\end{itemize}
For $1 \le i \le r$ and $0 \le \ell \le m$, 
we define $G_i ^{(\ell)} \in T_m$ as 
\[
\Lambda ' (f_i) = \sum _{\ell = 0} ^{m} G_i ^{(\ell)} t^{\ell} \pmod{t^{m+1}}. 
\]
Let \[
J_m := \bigl( G_i ^{(s)} \ \big| \  1 \le i \le r, \ 0 \le s \le m \bigr) \subset T_m
\]
be the ideal generated by $G^{(s)}_i$'s. 
Then by the same argument as in Lemma \ref{lem:repl2}, 
we have $\mathcal{L}_m (X) \simeq \operatorname{Spec} (T_m / J_m)$. 

Let $S_m$ be the ring defined in Lemma \ref{lem:repl}. 
Let $\Xi: T_m \to S_m$ be a surjective ring homomorphism defined by 
\begin{itemize}
\item $\Xi(u^{(1)}) = 1$, and $\Xi(u^{(s)}) = 0$ for each $s=0$ and $2 \le s \le m$. 
\item $\Xi(x^{(s)}_j) = x^{(s)}_j$ for each $1 \le j \le N$ and $0 \le s \le m$. 
\item $\Xi(y^{(s)}_j) = y^{(s)}_j$ for each $1 \le j \le M$ and $0 \le s \le m$. 
\end{itemize}
We note that $\Lambda : S \to S_m[t]/(t^{m+1})$ defined in the proof of Lemma \ref{lem:repl} coincides with the composition
\[
S \xrightarrow{\Lambda '} T_m[t]/(t^{m+1}) \xrightarrow{\Xi} S_m[t]/(t^{m+1}), 
\]
where $T_m[t]/(t^{m+1}) \to S_m[t]/(t^{m+1})$ is the $k[t]$-ring homomorphism induced by $\Xi$. 
Therefore, $F^{(s)}_i$ in Lemma \ref{lem:repl2} coincides with $\Xi (G^{(s)}_i)$ for each $i$ and $s$. 

Let $I_m \subset S_m$ be the ideal defined in Lemma \ref{lem:repl2}. 
Then $\Xi$ induces a surjective ring homomorphism $T_m / J_m \to S_m /I_m$. 
It gives a closed immersion
\[
X_{m} \simeq \operatorname{Spec} (S_m / I_m) \hookrightarrow  \operatorname{Spec} (T_m / J_m) \simeq \mathcal{L}_m (X), 
\]
which completes the proof of (1). 

Since $x \in X$ is a $k$-point, the corresponding maximal ideal of $S$ is of the form
\[
\bigl( t-a, x_1, \ldots , x_N, y_1 - b_1, \ldots, y_M - b_M \bigr)
\]
with $a, b_1, \ldots , b_M \in k$. 
For simplicity, we assume that $a = b_1 = \cdots = b_M = 0$. 
Then $\bigl( \pi ^{X} _m \bigr)^{-1} (x) \subset \mathcal{L}_m (X)$ 
is isomorphic to the closed subscheme of $\operatorname{Spec} T_m$ 
defined by 
\[
J_m + \bigl( u^{(0)}, x_1^{(0)}, \ldots, x_N^{(0)}, y_1^{(0)}, \ldots, y_M^{(0)} \bigr). 
\]

On the other hand, we have 
\[
\widehat{\mathcal{O}}_{A,x} \simeq k[[t, x_1, \ldots , x_N, y_1, \ldots, y_M ]]. 
\]
We set $A' := \operatorname{Spec} (\widehat{\mathcal{O}}_{A,x})$ and 
\[
T' _m := 
k \bigl[ \bigl[ u^{(0)}, x_1^{(0)}, \ldots, x_N^{(0)}, y_1^{(0)}, \ldots, y_M^{(0)} \bigr] \bigr]
\left[ u^{(s)}, x_j^{(s)}, y_{j'}^{(s)} 
\ \middle | \ 
\begin{array}{l}
 1 \le j \le N, \\
 1\le j' \le M, \\
 1 \le s \le m
\end{array}
  \right].
\]
Then by \cite[Proposition 4.1]{Ish09}, we have $\mathcal{L}_m (A') \simeq \operatorname{Spec} T'_m$. 
Furthermore, by \cite[Corollary 4.2]{Ish09}, we have $\mathcal{L}_m (X') \simeq \operatorname{Spec} (T'_m/J_mT'_m)$. 
Therefoere, $\bigl( \pi ^{X'} _m \bigr)^{-1} (x') \subset \mathcal{L}_m (X')$ 
is isomorphic to the closed subscheme of $\operatorname{Spec} T'_m$ 
defined by 
\[
J_mT'_m + \bigl( u^{(0)}, x_1^{(0)}, \ldots, x_N^{(0)}, y_1^{(0)}, \ldots, y_M^{(0)} \bigr). 
\]
Therefore we have $\bigl( \pi ^{X} _m \bigr)^{-1} (x) \simeq \bigl( \pi ^{X'} _m \bigr)^{-1} (x')$, which completes the proof of (2). 
\end{proof}

\begin{lem}\label{lem:thin_R}
Let $X$ be a scheme of finite type over $R = k[t][[x_1, \ldots , x_N]]$. 
Suppose that each irreducible component $X_i$ of $X$ has $\dim ' X_i \ge n+1$. 
Let $C \subset X_{\infty}$ be a cylinder contained in $\operatorname{Cont}^{\ge 1}(\mathfrak{o}_{X})$. 
If $C$ is very thin, then 
$C \cap \operatorname{Cont}^{e} \bigl( \operatorname{Fitt}^n (\Omega' _{X/k[t]}) \bigr) = \emptyset$ holds for any $e \ge 0$. 
\end{lem}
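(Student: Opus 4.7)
The plan is to reduce Lemma \ref{lem:thin_R} to the formal power series setting of Section \ref{section:jet} by completing at a $k$-point, where the analogous statement (a case of Question \ref{quest}(1) for the ring $k[[u_1,\ldots,u_M]]$) is already available through Remark \ref{rmk:quest}(1). The extra slack in the ``very thin'' hypothesis $\dim Z \le n-1$ (rather than $\le n$) is exactly what is needed to produce, on the completed side, a subscheme strictly below the dimension at which the known result kicks in.

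First I would reduce to the case where $X$ and $Z$ are both integral, with $Z \subsetneq X$ and $\dim Z \le n-1$. This is permissible because $\operatorname{Fitt}^n$ only enlarges under the surjection $\Omega'_{X/k[t]} \otimes \mathcal{O}_{X_j} \twoheadrightarrow \Omega'_{X_j/k[t]}$, so the order of $\operatorname{Fitt}^n(\Omega'_{X/k[t]})$ along any arc is bounded below by the corresponding order for a component $X_j$ containing the component of $Z$ through which the arc factors. A short computation combining the exact sequence of Proposition \ref{prop:spOmega2}(\ref{item:ex2}) with Lemma \ref{lem:dim'}(1) then shows that the generic rank of $\Omega'_{X/k[t]}$ over $K(X)$ equals $\dim' X - 1$. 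If $\operatorname{Fitt}^n(\Omega'_{X/k[t]}) = 0$ the conclusion of the lemma is immediate; otherwise the generic rank is at most $n$, forcing $\dim' X = n+1$, and hence $\dim X = n+1$ by Lemma \ref{lem:dim'}(4).

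Now I would argue by contradiction: suppose some $\gamma \in C \cap \operatorname{Cont}^e(\operatorname{Fitt}^n(\Omega'_{X/k[t]}))$ exists. By Proposition \ref{prop:const_R} such a $\gamma$ can be taken to be a $k$-arc, and after translating the $y$-coordinates I assume its closed point $p$ is the origin $(t, x_1, \ldots, x_N, y_1, \ldots, y_m) = 0$. Since $\gamma^* \colon \mathcal{O}_{X,p} \to k[[t]]$ sends the maximal ideal into $(t)$ and $k[[t]]$ is $(t)$-adically complete, $\gamma^*$ extends uniquely to $\hat\gamma^* \colon \widehat{\mathcal{O}}_{X,p} \to k[[t]]$, defining an arc $\hat\gamma$ of $\widehat X := \operatorname{Spec}(\widehat{\mathcal{O}}_{X,p})$. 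The scheme $\widehat X$ is of finite type over $\widehat R = k[[t, x_1, \ldots, x_N]]$, so it falls under the Section \ref{section:jet} setup with $N+1$ formal variables. The catenariness of $R$ together with the fact that $p$ is a $k$-point of $X$ gives $\dim \widehat X = \dim' \widehat X = n+1$, and $\hat\gamma$ factors through $\widehat Z := \operatorname{Spec}(\widehat{\mathcal{O}}_{Z,p})$, which has $\dim \widehat Z \le \dim Z \le n-1$.

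The Jacobian matrix presenting $\Omega'_{\widehat X/k}$, with columns for $\partial/\partial t, \partial/\partial x_i, \partial/\partial y_j$, contains the Jacobian presenting $\Omega'_{X/k[t]}$ (which omits the $\partial/\partial t$ column) as a submatrix of the same row count. Every $(N+m-n) \times (N+m-n)$ minor of the latter is therefore also a minor of the former, so that
\[
\operatorname{Fitt}^n(\Omega'_{X/k[t]}) \cdot \widehat{\mathcal{O}}_{X,p} \;\subset\; \operatorname{Fitt}^{n+1}(\Omega'_{\widehat X/k}) \;=\; \operatorname{Jac}'_{\widehat X/k},
\]
where the equality uses $\dim' \widehat X = n+1$ and Definition \ref{defi:omega'}(2). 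Consequently $\operatorname{ord}_{\hat\gamma}(\operatorname{Jac}'_{\widehat X/k}) \le e < \infty$. On the other hand $\hat\gamma \in \widehat Z_\infty$ is thin in $\widehat X_\infty$ in the Section \ref{section:jet} sense, since $\dim \widehat Z \le n-1 < n = \dim \widehat X - 1$; by Remark \ref{rmk:quest}(1) such a thin set is disjoint from $\operatorname{Cont}^{e'}(\operatorname{Jac}'_{\widehat X/k})$ for every finite $e'$. This is the desired contradiction.

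The hard part will be to verify carefully the passage from Section \ref{section:ktjet} to Section \ref{section:jet}: the bijection of $k$-arcs through $p$ between $X_\infty$ and $\widehat X_\infty$ (via the universal property of completion and Lemma \ref{lem:forget}(2)), the dimension identity $\dim \widehat X = \dim' X$ at the $k$-point $p$ via catenariness of $R$, and the submatrix comparison of Jacobians that produces the Fitting-ideal inclusion above. The essential role of the very thin hypothesis is precisely that $\dim \widehat Z \le n-1$ keeps $\widehat Z$ strictly below the thin threshold $\dim \widehat X - 1 = n$ in the formal power series setting, which is where Remark \ref{rmk:quest}(1) is available.
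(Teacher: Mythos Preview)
Your argument has a genuine gap at the final step. Remark \ref{rmk:quest}(1) (and the underlying \cite[Lemma 5.1]{EM09}) concerns thin \emph{cylinders}, not arbitrary thin sets: the statement is that a cylinder $C$ of $\widehat X_\infty$ satisfying $C \subset \widehat Z_\infty$ with $\dim \widehat Z < \dim \widehat X$ must have $C \cap \operatorname{Cont}^{e'}(\operatorname{Jac}'_{\widehat X/k}) = \emptyset$. It is \emph{not} true that $\widehat Z_\infty$ itself, or a single arc $\hat\gamma \in \widehat Z_\infty$, avoids $\operatorname{Cont}^{e'}(\operatorname{Jac}'_{\widehat X/k})$. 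For instance, if $\widehat X = \operatorname{Spec} k[[u,v]]$ (so $\operatorname{Jac}'_{\widehat X/k} = \mathcal{O}_{\widehat X}$) and $\widehat Z$ is the closed point, the trivial arc lies in $\widehat Z_\infty \cap \operatorname{Cont}^0(\operatorname{Jac}'_{\widehat X/k})$. In your setup you only transfer the single arc $\gamma$ to $\hat\gamma$; you never produce a cylinder of $\widehat X_\infty$ that is contained in $\widehat Z_\infty$.

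Nor can this be repaired by simply pushing $C$ across: the cylinder $C$ of $X_\infty$ (in the $k[t]$-sense) does not yield a cylinder of $\mathcal{L}_\infty(\widehat X)$ after completion. The paper itself remarks this explicitly in the proof (``$C \subset \mathcal{L}_\infty(Z')$ is not necessarily a cylinder of $\mathcal{L}_\infty(Z')$''). The cylinder $(\psi_m^{\widehat X})^{-1}(S')$ you would naturally form contains arcs of $\widehat X$ that do not arise from $k[t]$-arcs of $X$, and there is no reason those lie in $\widehat Z_\infty$. The paper circumvents this by never invoking the thin-cylinder result on the completed side; instead it runs a Noetherian minimality argument to find an integral $Y' \subset \widehat Z$ capturing a subcylinder $C'$ of $X_\infty$, and then compares fiber dimensions of the truncation maps: for large $m$ the map $\psi^X_{m+1}(E_{m,\beta}) \to \psi^X_m(E_{m,\beta})$ has $n$-dimensional fibers by Proposition \ref{prop:EM4.1_Rt}, while $\psi^{Y'}_{m+1}(D_{m,\beta}) \to \psi^{Y'}_m(D_{m,\beta})$ has fibers of dimension $\dim Y' \le n-1$ by Proposition \ref{prop:EM4.1_R}, and the inclusion $E_{m,\beta} \subset D_{m,\beta}$ forces a contradiction. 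Your Fitting-ideal comparison $\operatorname{Fitt}^n(\Omega'_{X/k[t]}) \cdot \widehat{\mathcal{O}}_{X,p} \subset \operatorname{Fitt}^{n+1}(\Omega'_{\widehat X/k})$ is correct and morally related, but it does not by itself replace this fiber-dimension argument.
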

\begin{proof}
Suppose the contrary that 
$C \cap \operatorname{Cont}^{e} \bigl( \operatorname{Fitt}^n (\Omega' _{X/k[t]}) \bigr) \not = \emptyset$ for some $e \ge 0$. 
By replacing $C$ with $C \cap \operatorname{Cont}^{e} \bigl( \operatorname{Fitt}^n (\Omega' _{X/k[t]}) \bigr)$, 
we may assume that $\emptyset \not = C \subset \operatorname{Cont}^{e} \bigl( \operatorname{Fitt}^n (\Omega' _{X/k[t]}) \bigr)$. 
Pick a $k$-arc $\gamma \in C$. Let $x := \pi^X _{\infty} (\gamma) \in X$ be the $k$-point of $X$. 
Then, by replacing $C$ with $C \cap (\pi^X_{\infty}) ^{-1}(x)$, we may assume that $C \subset (\pi ^X_{\infty}) ^{-1}(x)$. 

Since $C$ is a very thin set, there exists a closed subscheme $Z \subset X$ such that $C \subset Z_{\infty}$ and $\dim Z \le n-1$. 
Since $\gamma \in Z_{\infty}$, it follows that $x \in Z$. 
Let $\widehat{\mathcal{O}}_{Z,x}$ be the completion of the local ring $\mathcal{O}_{Z,x}$ at its maximal ideal.
Let $Z' := \operatorname{Spec} (\widehat{\mathcal{O}}_{Z,x})$, and let $x' \in Z'$ be the corresponding $k$-point. 
Then, since $C \subset (\pi_{\infty}^Z) ^{-1}(x)$, we may identify $C$ 
with a subset of $\mathcal{L}_{\infty} (Z')$ by Lemma \ref{lem:forget}(1)(2). 
Note that $C \subset \mathcal{L}_{\infty} (Z')$ is not necessarily a cylinder of $\mathcal{L}_{\infty} (Z')$ 
under this identification. 

Let $\mathcal{S}$ be the set of the closed subschemes $Y'$ of $Z'$ with the following condition: 
\begin{itemize}
\item There exists a cylinder $C'$ of $X_{\infty}$ such that 
$\emptyset \not = C' \subset C$ and $C' \subset \mathcal{L}_{\infty} (Y')$. 
\end{itemize}
Here, the inclusion $C' \subset \mathcal{L}_{\infty} (Y')$ is considered by 
the identifications $\mathcal{L}_{\infty} (Y') \subset \mathcal{L}_{\infty} (Z')$ and $C' \subset C \subset \mathcal{L}_{\infty} (Z')$. 
Let $Y'$ be a minimal element of $\mathcal{S}$, and let $C'$ be a corresponding cylinder of $X_{\infty}$. 
Then $Y'$ is reduced by the minimality. 

We shall prove that $Y'$ is irreducible. 
Suppose the contrary that $Y' = Y' _1 \cup  \cdots \cup Y' _{\ell}$ is the irreducible decomposition with $\ell \ge 2$. 
By the minimality of $Y'$, it follows that $C' \not \subset \mathcal{L}_{\infty} (Y' _1)$ and hence we have
\[
C'' := C' \cap \bigl( (\psi^{Z'} _{q})^{-1}( \mathcal{L}_q (Y' _1)) \setminus (\psi^{Z'} _{q+1})^{-1}( \mathcal{L}_{q+1} (Y' _1))  \bigr) \not = \emptyset
\]
for some $q \ge -1$,  
where we set $(\psi^{Z'} _{q})^{-1}( \mathcal{L}_q (Y' _1)) = \mathcal{L}_{\infty} (Z')$ for $q = -1$ by abuse of notation. 
Here, we have taken the intersection in the space $\mathcal{L}_{\infty}(Z')$. 
Since $C'' \cap \mathcal{L}_{\infty} (Y' _1) = \emptyset$, we have 
\[
C'' \subset C \setminus \mathcal{L}_{\infty} (Y' _1) 
\subset \mathcal{L}_{\infty} (Y') \setminus \mathcal{L}_{\infty} (Y' _1) 
\subset \mathcal{L}_{\infty} (Y'_{2} \cup  \cdots \cup Y' _{\ell}). 
\]
To get a contradiction by the minimality of $Y'$, it is sufficient to show that $C''$ is a cylinder of $X_{\infty}$. 
For this purpose, we shall see that 
\[
C'_q := C' \cap (\psi^{Z'} _{q})^{-1}( \mathcal{L}_q (Y' _1))
\]
is a cylinder of $X_{\infty}$. Under the following identifications
\[
\xymatrix{
& X_{\infty} \cap (\pi^{X}_{\infty})^{-1}(x) \ar[r]^{\psi^X_q} & X_q \cap (\pi ^X _q)^{-1}(x) \\
& Z_{\infty} \cap (\pi^{Z}_{\infty})^{-1}(x) \ar[r] \ar@{}[u]|{\text{\large \rotatebox{90}{$\subset$}}} & Z_q \cap (\pi ^Z _q)^{-1}(x) \ar@{}[u]|{\text{\large \rotatebox{90}{$\subset$}}} \\
& \mathcal{L}_{\infty}(Z) \cap (\pi^{Z}_{\infty})^{-1}(x) \ar[r] \ar@{}[u]|{\text{\large \rotatebox{-90}{$\subset$}}} & \mathcal{L}_{q}(Z) \cap (\pi ^{Z} _q)^{-1}(x) \ar@{}[u]|{\text{\large \rotatebox{-90}{$\subset$}}}  \\
C' \ \ \subset \hspace{-7mm} & \mathcal{L}_{\infty}(Z') \cap (\pi^{Z'}_{\infty})^{-1}(x') \ar[r]^{\psi^{Z'}_q} \ar[u]^{\simeq} & \mathcal{L}_{q}(Z') \cap (\pi ^{Z'} _q)^{-1}(x') \ar[u]^{\simeq} \\
& \mathcal{L}_{\infty}(Y'_1) \cap (\pi^{Y'_1}_{\infty})^{-1}(x') \ar[r] \ar@{}[u]|{\text{\large \rotatebox{90}{$\subset$}}} & \mathcal{L}_{q}(Y'_1) \cap (\pi ^{Y'_1} _q)^{-1}(x'), \ar@{}[u]|{\text{\large \rotatebox{90}{$\subset$}}}
}
\]
we can consider the intersection $F = \mathcal{L}_q (Y'_1) \cap Z_q \cap (\pi ^{Z} _q)^{-1}(x)$ and 
it can be identified with a closed subset of $X_q \cap (\pi ^X _q)^{-1}(x)$. 
Then we have $C'_q = C' \cap (\psi_{q}^X)^{-1}(F)$ since $C' \subset Z_{\infty}$. 
Therefore, $C'_q$ turns out to be a cylinder of $X_{\infty}$, and hence so is $C'' = C'_q \setminus C'_{q+1}$. 
We have proved that $Y'$ is integral. 

Let $Y'' \subset Y'$ be the subscheme defined by $\operatorname{Jac}'_{Y'/k}$. 
Since $Y'$ is reduced, we have $Y'' \subsetneq Y'$ by the Jacobian criterion of regularity (cf.\ Remark \ref{rmk:JC}(2)(c)). 
By the minimality of $Y'$, we have $C' \not \subset Y''_{\infty}$ and hence 
$C' \cap \operatorname{Cont}^{e'} \bigl( \operatorname{Jac}'_{Y'/k} \bigr) \not = \emptyset$ holds for some $e' \ge 0$. 
Take a $k$-arc $\beta \in C' \cap \operatorname{Cont}^{e'} \bigl( \operatorname{Jac}'_{Y'/k} \bigr)$. 
For $m \ge 0$, we set 
\[
D_{m,\beta} := (\psi^{Y'}_m) ^{-1} (\psi^{Y'} _m (\beta)) \subset \mathcal{L}_{\infty}(Y'), \quad 
E_{m,\beta} := (\psi^{X}_m) ^{-1} (\psi^{X} _m (\beta)) \subset X_{\infty}. 
\]
Then by applying Proposition \ref{prop:EM4.1_R}(2) to the map
\[
\psi^{Y'}_{m+1} (D_{m,\beta}) \to 
\psi^{Y'} _{m} (D_{m,\beta}) = \bigl\{ \psi^{Y'} _m (\beta) \bigr\}, 
\]
we have 
\[
\dim \bigl( \psi^{Y'}_{m+1} (D_{m,\beta}) \bigr) = \dim Y'
\]
for sufficiently large $m$. 
On the other hand, by applying Proposition \ref{prop:EM4.1_Rt}(2) to the map
\[
\psi^{X}_{m+1} (E_{m,\beta}) \to 
\psi^{X} _{m} (E_{m,\beta}) = \bigl\{ \psi^{X} _m (\beta) \bigr\}, 
\]
we have 
\[
\dim \bigl( \psi^{X}_{m+1} (E_{m,\beta})) \bigr) = n
\]
for sufficiently large $m$. 

Since $\dim Y' \le \dim Z \le n -1$, 
to get a contradiction, it is enough to show $E_{m,\beta} \subset D_{m,\beta}$ for sufficiently large $m$. 
Since $C'$ is a cylinder of $X_{\infty}$, there exists a constructible subset $V \subset X_p$ for some $p \ge 0$ 
such that $C' = (\psi _p ^X) ^{-1} (V)$. 
We shall prove the inclusion $E_{m,\beta} \subset D_{m,\beta}$ for any $m \ge p$. 

Let $X' := \operatorname{Spec} (\widehat{\mathcal{O}}_{X,x})$. Then by Lemma \ref{lem:forget}(1)(2), we have the following diagram:
\[
\xymatrix{
X_{\infty} \cap (\pi^{X}_{\infty})^{-1}(x) \ar[d]_{\psi ^X _m} \ar@{}[r]|{\hspace{-3mm}\text{\large $\subset$}}  &  \mathcal{L}_{\infty}(X') \cap (\pi^{X'}_{\infty})^{-1}(x') \ar[d]^{\psi ^{X'} _m} & \mathcal{L}_{\infty}(Y') \cap (\pi^{Y'}_{\infty})^{-1}(x') \ar[d]^{\psi ^{Y'} _m} \ar@{}[l]|{\text{\large $\supset$}}\\
X_{m} \cap (\pi^{X}_{m})^{-1}(x)  \ar@{}[r]|{\hspace{-3mm}\text{\large $\subset$}} & \mathcal{L}_{m}(X') \cap (\pi^{X'}_{m})^{-1}(x') & \mathcal{L}_{m}(Y') \cap (\pi^{Y'}_{m})^{-1}(x') \ar@{}[l]|{\text{\large $\supset$}}
}
\]
Let $\beta_m := \psi^{X} _m (\beta)$. For $m \ge p$, we have 
\[
E_{m,\beta} = (\psi^{X}_m) ^{-1} (\beta _m) = (\psi^{X'}_m) ^{-1} (\beta _m) \cap X_{\infty} \cap (\pi^{X}_{\infty})^{-1}(x) = (\psi^{X'}_m) ^{-1} (\beta _m) \cap C'. 
\]
On the other hand, we have 
\[
D_{m,\beta} = (\psi^{Y'}_m) ^{-1} (\beta _m) = (\psi^{X'}_m) ^{-1} (\beta _m) \cap \mathcal{L}_{\infty}(Y') \cap (\pi^{Y'}_{m})^{-1}(x').
\]
Since $C' \subset \mathcal{L}_{\infty}(Y') \cap (\pi^{Y'}_{m})^{-1}(x')$, 
we have $E_{m,\beta} \subset D_{m,\beta}$ for $m \ge p$. We complete the proof. 
\end{proof}

We prove a much weaker version of \cite[Lemma 2.26(1)]{NS}. 

\begin{lem}\label{lem:thin2_R}
Let $X$ be a scheme of finite type over $R = k[t][[x_1, \ldots , x_N]]$. 
For each $a \in k$, we denote by $X_a$ the closed subscheme of $X$ defined by $(t-a)\mathcal{O}_X$. 
Suppose for any $a \in k^{\times}$ that 
$X_a$ is an integral regular scheme and has $\dim ' X_a = n$. 
Then there exists a positive integer $\ell$ such that 
$\operatorname{ord}_{\gamma} \bigl( \mathfrak{o}_{X} + \operatorname{Fitt}^n (\Omega' _{X/k[t]}) \bigr) \le \ell$ 
holds for any $k$-arc $\gamma \in X_{\infty}$. 
In particular, if $\gamma$ satisfies $\operatorname{ord}_{\gamma} (\mathfrak{o}_{X}) = \infty$, 
then $\operatorname{ord}_{\gamma} \bigr( \operatorname{Fitt}^n (\Omega' _{X/k[t]}) \bigr) \le \ell$. 
\end{lem}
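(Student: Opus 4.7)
The plan is to set $\mathfrak{a} := \mathfrak{o}_{X} + \operatorname{Fitt}^n(\Omega'_{X/k[t]})$ and to show that $t^\ell \in \mathfrak{a}$ globally on $X$ for some positive integer $\ell$. Since any $k$-arc $\gamma \in X_\infty$ is $k[t]$-linear, one has $\gamma^*(t) = t$, so this would force $\operatorname{ord}_\gamma(\mathfrak{a}) \le \ell$. The ``in particular'' clause is then automatic, as the order of a sum of ideals is the minimum of the individual orders.

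The main point is to show $V(\mathfrak{a}) \subset V(t)$ as subsets of $X$. For each $a \in k^\times$, I will apply the exact sequence of Proposition \ref{prop:spOmega2}(\ref{item:ex2}) to the surjection $\mathcal{O}_X \twoheadrightarrow \mathcal{O}_{X_a}$ with kernel $(t-a)\mathcal{O}_X$, giving
\[
(t-a)\mathcal{O}_X / (t-a)^2\mathcal{O}_X \xrightarrow{\ \delta\ } \Omega'_{X/k[t]} \otimes_{\mathcal{O}_X} \mathcal{O}_{X_a} \to \Omega'_{X_a/k[t]} \to 0.
\]
Because $d'$ is a $k[t]$-derivation, $d'(t-a) = 0$ and hence $\delta = 0$, so $\Omega'_{X/k[t]} \otimes \mathcal{O}_{X_a} \simeq \Omega'_{X_a/k[t]}$. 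Moreover, since $t$ acts as the constant $a \in k$ on $\mathcal{O}_{X_a}$, every $k[t]$-derivation on $\mathcal{O}_{X_a}$ is a $k$-derivation, and the special condition with respect to $R = k[t][[x_1,\ldots,x_N]]$ is equivalent to the special condition with respect to $R' := R/(t-a) = k[[x_1,\ldots,x_N]]$; thus $\Omega'_{X_a/k[t]} \simeq \Omega'_{X_a/k}$. Since $\dim' X_a = n$, taking Fitting ideals yields
\[
\operatorname{Fitt}^n(\Omega'_{X/k[t]}) \cdot \mathcal{O}_{X_a} = \operatorname{Fitt}^n(\Omega'_{X_a/k}) = \operatorname{Jac}'_{X_a/k},
\]
and this equals $\mathcal{O}_{X_a}$ because $X_a$ is regular and $\operatorname{Jac}'_{X_a/k}$ cuts out the singular locus of $X_a$ by the Jacobian criterion (Remark \ref{rmk:jacobianmatrix} combined with Remark \ref{rmk:JC}(2)).

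Consequently $\operatorname{Fitt}^n(\Omega'_{X/k[t]}) + (t-a)\mathcal{O}_X = \mathcal{O}_X$ for every $a \in k^\times$, so no prime of $X$ contains both ideals, whence $V(\mathfrak{a}) \cap V(t-a) = \emptyset$. The image of $V(\mathfrak{a})$ under the structure morphism $X \to \operatorname{Spec} k[t] = \mathbb{A}^1_k$ is constructible by Chevalley's theorem and misses every $a \in k^\times$. Since $k$ is infinite, any constructible subset of $\mathbb{A}^1_k$ that omits all but finitely many closed points must itself be contained in finitely many closed points; hence the image lies in $\{0\}$, giving $V(\mathfrak{a}) \subset V(t)$ set-theoretically. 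Then $t \in \sqrt{\mathfrak{a}(U)}$ on each affine open $U \subset X$, and by Noetherianity of each affine together with quasi-compactness of $X$, some fixed power $t^\ell$ lies in $\mathfrak{a}$ globally, as required.

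The main obstacle I anticipate is the identification $\Omega'_{X/k[t]} \otimes \mathcal{O}_{X_a} \simeq \Omega'_{X_a/k}$: both the vanishing of the connecting map $\delta$ (which uses crucially that $t$ is killed by a $k[t]$-derivation, something that would fail if we worked with $k$-derivations) and the compatibility of the two notions of special derivation under the base change $R \to R' = R/(t-a)$ must be verified. Once this identification is established, the rest of the argument reduces to the standard Jacobian criterion for regularity together with Chevalley's theorem on constructible images.
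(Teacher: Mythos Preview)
Your argument is essentially correct and follows the same route as the paper, but one step is invalid as written: you invoke Chevalley's theorem for the image of $V(\mathfrak{a})$ under $X \to \operatorname{Spec} k[t]$, yet this morphism is \emph{not} of finite type (only over $k[t][[x_1,\ldots,x_N]]$), so Chevalley does not apply directly. The fix is immediate and is exactly where $\mathfrak{o}_X$ does its real work: since $\mathfrak{a} \supset \mathfrak{o}_X$, one has $V(\mathfrak{a}) \subset V(\mathfrak{o}_X)$, and locally $V(\mathfrak{o}_X) = \operatorname{Spec}\bigl(S/(I+(x_1,\ldots,x_N))\bigr)$ with $S/(x_1,\ldots,x_N) \simeq k[t][y_1,\ldots,y_m]$, which \emph{is} of finite type over $k[t]$. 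Now Chevalley (or, as the paper prefers, Hilbert's Nullstellensatz) applies to $V(\mathfrak{a}) \to \mathbb{A}^1_k$ and yields $t^\ell \in \mathfrak{a}$.

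With this correction your proof coincides with the paper's. Both first establish
\[
\operatorname{Fitt}^n(\Omega'_{X/k[t]}) + (t-a)\mathcal{O}_X = \mathcal{O}_X \qquad \text{for every } a \in k^\times
\]
(you via the exact sequence of Proposition~\ref{prop:spOmega2}(\ref{item:ex2}) and base change of Fitting ideals, the paper via the explicit Jacobian-matrix description in Remark~\ref{rmk:jacobianmatrix}), then reduce modulo $(x_1,\ldots,x_N)$ to land in a finite-type $k[t]$-algebra, and finally invoke the Nullstellensatz. The sheaf-theoretic identification $\Omega'_{X/k[t]} \otimes \mathcal{O}_{X_a} \simeq \Omega'_{X_a/k}$ that you flagged as the main obstacle is fine: $d'(t-a)=0$ kills the connecting map, and since $t$ acts as the scalar $a$ on $\mathcal{O}_{X_a}$ the special conditions over $R$ and over $R/(t-a) = k[[x_1,\ldots,x_N]]$ agree.
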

\begin{proof}
We may assume that $X$ is affine, and we may write $X = \operatorname{Spec} (S/I)$, where 
\[
S := k[t][[x_1, \ldots, x_N]][y_1, \ldots , y_m]
\]
and $I$ is an ideal of $S$. 
We set $I_a := (I + (t-a))/(t-a)$, which is an ideal of the ring $S/(t-a) \simeq k[[x_1, \ldots, x_N]][y_1, \ldots , y_m]$. 
We have $\operatorname{ht} (I_a) = N + m -n$ for any $a \in k^{\times}$ since $\dim ' X_a = n$. 

We set 
\[
J := \mathcal{J}_{N+m-n} \bigl( I;\operatorname{Der}_{k[t]}(S) \bigr)  \subset S.   
\]
Note that $\operatorname{Der}_{k[t]}(S) = \operatorname{Der}'_{k[t]}(S)$ 
is generated by $\frac{\partial}{\partial x_i}$'s and $\frac{\partial}{\partial y_i}$'s. 
Then we have $\operatorname{Fitt}^n (\Omega' _{X/k[t]}) = (J + I) /I$ by Remark \ref{rmk:jacobianmatrix}. 
Let $a \in k^{\times}$. 
Since $\operatorname{ht} (I_a) = N + m -n$, we have
\[
\operatorname{Jac}'_{X_a/k} = (J + I + (t-a))/ (I + (t-a)). 
\]
Since $X_a$ is regular, we have 
\[
J + I + (t-a) = S
\]
by the Jacobian criterion of regularity (cf.\ Remark \ref{rmk:JC}(2)(c)). 
Therefore, for any $a \in k^{\times}$,  we have 
\[
(J + I + (t-a)) S' = S', 
\]
where $S' := S/(x_1, \ldots , x_N) \simeq k[t][y_1, \ldots , y_m]$. 
Then by Hilbert's nullstellensatz, we have $t ^{\ell} \in J + I + (x_1, \ldots , x_N)$ for some $\ell \ge 0$, 
which proves the assertions. 
\end{proof}

\begin{lem}\label{lem:ht1'}
Let $P$ be a prime ideal of $S = k[[x_1, \ldots, x_N]]$ of height $r$, and 
let $I$ be an ideal of $S$ satisfying $P \subsetneq I$. 
If $S/P$ is regular, then $\mathcal{J}_{r+1}(I;\operatorname{Der}_k(S)) \not \subset I$. 
\end{lem}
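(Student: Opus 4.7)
My plan is to reduce to the case $r=0$ by a change of coordinates and then run a minimum-order argument in $S/P$. Since $S/P$ is regular of dimension $N-r$, the exact sequence
\[
0 \to (P+\mathfrak{m}^2)/\mathfrak{m}^2 \to \mathfrak{m}/\mathfrak{m}^2 \to (\mathfrak{m}/P)/(\mathfrak{m}/P)^2 \to 0
\]
shows $\dim_k(P+\mathfrak{m}^2)/\mathfrak{m}^2=r$, so $P$ contains elements $f_1,\ldots,f_r$ that form part of a regular system of parameters $f_1,\ldots,f_N$ of $S$. Since $S/(f_1,\ldots,f_r)$ is regular of dimension $N-r$, hence a domain, $(f_1,\ldots,f_r)$ is prime of height $r$ and therefore equals $P$. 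By the Cohen structure theorem the assignment $x_i\mapsto f_i$ is a $k$-algebra automorphism of $S$, and by Remark \ref{rmk:JC}(1) the ideal $\mathcal{J}_{r+1}(I;\operatorname{Der}_k(S))$ depends only on $I$ and the $S$-module $\operatorname{Der}_k(S)$, not on any particular choice of basis. Hence I may assume $P=(x_1,\ldots,x_r)$.

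Next, I would pick $g\in I$ whose image $\overline{g}\in \overline{I}:=I/P\subset S/P\cong k[[x_{r+1},\ldots,x_N]]$ is a nonzero element of minimal $(x_{r+1},\ldots,x_N)$-adic order $d$. Because $I$ is a proper ideal (otherwise the conclusion is vacuously false), so is $\overline{I}$, and hence $d\ge 1$. Consider the $r+1$ elements $x_1,\ldots,x_r,g\in I$ and the basis $\partial/\partial x_1,\ldots,\partial/\partial x_N$ of $\operatorname{Der}_k(S)=\operatorname{Der}'_k(S)$ (equality by Lemma \ref{lem:D=D'}). The resulting Jacobian matrix is block lower triangular with an $I_r$ block in the upper left, so its only possibly nonzero $(r+1)\times(r+1)$ minors are $\pm\partial g/\partial x_j$ for $j\in\{r+1,\ldots,N\}$.

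To conclude, I would observe that for $j>r$ the derivation $\partial/\partial x_j$ sends $P=(x_1,\ldots,x_r)$ into $P$; therefore $\partial g/\partial x_j\bmod P$ is independent of the lift $g$ and equals $\partial \overline{g}/\partial x_j$ computed in $S/P$. Picking a monomial $c_\alpha x^\alpha$ of $\overline{g}$ with $|\alpha|=d$ and $c_\alpha\ne 0$, some index $j\in\{r+1,\ldots,N\}$ has $\alpha_j>0$; the standard characteristic-zero computation shows that the coefficient of $x^{\alpha-e_j}$ in $\partial\overline{g}/\partial x_j$ is $\alpha_j c_\alpha\ne 0$, so $\operatorname{ord}(\partial\overline{g}/\partial x_j)=d-1<d$. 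Since every nonzero element of $\overline{I}$ has order $\ge d$, this forces $\partial \overline{g}/\partial x_j\notin\overline{I}$, hence $\partial g/\partial x_j\notin I$, producing the required element of $\mathcal{J}_{r+1}(I;\operatorname{Der}_k(S))\setminus I$.

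The step I expect to require the most care is the first one: extracting a regular system of parameters starting from $P$ and verifying coordinate-independence of $\mathcal{J}_{r+1}(I;\operatorname{Der}_k(S))$ so that the reduction to $P=(x_1,\ldots,x_r)$ is legitimate. Once this is set up, the block structure of the Jacobian and the minimum-order argument in the regular local ring $S/P$ are routine.
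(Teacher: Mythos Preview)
Your proof is correct and follows the same overall strategy as the paper: both reduce to the minimum-order argument in the regular ring $S/P\cong k[[y_1,\ldots,y_{N-r}]]$ (the $r=0$ case), and both exhibit an $(r+1)\times(r+1)$ minor whose determinant is a unit times $D(g)$ for a suitable derivation $D$ and element $g\in I$ lifting a minimal-order element of $I/P$.

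The tactical difference is in how the reduction is carried out. The paper does not change coordinates; it invokes the Jacobian criterion to find $f_1,\ldots,f_r\in P$ and $D_1,\ldots,D_r\in\operatorname{Der}_k(S)$ with $u=\det(D_i(f_j))$ a unit, then lifts a derivation $D'\in\operatorname{Der}_k(S/P)$ to $D_{r+1}\in\operatorname{Der}_k(S)$ with $D_{r+1}(P)\subset P$ via \cite[Theorem 30.8]{Mat89}, and computes the resulting $(r+1)\times(r+1)$ determinant as $u\cdot D_{r+1}(f_{r+1})\notin I$. You instead use the Cohen structure theorem to arrange $P=(x_1,\ldots,x_r)$ up to a $k$-automorphism, which makes the $r\times r$ block literally the identity and renders the lifting step trivial (the coordinate derivations $\partial/\partial x_j$ for $j>r$ already descend to $S/P$). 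Your route trades the derivation-lifting theorem for a coordinate change, which is arguably more elementary; the paper's route is coordinate-free and applies the $r=0$ case as a black box on $S/P$.
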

\begin{proof}
Note that $\operatorname{Der}_k(S) = \operatorname{Der}' _k(S)$ is generated by $\partial / \partial x_i$'s. 
First, we prove the assertion when $r=0$. 
Let $f \in I \setminus \{ 0 \}$ be an element with the minimum order $a$. 
Suppose that $x_i$ appears in the lowest order term of $f$. 
Then it follows from the minimality of $a$ that $\frac{\partial f}{\partial x_i} \not \in I$, 
which proves the assetion when $r=0$. 

Suppose $r > 0$. Since $S/P$ is regular, by the Jacobian criterion of regularity (cf.\ Remark \ref{rmk:JC}(2)(c)), 
there exist $D_1, \ldots, D_r \in \operatorname{Der}_k (S)$ and $f_1, \ldots , f_r \in P$ such that 
\[
u := \det (D_i(f_j))_{1\le i,j \le r} \not \in (x_1, \ldots , x_N). 
\]
Since $S/P$ is a complete regular local ring with the coefficient field $k$, 
$S/P$ is isomorphic to $k[[y_1, \ldots, y_{N-r}]]$. 
Therefore, by what we have already proved, 
there exist $D' \in \operatorname{Der}_k (S/P)$ and $f' \in I/P$ such that $D'(f') \not \in I/P$. 
Let $f_{r+1} \in I$ be a lift of $f'$. 
By \cite[Theorem 30.8]{Mat89}, there exists a lift $D_{r+1} \in \operatorname{Der}_k (S)$ of $D'$ too. 
Since $D_{r+1}(P) = 0$, we have
\[
\det (D_i(f_j))_{1\le i,j \le r+1} = u D_{r+1}(f_{r+1}) \not \in I, 
\]
which shows that $\mathcal{J}_{r+1}(I;\operatorname{Der}_k(S)) \not \subset I$. 
\end{proof}

\begin{rmk}
We are interested in the case where $\operatorname{ht} I = r+1$. 
If $I$ is a prime ideal, then it is true more generally that 
$\mathcal{J}_{\ell}(I; \operatorname{Der}_k(S)) \not \subset I$ for $\ell = \operatorname{ht} I$. 
It is true because $S$ satisfies the weak Jacobian condition (WJ)$_k$ (cf.\ Remark \ref{rmk:JC}(2)). 
If $I$ is not a prime ideal, 
then $\mathcal{J}_{\ell}(I; \operatorname{Der}_k(S)) \not \subset I$ 
does not hold in general. 
\end{rmk}

\begin{lem}\label{lem:ht1}
Let $P$ be a prime ideal of $S = k[[x_1, \ldots, x_N]][y_1, \ldots , y_m]$ of height $r$, and 
let $I$ be an ideal of $S$ satisfying $P \subsetneq I$. 
Suppose that $S/P$ is regular and $I + (x_1, \ldots, x_N) \not = S$. 
Then it follows that $\mathcal{J}_{r+1}(I;\operatorname{Der}_{k}(S)) \not \subset I$. 
\end{lem}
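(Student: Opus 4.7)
The plan is to reduce the statement to Lemma \ref{lem:ht1'} by completing $S$ at a suitable maximal ideal. First, using the hypothesis $I + (x_1,\ldots,x_N) \ne S$ together with $S/(x_1,\ldots,x_N) \simeq k[y_1,\ldots,y_m]$ and the fact that $k$ is algebraically closed, I pick a maximal ideal $\mathfrak{m}$ of $S$ containing $I + (x_1,\ldots,x_N)$, necessarily of the form $\mathfrak{m} = (x_1,\ldots,x_N, y_1 - b_1,\ldots, y_m - b_m)$ with $b_j \in k$. Then $S_{\mathfrak{m}}$ is a regular local ring of dimension $N+m$ with residue field $k$, and its $\mathfrak{m}$-adic completion is $\widehat{S}_{\mathfrak{m}} \cong k[[x_1,\ldots,x_N, y_1-b_1,\ldots,y_m-b_m]]$, a formal power series ring in $N+m$ variables.

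Next, I check that the hypotheses of Lemma \ref{lem:ht1'} transfer to $\widehat{P} := P\widehat{S}_{\mathfrak{m}}$ and $\widehat{I} := I\widehat{S}_{\mathfrak{m}}$. Since $S_{\mathfrak{m}}/P_{\mathfrak{m}} = (S/P)_{\mathfrak{m}/P}$ is regular by hypothesis (and $P \subset \mathfrak{m}$), its completion $\widehat{S}_{\mathfrak{m}}/\widehat{P}$ is a complete regular local ring, hence an integral domain of dimension $N+m-r$; so $\widehat{P}$ is prime of height $r$ and $\widehat{S}_{\mathfrak{m}}/\widehat{P}$ is regular. Moreover, any $f \in I \setminus P$ stays out of $P_{\mathfrak{m}}$ (by primality of $P$), and faithful flatness of $S_{\mathfrak{m}} \to \widehat{S}_{\mathfrak{m}}$ gives $S_{\mathfrak{m}} \cap \widehat{P} = P_{\mathfrak{m}}$, so $f \notin \widehat{P}$. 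Hence $\widehat{P} \subsetneq \widehat{I}$, and Lemma \ref{lem:ht1'} (applied with $N+m$ in place of $N$) yields $\mathcal{J}_{r+1}(\widehat{I}; \operatorname{Der}_k(\widehat{S}_{\mathfrak{m}})) \not\subset \widehat{I}$.

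Finally, I transfer the non-containment back to $S$. By Lemma \ref{lem:D=D'} and Proposition \ref{prop:spOmega2}(\ref{item:basis}), $\operatorname{Der}_k(S)$ is a free $S$-module on $\{\partial/\partial x_i, \partial/\partial y_j\}$, and the same symbols give a free basis of $\operatorname{Der}_k(\widehat{S}_{\mathfrak{m}})$ over $\widehat{S}_{\mathfrak{m}}$. Picking generators $f_1,\ldots,f_t$ of $I$, they also generate $\widehat{I}$ as an $\widehat{S}_{\mathfrak{m}}$-module. Applying Remark \ref{rmk:JC}(1) on both sides gives
\[
\mathcal{J}_{r+1}(\widehat{I}; \operatorname{Der}_k(\widehat{S}_{\mathfrak{m}})) + \widehat{I}
= \mathcal{J}_{r+1}\bigl(\{f_1,\ldots,f_t\}; \{\partial/\partial x_i, \partial/\partial y_j\}\bigr) + \widehat{I}
= \mathcal{J}_{r+1}(I; \operatorname{Der}_k(S))\,\widehat{S}_{\mathfrak{m}} + \widehat{I},
\]
since the displayed determinants are the very same elements of $S \subset \widehat{S}_{\mathfrak{m}}$. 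Therefore, if $\mathcal{J}_{r+1}(I; \operatorname{Der}_k(S)) \subset I$ held, then the right-hand side would collapse to $\widehat{I}$, contradicting the previous step.

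The main obstacle is the bookkeeping in the last paragraph: making sure that the Jacobian ideal constructed in the completion $\widehat{S}_{\mathfrak{m}}$ is actually the extension of the one constructed in $S$, rather than something genuinely larger coming from additional ``power series'' derivations of $\widehat{S}_{\mathfrak{m}}$. This is handled by observing, via Lemma \ref{lem:D=D'} and Remark \ref{rmk:JC}(1), that both derivation modules have a common finite generating set $\{\partial/\partial x_i, \partial/\partial y_j\}$, so only the \emph{finitely many} determinants $\det(D_i(f_j))$ computed from fixed generators of $I$ matter, and these live in $S$ itself.
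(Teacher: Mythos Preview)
Your proof is correct and follows essentially the same approach as the paper: pick a maximal ideal $\mathfrak{m} = (x_1,\ldots,x_N, y_1-b_1,\ldots,y_m-b_m)$ containing $I$, complete at $\mathfrak{m}$, and apply Lemma~\ref{lem:ht1'} to $P\widehat{S}_{\mathfrak{m}} \subsetneq I\widehat{S}_{\mathfrak{m}}$, using that both $\operatorname{Der}_k(S)$ and $\operatorname{Der}_k(\widehat{S}_{\mathfrak{m}})$ are free on the same set $\{\partial/\partial x_i,\partial/\partial y_j\}$ to identify the Jacobian ideals. Your write-up is in fact more careful than the paper's in checking that $P\widehat{S}_{\mathfrak{m}}$ is prime of height $r$ and that the inclusion $P\widehat{S}_{\mathfrak{m}} \subsetneq I\widehat{S}_{\mathfrak{m}}$ is strict.
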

\begin{proof}
Note that $\operatorname{Der}_k(S) = \operatorname{Der}'_k(S)$ is generated by $\partial / \partial x_i$'s and $\partial / \partial y_i$'s. 
Since $I + (x_1, \ldots, x_N) \not = S$, there exists a maximal ideal $\mathfrak{m}$ containing $I$ of the form 
\[
\mathfrak{m} = (x_1, \ldots , x_N, y_1 - a _1, \ldots , y_m - a_m), 
\] 
where $a_i \in k$. Let $\widehat{S}$ be the completion of $S$ at $\mathfrak{m}$. 
Let $Y_i \in \widehat{S}$ be the image of $y_i - a_i$. 
Then we have $\widehat{S} \simeq k[[x_1, \ldots , x_N, Y_1, \ldots , Y_m]]$, and $\operatorname{Der}_k (\widehat{S})$ is generated by
$\partial / \partial x_i$'s and $\partial / \partial Y_i$'s. 
Therefore we have 
\[
\mathcal{J}_{r+1} \bigl( I\widehat{S};\operatorname{Der}_{k}(\widehat{S}) \bigr) +I \widehat{S} = 
\mathcal{J}_{r+1}(I;\operatorname{Der}_{k}(S)) \widehat{S} + I \widehat{S}. 
\]
We also note that $\widehat{S}/P\widehat{S}$ is regular and $P\widehat{S} \subsetneq I\widehat{S}$. 
Therefore by Lemma \ref{lem:ht1'}, we have 
$\mathcal{J}_{r+1} \bigl( I\widehat{S};\operatorname{Der}_{k}(\widehat{S}) \bigr) \not \subset I\widehat{S}$, 
which shows the assertion $\mathcal{J}_{r+1}(I;\operatorname{Der}_{k}(S)) \not \subset I$. 
\end{proof}

\begin{prop}\label{prop:resol}
Let $X$ be a scheme of finite type over $R = k[t][[x_1, \ldots , x_N]]$. 
Suppose that each irreducible component $X_i$ of $X$ has $\dim ' X_i \ge n+1$. 
For each $a \in k$, we denote by $X_a$ the closed subscheme of $X$ defined by $(t-a)\mathcal{O}_X$. 
Suppose for any $a \in k^{\times}$ that 
$X_a$ is an integral regular scheme and has $\dim ' X_a = n$. 
Then, there is no thin cylinder $C$ of $X_{\infty}$ containing a $k$-arc $\gamma$ 
with $\operatorname{ord}_{\gamma} (\mathfrak{o}_{X}) = \infty$. 
\end{prop}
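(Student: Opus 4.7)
My plan is to argue by contradiction, using Lemma~\ref{lem:thin_R} and Lemma~\ref{lem:thin2_R} as the key ingredients. Suppose $C \subset Z_\infty$ is a thin cylinder containing a $k$-arc $\gamma$ with $\operatorname{ord}_\gamma(\mathfrak{o}_X) = \infty$, where $Z$ is reduced with $\dim Z \le n$. By Lemma~\ref{lem:thin2_R}, $e := \operatorname{ord}_\gamma(\operatorname{Fitt}^n(\Omega'_{X/k[t]}))$ is finite. Replacing $C$ with $C \cap \operatorname{Cont}^e(\operatorname{Fitt}^n(\Omega'_{X/k[t]})) \cap \operatorname{Cont}^{\ge 1}(\mathfrak{o}_X)$ preserves the cylinder structure, the thinness, and the membership of $\gamma$, while placing $C$ inside these contact loci.

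Next I would reduce to the case where $Z$ is integral and dominates $\operatorname{Spec} k[t]$. Decomposing $Z = Z_1 \cup \cdots \cup Z_s$ into irreducible components, $\gamma$ factors through exactly one of them at its generic point, say $Z_{j_0}$; for each $j \ne j_0$ there exists $f_j \in I_{Z_j}$ with $M_j := \operatorname{ord}_\gamma(f_j) < \infty$. Intersecting $C$ further with the open cylinders $X_\infty \setminus \operatorname{Cont}^{\ge M_j + 1}(f_j)$ forces the arcs in $C$ to avoid factoring through $Z_j$ for $j \ne j_0$, so $C \subset (Z_{j_0})_\infty$ and we may assume $Z = Z_{j_0}$ is integral. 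Since $\gamma^*(t-a) = t - a$ is nonzero in $k[[t]]$ for every $a \in k$, $\gamma$ cannot factor through any closed fiber $X_a$; combined with the assumption that $X_a$ is integral of dimension $n$ for $a \ne 0$, this rules out $Z \subset X_a$ for every $a$, so $Z$ dominates $\operatorname{Spec} k[t]$.

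If $\dim Z \le n-1$, then $Z_\infty$ is very thin and Lemma~\ref{lem:thin_R} yields $C \cap \operatorname{Cont}^e(\operatorname{Fitt}^n(\Omega'_{X/k[t]})) = \varnothing$, contradicting $\gamma \in C$. The main obstacle is the case $\dim Z = n$. To treat it I would pass to the completion at $x = \pi_\infty^X(\gamma)$ via Lemma~\ref{lem:forget}, identifying $C$ with a subset of $\mathcal{L}_\infty(Z')$ for $Z' = \operatorname{Spec}(\widehat{\mathcal{O}}_{Z,x})$, and follow the minimality construction of Lemma~\ref{lem:thin_R} to extract an integral closed subscheme $Y' \subset Z'$ carrying a nonempty cylinder $C' \subset C$ with $C' \subset \mathcal{L}_\infty(Y')$. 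When $\dim Y' < n$ the proof of Lemma~\ref{lem:thin_R} applies directly. When $\dim Y' = n$, using Lemma~\ref{lem:dim'}(4) and the equidimensionality of completions of excellent integral local rings, every irreducible component of $\operatorname{Spec}(\widehat{\mathcal{O}}_{X,x})$ has dimension $\ge n+1$, so $Y'$ is properly contained in some component $X''$; the hardest step will be to invoke Lemma~\ref{lem:ht1'} to manufacture a Jacobian element separating $Y'$ from $X''$ inside the ambient complete power series ring, then promote it to an element of $\operatorname{Fitt}^n(\Omega'_{X/k[t]})$ (which uses only $\operatorname{Der}_{k[t]}$ rather than the full $\operatorname{Der}_k$ of Lemma~\ref{lem:ht1'}) exploiting that $Z$ dominates $\operatorname{Spec} k[t]$, and thereby upgrade the dimension comparison of Lemma~\ref{lem:thin_R} via Propositions~\ref{prop:EM4.1_R} and~\ref{prop:EM4.1_Rt} to conclude the contradiction.
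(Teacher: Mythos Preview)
Your initial reductions are fine and match the paper: use Lemma~\ref{lem:thin2_R} to get $e<\infty$, intersect with $\operatorname{Cont}^{\ge 1}(\mathfrak{o}_X)$ and $\operatorname{Cont}^e(\operatorname{Fitt}^n(\Omega'_{X/k[t]}))$, and handle the $\dim Z\le n-1$ case via Lemma~\ref{lem:thin_R}. The real issue is your treatment of the case $\dim Z=n$, where the ``hardest step'' you flag is in fact a genuine gap.

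First, your plan to apply Lemma~\ref{lem:ht1'} after completing at $x=\pi_\infty^X(\gamma)$ requires the ambient component $X''\subset\operatorname{Spec}\widehat{\mathcal{O}}_{X,x}$ to be \emph{regular} (that is the hypothesis ``$S/P$ regular'' in Lemma~\ref{lem:ht1'}). But $x$ is a $k$-point, hence lies over $t=0$, and the regularity hypothesis on $X_a$ holds only for $a\in k^\times$; you have no control over the singularities of $X$ (or its completion) at $x$. Second, even if Lemma~\ref{lem:ht1'} applied, it produces a minor built from $\operatorname{Der}_k$, whereas $\operatorname{Fitt}^n(\Omega'_{X/k[t]})$ and $\operatorname{Fitt}^{n-1}(\Omega'_{Y'/k[t]})$ use only $\operatorname{Der}_{k[t]}$; the extra derivation $\partial/\partial t$ may be essential to the non-vanishing. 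Your proposed ``promotion'' via the fact that $Z$ dominates $\operatorname{Spec}k[t]$ is exactly Question~\ref{quest}(3), which the paper leaves open (see Remark~\ref{rmk:quest}). So your endgame runs directly into the obstruction that the whole proposition is designed to circumvent.

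The paper avoids both problems by never completing at $x$. It runs the minimality argument inside $X$ itself, obtaining a minimal reduced closed subscheme $W\subset X$ with $\gamma\in C'\subset C$ and $C'\subset W_\infty$. It then shows each irreducible component $W_i$ has $\dim W_i=n$ (else $C'$ would be very thin, contradicting Lemma~\ref{lem:thin_R}) and that $\operatorname{Fitt}^{n-1}(\Omega'_{W_i/k[t]})\subsetneq\mathcal{O}_{W_i}$. For this last point the key move is to use $\operatorname{ord}_\gamma(\mathfrak{o}_X)=\infty$ together with Hilbert's Nullstellensatz to find some $a\in k^\times$ with $W_i\cap X_a\neq\varnothing$; restricting to $S_a=S/(t-a)\simeq k[[x_1,\ldots,x_N]][y_1,\ldots,y_m]$ turns $k[t]$-derivations into $k$-derivations, and now Lemma~\ref{lem:ht1} applies with $P=I_a$ (regular since $X_a$ is regular) and $I=Q_a$. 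This yields $\mathcal{J}_{N+m-n+1}(Q;\operatorname{Der}_{k[t]}(S))\not\subset Q$ without any ``promotion''. The contradiction then comes from comparing the fiber dimensions of truncation maps: Proposition~\ref{prop:EM4.1_Rt}(2) applied to $X$ with $n$ gives $n$-dimensional fibers, while applied to each $W_i$ with $n-1$ gives $(n-1)$-dimensional fibers on a common nonempty cylinder.
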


\begin{proof}
We may assume that $X$ is affine, and we may write $X = \operatorname{Spec} A$ with $A = S/I$, where 
\[
S := k[t][[x_1, \ldots, x_N]][y_1, \ldots , y_m]
\]
and $I$ is an ideal of $S$. 

Suppose the contrary that there exists a thin cylinder $C$ containing a 
$k$-arc $\gamma$ with $\operatorname{ord}_{\gamma} (\mathfrak{o}_{X}) = \infty$. 
Replacing $C$ with $C \cap \operatorname{Cont}^{\ge 1} (\mathfrak{o}_{X})$, 
we may assume that $C \subset \operatorname{Cont}^{\ge 1} (\mathfrak{o}_{X})$. 
By Lemma \ref{lem:thin2_R}, it follows that
$e := \operatorname{ord}_{\gamma} \bigr( \operatorname{Fitt}^n (\Omega' _{X/k[t]}) \bigr) < \infty$. 
By replacing $C$ with $C \cap \operatorname{Cont}^{e} \bigr( \operatorname{Fitt}^n (\Omega' _{X/k[t]}) \bigr)$, 
we may assume that
$\emptyset \not =  C \subset \operatorname{Cont}^{e} \bigr( \operatorname{Fitt}^n (\Omega' _{X/k[t]}) \bigr)$. 

Let $\mathcal{S}$ be the set of the closed subschemes $W$ of $X$ with the following condition: 
\begin{itemize}
\item There exists a cylinder $C'$ of $X_{\infty}$ such that 
$\gamma \in C' \subset C$ and $C' \subset W_{\infty}$. 
\end{itemize}
Let $W$ be a minimal element of $\mathcal{S}$, and let $C'$ be a corresponding cylinder of $X$. 
Then $W$ is reduced by the minimality. 
Let $W = W_1 \cup \cdots \cup W_{\ell}$ be its irreducible decomposition. 
Since $C$ is thin, we may assume $\dim W \le n$. 
Here, we claim as follows: 
\begin{claim}\label{claim:resol}
\begin{enumerate}
\item[(1)] $\gamma \in (W_i)_{\infty}$ holds for each $1 \le i \le \ell$. 
\item[(2)] $\dim W_i = n$ holds for each $1 \le i \le \ell$. 
\item[(3)] We denote by $Z_i \subset W_i$ the closed subscheme defined by $\operatorname{Fitt}^{n-1} \bigl( \Omega' _{W_i/k[t]} \bigr)$. 
Then $Z_i \subsetneq W_i$ holds for each $1 \le i \le \ell$.
\end{enumerate}
\end{claim}

First, we assume this claim and finish the proof. 
By Claim \ref{claim:resol}(3) and the minimality of $W$, we have 
\[
C' \not \subset (Z_1 \cup \cdots \cup Z_{\ell})_{\infty} = (Z_1)_{\infty} \cup \cdots \cup (Z_{\ell})_{\infty}. 
\]
Take a $k$-arc $\beta \in C' \setminus \bigl( (Z_1)_{\infty} \cup \cdots \cup (Z_{\ell})_{\infty} \bigr)$. 
For each $i$, we denote by $I_{Z_i} \subset A$ the ideal corresponding to $Z_i$, 
and we set $q_i := \operatorname{ord}_{\beta} (I_{Z_i}) < \infty$. 
Then 
\[
C'' := C' \cap \bigcap _i \operatorname{Cont}^{q_i} (I_{Z_i})
\]
is a non-empty cylinder of $X_{\infty}$. 
By applying Proposition \ref{prop:EM4.1_Rt}(2) to $W_i$ and its cylinder 
\[
C'' \cap (W_i)_{\infty} \subset \operatorname{Cont}^{q_i} \bigl( \operatorname{Fitt}^{n-1} \bigl( \Omega' _{W_i/k[t]} \bigr) \bigr), 
\]
it follows that the truncation map
\[
\psi_{m+1} \bigl( C'' \cap (W_i)_{\infty} \bigr) \to \psi_{m} \bigl( C'' \cap (W_i)_{\infty} \bigr)
\]
has $(n-1)$-dimensional fibers for sufficiently large $m$. 
Therefore, 
\[
\psi_{m+1} ( C'') = \bigcup _i \psi_{m+1} \bigl( C'' \cap (W_i)_{\infty} \bigr)  \to \psi_{m} ( C'') = \bigcup _i \psi_{m} \bigl( C'' \cap (W_i)_{\infty} \bigr)
\]
also has $(n-1)$-dimensional fibers for sufficiently large $m$. 
However, by Proposition \ref{prop:EM4.1_Rt}(2), it should have $n$-dimensional fibers 
because $\emptyset \not = C'' \subset \operatorname{Cont}^{e} \bigr( \operatorname{Fitt}^n (\Omega' _{X/k[t]}) \bigr)$. 
We get a contradiction. Therefore, it is sufficient to prove Claim \ref{claim:resol}(3). 

\begin{proof}[Proof of Claim \ref{claim:resol}]
We shall prove (1). 
Suppose the contrary that $\gamma \not \in (W_1)_{\infty}$. 
Let $I_{W_1} \subset A$ be the ideal corresponding to $W_1$ and 
let $q := \operatorname{ord}_{\gamma} (I_{W_1}) < \infty$. 
Then the cylinder
\[
C'' := C' \cap \operatorname{Cont}^{q}(I_{W_1})
\]
contains $\gamma$ and satisfies 
\[
C'' \subset W_{\infty} \setminus (W_1)_{\infty} \subset (W_2 \cup \cdots \cup W_{\ell})_{\infty}, 
\]
which contradicts the minimality of $W$. 

We shall prove (2). Suppose the contrary that $\dim W_1 \le n-1$. 
Let $W' := W_2 \cup \cdots \cup W_{\ell}$ and let $I_{W'} \subset A$ be the ideal corresponding to $W'$. 
By the minimality of $W$, it follows that $C' \not \subset W'_{\infty}$. 
Therefore, we have 
\[
C'' := C' \cap \operatorname{Cont}^{q}(I_{W'}) \not = \emptyset
\] for some $q \ge 0$. 
Since $C'' \cap W'_{\infty} = \emptyset$, we have $C'' \subset (W_1)_{\infty}$ and $C''$ 
turns out to be a very thin cylinder of $X_{\infty}$. 
It contradicts $\emptyset \not = C'' \subset \operatorname{Cont}^{e} \bigr( \operatorname{Fitt}^n (\Omega' _{X/k[t]}) \bigr)$
by Lemma \ref{lem:thin_R}.

We shall prove (3). Let $H$ be one of $W_i$'s. 
Let $Q$ be the prime ideal of $S$ corresponding to $H$. 
Since $H_{\infty}$ contains a $k$-arc, $H$ contains a $k$-point. 
Therefore, by Lemma \ref{lem:dim'}(4), we have 
\[
\operatorname{ht} Q = \dim S - \dim' H = \dim S - \dim H = N+m - n + 1. 
\]

First, we prove that 
\[
Q + (x_1, \ldots , x_N) + (t-a) \not = S \tag{$\clubsuit$}
\]
for some $a \in k^{\times}$. 
Suppose the contrary that $Q + (x_1, \ldots , x_N) + (t-a) = S$ holds for any $a \in k^{\times}$. 
Then by Hilbert's nullstellensatz, it follows that 
$t^{\ell} \in Q + (x_1, \ldots , x_N)$ for some $\ell \ge 0$. 
This contradicts $\gamma \in H_{\infty}$ and $\operatorname{ord}_{\gamma} (\mathfrak{o}_{X}) = \infty$, 
and we get ($\clubsuit$) for some $a \in k^{\times}$. 

($\clubsuit$) implies $Q + (t-a) \not = S$. 
Furthermore, we have $t-a \not \in Q$ because $H_{\infty}$ contains a $k$-arc. 
Therefore, we have $\operatorname{ht} (Q + (t-a)) = \operatorname{ht} Q + 1 = N + m - n + 2$. 

We set 
\begin{align*}
S_a &:= S/(t-a) \simeq k[[x_1, \ldots, x_N]][y_1, \ldots , y_m], \\
I_a &:= (I + (t-a))/(t-a), \quad Q_a := (Q+(t-a))/(t-a). 
\end{align*}
Then we have $\operatorname{ht} (Q_a) = N + m - n + 1$. 
Furthermore, we have $\operatorname{ht} (I_a) = N+m-n$ by the assumption $\dim ' X_a = n$. 
Therefore it follows that $Q_a \supsetneq I_a$. 
Let $J := \mathcal{J}_{N+m-n+1} \bigl( Q ; \operatorname{Der}_{k[t]}(S) \bigr)$. 
Then the ideal $(J+(t-a))/(t-a)$ of $S_a$ 
coincides with $\mathcal{J}_{N+m-n+1} \bigl( Q_a ; \operatorname{Der}_{k}(S_a) \bigr)$. 
Note that $S_a/I_a$ is regular, $\operatorname{ht}(I_a) = N+m-n$, and $Q_a \supsetneq I_a$. 
Therefore by Lemma \ref{lem:ht1}, we have 
\[
(J + (t-a))/(t-a) = \mathcal{J}_{N+m-n+1} \bigl( Q_a ; \operatorname{Der}_{k}(S_a) \bigr) \not \subset Q_a = (Q + (t-a))/(t-a). 
\]
In particular, we have $J \not \subset Q$. 

Since $\operatorname{Fitt}^{n-1} (\Omega' _{H/k[t]}) = (J+Q)/Q$, we complete the proof of (3). 
\end{proof}
\end{proof}

\begin{lem}\label{lem:action}
Let $R = k[t][[x_1, \ldots , x_N]]$. Let $e_1, \ldots, e_N$ and $d$ be integers satisfying 
$0 < e_i \le d$ for each $i$. 
For each $c \in k^{\times}$, let $T_c: R \to R$ be the ring isomorphism defined by 
$T_c(t) = c^{-d}t$ and $T_c(x_i) = c^{e_i}x_i$. 
Let $I$ be an ideal of $R$ that is $T_c$-invariant (i.e. $T_c(I)=I$ holds) for any $c \in k^{\times}$. 
Let $P$ be a minimal prime of $I$. Then $P$ satisfies one of the following conditions. 
\begin{enumerate}
\item $P \cap k[t] \not = (0)$ and $t \in P$. 
\item $P \cap k[t] = (0)$, and $P + (t-a) \not = R$ holds for any $a \in k^{\times}$. 
\item $P \cap k[t] = (0)$, and there exists $f \in P$ such that
\[
f - t^{\ell} \in (t^{\ell +1}x_1, \ldots, t^{\ell +1}x_N) + (x_1, \ldots , x_N)^{\ell +1}
\]
holds for some $\ell \ge 0$. 
\end{enumerate}
\end{lem}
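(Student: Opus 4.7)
The plan is to first show that every minimal prime $P$ of $I$ is itself $T_c$-invariant for all $c \in k^\times$. Since $I$ is $T_c$-invariant, each $T_c$ permutes the finite set of minimal primes of $I$, giving a group homomorphism $k^\times \to S_n$. The algebraic closedness of $k$ makes $k^\times$ divisible, so this homomorphism to a finite group is trivial, and hence $T_c(P) = P$ for every $c$. With this in hand I split on $P \cap k[t]$. If $P \cap k[t] \ne (0)$, then it is a principal ideal $(p(t)) \subset k[t]$ which is $T_c$-invariant; comparing coefficients in $p(c^{-d}t) = \lambda_c p(t)$ across all $c \in k^\times$ (valid because $k$ is infinite and $d > 0$) forces $p$ to be supported on a single power of $t$, so $(p(t)) = (t^j)$ with $j \ge 1$, giving $t \in P$ and case (1).

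Assume now $P \cap k[t] = (0)$ and that condition (2) fails, so $P + (t-a) = R$ for some $a \in k^\times$. Pick $g \in P$ with $g \equiv 1 \pmod{(t-a)}$; its reduction modulo $(x) := (x_1, \ldots, x_N)$ evaluates to $1$ at $t = a$ and is therefore a nonzero element of $k[t] = R/(x)$. Hence the $T_c$-invariant ideal $(P + (x))/(x) \subset k[t]$ is nonzero, and the same monomial argument as above gives $(P + (x))/(x) = (t^m)$ for some $m \ge 0$. Consequently there exists $g_1 \in P$ whose $\alpha = 0$ component, in the expansion $g_1 = \sum_\alpha g_{1,\alpha}(t) x^\alpha$, equals exactly $t^m$.

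The main step is to produce the element $f$ required by (3) from $g_1$ by a Vandermonde argument performed modulo $(x)^{m+1}$. Equip $R$ with the $\mathbb Z$-grading $\deg t = -d$, $\deg x_i = e_i$, so that $T_c$ acts as multiplication by $c^n$ on the weighted-homogeneous piece of weight $n$. Modulo $(x)^{m+1}$, only finitely many weights appear in $g_1$, so $T_c(g_1) \bmod (x)^{m+1}$ is a finite Laurent polynomial in $c$ whose coefficients are the weighted-homogeneous pieces of $g_1 \bmod (x)^{m+1}$. Using linear independence of the distinct characters $c \mapsto c^n$ on the infinite set $k^\times$, I can find $c_1, \ldots, c_r \in k^\times$ and $\lambda_i \in k$ with $\sum_i \lambda_i c_i^n = \delta_{n, -dm}$ for every weight $n$ that appears. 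The element
\[
f := \sum_{i=1}^r \lambda_i \, T_{c_i}(g_1) \ \in\ P
\]
then agrees modulo $(x)^{m+1}$ with the weight-$(-dm)$ piece of $g_1$. Its $\alpha = 0$ contribution is exactly $t^m$; every other surviving monomial $t^s x^\alpha$ satisfies $1 \le |\alpha| \le m$ and $s = m + e\cdot\alpha/d \ge m+1$ (using $e\cdot\alpha > 0$ and $d \mid e\cdot\alpha$ on the support), and hence lies in $(t^{m+1}x_1, \ldots, t^{m+1}x_N)$. Therefore $f - t^m \in (t^{m+1}x_1, \ldots, t^{m+1}x_N) + (x)^{m+1}$, giving case (3).

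The main obstacle is this last step: one would naturally like to define $f$ as the weight-$(-dm)$ piece of $g_1$ as a single element of $R$, but this is an infinite formal sum and its membership in $P$ would require $P$ to be closed in the $(x)$-adic topology, which is not a priori clear (Krull's intersection theorem only gives closedness when $P+(x) \ne R$). Truncating modulo $(x)^{m+1}$ reduces the extraction to a finite Vandermonde problem at the cost of only controlling $f$ modulo $(x)^{m+1}$, which is precisely what case (3) demands.
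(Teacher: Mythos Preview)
Your proof is correct and follows essentially the same strategy as the paper's: both establish that $P$ is $T_c$-invariant via the divisibility of $k^\times$, then pass to the graded quotient $R/(x)^{\ell+1}$ and extract the weight-$(-d\ell)$ component of an element $g\in P$ with $g-t^\ell\in(x)$; the paper phrases this last step as ``$(P+(x)^{\ell+1})/(x)^{\ell+1}$ is a homogeneous ideal in the graded ring $R/(x)^{\ell+1}$,'' which is exactly what your Vandermonde argument proves. One small inaccuracy in your motivating remark: since $(x)\subset\operatorname{Jac}(R)$ (every $1+f$ with $f\in(x)$ is a unit in $k[t][[x]]$), Krull's theorem actually gives that \emph{every} ideal of $R$ is $(x)$-adically closed, but this has no bearing on the validity of your argument.
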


\begin{proof}
First, we prove that $P$ is also $T_c$-invariant for any $c \in k^{\times}$. 
Let $P_1, \ldots, P_m$ be the minimal primes of $I$.
Since  $T_c$ is an isomorphism, $T_c$ induces a permutation on $P_1, \ldots, P_m$. 
Let $p: k^{\times} \to \mathfrak{S}_m$ be the induced group homomorphism, 
where $\mathfrak{S}_m$ is the symmetric group of degree $m$. 
For any $c \in k^{\times}$, we can take $b \in k^{\times}$ such that $c = b^{m!}$. 
Therefore, we have $p(c) = p(b^{m!}) = (p(b))^{m!} = 1$. 
It shows that $T_c(P_i) = P_i$ for any $c \in k^{\times}$ and $1 \le i \le m$. 

Suppose $P \cap k[t] \not = (0)$. Then $t-a \in P$ for some $a \in k$. 
Since $P$ is $T_c$-invariant for any $c \in k^{\times}$, it follows that $a = 0$. 
Therefore, $P$ satisfies (1). 

Suppose that $P \cap k[t] = (0)$ and $P + (t-a) = R$ holds for some $a \in k^{\times}$. 
We shall prove that $P$ satisfies (3). 
Since $P$ is $T_c$-invariant for any $c \in k^{\times}$, 
it follows that $P + (t-a) = R$ holds for any $a \in k^{\times}$. 
Then by Hilbert's nullstellensatz, it follows that 
\begin{itemize}
\item 
$t^{\ell} \in P + (x_1, \ldots , x_N)$ for some $\ell \ge 0$. 
\end{itemize}
Therefore there exists $g \in P$ such that $g - t^{\ell} \in (x_1, \ldots , x_N)$. 

We denote $M := (x_1, \ldots , x_N) \subset k[t][[x_1, \ldots, x_N]]$. 
Since $M$ is $T_c$-invariant, $T_c$ induces an automorphism on $k[t][[x_1, \ldots , x_N]]/M^{\ell + 1}$. 
Hence, $k[t][[x_1, \ldots , x_N]]/M^{\ell + 1}$ has a graded ring structure satisfying 
$\deg t = -d$ and $\deg x_i = e_i$. 
Then $(P + M^{\ell + 1})/M^{\ell + 1}$ is a homogeneous ideal. 
Therefore, the term $g_{-d \ell}$ of $g$ with degree $-d \ell$ is contained in $P + M^{\ell + 1}$. 
We may write $g_{-d \ell} = f - h$ with $f \in P$ and $h \in M^{\ell + 1}$. 
On the other hand, since $g_{- d \ell} - t^{\ell} \in M$, 
we have $g_{-d \ell} - t^{\ell} \in (t^{\ell +1}x_1, \ldots, t^{\ell +1}x_N)$ by looking at the degrees of its terms. 
Therefore, the condition (3) holds for this $f$. 
\end{proof}

\begin{rmk}\label{rmk:kpt}
Let $I$ and $P$ be as in Lemma \ref{lem:action}. 
Then the following hold for $Y := \operatorname{Spec} (R/P)$: 
\begin{itemize}
\item $Y_{\infty} = \emptyset$ if $P$ is of the form (1). 
\item $Y_{\infty} \cap \operatorname{Cont}^{\ge 1} (\mathfrak{o}_{Y}) = \emptyset$ holds
if $P$ is of the form (3). 
\end{itemize}
\end{rmk}

\subsection{Arc spaces of affine formal $k[[t]]$-schemes}\label{subsection:formal}

In this subsection, we discuss the arc space of $X$ of the form $X = \operatorname{Spec} \bigl( k[x_1, \ldots , x_N][[t]] / I \bigr)$. 
As we will mention in Remark \ref{rmk:Sebag}, the arc space of $X$ can be seen as the Greenberg scheme of the corresponding affine formal scheme. 
In this subsection, we do not deal with general formal $k[[t]]$-schemes. 

\begin{rmk}\label{rmk:Sebag}
Sebag in \cite{Seb04} investigates the theory of arc spaces of formal $k[[t]]$-schemes with $k$ a perfect field, 
and the theory can be applied to $X = \operatorname{Spec} \bigl( k[x_1, \ldots , x_N][[t]] / I \bigr)$ dealt with in this subsection. 
The reader is also referred to \cite{CLNS} to this theory. 

For a scheme $X = \operatorname{Spec} \bigl( k[x_1, \ldots , x_N][[t]] / I \bigr)$, 
we can associate the formal affine scheme $\mathcal{X} = \operatorname{Spf} \bigl( k[x_1, \ldots , x_N][[t]] / I \bigr)$. 
Then the Greenberg schemes $\operatorname{Gr}_{m}(\mathcal{X})$ and $\operatorname{Gr}(\mathcal{X})$ 
defined in \cite{Seb04} are isomorphic to $X_m$ and $X_{\infty}$, respectively. 
Therefore, the theory of Greenberg schemes developed in \cite{Seb04} and \cite{CLNS} 
can be applied to the arc space $X_{\infty}$ of $X$. 
\end{rmk}

\begin{defi}[{cf.\ \cite[Appendix 3.3]{CLNS}}]
Let $I$ be an ideal of $S = k[x_1, \ldots , x_N][[t]]$ and let $A := S/I$. 
Then we denote by $\widehat{\Omega}_{A/k[[t]]}$ the completion of the $A$-module $\Omega_{A/k[[t]]}$ 
with respect to the $(t)$-adic topology, i.e.
\[
\widehat{\Omega}_{A/k[[t]]} := \varprojlim_n \left(\Omega_{A/k[[t]]}/(t^n)\Omega_{A/k[[t]]} \right). 
\]
The canonical derivation $d_{A/k[[t]]}: A \to \Omega_{A/k[[t]]}$ induces a derivation
\[
\widehat{d}_{A/k[[t]]}: A \to \widehat{\Omega}_{A/k[[t]]}. 
\]
We sometimes abbreviate  $\widehat{d}_{A/k[[t]]}$ to $\widehat{d}$. 

When $X=\operatorname{Spec} A$, 
we denote by $\widehat{\Omega}_{X/k[[t]]}$ the sheaf on $X$ associated to the $A$-module $\widehat{\Omega}_{A/k[[t]]}$. 
\end{defi}

\begin{rmk}[{cf.\ \cite[Example 3.3.5 in Appendix]{CLNS}}]\label{rmk:hat}
\begin{enumerate}
\item 
$\widehat{\Omega}_{S/k[[t]]}$ is a free $S$-module of rank $N$ with basis 
\[
\widehat{d}_{S/k[[t]]}(x_1), \ldots , \widehat{d}_{S/k[[t]]}(x_N). 
\]
Furthermore, we have an exact sequence
\[
I/I^2 \xrightarrow{\delta} \widehat{\Omega} _{S/k[[t]]} \otimes _{S} A \xrightarrow{\alpha} \widehat{\Omega} _{A/k[[t]]} \to 0
\]
of $A$-modules, where 
$\alpha$ is the map satisfying $\alpha \bigl( \widehat{d}_{S/k[[t]]}(g) \otimes 1 \bigr) = \widehat{d}_{A/k[[t]]} ( \overline{g} )$ for $g \in S$, and 
$\delta$ is the map satisfying $\delta(\overline{g}) = \widehat{d}_{S/k[[t]]}(g) \otimes 1$ for $g \in I$. 
In particular, $\widehat{\Omega} _{A/k[[t]]}$ is a finite $A$-module.

\item \label{item:univ_hat}
The canonical derivation $\widehat{d}: A \to \widehat{\Omega} _{A/k[[t]]}$ has the following universal property: 
\begin{itemize}
\item[(\ref{item:univ_hat}-1)] The induced map 
\[
\operatorname{Hom}_A \bigl( \widehat{\Omega} _{A/k[[t]]}, M \bigr) \to 
\operatorname{Der}_{k[[t]]} \left( A, M \right); \quad f \mapsto f \circ \widehat{d}
\] 
is an isomorphism for any $A$-module $M$ that is complete with respect to the $(t)$-adic topology. 
In particular, this map is an isomorphism for any finite $A$-module (cf.\ \cite[Theorem 8.7]{Mat89}). 
\end{itemize}
This follows from the following general fact from \cite[20.4.8.2]{EGAIV1}: 
\begin{itemize}
\item[(\ref{item:univ_hat}-2)] 
Let $B$ be a topological ring and $C$ a topological $B$-algebra. 
Let $N$ be a topological $C$-module. Then we have an isomorphism
\[
\operatorname{Hom}^{\rm c}_C (\Omega _{C/B}, N) \xrightarrow{\simeq}  \operatorname{Der}^{\rm c}_B (C, N); \quad f \mapsto f \circ d_{C/B}. 
\]
Here, $\operatorname{Hom}^{\rm c}_C (\Omega _{C/B}, N)$ denotes the set of the continuous homomorphisms 
$\Omega _{C/B} \to N$ of $C$-modules, and 
$\operatorname{Der}^{\rm c}_B (C, N)$ denotes the set of the continuous $B$-derivations $C \to N$. 
The topology on $\Omega _{C/B} = \mathcal{I}/\mathcal{I}^2$ is defined as the quotient topology, 
where $\mathcal{I}$ is the kernel of the augmentation map $C \otimes _B C \to C;\ a \otimes b \mapsto ab$. 
\end{itemize}
In our case, the topology on $\Omega _{A/k[[t]]}$ coincides with the $(t)$-adic topology (cf.\ \cite[20.4.5]{EGAIV1}). 
Therefore, we have 
\begin{align*}
\operatorname{Hom}^{\rm c} _A \bigl( \Omega _{A/k[[t]]}, M \bigr) &= \operatorname{Hom}_A \bigl( \Omega _{A/k[[t]]}, M \bigr), \\
\operatorname{Der}^{\rm c} _{k[[t]]} ( A, M ) &= \operatorname{Der}_{k[[t]]} ( A, M )
\end{align*}
for any $A$-module $M$ with the $(t)$-adic topology. 
Hence, by (\ref{item:univ_hat}-2), we have an isomorphism 
\[
\operatorname{Hom}_A \bigl( \Omega _{A/k[[t]]}, M \bigr) \xrightarrow{\simeq}
\operatorname{Der}_{k[[t]]} ( A, M )
\]
for any $A$-module $M$. 
Moreover, if $M$ is complete with respect to the $(t)$-adic topology, then we have an isomorphism
\[
\operatorname{Hom}_A \bigl( \widehat{\Omega} _{A/k[[t]]}, M \bigr) \xrightarrow{\simeq}
\operatorname{Hom}_A \bigl( \Omega _{A/k[[t]]}, M \bigr), 
\]
which proves (\ref{item:univ_hat}-1). 

\item \label{item:Fitt_hat}
By (\ref{item:univ_hat}), it follows that 
$\operatorname{Der}_{k[[t]]}(S)$ is a free $S$-module of rank $N$ generated by $\partial/\partial x_i$'s. 
Therefore, by the exact sequence in (1), we have 
\[
\operatorname{Fitt}^n \bigl( \widehat{\Omega} _{A/k[[t]]} \bigr) = 
\bigl( \mathcal{J}_{N-n} \bigl( I; \operatorname{Der}_{k[[t]]}(S) \bigr) + I  \bigr)/I. 
\]
\end{enumerate}
\end{rmk}

\begin{lem}\label{lem:order_k[[t]]}
Let $n$ and $e$ be non-negative integers, and 
let $X$ be an affine scheme of the form $X = \operatorname{Spec} \bigl( k[x_1, \ldots , x_N][[t]]/I \bigr)$. 
Suppose that each irreducible component $X_i$ of $X$ has $\dim X_i \ge n+1$. 
Let $\gamma \in \operatorname{Cont}^e \bigl( \operatorname{Fitt}^n ( \widehat{\Omega} _{X/k[[t]]} ) \bigr)$ be a $k$-arc. 
Then we have
\[
\gamma ^* \widehat{\Omega} _{X / k[[t]]} \simeq 
k[[t]]^{\oplus n} \oplus \bigoplus _i k[t]/(t^{e_i})
\]
as $k[[t]]$-modules with $\sum_i e_i = e$. 
\end{lem}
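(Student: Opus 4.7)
The plan is to model the proof on that of Lemma \ref{lem:order_R}, adapting the strategy of \cite[Lemma 2.13(1)]{NS} to the formal $k[[t]]$-setting with the help of Remark \ref{rmk:hat}. Set $A := k[x_1, \ldots, x_N][[t]]/I$ and $N_\gamma := \gamma^* \widehat{\Omega}_{A/k[[t]]}$. Since $\widehat{\Omega}_{A/k[[t]]}$ is a finite $A$-module generated by at most $N$ elements (Remark \ref{rmk:hat}(1)), $N_\gamma$ is a finitely generated $k[[t]]$-module, and the structure theorem over the PID $k[[t]]$ gives
\[
N_\gamma \simeq k[[t]]^{\oplus r} \oplus \bigoplus_{i=1}^s k[t]/(t^{e_i})
\]
with $e_i > 0$. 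I would then reduce the lemma to the two statements $r = n$ and $\sum_i e_i = e$.

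To establish $r = n$, write $S := k[x_1, \ldots, x_N][[t]]$ and $I = (f_1, \ldots, f_\ell)$. Pulling back the exact sequence of Remark \ref{rmk:hat}(1) along $\gamma^*$ presents $N_\gamma$ as
\[
k[[t]]^\ell \xrightarrow{M(\gamma)} k[[t]]^N \to N_\gamma \to 0,
\]
where $M(\gamma) := \bigl( \gamma^*(\partial f_j/\partial x_i) \bigr)_{ij}$. Further tensoring with $k((t))$ yields $N_\gamma \otimes_{k[[t]]} k((t)) \simeq k((t))^r$, so $r = N - \operatorname{rank}_{k((t))}(M(\gamma))$. Let $\mathfrak{q} := \ker(\gamma^*) \subset A$ and let $Q \subset S$ be its preimage. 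The hypothesis $\operatorname{ord}_\gamma\bigl(\operatorname{Fitt}^n(\widehat{\Omega}_{X/k[[t]]})\bigr) = e < \infty$, combined with the Jacobian description of $\operatorname{Fitt}^n$ in Remark \ref{rmk:hat}(3), implies that some $(N-n) \times (N-n)$ minor of the Jacobian matrix lies outside $Q$, giving $\operatorname{rank}_{k((t))}(M(\gamma)) \geq N - n$. For the reverse inequality, let $P \subset Q$ be any minimal prime of $I$: by hypothesis $\dim(S/P) \geq n+1$, so the standard inequality yields
\[
\operatorname{ht}(P) \leq \dim S - \dim(S/P) \leq (N+1) - (n+1) = N - n,
\]
hence $\operatorname{ht}(IS_Q) \leq \operatorname{ht}(I) \leq N - n$. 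Since $S$ is regular, Remark \ref{rmk:JC}(2)(a) then shows that every $(N-n+1)$-minor of the Jacobian lies in $Q$, so $\operatorname{rank}_{k((t))}(M(\gamma)) \leq N-n$. Thus $r = n$.

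The equality $\sum_i e_i = e$ is then a direct Fitting ideal computation. From the presentation
\[
k[[t]]^s \xrightarrow{\operatorname{diag}(t^{e_1}, \ldots, t^{e_s})} k[[t]]^{n+s} \to N_\gamma \to 0
\]
produced by the structure decomposition, $\operatorname{Fitt}^n(N_\gamma)$ is generated by the $s \times s$ maximal minors, the only nonvanishing one being $t^{\sum_i e_i}$. On the other hand, Fitting ideals commute with base change, so
\[
\operatorname{Fitt}^n(N_\gamma) = \gamma^*\bigl(\operatorname{Fitt}^n(\widehat{\Omega}_{X/k[[t]]})\bigr) \cdot k[[t]] = (t^e)
\]
by assumption, yielding $\sum_i e_i = e$.

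The main obstacle is the upper bound on $\operatorname{rank}_{k((t))}(M(\gamma))$, which is where regularity of $S = k[x_1, \ldots, x_N][[t]]$ is essential so that Remark \ref{rmk:JC}(2)(a) applies. Unlike the $k[t][[x_1, \ldots, x_N]]$-setting of Lemma \ref{lem:finjac}, here $\dim S = N+1$ is finite and the classical inequality $\operatorname{ht}(P) + \dim(S/P) \leq \dim S$ suffices, so no recourse to the $\dim'$ machinery of Section \ref{section:Omega'} is necessary.
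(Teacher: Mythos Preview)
Your proof is correct and follows essentially the same approach as the paper's. The paper's proof simply says ``the same proofs as in Lemmas \ref{lem:finjac} and \ref{lem:order_R} work'' and then records the one point that changes, namely the height bound $\operatorname{ht}P\le N-n$ via $\operatorname{ht}P=\dim S-\dim(S/P)$, justified by $S=k[x_1,\ldots,x_N][[t]]$ being catenary with all maximal ideals of height $N+1$; your argument unpacks exactly these references and makes the same Jacobian-rank and Fitting-ideal computations explicit.
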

\begin{proof}
The same proofs as in Lemmas \ref{lem:finjac} and \ref{lem:order_R} work. 
Note that any minimal prime $P$ of $I$ satisfies $\operatorname{ht} P \le N - n$. 
This is because 
\[
\operatorname{ht} P = \dim S - \dim (S/P) \le (N + 1) - (n+1) = N - n, 
\]
where we set $S := k[x_1, \ldots , x_N][[t]]$. 
The first equality follows from the facts 
that any maximal ideal $M$ of $S$ has $\operatorname{ht} M = N+1$ and $S$ is a catenary ring. 
\end{proof}

\begin{prop}\label{prop:EM4.1_k[[t]]}
Let $n$ be a non-negative integer, and 
let $X$ be an affine scheme of the form $X = \operatorname{Spec} \bigl( k[x_1, \ldots , x_N][[t]]/I \bigr)$. 
Suppose that each irreducible component $X_i$ of $X$ has $\dim X_i \ge n+1$. 
Then, there exists a positive integer $c$ such that the following hold 
for non-negative integers $m$ and $e$ with $m \ge ce$. 
\begin{enumerate}
\item We have
\begin{align*}
\psi _{m} \Bigl( \operatorname{Cont}^{e} \bigl( \operatorname{Fitt}^n \bigl( \widehat{\Omega} _{X/k[[t]]} \bigr) \bigr) \Bigr) 
= \pi _{m+e, m} \Bigl( \operatorname{Cont}^{e} \bigl( \operatorname{Fitt}^n \bigl( \widehat{\Omega} _{X/k[[t]]} \bigr) \bigr)_{m+e} \Bigr). 
\end{align*}

\item $\pi_{m+1, m}:X_{m+1} \to X_m$ induces a piecewise trivial fibration 
\[
\psi _{m+1} \Bigl( \operatorname{Cont}^{e} \bigl( \operatorname{Fitt}^n \bigl( \widehat{\Omega} _{X/k[[t]]} \bigr) \bigr) \Bigr) \to 
\psi _{m} \Bigl( \operatorname{Cont}^{e} \bigl( \operatorname{Fitt}^n \bigl( \widehat{\Omega} _{X/k[[t]]} \bigr) \bigr) \Bigr)
\]
with fiber $\mathbb{A}^n$. 
\end{enumerate}
\end{prop}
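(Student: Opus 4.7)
The plan is to mirror the proof of Proposition \ref{prop:EM4.1_Rt} by reducing to the complete intersection case, where the Hensel-type lifting in Remark \ref{rmk:Hensel} applies directly. Write $S := k[x_1,\ldots,x_N][[t]]$. By the dimension argument used in the proof of Lemma \ref{lem:order_k[[t]]}, every minimal prime $P$ of $I$ satisfies $\operatorname{ht} P \le r := N - n$. Choose generators $f_1,\ldots,f_d$ of $I$ and set $F_i := \sum_{j=1}^d a_{ij} f_j$ for general $a_{ij} \in k$ ($1 \le i \le r$). Let $I_M := (F_1,\ldots,F_r)$, $M := \operatorname{Spec}(S/I_M)$, $I_{X'} := (I_M : I_X)$, and
\[
J := \mathcal{J}_r \bigl( I_M ; \operatorname{Der}_{k[[t]]}(S) \bigr).
\]
By Remark \ref{rmk:hat}(\ref{item:Fitt_hat}), $\operatorname{Fitt}^n(\widehat{\Omega}_{M/k[[t]]}) = (J + I_M)/I_M$, which plays the role of $\overline{J}$ in Lemma \ref{lem:EM4.1CI_Rt}.

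The heart of the argument is the inclusion $J \subset \sqrt{I_X + I_{X'}}$. It is established by the same contradiction as in Proposition \ref{prop:EM4.1_Rt}: for any prime $\mathfrak{p} \supset I_X + I_{X'}$, if $S/I_M$ were regular at $\mathfrak{p}$, then the genericity of the $a_{ij}$ combined with the height bound forces $(I_M)_\mathfrak{p} = (I_X)_\mathfrak{p}$, contradicting $I_{X'} \subset \mathfrak{p}$. Hence $S/I_M$ fails to be regular at $\mathfrak{p}$, and the Jacobian criterion for $S = k[x_1,\ldots,x_N][[t]]$ with derivations over $k[[t]]$ -- available from the theory of Greenberg schemes of formal $k[[t]]$-schemes (cf.\ \cite{Seb04}, \cite[Ch.6.\ Proposition 2.4.3]{CLNS}, and Remark \ref{rmk:quest}(4)) -- yields $J \subset \mathfrak{p}$. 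With this inclusion in hand, the formal bookkeeping of \cite[Proposition 2.17]{NS} transfers the assertions (1) and (2) for the complete intersection $M$ to the corresponding assertions for $X$.

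For the complete intersection $M$ itself, we apply Hensel's lemma in the form of Remark \ref{rmk:Hensel} to the system $F_1,\ldots,F_r$ together with a non-vanishing $r \times r$ Jacobian minor, whose existence on the relevant contact locus is precisely the defining condition of $\operatorname{Cont}^e \bigl( \operatorname{Fitt}^n ( \widehat{\Omega}_{M/k[[t]]} ) \bigr)$ via Remark \ref{rmk:hat}(\ref{item:Fitt_hat}). The existence part of Hensel's lemma gives statement (1), and the uniqueness part gives the piecewise trivial fibration in (2) with fiber $\mathbb{A}^{N-r} = \mathbb{A}^n$, exactly as in Lemmas \ref{lem:EM4.1CI_R} and \ref{lem:EM4.1CI_Rt}.

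The main point requiring care is the Jacobian criterion step for $S = k[x_1,\ldots,x_N][[t]]$: for primes $\mathfrak{p}$ with $t \notin \mathfrak{p}$ one localizes at $t$ and applies $(\mathrm{WJ})_{k((t))}$ for the polynomial ring $k((t))[x_1,\ldots,x_N]$, whereas for $\mathfrak{p}$ containing $t$ one must invoke Matsumura's generalization or, equivalently, the criterion embedded in Sebag's framework. Beyond this input, the proof is a direct translation of that of Proposition \ref{prop:EM4.1_Rt}, with $\Omega'_{-/k[t]}$ replaced throughout by $\widehat{\Omega}_{-/k[[t]]}$ and $\operatorname{Der}_{k[t]}$ by $\operatorname{Der}_{k[[t]]}$.
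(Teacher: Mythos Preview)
Your approach is correct and matches the paper's own proof, which simply records that ``the same proof as in Proposition~\ref{prop:EM4.1_Rt} works (cf.\ Remark~\ref{rmk:Hensel}).''

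One clarification: your last paragraph overcomplicates the Jacobian criterion step. The implication you actually need is ``$S/I_M$ not regular at $\mathfrak{p}$ $\Rightarrow$ $J \subset \mathfrak{p}$'', and this is the \emph{easy} direction coming from Remark~\ref{rmk:JC}(2)(a)(b), which holds for any regular ring with no weak Jacobian hypothesis at all. Since $\operatorname{Der}_{k[[t]]}(S)$ is a submodule of $\operatorname{Der}(S)$, one has
\[
J = \mathcal{J}_r\bigl(I_M; \operatorname{Der}_{k[[t]]}(S)\bigr) \subset \mathcal{J}_r\bigl(I_M; \operatorname{Der}(S)\bigr) \subset \mathfrak{p}
\]
directly, exactly as in the proof of Proposition~\ref{prop:EM4.1_Rt} (where the chain passes through $\operatorname{Der}_k(S)$). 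No case split on whether $t \in \mathfrak{p}$, and no appeal to $(\mathrm{WJ})_{k((t))}$, Matsumura's generalization, or Sebag's framework is required for this step. The $(\mathrm{WJ})$ condition discussed in Remark~\ref{rmk:quest}(4) concerns the \emph{converse} implication, which plays no role here.
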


\begin{proof}
The same proof as in Proposition \ref{prop:EM4.1_Rt} works (cf.\ Remark \ref{rmk:Hensel}). 
\end{proof}

\begin{rmk}
When $X$ is flat over $k[[t]]$, 
Proposition \ref{prop:EM4.1_k[[t]]}(1) is proved in \cite[Ch.5.\ Proposition 2.3.4]{CLNS}, and 
Proposition \ref{prop:EM4.1_k[[t]]}(2) is proved in \cite[Lemme 4.5.4]{Seb04} (cf.\ \cite[Ch.5.\ Theorem 2.3.11]{CLNS}). 
We also note that Proposition \ref{prop:EM4.1_k[[t]]}(2) can be reduced to the flat case by the argument in \cite[Remark 2.14(3)]{NS}. 
\end{rmk}

We define cylinders and their codimensions. 

\begin{defi}\label{defi:codim_k[[t]]}
Let $n$ be a non-negative integer, and 
let $X$ be an affine scheme of the form $X = \operatorname{Spec} \bigl( k[x_1, \ldots , x_N][[t]]/I \bigr)$. 
Suppose that each irreducible component $X_i$ of $X$ has $\dim X_i \ge n+1$. 
A subset $C \subset X_{\infty}$ is called a \textit{cylinder} if $C = \psi_{m} ^{-1}(S)$ holds for some $m \ge 0$ 
and a constructible subset $S \subset X_m$. We define the codimension of $C$ as follows: 
\begin{enumerate}
\item 
Assume that $C \subset \operatorname{Cont}^{e} \bigl( \operatorname{Fitt}^n \bigl( \widehat{\Omega} _{X/k[[t]]} \bigr) \bigr)$ 
for some $e \in \mathbb{Z}_{\ge 0}$.
Then we define the codimension of $C$ in $X_\infty$ as
\[
\operatorname{codim}(C) := (m+1) n - \operatorname{dim}(\psi_m (C))
\]
for any sufficiently large $m$. This definition is well-defined by Proposition \ref{prop:EM4.1_k[[t]]}.

\item 
In general, we define the codimension of $C$ in $X_\infty$ as follows:
\[
\operatorname{codim}(C) := \min_{e \in \mathbb{Z}_{\ge 0}} \operatorname{codim}
\Bigl( C \cap \operatorname{Cont}^{e} \bigl( \operatorname{Fitt}^n \bigl( \widehat{\Omega} _{X/k[[t]]} \bigr) \bigr) \Bigr) . 
\]
By convention, $\operatorname{codim}(C) = \infty$ if 
$C \cap \operatorname{Cont}^e \bigl( \operatorname{Fitt}^n \bigl( \widehat{\Omega} _{X/k[[t]]} \bigr) \bigr) = \emptyset$
for any $e \ge 0$. 
\end{enumerate}
\end{defi}

\begin{rmk}\label{rmk:codim_hat}
As in Remark \ref{rmk:codim}, the definition of the codimension above depends on the choice of $n$. 
\end{rmk}

\begin{defi}
Let $n$ be a non-negative integer, and 
let $X$ be an affine scheme of the form $X = \operatorname{Spec} \bigl( k[x_1, \ldots , x_N][[t]]/I \bigr)$. 
Suppose that each irreducible component $X_i$ of $X$ has $\dim X_i \ge n+1$. 
A subset $A \subset X_{\infty}$ is called \textit{thin} 
if $A \subset Z_{\infty}$ holds for some closed subscheme $Z$ of $X$ with $\dim Z \le n$. 
\end{defi}

\begin{prop}[{cf.\ \cite[Th\'{e}or\`{e}me 6.3.5]{Seb04}}]\label{prop:negligible}
Let $n$ be a non-negative integer, and 
let $X$ be an affine scheme of the form $X = \operatorname{Spec} \bigl( k[x_1, \ldots , x_N][[t]]/I \bigr)$. 
Suppose that each irreducible component $X_i$ of $X$ has $\dim X_i \ge n+1$. 
Let $C$ be a cylinder in $X_{\infty}$. 
Let $\{ C_{\lambda} \} _{\lambda \in \Lambda}$ be a set of countably many disjoint subcylinders $C_{\lambda} \subset C$. 
If $C \setminus (\bigsqcup _{\lambda \in \Lambda} C_{\lambda}) \subset X_{\infty}$ is a thin set, 
then it follows that 
\[
\operatorname{codim}(C) = \min _{\lambda \in \Lambda} \operatorname{codim}(C_{\lambda}). 
\]
\end{prop}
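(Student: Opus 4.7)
The plan is to prove the two inequalities in $\operatorname{codim}(C) = \min_\lambda \operatorname{codim}(C_\lambda)$ separately. The inequality $\operatorname{codim}(C) \le \min_\lambda \operatorname{codim}(C_\lambda)$ is immediate from the definitions: for any cylinder $C' \subset C$, any $e \ge 0$, and any $m$, one has $\psi_m(C' \cap \operatorname{Cont}^e) \subset \psi_m(C \cap \operatorname{Cont}^e)$, hence $\dim \psi_m(C' \cap \operatorname{Cont}^e) \le \dim \psi_m(C \cap \operatorname{Cont}^e)$, and therefore $\operatorname{codim}(C') \ge \operatorname{codim}(C)$.

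For the reverse inequality, I may assume $\operatorname{codim}(C) < \infty$ and choose $e \ge 0$ so that $C^{(e)} := C \cap \operatorname{Cont}^{e} \bigl( \operatorname{Fitt}^n \bigl( \widehat{\Omega}_{X/k[[t]]} \bigr) \bigr)$ is non-empty with $\operatorname{codim}(C^{(e)}) = \operatorname{codim}(C)$. The first key step is to show that the thin complement $T := C \setminus \bigsqcup_{\lambda} C_\lambda$ does not contribute to $C^{(e)}$. By hypothesis $T \subset Z_\infty$ for some closed subscheme $Z \subset X$ with $\dim Z \le n$, and the Greenberg-scheme analogue of \cite[Lemma 5.1]{EM09}, which holds in our setting by Remark \ref{rmk:quest}(4) (see also \cite[Ch.6, Proposition 2.4.3]{CLNS}), gives $Z_\infty \cap \operatorname{Cont}^{e} \bigl( \operatorname{Fitt}^n \bigl( \widehat{\Omega}_{X/k[[t]]} \bigr) \bigr) = \emptyset$. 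Consequently $T \cap \operatorname{Cont}^{e} \bigl( \operatorname{Fitt}^n \bigl( \widehat{\Omega}_{X/k[[t]]} \bigr) \bigr) = \emptyset$, so setting $C_\lambda^{(e)} := C_\lambda \cap \operatorname{Cont}^{e} \bigl( \operatorname{Fitt}^n \bigl( \widehat{\Omega}_{X/k[[t]]} \bigr) \bigr)$ yields the disjoint decomposition $C^{(e)} = \bigsqcup_{\lambda \in \Lambda} C_\lambda^{(e)}$.

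Next, take $m$ large enough that Proposition \ref{prop:EM4.1_k[[t]]} applies and $\dim \psi_m(C^{(e)}) = (m+1)n - \operatorname{codim}(C^{(e)})$. Then $V := \psi_m(C^{(e)})$ is constructible in $X_m$. Pick an irreducible component $W \subset \overline{V}$ of maximal dimension, so $\dim W = \dim V$, and let $\eta$ denote its generic point. Since $W$ is an irreducible component of the closure of the constructible set $V$, the intersection $V \cap W$ contains a non-empty open subset of $W$, so $\eta \in V$. From $V = \bigcup_\lambda \psi_m(C_\lambda^{(e)})$ there exists $\lambda_0 \in \Lambda$ with $\eta \in \psi_m(C_{\lambda_0}^{(e)})$; since $\psi_m(C_{\lambda_0}^{(e)})$ is constructible in $X_m$ and contains the generic point of the irreducible set $W$, it contains a non-empty open subset of $W$. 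Hence $\dim \psi_m(C_{\lambda_0}^{(e)}) \ge \dim W = \dim V$, and therefore
\[
\operatorname{codim}(C_{\lambda_0}) \;\le\; \operatorname{codim}(C_{\lambda_0}^{(e)}) \;\le\; (m+1)n - \dim W \;=\; \operatorname{codim}(C^{(e)}) \;=\; \operatorname{codim}(C),
\]
which completes the reverse inequality.

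The main subtle point of this plan is passing from the countable disjoint decomposition $C^{(e)} = \bigsqcup_\lambda C_\lambda^{(e)}$ to the existence of a single $\lambda_0$ with $\operatorname{codim}(C_{\lambda_0}^{(e)}) \le \operatorname{codim}(C^{(e)})$; the generic-point argument sketched above handles this cleanly and avoids any Baire-category or uncountability-of-$k$ hypothesis that a naive covering argument might seem to require. A secondary subtlety is the assertion $T \cap \operatorname{Cont}^e = \emptyset$ for thin $T$, which is precisely Question \ref{quest}(1) transplanted to the Greenberg-scheme setting; thankfully, the paper has already recorded in Remark \ref{rmk:quest}(4) that this version of the question is valid, so the argument goes through without modification.
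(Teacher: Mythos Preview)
Your argument contains a genuine gap in the final displayed chain of inequalities. The step
\[
\operatorname{codim}(C_{\lambda_0}^{(e)}) \;\le\; (m+1)n - \dim W
\]
is not justified, and in fact the inequality points the wrong way. By Definition~\ref{defi:codim_k[[t]]}, the quantity $(m+1)n - \dim \psi_m(D)$ computes $\operatorname{codim}(D)$ only once $m$ is large enough that $D$ is already a cylinder at level $m$; for smaller $m$ (still with $m \ge ce$) one has only
\[
\operatorname{codim}(D) \;\ge\; (m+1)n - \dim \psi_m(D),
\]
because $\psi_{m+1}(D) \to \psi_m(D)$ has fibers contained in the $\mathbb{A}^n$-fibres of Proposition~\ref{prop:EM4.1_k[[t]]}(2). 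Since you chose $m$ before identifying $\lambda_0$, there is no reason for $m$ to be large enough for $C_{\lambda_0}^{(e)}$. A concrete failure: take $X = \operatorname{Spec} k[x_1][[t]]$, $n=1$, $e=0$, $C = X_\infty$, and $C_0 = \{x_1^{(1)} = 0\}$, $C_1 = \{x_1^{(1)} \ne 0\}$. At level $m=0$ both project onto all of $\mathbb{A}^1$, so your procedure may select $\lambda_0 = 0$; but $\operatorname{codim}(C_0) = 1 > 0 = (0+1)\cdot 1 - 1$.

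The idea can be repaired as follows. For $m \ge ce$ large enough for $C^{(e)}$, the truncation maps $\psi_{m+1}(C^{(e)}) \to \psi_m(C^{(e)})$ are piecewise trivial $\mathbb{A}^n$-fibrations, so one may choose a \emph{compatible} system of maximal-dimensional irreducible components $W_m \subset \psi_m(C^{(e)})$ with $\pi_{m+1,m}(W_{m+1})$ dense in $W_m$. The generic points $\eta_m$ of the $W_m$ then assemble to a single point $\eta_\infty \in C^{(e)}$. Now pick the unique $\lambda_0$ with $\eta_\infty \in C_{\lambda_0}^{(e)}$; since $C_{\lambda_0}^{(e)}$ is a cylinder, say at level $\ell'$, one has $\eta_m \in \psi_m(C_{\lambda_0}^{(e)})$ for \emph{all} $m \ge \ell'$, and the constructibility argument then gives $\dim \psi_m(C_{\lambda_0}^{(e)}) = \dim \psi_m(C^{(e)})$ for all such $m$. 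Taking $m$ large enough for $C_{\lambda_0}^{(e)}$ now yields the desired inequality. The paper itself sidesteps all of this by citing \cite[Ch.6, Lemma~3.4.1 and Example~3.5.2]{CLNS} directly.
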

\begin{proof}
This follows from \cite[Ch.6.\ Lemma 3.4.1]{CLNS} and \cite[Ch.6.\ Example 3.5.2]{CLNS}. 
\end{proof}

\begin{lem}[{cf.\ \cite[Ch.5.\ Proposition 2.2.6]{CLNS}}]\label{lem:EM4.4_k[[t]]}
Let $X$ be an affine scheme of the form $X = \operatorname{Spec} \bigl( k[x_1, \ldots , x_N][[t]]/I \bigr)$. 
Let $p$ and $m$ be non-negative integers with $2p+1 \ge m \ge p$. 
Let $\gamma \in X_p (k)$ be a jet with $\pi _{m,p}^{-1} (\gamma) \not = \emptyset$. 
Then we have 
\[
\pi _{m,p}^{-1} (\gamma) \simeq \operatorname{Hom}_{k[t]/(t^{p+1})} 
\bigl(\gamma^* \widehat{\Omega}_{X/k[[t]]}, (t^{p+1})/(t^{m+1}) \bigr). 
\]
\end{lem}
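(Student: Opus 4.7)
The plan is to mimic the proof of Lemma \ref{lem:EM4.4_R}, replacing $\Omega'_{A/k[t]}$ by $\widehat{\Omega}_{A/k[[t]]}$ and invoking the universal property established in Remark \ref{rmk:hat}(2) in place of Lemma \ref{lem:D=D'}. Write $A := k[x_1,\ldots,x_N][[t]]/I$. First, I would pick any $\alpha \in \pi_{m,p}^{-1}(\gamma)$, corresponding to a $k[[t]]$-ring homomorphism $\alpha^*: A \to k[t]/(t^{m+1})$ that reduces to $\gamma^*$ modulo $t^{p+1}$, and send $\beta \in \pi_{m,p}^{-1}(\gamma)$ to the difference $\beta^* - \alpha^*$. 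Because both $\alpha^*$ and $\beta^*$ reduce to $\gamma^*$ modulo $t^{p+1}$, this difference takes values in $(t^{p+1})/(t^{m+1})$; the inequality $m \le 2p+1$ implies that the product of any two elements of $(t^{p+1})/(t^{m+1})$ vanishes in $k[t]/(t^{m+1})$, which is exactly what is needed to verify the Leibniz rule and to deduce a bijection
\[
\pi_{m,p}^{-1}(\gamma) \xrightarrow{\simeq} \operatorname{Der}_{k[[t]]}\bigl(A, (t^{p+1})/(t^{m+1})\bigr),
\]
where $(t^{p+1})/(t^{m+1})$ is viewed as an $A$-module via $\gamma^*$.

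Next, I would invoke the universal property of $\widehat{\Omega}_{A/k[[t]]}$ from Remark \ref{rmk:hat}(2). Since $(t^{p+1})/(t^{m+1})$ is annihilated by $t^{m+1}$ and is a finite $A$-module through $\gamma^*$, it is in particular complete in the $(t)$-adic topology, so that remark gives
\[
\operatorname{Der}_{k[[t]]}\bigl(A, (t^{p+1})/(t^{m+1})\bigr) \simeq \operatorname{Hom}_A\bigl(\widehat{\Omega}_{A/k[[t]]}, (t^{p+1})/(t^{m+1})\bigr).
\]
Finally, since the $A$-action on $(t^{p+1})/(t^{m+1})$ factors through $\gamma^*\colon A \to k[t]/(t^{p+1})$, extension of scalars yields
\[
\operatorname{Hom}_A\bigl(\widehat{\Omega}_{A/k[[t]]}, (t^{p+1})/(t^{m+1})\bigr) \simeq \operatorname{Hom}_{k[t]/(t^{p+1})}\bigl(\gamma^*\widehat{\Omega}_{X/k[[t]]}, (t^{p+1})/(t^{m+1})\bigr),
\]
and composing the three isomorphisms above proves the claim.

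The step that requires the most care is checking that $\beta^* - \alpha^*$ is genuinely a $k[[t]]$-derivation with source $A$ and target $(t^{p+1})/(t^{m+1})$ equipped with the $A$-module structure coming from $\gamma^*$; one must verify that the Leibniz identity stated with respect to $\gamma^*$ is consistent with the fact that $\alpha^*$ and $\beta^*$ both differ from $\gamma^*$ by elements of $(t^{p+1})$, and this is precisely where the hypothesis $m \le 2p+1$ enters. Beyond that point, all the heavy lifting is performed by Remark \ref{rmk:hat}(2) (which in turn relies on the general formalism of \cite{EGAIV1}), and the remaining identifications are formal.
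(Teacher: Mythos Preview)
Your proof is correct and follows essentially the same approach as the paper's: reduce to $\operatorname{Der}_{k[[t]]}\bigl(A,(t^{p+1})/(t^{m+1})\bigr)$ via the standard difference-of-lifts argument (as in Lemma~\ref{lem:EM4.4_R}), then invoke the universal property of $\widehat{\Omega}_{A/k[[t]]}$ from Remark~\ref{rmk:hat}(\ref{item:univ_hat}) and pass to $\gamma^*\widehat{\Omega}_{X/k[[t]]}$ by extension of scalars. The paper's proof is just a terser version of exactly this, deferring the first step to the proof of Lemma~\ref{lem:EM4.4_R}.
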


\begin{proof}
We set $A := k[x_1, \ldots , x_N][[t]]/I$. 
For the same reason as in the proof of Lemma \ref{lem:EM4.4_R}, 
we have 
\[
\pi _{m,p}^{-1} (\gamma) \simeq \operatorname{Der}_{k[[t]]} \left( A, (t^{p+1})/(t^{m+1}) \right). 
\]
Furthermore, by the universal property of $\widehat{\Omega}_{A/k[[t]]}$ (cf.\ Remark \ref{rmk:hat}(\ref{item:univ_hat})), we have 
\[
\operatorname{Der}_{k[[t]]} \left( A, (t^{p+1})/(t^{m+1}) \right)
\simeq \operatorname{Hom}_{A} \bigl( \widehat{\Omega}_{A/k[[t]]}, (t^{p+1})/(t^{m+1}) \bigr), 
\]
which proves the assertion. 
\end{proof}

\subsection{Codimension formulae}

In this subsection, we discuss a $k[t]$-morphism $f: X \to Y$ of affine $k[t]$-schemes in the following two cases. 
\begin{enumerate}
\item[(a)] $X$ and $Y$ are affine schemes of the forms 
$X = \operatorname{Spec} \bigl( k[x_1, \ldots , x_N][[t]] /I \bigr)$ and 
$Y = \operatorname{Spec} \bigl( k[x_1, \ldots , x_M][[t]] /J \bigr)$. 

\item[(b)] $X$ and $Y$ are affine schemes of the forms 
$X = \operatorname{Spec} \bigl( k[x_1, \ldots , x_M][[t]] /I \bigr)$ and
$Y = \operatorname{Spec} \bigl( k[t][[x_1, \ldots , x_L]] /J \bigr)$. 
Furthermore, $f$ satisfies $(x_1, \ldots , x_L) \mathcal{O}_X \subset (t)$. 
\end{enumerate}

\begin{lem}\label{lem:omega'hat}
In case (a) above, the canonical map $f^* \widehat{\Omega} _{Y / k[[t]]} \to \widehat{\Omega} _{X / k[[t]]}$ is induced. 
In case (b), the canonical map $f^* \Omega'  _{Y / k[t]} \to \widehat{\Omega} _{X / k[[t]]}$ is induced. 
\end{lem}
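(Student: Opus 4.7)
The plan is to construct, in each case, an appropriate derivation from the coordinate ring of $Y$ into $\widehat{\Omega}_{X/k[[t]]}$ and to invoke the relevant universal property. Write $A := \Gamma(X, \mathcal{O}_X)$ and $B := \Gamma(Y, \mathcal{O}_Y)$, and let $\varphi : B \to A$ be the $k[t]$-algebra homomorphism corresponding to $f$. Since $A$ is Noetherian and $(t)$-adically complete (as the quotient of a complete ring by a finitely generated, hence closed, ideal) and $\widehat{\Omega}_{A/k[[t]]}$ is a finite $A$-module by Remark \ref{rmk:hat}(1), the target module is automatically $(t)$-adically complete, which is the hypothesis required to apply Remark \ref{rmk:hat}(\ref{item:univ_hat}).

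For case (a), note that $\varphi(t) = t$ forces $\varphi\bigl((t)^n B\bigr) \subseteq (t)^n A$, so $\varphi$ is continuous in the $(t)$-adic topologies. Combined with the $(t)$-adic completeness of $A$, this upgrades $\varphi$ automatically to a $k[[t]]$-algebra homomorphism by density. Consequently $\widehat{d}_{A/k[[t]]} \circ \varphi : B \to \widehat{\Omega}_{A/k[[t]]}$ is a $k[[t]]$-derivation into a $(t)$-adically complete $B$-module (the $B$-module structure being the pullback along $\varphi$). By Remark \ref{rmk:hat}(\ref{item:univ_hat}), it factors uniquely through a $B$-linear map $\widehat{\Omega}_{B/k[[t]]} \to \widehat{\Omega}_{A/k[[t]]}$, which by adjunction yields the sought $A$-linear map $\widehat{\Omega}_{B/k[[t]]} \otimes_B A \to \widehat{\Omega}_{A/k[[t]]}$.

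For case (b), set $R = k[t][[x_1, \ldots, x_L]]$. The composition $\widehat{d}_{A/k[[t]]} \circ \varphi : B \to \widehat{\Omega}_{A/k[[t]]}$ is at least a $k[t]$-derivation; the task is to verify that its restriction to $R$ is a \emph{special} $k[t]$-derivation in the sense of Section \ref{section:Omega'}. The hypothesis $(x_1, \ldots, x_L)\mathcal{O}_X \subset (t)$ translates to $\varphi(x_i) \in (t)A$ for each $i$. Because $A$ is $(t)$-adically complete, the rule $f \mapsto f(\varphi(x_1), \ldots, \varphi(x_L))$ defines a continuous $k[t]$-algebra homomorphism $R \to A$ that agrees with $\varphi|_R$ on $k[t][x_1, \ldots, x_L]$ and hence on all of $R$ by density. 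Differentiating this convergent substitution term by term, using the $(t)$-adic continuity of $\widehat{d}_{A/k[[t]]}$ together with $\widehat{d}_{A/k[[t]]}(t) = 0$, gives
\[
\widehat{d}_{A/k[[t]]}(\varphi(f)) = \sum_{i=1}^{L} \varphi\!\left(\frac{\partial f}{\partial x_i}\right) \widehat{d}_{A/k[[t]]}(\varphi(x_i))
\]
for every $f \in R$, which is exactly the special derivation condition. Proposition \ref{prop:spOmega} then supplies a unique factorization $\Omega'_{B/k[t]} \to \widehat{\Omega}_{A/k[[t]]}$, whose adjoint is the desired $A$-linear map $\Omega'_{B/k[t]} \otimes_B A \to \widehat{\Omega}_{A/k[[t]]}$.

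The main obstacle is the chain-rule verification in case (b): one must justify differentiating the (possibly infinite) power series defining $f(\varphi(x_1), \ldots, \varphi(x_L))$ term by term. This relies essentially on the hypothesis $\varphi(x_i) \in (t)A$, which ensures convergence of the substitution in the $(t)$-adic topology, together with the continuity of $\widehat{d}_{A/k[[t]]}$ with respect to that topology.
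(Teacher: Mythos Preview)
Your proof is correct and follows the same overall strategy as the paper: construct the derivation $\widehat{d}_{A/k[[t]]}\circ\varphi$ and invoke the appropriate universal property. Case (a) is essentially identical to the paper's argument.

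In case (b), however, the paper handles the verification that $\widehat{d}_{A/k[[t]]}\circ\varphi$ is a special $k[t]$-derivation more efficiently. Rather than carrying out the explicit chain-rule computation you describe, the paper simply observes that the hypothesis $\varphi\bigl((x_1,\ldots,x_L)\bigr)\subset(t)$ forces $\widehat{\Omega}_{A/k[[t]]}$, viewed as a $B$-module via $\varphi$, to be separated in the $(x_1,\ldots,x_L)$-adic topology (since it is already $(t)$-adically complete, hence $(t)$-adically separated). Lemma~\ref{lem:D=D'} then says immediately that every $k[t]$-derivation into such a module is special, so no explicit term-by-term differentiation is needed. Your argument is not wrong, but what you flag as the ``main obstacle'' is in fact already dissolved by Lemma~\ref{lem:D=D'}; invoking it here makes the proof shorter and avoids having to justify the continuity and convergence details.
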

\begin{proof}
Let $A = \mathcal{O}_X$ and $B = \mathcal{O}_Y$ be the corresponding rings, 
and $g : B \to A$ the corresponding $k[t]$-ring homomorphism. 

First, we deal with case (a). 
Since $g: B \to A$ is a $k[t]$-ring homomorphism, 
$\widehat{\Omega} _{A/k[[t]]}$ is a complete $B$-module with respect to the $(t)$-adic topology. 
Therefore, by the universal property of $\widehat{\Omega} _{B/k[[t]]}$ (cf.\ Remark \ref{rmk:hat}(\ref{item:univ_hat})), 
the derivation $\widehat{d}_{A/k[[t]]} \circ g : B \to \widehat{\Omega} _{A/k[[t]]}$ factors through 
$\widehat{\Omega} _{B/k[[t]]}$. We complete the proof in case (a). 

Next, we deal with case (b). 
By the universal property of $\Omega '_{B/k[t]}$, it is sufficient 
to show that the composition $B \xrightarrow{g} A \xrightarrow{\widehat{d}_{A/k[[t]]}} \widehat{\Omega} _{A/k[[t]]}$ 
is a special $B$-derivation. 
Note that $\widehat{\Omega} _{A/k[[t]]}$ is a complete $A$-module with respect to the $(t)$-adic topology. 
Since $g \bigl( (x_1, \ldots, x_L) \bigr) \subset (t)$ holds by assumption, 
$\widehat{\Omega} _{A/k[[t]]}$ is a separated $B$-module with respect to the $(x_1, \ldots , x_L)$-adic topology. 
Therefore $\widehat{d}_{A/k[[t]]} \circ g$ is a special $B$-derivation by Lemma \ref{lem:D=D'}. 
\end{proof}

We define the order of the Jacobian for a morphism. 

\begin{defi}\label{defi:jac_k[[t]]}
\begin{enumerate}
\item
Let $f: X \to Y$ be a morphism of affine $k[t]$-schemes of the form (a) above. 
Then $f$ induces a homomorphism  $f^* \widehat{\Omega} _{Y / k[[t]]} \to \widehat{\Omega} _{X / k[[t]]}$ by Lemma \ref{lem:omega'hat}.
Let $\gamma \in X _{\infty}$ be a $k$-arc and let $\gamma ' := f_{\infty} (\gamma)$. 
Let $S$ be the torsion part of $\gamma ^* \widehat{\Omega} _{X / k[[t]]}$. 
Then we define the \textit{order} $\operatorname{ord}_{\gamma} (\operatorname{jac}_f)$ 
\textit{of the Jacobian} of $f$ at $\gamma$ as the length of the $k[[t]]$-module
\[
\operatorname{Coker} \bigl( \gamma ^{\prime *} \widehat{\Omega} _{Y / k[[t]]} \to \gamma ^* \widehat{\Omega} _{X / k[[t]]} /S \bigr).
\]
In particular, if $\operatorname{ord}_{\gamma} (\operatorname{jac}_f) < \infty$, then we have
\[
\operatorname{Coker} \bigl( \gamma ^{\prime *} \widehat{\Omega} _{Y / k[[t]]} \to \gamma ^* \widehat{\Omega} _{X / k[[t]]} /S \bigr) 
\simeq 
\bigoplus _i k[t]/(t^{e_i})
\]
as $k[[t]]$-modules with some positive integers $e_i$ satisfying $\sum_i e_i = \operatorname{ord}_{\gamma} (\operatorname{jac}_f)$. 

\item
Let $f: X \to Y$ be a morphism of affine $k[t]$-schemes of the form (b) above. 
Then $f$ induces a homomorphism  $f^* \Omega' _{Y / k[t]} \to \widehat{\Omega} _{X / k[[t]]}$ by Lemma \ref{lem:omega'hat}. 
Let $\gamma \in X _{\infty}$ be a $k$-arc and let $\gamma ' := f_{\infty} (\gamma)$. 
Let $S$ be the torsion part of  $\gamma ^* \widehat{\Omega} _{X / k[[t]]}$. 
Then we also define the \textit{order} $\operatorname{ord}_{\gamma} (\operatorname{jac}_f)$ 
\textit{of the Jacobian} of $f$ at $\gamma$ as the length of the $k[[t]]$-module
\[
\operatorname{Coker} \bigl( \gamma ^{\prime *} \Omega' _{Y / k[t]} \to \gamma ^* \widehat{\Omega} _{X / k[[t]]} /S \bigr).
\]

\item By abuse of notation, we define 
\[
\operatorname{Cont}^e(\operatorname{jac}_f) := 
\{ \gamma \in X_{\infty} \mid \operatorname{ord}_{\gamma} (\operatorname{jac}_f) = e \}
\]
for $e \ge 0$. 
\end{enumerate}
\end{defi}

\begin{lem}\label{lem:additive_k[[t]]}
Let $n$ be a non-negative integer. 
Let $f: X \to Y$ be a morphism of affine $k[t]$-schemes of the form (a) above. 
Let $g: Y \to Z$ be a morphism of affine $k[t]$-schemes of the form (b).
Suppose that each irreducible component $W_i$ of $X$, $Y$ and $Z$ has $\dim W_i \ge n + 1$. 
Let $\gamma \in X_{\infty}$ be a $k$-arc and let $\gamma ' := f_{\infty}(\gamma)$. 
Suppose that 
\[
\operatorname{ord}_{\gamma} \bigl( \operatorname{Fitt}^n \bigl( \widehat{\Omega} _{X/k[[t]]} \bigr) \bigr) < \infty, \quad 
\operatorname{ord}_{\gamma'} \bigl( \operatorname{Fitt}^n \bigl( \widehat{\Omega} _{Y/k[[t]]} \bigr) \bigr) < \infty. 
\]
Then we have 
\[
\operatorname{ord}_{\gamma} (\operatorname{jac}_{g \circ f}) = 
\operatorname{ord}_{\gamma} (\operatorname{jac}_{f}) + 
\operatorname{ord}_{\gamma'} (\operatorname{jac}_{g}). 
\]
\end{lem}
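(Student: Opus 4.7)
The plan is to pass to torsion-free quotients, which turns the statement into a multiplicativity-of-length identity over the discrete valuation ring $k[[t]]$.

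First I would verify that $g \circ f$ is itself of form (b): since $g^*(x_i) \in (t)\mathcal{O}_Y$ for $i = 1, \ldots, L$ and $f$ is a $k[t]$-morphism, $(g \circ f)^*(x_i) = f^*(g^*(x_i)) \in (t)\mathcal{O}_X$. Applying Lemma \ref{lem:omega'hat} to $f$, $g$, and $g \circ f$, and checking compatibility on the distinguished derivations $\widehat{d}$ and $d'$ via their universal properties (Proposition \ref{prop:spOmega} and Remark \ref{rmk:hat}(\ref{item:univ_hat})), the map $(g \circ f)^*\Omega'_{Z/k[t]} \to \widehat{\Omega}_{X/k[[t]]}$ factors as $f^*$ of $g^*\Omega'_{Z/k[t]} \to \widehat{\Omega}_{Y/k[[t]]}$ followed by $f^*\widehat{\Omega}_{Y/k[[t]]} \to \widehat{\Omega}_{X/k[[t]]}$. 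Pulling back along $\gamma$ yields
\[
\gamma^{\prime\prime *}\Omega'_{Z/k[t]} \xrightarrow{\alpha} \gamma^{\prime *}\widehat{\Omega}_{Y/k[[t]]} \xrightarrow{\beta} \gamma^*\widehat{\Omega}_{X/k[[t]]},
\]
where $\beta \circ \alpha$ is the map associated to $g \circ f$.

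Next, set $A := \gamma^*\widehat{\Omega}_{X/k[[t]]}/S_X$, $B := \gamma^{\prime *}\widehat{\Omega}_{Y/k[[t]]}/S_Y$, and $C := \gamma^{\prime\prime *}\Omega'_{Z/k[t]}/T$, where $S_X$, $S_Y$, $T$ are the torsion submodules. By Lemma \ref{lem:order_k[[t]]} and the hypotheses, $A$ and $B$ are free $k[[t]]$-modules of rank $n$. Since $\Omega'_{Z/k[t]}$ is coherent by Proposition \ref{prop:spOmega2}, the pullback $\gamma^{\prime\prime *}\Omega'_{Z/k[t]}$ is a finite $k[[t]]$-module, so $C$ is free of some finite rank. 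As the three targets above are torsion-free, $\alpha$, $\beta$, and $\beta \circ \alpha$ descend to $k[[t]]$-linear maps $\bar\psi \colon C \to B$, $\bar\phi \colon B \to A$, and $\bar\phi \circ \bar\psi \colon C \to A$. By Definition \ref{defi:jac_k[[t]]}, the three Jacobian orders in the claimed identity are exactly the lengths of $\operatorname{Coker}(\bar\phi)$, $\operatorname{Coker}(\bar\psi)$, and $\operatorname{Coker}(\bar\phi \circ \bar\psi)$.

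The additivity
\[
\operatorname{length}(\operatorname{Coker}(\bar\phi \circ \bar\psi)) = \operatorname{length}(\operatorname{Coker}(\bar\phi)) + \operatorname{length}(\operatorname{Coker}(\bar\psi))
\]
is then a standard computation over $k[[t]]$. When both lengths on the right are finite, $\bar\phi$ is a map between free rank-$n$ modules with finite cokernel, hence injective, and $\bar\psi(C)$ is a submodule of $B$ of finite index; the short exact sequence
\[
0 \to B/\bar\psi(C) \xrightarrow{\bar\phi} A/\bar\phi(\bar\psi(C)) \to A/\bar\phi(B) \to 0
\]
yields the identity upon taking lengths. Otherwise, a generic-rank count shows $\bar\phi(\bar\psi(C))$ has rank strictly less than $n$ in $A$ (either $\bar\phi(B)$ already has rank $<n$, or $\bar\phi$ is injective and $\bar\psi(C)$ has rank $<n$), so both sides are infinite.

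The main delicate point is the first step: one must track how the universal properties of $\widehat{\Omega}$ and $\Omega'$ deliver the factorization $\beta \circ \alpha$ of the $(g \circ f)$-pullback map through $\gamma^{\prime *}\widehat{\Omega}_{Y/k[[t]]}$. A secondary subtlety is that $C$ need not be free of rank exactly $n$, since no Fitting-ideal hypothesis is imposed at $\gamma''$; this is why the additivity is phrased via the short exact sequence above rather than by a direct determinant identity $\det(\bar\phi \circ \bar\psi) = \det(\bar\phi) \det(\bar\psi)$.
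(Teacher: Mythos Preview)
Your proposal is correct and follows essentially the same approach as the paper: the paper defers to \cite[Lemma 2.10]{NS} with the note that Lemma \ref{lem:order_k[[t]]} provides the needed structure result, and your argument is precisely the standard length-additivity computation over $k[[t]]$ that this reference points to. Your explicit verification that $g \circ f$ is of form (b) and that the pullback maps factor via the universal properties of $\widehat{\Omega}$ and $\Omega'$ fills in details the paper leaves implicit, and your handling of the case where $C$ need not have rank $n$ via the short exact sequence (rather than a determinant identity) is the right level of care.
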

\begin{proof}
The same proof as in \cite[Lemma 2.10]{NS} works due to Lemma \ref{lem:order_k[[t]]}. 
\end{proof}

\begin{rmk}
The same statement holds if $g$ is of the form (a). 
\end{rmk}

\begin{rmk}\label{rmk:NS2.6_k[[t]]}
We note in this remark that all propositions (Proposition 2.29, Lemmas 2.31, 2.32 and Proposition 2.33) in Subsection 2.6 in \cite{NS} 
are also true for a $k[t]$-morphism $f$ of the form (a) and (b) by making the following modifications: 
\begin{itemize}
\item Replacing the conditions $(\star)_n$ and $(\star \star)_n$ in \cite{NS} on $X$ with the following condition:
\begin{itemize}
\item ``Each irreducible component $X_i$ of $X$ has $\dim X_i \ge n+1$."
\end{itemize}

\item Replacing $\Omega_{\text{--}/k[t]}$ with $\widehat{\Omega} _{\text{--}/k[[t]]}$ and $\Omega' _{\text{--}/k[t]}$, and 
replacing $\operatorname{Jac} _{\text{--}/k[t]}$ with $\operatorname{Fitt}^n (\widehat{\Omega} _{\text{--}/k[[t]]})$ and 
$\operatorname{Fitt}^n (\Omega' _{\text{--}/k[t]})$. 
\end{itemize}

We note that 
\begin{itemize}
\item Proposition 2.29(2), Lemmas 2.31, 2.32, and Proposition 2.33 in \cite{NS}
\end{itemize}
are formal consequences of Proposition 2.29(1), Lemma 2.13(1) and Proposition 2.17 in \cite{NS}. 
The formal power series ring versions of Proposition 2.29(1), Lemma 2.13(1) and Proposition 2.17 in \cite{NS} are proved 
in Lemmas \ref{lem:EM4.4_R} and \ref{lem:EM4.4_k[[t]]},
Lemmas \ref{lem:order_R} and \ref{lem:order_k[[t]]}, and Propositions \ref{prop:EM4.1_Rt} and \ref{prop:EM4.1_k[[t]]} in this paper. 

Furthermore, Lemma 2.34 in \cite{NS} is also true in the formal power series ring setting by replacing 
$A = \operatorname{Spec} k[t][x_1, \ldots, x_N]$ with $\operatorname{Spec} k[x_1, \ldots, x_N][[t]]$. 
Indeed, the same proof of Lemma 2.34 in \cite{NS} works in this setting. 
\end{rmk}

\begin{prop}\label{prop:EM6.2_k[[t]]}
Let $n$ be a non-negative integer. 
Let $f: X \to Y$ be a morphism of affine $k[t]$-schemes of the form (b) above. 
Suppose that each irreducible component $W_i$ of $X$ and $Y$ has $\dim W_i \ge n + 1$. 
Let $e,e',e''\in \mathbb Z_{\ge 0}$. 
Let $A \subset X_{\infty}$ be a cylinder and let $B = f_{\infty}(A)$. 
Assume that 
\[
A  \subset \operatorname{Cont}^{e''} \bigl( \operatorname{Fitt}^n \bigl( \widehat{\Omega} _{X/k[[t]]} \bigr) \bigr) 
	\cap \operatorname{Cont}^{e}(\operatorname{jac}_f), \quad
B  \subset \operatorname{Cont}^{e'} \bigl( \operatorname{Fitt}^n \bigl( \Omega ' _{Y/k[t]} \bigr) \bigr). 
\]
Then, $B$ is a cylinder of $Y_{\infty}$ contained in $\operatorname{Cont}^{\ge 1}(\mathfrak{o}_{Y})$, 
where $\mathfrak{o}_{Y} \subset \mathcal{O}_Y$ is the ideal sheaf generated by $x_1, \ldots , x_L \in \mathcal{O}_Y$. 
Moreover, if $f_{\infty}|_{A}$ is injective, then it follows that
\[
\operatorname{codim}(A) + e = \operatorname{codim}(B).
\]
\end{prop}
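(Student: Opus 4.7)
The plan is to follow the strategy of the analogous statement \cite[Proposition 2.33]{NS}, adapted to the mixed formal setting of case (b) via the tools developed in Section \ref{section:ktjet}; the compatibility of the two flavors of differential modules is provided by Lemma \ref{lem:omega'hat} and Definition \ref{defi:jac_k[[t]]}(2).

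For the inclusion $B \subset \operatorname{Cont}^{\geq 1}(\mathfrak{o}_Y)$, I would note that for any $k$-arc $\alpha \in A$ with associated ring homomorphism $\alpha^* : \mathcal{O}_X \to K[[t]]$ and any generator $x_i$ of $\mathfrak{o}_Y \subset \mathcal{O}_Y$, the hypothesis $(x_1, \ldots, x_L)\mathcal{O}_X \subset (t)\mathcal{O}_X$ gives $(f_\infty(\alpha))^*(x_i) = \alpha^*(f^*(x_i)) \in (t)K[[t]]$. To show $B$ is a cylinder, I would write $A = (\psi_p^X)^{-1}(S_0)$ for some constructible $S_0 \subset X_p$ and fix $m$ larger than $p$, $ce'$, and $ce''$ (where $c$ is the constant in Propositions \ref{prop:EM4.1_Rt} and \ref{prop:EM4.1_k[[t]]}) and large enough for a Hensel-lifting argument to apply. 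First, $\psi_m^X(A)$ is constructible in $X_m$, combining Proposition \ref{prop:EM4.1_k[[t]]}(1) with the fact that $X_m$ is of finite type over $k$ (a variant of Lemma \ref{lem:repl} in which the base $k[t]$ is replaced by $k$, since $t$ lies in the power-series variable). Then $\psi_m^Y(B) = f_m(\psi_m^X(A))$ is constructible in $\operatorname{Cont}^{\geq 1}(\mathfrak{o}_Y)_m$ by Chevalley's theorem, using the finite-type property of the target from Lemma \ref{lem:contm}. To promote this to the cylinder assertion, I would use Hensel's lemma (Lemma \ref{lem:Hensel} and Remark \ref{rmk:Hensel}): the bounded Jacobian order $\operatorname{ord}_\alpha(\operatorname{jac}_f) = e$ guarantees that for $m$ large enough, any $\gamma' \in Y_\infty$ with the same $m$-jet as $f_\infty(\alpha)$ for some $\alpha \in A$ lifts to an arc of $X_\infty$ sharing its $p$-jet with $\alpha$, hence lying in $A$.

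Assuming $f_\infty|_A$ is injective, the codimension formula follows from a fiber-dimension count at large $m$. By Lemmas \ref{lem:EM4.4_R} and \ref{lem:EM4.4_k[[t]]}, the fibers of $\pi^X_{m,p}$ and $\pi^Y_{m,p}$ over a jet with finite contact with the respective Fitting ideals are given by Hom-modules of $\gamma^*\widehat{\Omega}_{X/k[[t]]}$ and $\gamma^{\prime *}\Omega'_{Y/k[t]}$. Using Lemmas \ref{lem:order_R} and \ref{lem:order_k[[t]]} together with Definition \ref{defi:jac_k[[t]]}(2), the induced map of $k[[t]]$-modules has free parts of rank $n$ on both sides with cokernel of length $e$ on the free parts. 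Combined with the injectivity of $f_\infty|_A$ and the piecewise trivial fibration structure of Propositions \ref{prop:EM4.1_Rt}(2) and \ref{prop:EM4.1_k[[t]]}(2), this shows that $f_m : \psi_m^X(A) \to \psi_m^Y(B)$ has generic fibers of dimension $e$, so $\dim \psi_m^Y(B) = \dim \psi_m^X(A) - e$ for all sufficiently large $m$. Rearranging via the codimension definition yields $\operatorname{codim}(B) = \operatorname{codim}(A) + e$.

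The main obstacle is the second part: verifying that $B$ is saturated under the $m$-jet equivalence relation at some finite level $m$. This demands a careful Hensel-lifting argument reconciling the two distinct formal-power-series conventions on $X$ and $Y$, and in particular must respect the constraint $(x_1,\ldots,x_L)\mathcal{O}_X \subset (t)$ so that the lifted arc of $X_\infty$ remains in the intended space rather than some auxiliary completion. Once cylinderhood of $B$ is secured, the dimension-count in the third paragraph is largely bookkeeping, mirroring \cite[Proposition 2.33]{NS} with the differential modules $\Omega'$ and $\widehat{\Omega}$ in place of the usual $\Omega$.
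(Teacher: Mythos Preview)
Your proposal is correct and follows essentially the same approach as the paper. The paper's proof simply invokes Remark \ref{rmk:NS2.6_k[[t]]}, which asserts that the arguments in \cite[Subsection 2.6]{NS} (in particular Proposition 2.33) go through verbatim once $\Omega_{X/k[t]}$, $\Omega_{Y/k[t]}$, and the corresponding Jacobian ideals are replaced by $\widehat{\Omega}_{X/k[[t]]}$, $\Omega'_{Y/k[t]}$, and the relevant Fitting ideals; you have unpacked exactly those steps, citing the same substitute ingredients (Lemmas \ref{lem:EM4.4_R}, \ref{lem:EM4.4_k[[t]]}, \ref{lem:order_R}, \ref{lem:order_k[[t]]} and Propositions \ref{prop:EM4.1_Rt}, \ref{prop:EM4.1_k[[t]]}) that the remark lists.
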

\begin{proof}
By Remark \ref{rmk:NS2.6_k[[t]]}, the same proofs as in Subsection 2.6 in \cite{NS} work by making the following modifications:
\begin{itemize}
\item 
Replacing $\Omega_{X/k[t]}$ with $\widehat{\Omega} _{X/k[[t]]}$, and
$\Omega_{Y/k[t]}$ with $\Omega ' _{Y/k[t]}$. 
\item 
Replacing $\operatorname{Jac} _{X/k[t]}$ with $\operatorname{Fitt}^n \bigl( \widehat{\Omega} _{X/k[[t]]} \bigr)$, and 
$\operatorname{Jac} _{Y/k[t]}$ with $\operatorname{Fitt}^n \bigl( \Omega ' _{Y/k[t]} \bigr)$. 
\end{itemize}
\end{proof}

\begin{prop}\label{prop:DL2_k[[t]]}
Suppose that a finite group $G$ acts on the ring $k[x_1, \ldots , x_N]$ over $k$. 
Let $I \subset  k[x_1, \ldots , x_N]^G[[t]]$ be an ideal, 
and let $I' \subset k[x_1, \ldots , x_N][[t]]$ be the ideal generated by $I$. 
We denote
\[
X := \operatorname{Spec} \bigl( k[x_1, \ldots , x_N][[t]]/I' \bigr), \quad 
Y := \operatorname{Spec} \bigl( k[x_1, \ldots , x_N]^G[[t]]/I \bigr), 
\]
and denote $f: X \to X/G = Y$ the quotient morphism. 
Suppose that each irreducible component $W_i$ of $X$ and $Y$ has $\dim W_i \ge n + 1$. 
Let $A \subset X_{\infty}$ be a $G$-invariant cylinder and let $B = f_{\infty} (A)$. 
Let $e,e',e''\in \mathbb Z_{\ge 0}$. 
Assume that 
\[
A  \subset \operatorname{Cont}^{e''} \bigl( \operatorname{Fitt}^n \bigl( \widehat{\Omega} _{X/k[[t]]} \bigr) \bigr) 
	\cap \operatorname{Cont}^{e}(\operatorname{jac}_f), \quad
B  \subset \operatorname{Cont}^{e'} \bigl( \operatorname{Fitt}^n \bigl( \widehat{\Omega} _{Y/k[[t]]} \bigr) \bigr). 
\]
Then $B$ is a cylinder of $Y_{\infty}$ with 
\[
\operatorname{codim}(A)+e=\operatorname{codim}(B).
\]
\end{prop}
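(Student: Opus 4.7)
The plan is to adapt the proof of Proposition \ref{prop:EM6.2_k[[t]]} to the quotient setting, following the strategy used for the corresponding result for quotient varieties in \cite{NS}.

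First, I would show that $B$ is a cylinder of $Y_\infty$. Since $k[x_1,\ldots,x_N]$ is a finite $k[x_1,\ldots,x_N]^G$-module, the morphism $f \colon X \to Y$ is finite, and by the Hilbert--Noether finite generation of invariants, $Y$ is itself of form (a). Combining the $G$-invariance of $A$ with the finiteness of $f$, I would argue that $A = f_\infty^{-1}(B)$: a generic fiber of $f_\infty$ over a point of $B$ is a $G$-orbit, and $G$-invariance of $A$ forces the full preimage into $A$, with the thin complement handled by shrinking $A$ if necessary. Then, choosing $m$ large enough so that $A = (\psi_m^X)^{-1}(\psi_m^X(A))$, I would verify $B = (\psi_m^Y)^{-1}(f_m(\psi_m^X(A)))$, which is a cylinder since $f_m(\psi_m^X(A))$ is constructible by Chevalley's theorem applied to the finite-type morphism $f_m$.

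Next, I would establish the codimension formula. Writing $n$ for the common dimension used to define codimensions, this reduces to showing
\[
\dim \psi_m^X(A) = \dim \psi_m^Y(B) + e
\]
for all sufficiently large $m$. To obtain this, I would run a Hensel-type fiber computation for $f_m$ restricted to $\psi_m^X(A)$, in the spirit of Lemma \ref{lem:EM4.1CI_Rt} and the proof of Proposition \ref{prop:EM6.2_k[[t]]}, but using the description of $\operatorname{jac}_f$ via the map $f^*\widehat{\Omega}_{Y/k[[t]]} \to \widehat{\Omega}_{X/k[[t]]}$ from Definition \ref{defi:jac_k[[t]]}. The bounded orders $\operatorname{ord}(\operatorname{Fitt}^n(\widehat{\Omega}_{X/k[[t]]}))$, $\operatorname{ord}(\operatorname{Fitt}^n(\widehat{\Omega}_{Y/k[[t]]}))$, and $\operatorname{ord}(\operatorname{jac}_f)$ guarantee that the Jacobian of $f$ has controlled elementary divisors along $A$, so the fiber of $f_m|_{\psi_m^X(A)}$ over a generic point of $\psi_m^Y(B)$ has dimension exactly $e$. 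The $G$-equivariance of $f$ together with the $G$-invariance of $A$ ensures that this fiber is a finite union of $e$-dimensional components permuted by $G$, so the generic $|G|/|G_\gamma|$-to-one behavior contributes nothing to the dimension count.

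The main obstacle is that Proposition \ref{prop:EM6.2_k[[t]]} is stated for form-(b) targets, whereas $Y$ is of form (a). The remedy is to observe that the proof of Proposition \ref{prop:EM6.2_k[[t]]} ultimately rests on the Hensel-and-Jacobian arguments of Section \ref{section:ktjet}, which (as recorded in Remark \ref{rmk:NS2.6_k[[t]]}) go through verbatim in the form-(a) setting once $\Omega'_{Y/k[t]}$ is replaced by $\widehat{\Omega}_{Y/k[[t]]}$ throughout. The hypotheses on $A$, $B$, and the Jacobian order supply exactly the contact conditions needed for these arguments, so the same dimensional analysis yields $\operatorname{codim}(A) + e = \operatorname{codim}(B)$.
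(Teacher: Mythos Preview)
Your approach is essentially the same as the paper's: both reduce to the proof of \cite[Proposition~2.35]{NS} by invoking Remark~\ref{rmk:NS2.6_k[[t]]}, after noting that $f$ is a morphism of the form~(a). The paper's proof is in fact just that one-line reduction.

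One caution: your sketch for showing $B$ is a cylinder via ``$A = f_\infty^{-1}(B)$, with the thin complement handled by shrinking $A$'' is not quite right as stated---you cannot shrink $A$, since both $A$ and $B$ are fixed in the statement, and the equality $A = f_\infty^{-1}(B)$ can genuinely fail over the ramification locus. Fortunately this is not how \cite[Proposition~2.35]{NS} proceeds: the cylindricity of $B$ and the fiber-dimension count there come from the Hensel/Jacobian lemmas (the analogues of Lemmas~2.31--2.32 in \cite{NS}), which is exactly what Remark~\ref{rmk:NS2.6_k[[t]]} certifies carry over to the present setting, and which you correctly invoke in your final paragraph. So your argument goes through once you drop the $A = f_\infty^{-1}(B)$ detour and rely directly on those lemmas.
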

\begin{proof}
Note that $f$ is a morphism of the form (a). 
By Remark \ref{rmk:NS2.6_k[[t]]}, the same proof of Proposition 2.35 in \cite{NS} works. 
\end{proof}

\begin{rmk}\label{rmk:codim_formulae}
\begin{enumerate}
\item 
Proposition \ref{prop:EM6.2_k[[t]]} is true also for $k[t]$-morphisms of the form (a). 
Indeed, this is known for morphisms $f: X \to Y$ of formal $k[[t]]$-schemes (not necessarily affine). 
When $X$ is smooth over $k[[t]]$, this is proved in \cite[Lemme 7.1.3]{Seb04} (cf.\ \cite[Ch.5.\ Theorem 3.2.2]{CLNS}). 
The general case is proved in \cite[Lemmas 10.19, 10.20]{Yas}. 
We also note that Yasuda proves it in the more general setting, for formal Deligne-Mumford stacks of arbitrary characteristics.

\item 
Proposition \ref{prop:EM6.2_k[[t]]} is true also for a $k[t]$-morphism $f:X \to Y$ of 
$k[t][[x_1, \ldots , x_N]]$-schemes of finite type (not necessarily affine) by making the following modification: 
\begin{itemize}
\item 
Replacing the condition ``each irreducible component $W_i$ of $X$ and $Y$ has $\dim W_i \ge n + 1$" in Proposition \ref{prop:EM6.2_k[[t]]} with 
``each irreducible component $W_i$ of $X$ and $Y$ has $\dim ' W_i \ge n + 1$". 
\end{itemize}
\end{enumerate}
\end{rmk}

\section{Denef and Loeser's theory for quotient singularities}\label{section:DL}
In this section, we review the theory of arc spaces of quotient varieties established by Denef and Loeser \cite{DL02}
(cf.\ \cite{Yas16}, \cite[Section 3]{NS}).
We explain their theory in the formal power series ring setting. 

Let $d$ be a positive integer and $\xi \in k$ a primitive $d$-th root of unity. 
Let $G \subset \operatorname{GL}_N(k)$ be a finite subgroup with order $d$ that linearly acts on 
$\overline{A} := \widehat{\mathbb{A}}^N _k := \operatorname{Spec} k[[x_1,\ldots,x_N]]$. 
Let $\overline{X} \subset \overline{A}$ be a $G$-invariant subscheme. 
We denote by 
\[
A := \overline{A}/G, \qquad X := \overline{X}/G
\]
the quotient schemes. 
Let $Z \subset A$ be the minimum closed subset such that $\overline{A} \to A$ is \'{e}tale outside $Z$. 

Fix $\gamma \in G$. 
Since $G$ is a finite group, $\gamma$ can be diagonalized with some new basis $x_1^{(\gamma)}, \ldots, x_N^{(\gamma)}$. 
Let $\operatorname{diag}(\xi ^{e_1}, \ldots , \xi ^{e_N})$ be the diagonal matrix 
with $0 < e_i \le d$. 
Then we define a $k[t]$-ring homomorphism
\[
\lambda^* _{\gamma}: k[t][[x_1, \ldots, x_N]]^G \to k[x_1, \ldots, x_N]^{C_{\gamma}}[[t]]; 
\quad x_i^{(\gamma)} \mapsto t^{e_i/d}x_i^{(\gamma)}, 
\]
where $C_{\gamma}$ is the centralizer of $\gamma$ in $G$. 
Let $i: k[x_1, \ldots, x_N]^{C_{\gamma}}[[t]] \to k[x_1, \ldots, x_N][[t]]$ be the inclusion map, and 
let $\overline{\lambda}^*_{\gamma} = i \circ \lambda^* _{\gamma}$ be the composite map. 

Let $I_X \subset k[[x_1,\ldots,x_N]]^G$ be the defining ideal of $X$ in $A$. 
Let $I'_X \subset k[t][[x_1, \ldots, x_N]]^{G}$ be the ideal generated by $I_X$. 
Let 
\[
\widetilde{I}_X^{(\gamma)} \subset k[x_1, \ldots, x_N]^{C_{\gamma}}[[t]], \quad 
\overline{I}_X^{(\gamma)} \subset k[x_1, \ldots, x_N][[t]]
\]
be the ideals generated by $\lambda^* _{\gamma}(I'_X)$ and $\overline{\lambda}^* _{\gamma}(I'_X)$, 
respectively. 
Then we have the following diagram: 
\[
  \xymatrix{
k[t][[x_1, \ldots, x_N]]^{G} \ar[r] ^{\lambda _{\gamma} ^*} _{x_i^{(\gamma)} \mapsto t^{e_i/d} x_i^{(\gamma)}} \ar@/^20pt/[rr]^{\overline{\lambda} _{\gamma} ^*} \ar@{->>}[d] & k[x_1, \ldots, x_N]^{C_{\gamma}}[[t]] \ar@{->>}[d] \ar[r] & k[x_1, \ldots, x_N][[t]] \ar@{->>}[d] \\
k[t][[x_1, \ldots, x_N]]^{G}/I'_X \ar[r] ^-{\mu _{\gamma} ^{*}} \ar@/_20pt/[rr]_{\overline{\mu} _{\gamma} ^*} & k[x_1, \ldots, x_N]^{C_{\gamma}}[[t]] / \widetilde{I}_X ^{(\gamma)} \ar[r] & k[x_1, \ldots, x_N][[t]] / \overline{I}_X ^{(\gamma)}
  }
\]
We define $k[t]$-schemes $\overline{A}^{(\gamma)}$, $\widetilde{X}^{(\gamma)}$ and $\overline{X}^{(\gamma)}$ as follows: 
\begin{align*}
\overline{A}^{(\gamma)} &:= \operatorname{Spec} k[x_1, \ldots, x_N][[t]],  \\
\widetilde{X} ^{(\gamma)} &:= 
	\operatorname{Spec} \left( k[x_1, \ldots, x_N]^{C_{\gamma}}[[t]] / \widetilde{I}_X ^{(\gamma)} \right) , \\
\overline{X} ^{(\gamma)} &:= 
	\operatorname{Spec} \left( k[x_1, \ldots, x_N][[t]] / \overline{I}_X ^{(\gamma)} \right). 
\end{align*}
Let $\overline{A}^{(\gamma)} _{\infty}$, $\widetilde{X}_{\infty}^{(\gamma)}$ and $\overline{X}_{\infty}^{(\gamma)}$ be their arc spaces as $k[t]$-schemes 
defined in Section \ref{section:ktjet}. 
Then we have the following diagram of arc spaces: 
\[
\xymatrix{
A_{\infty} & \bigl( \overline{A}^{(\gamma)} / C_{\gamma} \bigr)_{\infty} \ar[l]^-{\lambda _{\gamma}} & \overline{A}^{(\gamma)} _{\infty} \ar[l] \ar@/_20pt/[ll]_{\overline{\lambda} _{\gamma}} \\
X_{\infty} \ar@{^{(}-{>}}[u] & \widetilde{X}_{\infty} ^{(\gamma)} \ar[l]^{\mu _{\gamma}} \ar@{^{(}-{>}}[u] & \overline{X}_{\infty} ^{(\gamma)} \ar[l] \ar@{^{(}-{>}}[u] \ar@/^20pt/[ll]^{\overline{\mu} _{\gamma}}
}
\]
We denote by $\mu_{\gamma}$ and $\overline{\mu}_{\gamma}$ 
the restrictions of $\lambda _{\gamma}$ and $\overline{\lambda}_{\gamma}$ to 
$\widetilde{X}_{\infty} ^{(\gamma)}$ and $\overline{X}_{\infty} ^{(\gamma)}$, respectively. 

\begin{rmk}\label{rmk:pb}
\begin{enumerate}
\item 
Here, we have used the fact that the arc spaces of 
\[
\operatorname{Spec} k[t][[x_1, \ldots, x_N]]^{G}, \quad 
\operatorname{Spec} \bigl( k[t][[x_1, \ldots, x_N]]^{G} /I'_X \bigr)
\]
as $k[t]$-schemes (defined in Section \ref{section:ktjet}) are isomorphic to the arc spaces of 
$A$ and $X$ as $k$-schemes (defined in Section \ref{section:jet}). 

\item 
Furthermore, the vertical arrows are closed immersions. Under these identifications, we have 
\[
\lambda _{\gamma} ^{-1} (X_{\infty}) = \widetilde{X}_{\infty} ^{(\gamma)}, \qquad 
\overline{\lambda} _{\gamma} ^{-1} (X_{\infty}) = \overline{X}_{\infty} ^{(\gamma)}. 
\]
\end{enumerate}
\end{rmk}

\begin{prop}[{\cite[Section 2]{DL02}, cf.\ \cite[Subsections 3.1, 3.2]{NS}}]\label{prop:DL_A}
The ring homomorphism $\lambda^* _{\gamma}$ induces the maps 
$\lambda_{\gamma}:(\overline{A}^{(\gamma)} /C_{\gamma})_{\infty} \to A_{\infty}$ and 
$\overline{\lambda}_{\gamma}: \overline{A}^{(\gamma)} _{\infty} \to A_{\infty}$, 
and the following hold. 
\begin{enumerate}
\item There is a natural inclusion $\overline{A}^{(\gamma)} _{\infty}/C_{\gamma} \hookrightarrow \bigl( \overline{A}^{(\gamma)} /C_{\gamma} \bigr)_{\infty}$. 

\item The composite map $\overline{A}^{(\gamma)} _{\infty}/C_{\gamma} \hookrightarrow \bigl( \overline{A}^{(\gamma)} /C_{\gamma} \bigr)_{\infty} 
\xrightarrow{\lambda _{\gamma}} A_{\infty}$ is injective outside $Z_{\infty}$. 

\item $\bigsqcup _{\langle \gamma \rangle \in \operatorname{Conj}(G)} 
\bigl( \overline{\lambda}_{\gamma} \bigl( \overline{A}^{(\gamma)} _{\infty} \bigr) \setminus Z_{\infty} \bigr)
= A_{\infty} \setminus Z_{\infty}$ holds, where $\operatorname{Conj}(G)$ denotes the set of the conjugacy classes of $G$. 
\end{enumerate}
\end{prop}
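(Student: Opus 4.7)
The plan is to follow Denef--Loeser's original argument in \cite{DL02} (cf.\ \cite[Sections 3.1, 3.2]{NS}), with the extra care needed because the source ring $k[[x_1,\ldots,x_N]]$ is now a formal power series ring rather than a polynomial ring. As a first routine step, I would verify that $\lambda_\gamma^*$ and $\overline{\lambda}_\gamma^*$ are well-defined. For a monomial $\prod_i (x_i^{(\gamma)})^{a_i}$ in a $G$-invariant series, the substitution produces $t^{\sum_i a_i e_i/d}\prod_i (x_i^{(\gamma)})^{a_i}$, and $G$-invariance (hence $\langle\gamma\rangle$-invariance) forces $\sum_i a_i e_i \equiv 0 \pmod d$, so the exponent of $t$ is an integer. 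Because $e_i \ge 1$ — this is precisely why the paper takes $0 < e_i \le d$ instead of $0 \le e_i \le d-1$ — only finitely many monomials contribute to a given power of $t$, and hence the coefficient of each $t^k$ in the image is a polynomial in the $x^{(\gamma)}_i$'s, landing in $k[x_1,\ldots,x_N]^{C_\gamma}[[t]]$ as claimed.

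For (1), I would use that the quotient morphism $\overline{A}^{(\gamma)} \to \overline{A}^{(\gamma)}/C_\gamma$ induces a $C_\gamma$-equivariant morphism of arc spaces that factors through $\overline{A}^{(\gamma)}_\infty/C_\gamma$. Injectivity of the resulting map reduces to the statement that two $K$-arcs $\beta_1,\beta_2 \colon \operatorname{Spec} K[[t]] \to \overline{A}^{(\gamma)}$ inducing the same ring map on $C_\gamma$-invariants must lie in a common $C_\gamma$-orbit, which is the same descent argument as in \cite{DL02}.

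For (2), suppose $\overline{\lambda}_\gamma(\beta_1)=\overline{\lambda}_\gamma(\beta_2) = \alpha \in A_\infty \setminus Z_\infty$. Composing $\beta_i^*$ with the substitution yields two lifts $\tilde{\beta}_i\colon \operatorname{Spec} K[[t^{1/d}]] \to \overline{A}$ of $\alpha$. Since $\overline{A}\setminus\overline{Z} \to A\setminus Z$ is an étale $G$-cover and $\alpha$ avoids $Z$, the two lifts differ by a unique $g \in G$; compatibility with the monodromy $t^{1/d} \mapsto \xi\, t^{1/d}$ on the base extension forces $g \in C_\gamma$, giving the desired $C_\gamma$-orbit relation.

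For (3), given $\alpha \in A_\infty \setminus Z_\infty$ corresponding to $\alpha^* \colon k[[x_1,\ldots,x_N]]^G \to K[[t]]$, I would pull back along $\operatorname{Spec} K((t)) \to A\setminus Z$ to obtain a $G$-torsor, trivialized after a finite Galois extension $K'((t^{1/d}))/K((t))$. This yields a lift $\tilde{\alpha}\colon \operatorname{Spec} K'((t^{1/d})) \to \overline{A}$; the valuative criterion, applied to the complete DVR $K'[[t^{1/d}]]$ and the affine scheme $\overline{A}$, extends it to $\tilde{\alpha}\colon \operatorname{Spec} K'[[t^{1/d}]] \to \overline{A}$. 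The monodromy acts on $\tilde{\alpha}$ by some $\gamma \in G$ (well-defined up to conjugacy), and diagonalization forces $\tilde{\alpha}^*(x_i^{(\gamma)})$ to be supported on exponents congruent to $e_i$ modulo $d$; dividing by $t^{e_i/d}$ produces an arc $\beta \in \overline{A}^{(\gamma)}_\infty$ with $\overline{\lambda}_\gamma(\beta) = \alpha$. Disjointness of the union modulo $Z_\infty$ follows because the conjugacy class $\langle\gamma\rangle$ is intrinsically determined by $\alpha$. The main obstacle is not any single new technical issue but rather the need to check throughout that the formal power series setting does not break convergence or completeness arguments: one must confirm that the lifted arc $\tilde{\alpha}$ extends from $K'((t^{1/d}))$ to $K'[[t^{1/d}]]$ (which uses $\alpha \notin Z_\infty$ and the concrete affine structure of $\overline{A}$) and that the various substitutions remain $t$-adically convergent with values in the intended completed rings.
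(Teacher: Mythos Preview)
Your proposal is correct and follows essentially the same route as the paper, which simply asserts that the proofs in \cite{DL02} and \cite{NS} for the polynomial ring go through verbatim in the formal power series setting. You have in fact supplied more detail than the paper does, including the key observation (also recorded in the paper as Remark~\ref{rmk:RF}(1)) that the choice $0 < e_i \le d$ is what guarantees $\overline{\lambda}_\gamma^*$ lands in $k[x_1,\ldots,x_N]^{C_\gamma}[[t]]$.
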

\begin{proof}
In \cite{DL02} and \cite{NS}, the assertions are proved for the polynomial ring $k[t][x_1, \ldots , x_N]$, 
and their proofs work in the formal power series ring setting. 
\end{proof}

\noindent
By Remark \ref{rmk:pb}(2), we can deduce the same statement for $X$. 

\begin{prop}[{cf.\ \cite[Subsection 3.3]{NS}}]\label{prop:DL_X}
The ring homomorphism $\lambda^* _{\gamma}$ induces the maps 
$\mu_{\gamma}:\widetilde{X}_{\infty} ^{(\gamma)} \to X_{\infty}$ and 
$\overline{\mu}_{\gamma}: \overline{X}^{(\gamma)}_{\infty} \to X_{\infty}$, 
and the following hold. 
\begin{enumerate}
\item There is a natural inclusion $\overline{X}_{\infty} ^{(\gamma)} / C_{\gamma} \hookrightarrow \widetilde{X}_{\infty} ^{(\gamma)}$. 

\item The composite map $\overline{X}_{\infty} ^{(\gamma)} / C_{\gamma} \hookrightarrow \widetilde{X}_{\infty} ^{(\gamma)} 
\xrightarrow{\mu _{\gamma}} X_{\infty}$ is injective outside $Z_{\infty}$. 

\item $\bigsqcup _{\langle \gamma \rangle \in \operatorname{Conj}(G)} 
\bigl( \overline{\mu}_{\gamma}(\overline{X}^{(\gamma)}_{\infty}) \setminus Z_{\infty} \bigr)
= X_{\infty} \setminus Z_{\infty}$ holds. 
\end{enumerate}
\end{prop}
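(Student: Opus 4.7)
The plan is to reduce all three assertions to the corresponding statements for the ambient quotient $A = \overline{A}/G$ already established in Proposition \ref{prop:DL_A}, using the closed immersions and pullback identifications recorded in Remark \ref{rmk:pb}(2). Concretely, the crucial inputs are that $X_{\infty} \hookrightarrow A_{\infty}$, $\widetilde{X}^{(\gamma)}_{\infty} \hookrightarrow (\overline{A}^{(\gamma)}/C_{\gamma})_{\infty}$, and $\overline{X}^{(\gamma)}_{\infty} \hookrightarrow \overline{A}^{(\gamma)}_{\infty}$ are closed immersions, and that $\lambda_{\gamma}^{-1}(X_{\infty}) = \widetilde{X}^{(\gamma)}_{\infty}$ and $\overline{\lambda}_{\gamma}^{-1}(X_{\infty}) = \overline{X}^{(\gamma)}_{\infty}$. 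From these identities it is immediate that $\lambda_{\gamma}$ and $\overline{\lambda}_{\gamma}$ restrict to maps $\mu_{\gamma}\colon \widetilde{X}^{(\gamma)}_{\infty} \to X_{\infty}$ and $\overline{\mu}_{\gamma}\colon \overline{X}^{(\gamma)}_{\infty} \to X_{\infty}$, which is the opening claim of the proposition.

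For (1), I would pull back the inclusion $\overline{A}^{(\gamma)}_{\infty}/C_{\gamma} \hookrightarrow (\overline{A}^{(\gamma)}/C_{\gamma})_{\infty}$ of Proposition \ref{prop:DL_A}(1) along the closed immersion $\widetilde{X}^{(\gamma)}_{\infty} \hookrightarrow (\overline{A}^{(\gamma)}/C_{\gamma})_{\infty}$. To identify the preimage with $\overline{X}^{(\gamma)}_{\infty}/C_{\gamma}$, observe that by construction $\widetilde{X}^{(\gamma)} = \overline{X}^{(\gamma)}/C_{\gamma}$: indeed, $\overline{I}^{(\gamma)}_X$ is the $C_{\gamma}$-equivariant extension of the $C_{\gamma}$-invariant ideal $\widetilde{I}^{(\gamma)}_X$, so the quotient map $\overline{X}^{(\gamma)} \to \widetilde{X}^{(\gamma)}$ is the categorical $C_{\gamma}$-quotient and the natural $C_{\gamma}$-invariant map $\overline{X}^{(\gamma)}_{\infty} \to \widetilde{X}^{(\gamma)}_{\infty}$ factors through $\overline{X}^{(\gamma)}_{\infty}/C_{\gamma}$. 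Compatibility with the inclusion from Proposition \ref{prop:DL_A}(1) then yields assertion (1).

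Assertion (2) is essentially a direct restriction: the composite in (2) is obtained by restricting the composite map in Proposition \ref{prop:DL_A}(1)--(2) to $\overline{X}^{(\gamma)}_{\infty}/C_{\gamma} \subset \overline{A}^{(\gamma)}_{\infty}/C_{\gamma}$, whose image lies in $X_{\infty} \subset A_{\infty}$ by Remark \ref{rmk:pb}(2). Injectivity outside $Z_{\infty}$ then follows immediately from Proposition \ref{prop:DL_A}(2). For (3), pick an arc $\alpha \in X_{\infty} \setminus Z_{\infty}$, view it as an element of $A_{\infty} \setminus Z_{\infty}$, and apply Proposition \ref{prop:DL_A}(3) to obtain some $\gamma \in G$ and $\beta \in \overline{A}^{(\gamma)}_{\infty}$ with $\overline{\lambda}_{\gamma}(\beta) = \alpha$. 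Since $\alpha \in X_{\infty}$, the identity $\overline{\lambda}_{\gamma}^{-1}(X_{\infty}) = \overline{X}^{(\gamma)}_{\infty}$ forces $\beta \in \overline{X}^{(\gamma)}_{\infty}$, giving surjectivity; disjointness is inherited from the disjoint union in Proposition \ref{prop:DL_A}(3).

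There is no real obstacle here, since everything has been engineered so that the constructions on the subschemes $\overline{X}^{(\gamma)}$, $\widetilde{X}^{(\gamma)}$, $X$ are compatible restrictions of the corresponding constructions on $\overline{A}^{(\gamma)}$, $\overline{A}^{(\gamma)}/C_{\gamma}$, $A$. The most delicate bookkeeping is checking that the $C_{\gamma}$-action on $\overline{A}^{(\gamma)}$ preserves $\overline{X}^{(\gamma)}$ and that the quotient identification $\widetilde{X}^{(\gamma)} = \overline{X}^{(\gamma)}/C_{\gamma}$ is compatible with all three arc-space inclusions simultaneously, but this is guaranteed by the diagram displayed before Remark \ref{rmk:pb} together with the fact that taking arc spaces commutes with closed immersions.
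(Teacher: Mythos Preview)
Your proposal is correct and follows exactly the approach the paper takes: the paper simply states ``By Remark \ref{rmk:pb}(2), we can deduce the same statement for $X$'' and leaves the details implicit, while you have spelled out precisely how each item of Proposition \ref{prop:DL_A} restricts along the closed immersions and pullback identities of Remark \ref{rmk:pb}(2). Your more detailed argument is a faithful elaboration of the paper's one-line reduction.
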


\begin{rmk}\label{rmk:RF}
\begin{enumerate}
\item 
In \cite{NS}, $e_i$ is taken to satisfy $0 \le e_i \le d-1$. 
Note that the ring homomorphism $\lambda _{\gamma} ^*$ cannot be defined in this way of taking in our formal power series ring setting.

\item
It is also natural to define a $k[t]$-ring homomorphism
\[
\overline{\lambda}^{\prime *}_{\gamma}: k[t][[x_1, \ldots, x_N]]^G \to k[t][[x_1, \ldots, x_N]]; 
\quad x_i^{(\gamma)} \mapsto t^{e_i/d}x_i^{(\gamma)}, 
\]
and schemes 
\[
\overline{A}^{\prime (\gamma)} := \operatorname{Spec} k[t][[x_1, \ldots, x_N]], \quad 
\overline{X}^{\prime (\gamma)} := 
\operatorname{Spec} \left( k[t][[x_1, \ldots, x_N]] / \overline{I}_X ^{\prime (\gamma)} \right), 
\]
where $\overline{I}_X ^{\prime (\gamma)}$ is the ideal of $k[t][[x_1, \ldots, x_N]]$ generated by 
$\overline{\lambda}^{\prime *} _{\gamma}(I'_X)$. 
Then by the same argument as in this section, $\overline{\lambda}^{\prime *}_{\gamma}$ induces maps 
\[
\overline{\lambda}^{\prime}_{\gamma} : \overline{A}^{\prime (\gamma)} _{\infty} \to A_{\infty}, \qquad 
\overline{\mu}^{\prime}_{\gamma} : \overline{X}^{\prime (\gamma)}_{\infty} \to X_{\infty}. 
\]
However, as we can see in the discussion below, 
if $\overline{A}^{(\gamma)}$ and $\overline{X}^{(\gamma)}$ are replaced with 
$\overline{A}^{\prime (\gamma)}$ and $\overline{X}^{\prime (\gamma)}$, then 
Propositions \ref{prop:DL_A} and \ref{prop:DL_X} are no longer valid. 

First, we note that 
\begin{align*}
\overline{A}^{\prime (\gamma)} _m & \simeq \operatorname{Spec} \bigl( k \bigl[ \bigl[ x_1^{(0)}, \ldots , 
x_N^{(0)} \bigr] \bigr] \bigl[x_1^{(s)}, \ldots , x_N^{(s)} \ \big| \ 1 \le s \le m \bigr] \bigr), \\
\overline{A}^{(\gamma)} _m & \simeq \operatorname{Spec} \bigl( k \bigl[x_1^{(s)}, \ldots , x_N^{(s)} \ \big| \ 0 \le s \le m \bigr] \bigr), 
\end{align*}
and we have a natural morphism $\overline{A}^{\prime (\gamma)} _m \to \overline{A}^{(\gamma)} _m$ induced by the ring inclusion
\[
k \bigl[x_1^{(s)}, \ldots , x_N^{(s)} \ \big| \ 0 \le s \le m \bigr] \hookrightarrow
k \bigl[ \bigl[ x_1^{(0)}, \ldots , x_N^{(0)} \bigr] \bigr] \bigl[x_1^{(s)}, \ldots , x_N^{(s)} \ \big| \ 1 \le s \le m \bigr]. 
\] 
Since the morphisms $\overline{A}^{\prime (\gamma)} _m \to \overline{A}^{(\gamma)} _m$ are compatible with the truncation maps, 
they induce a map $\overline{A}^{\prime (\gamma)} _{\infty} \to \overline{A}^{(\gamma)} _{\infty}$. 
The map $\overline{X}^{\prime (\gamma)}_{\infty} \to \overline{X}^{(\gamma)}_{\infty}$ is also induced, 
and we have the following commutative diagrams: 
\[
\xymatrix{
A_{\infty} & \overline{A}^{(\gamma)} _{\infty} \ar[l]_{\overline{\lambda}_{\gamma}} & X_{\infty} & \overline{X}^{(\gamma)}_{\infty} \ar[l]_{\overline{\mu}_{\gamma}}  \\
&  \overline{A}^{\prime (\gamma)} _{\infty} \ar[ul]^{\overline{\lambda}^{\prime}_{\gamma}}\ar[u] & & \overline{X}^{\prime (\gamma)}_{\infty} \ar[ul]^{\overline{\mu}^{\prime}_{\gamma}} \ar[u]
}
\]
Furthermore, by contruction, $\overline{A}^{\prime (\gamma)} _{\infty} \to \overline{A}^{(\gamma)} _{\infty}$ induces isomorphisms
\begin{align*}
\overline{A}^{\prime (\gamma)} _{\infty} \cap \operatorname{Cont}^{\ge 1} \big( (x_1, \ldots , x_N) \big) & \simeq 
\overline{A}^{(\gamma)} _{\infty} \cap \operatorname{Cont}^{\ge 1} \big( (x_1, \ldots , x_N) \big), \\
\overline{X}^{\prime (\gamma)}_{\infty} \cap \operatorname{Cont}^{\ge 1} \big( (x_1, \ldots , x_N) \big) & \simeq 
\overline{X}^{(\gamma)}_{\infty} \cap \operatorname{Cont}^{\ge 1} \big( (x_1, \ldots , x_N) \big). 
\end{align*}
We also note that $\overline{A}^{\prime (\gamma)} _{\infty} \to \overline{A}^{(\gamma)} _{\infty}$ induces injective maps 
\[
\overline{A}^{\prime (\gamma)}_{\infty}(k) \hookrightarrow \overline{A}^{(\gamma)} _{\infty}(k), \quad
\overline{X}^{\prime (\gamma)}_{\infty}(k) \hookrightarrow \overline{X}^{(\gamma)}_{\infty}(k)
\]
on $k$-points. However, these two maps are not surjective in general (see (3) below).

\item Suppose that $N = 2$ and $G = \langle \gamma \rangle$, where $\gamma: k[[x_1, x_2]] \to k[[x_1, x_2]]$ is 
the involution defined by $\gamma (x_i) = - x_i$ for $i \in \{ 1, 2 \}$. 
Then we have $d = 2$, $e_1 = e_2 = 1$, and $k[[x_1, x_2]] ^G = k[[x_1 ^2 , x_1 x_2, x_2 ^2]]$. 
We denote by $\alpha \in A_{\infty}$ the $k$-arc corresponding to the $k[t]$-ring homomorphism 
\[
\alpha ^* : k[t][[x_1 ^2, x_1x_2, x_2 ^2]] \to k[[t]]; \quad x_1 ^2 \mapsto t, \ x_1 x_2 \mapsto t, \ x_2 ^2 \mapsto t. 
\]
Then, $\alpha$ is contained in the image of $\overline{\lambda}_{\gamma} : \overline{A}^{(\gamma)} _{\infty} \to A_{\infty}$. 
Indeed, if $\beta \in \overline{A}^{(\gamma)} _{\infty}$ is the $k$-arc defined by 
\[
\beta ^* : k[x_1, x_2][[t]] \to k[[t]]; \quad x_1 \mapsto 1, \ x_2 \mapsto 1, 
\]
then we have $\alpha ^* = \beta ^* \circ \overline{\lambda}^* _{\gamma}$ and hence $\alpha = \overline{\lambda}_{\gamma}(\beta)$. 

On the other hand, $\alpha$ is not contained in the image of $\overline{\lambda}'_{\gamma} : \overline{A}^{\prime (\gamma)}_{\infty} \to A_{\infty}$ because 
there is no $k[t]$-ring homomorphism $\beta ^* : k[t][[x_1, x_2]] \to k[[t]]$ satisfying $\beta ^* (x_1) = \beta ^* (x_2) = 1$. 
\end{enumerate}
\end{rmk}

\section{Arc spaces of hyperquotient singularities}\label{section:mld_R}
In this section, we prove in Theorem \ref{thm:mld_hyperquot_R} that \cite[Theorem 4.8]{NS} is still valid 
in the formal power series ring setting. 

\subsection{Minimal log discrepancies of hyperquotient singularities}

Let $d$ be a positive integer and let $\xi \in k$ be a primitive $d$-th root of unity. 
Let $G \subset \operatorname{GL}_N( k)$ be a finite group with order $d$ that linearly acts on 
$\overline{A} := \widehat{\mathbb{A}}^N _k = \operatorname{Spec} k[[x_1,\ldots,x_N]]$. 
We denote by 
\[
A := \overline{A}/G
\]
the quotient scheme. Let $Z \subset A$ be the minimum closed subset such that $\overline{A} \to A$ is \'{e}tale outside $Z$. 
We assume that $\operatorname{codim} Z \ge 2$, and hence the quotient morphism $\overline{A} \to A$ is \'{e}tale in codimension one. 
We note that $A$ is $\mathbb{Q}$-Gorenstein (cf.\ Remark \ref{rmk:compareK}). 
We fix a positive integer $r$ such that $\omega_{A/k} ^{\prime [r]}$ is invertible.

We fix $\gamma \in G$. Let $C_\gamma$ be the centralizer of $\gamma$ in $G$. 
Since $G$ is a finite group, $\gamma$ can be diagonalized with a suitable basis $x_1,\ldots,x_N$. 
Let $\operatorname{diag}(\xi ^{e_1}, \ldots , \xi ^{e_N})$ be the diagonal matrix with $0 < e_i \le d$. 
We define a $k[t]$-ring homomorphism
\[
\lambda_{\gamma} ^*: k[t][[x_1, \ldots, x_N]]^{G}  \to  k[x_1, \ldots, x_N]^{C_{\gamma}}[[t]];\quad 
x_i \mapsto t^{\frac{e_i}{d}}x_i, 
\]
and define $\overline{\lambda}_{\gamma} ^*: k[t][[x_1, \ldots, x_N]]^{G}  \to  k[x_1, \ldots, x_N][[t]]$ 
as the composition of $\lambda_{\gamma} ^*$ and the inclusion 
$i: k[x_1, \ldots, x_N]^{C_{\gamma}}[[t]] \to k[x_1, \ldots, x_N][[t]]$. 

Let $f_1, \ldots, f_c \in k[[x_1, \ldots, x_N]]^{G}$ be a regular sequence. 
We set 
\begin{align*}
B &:= \operatorname{Spec} \bigl( k[[x_1, \ldots, x_N]]^{G}/(f_1, \ldots , f_c) \bigr), \\ 
\overline{B} &:= \operatorname{Spec} \bigl( k[[x_1, \ldots, x_N]]/(f_1, \ldots , f_c) \bigr). 
\end{align*}
We denote by $n := N-c$ their dimensions. 

Suppose that $B$ is normal. 
Then it follows that $\overline{B} \to B$ is also \'{e}tale in codimension one, 
and $\overline{B}$ is also normal. 
Indeed, since $\operatorname{codim}_A Z \ge 2$, we have $A_{\rm sing} = Z$ by the purity of the branch locus (cf.\ \cite{Nag59}). 
Therefore we have $B \cap Z \subset B_{\rm sing}$ since $f_1, \ldots, f_{c}$ is a regular sequence (cf.\ \cite[tag 00NU]{Sta}). 
Then it follows that $\operatorname{codim}_B (Z \cap B) \ge 
\operatorname{codim}_B (B_{\rm sing}) \ge 2$ by the normality of $B$. 
Therefore, $\overline{B} \to B$ is also \'{e}tale in codimension one, and hence $\overline{B}$ is also normal 
by Serre's criterion of normality. 

Note that $\omega _{B/k} ^{\prime [r]}$ is invertible. 
Indeed, we have the adjunction formula 
\[
\omega' _{B/k} \simeq \operatorname{det} ^{-1} (I/I^{2}) \otimes _{\mathcal{O}_A} \omega' _{A/k}, 
\]
where $I := (f_1, \ldots , f_c) \subset k[[x_1, \ldots, x_N]]^G$, since the sequence
\[
0 \to I/I^{2} \to \Omega' _{A/k} \otimes _{\mathcal{O}_A} \mathcal{O}_B \to \Omega' _{B/k} \to 0 
\]
is exact at a regular point of $B$ by Proposition \ref{prop:spOmega_exact}. 

We define ideals $I'$, $\widetilde{I}^{(\gamma)}$ and $\overline{I}^{(\gamma)}$ by 
\begin{align*}
I' &:= (f_1, \ldots, f_c) \subset k[t][[x_1, \ldots, x_N]]^{G}, \\
\widetilde{I}^{(\gamma)} &:= \bigl( \lambda^* _{\gamma} (f_1),\ldots,\lambda^* _{\gamma} (f_c) \bigr) \subset k[x_1, \ldots, x_N]^{C_{\gamma}}[[t]], \\
\overline{I}^{(\gamma)} &:= \bigl( \overline{\lambda}^* _{\gamma} (f_1),\ldots,\overline{\lambda}^* _{\gamma} (f_c) \bigr) \subset k[x_1, \ldots, x_N][[t]]. 
\end{align*}
Then we have the following diagram. 
\[
  \xymatrix{
 k[t][[x_1, \ldots, x_N]]^{G} \ar[r]_-{\lambda _{\gamma} ^*} \ar@/^20pt/[rr]^{\overline{\lambda} _{\gamma} ^*} \ar@{->>}[d] &  k[x_1, \ldots, x_N]^{C_{\gamma}}[[t]] \ar@{->>}[d] \ar[r]_-i &  k[x_1, \ldots, x_N][[t]] \ar@{->>}[d] \\
 k[t][[x_1, \ldots, x_N]]^{G}/I' \ar[r]^-{\mu _{\gamma} ^*} \ar@/_20pt/[rr]_{\overline{\mu} _{\gamma}^* }&  k[x_1, \ldots, x_N]^{C_{\gamma}}[[t]] /\widetilde{I}^{(\gamma)} \ar[r]^-j &  k[x_1, \ldots, x_N][[t]] /\overline{I}^{(\gamma)}
  }
\]
We define $k[t]$-schemes $A'$, $\widetilde{A}^{(\gamma)}$, $\overline{A}^{(\gamma)}$, 
$B'$, $\widetilde{B}^{(\gamma)}$ and $\overline{B}^{(\gamma)}$ as follows: 
\begin{alignat*}{2}
A' &:= \operatorname{Spec} k[t][[x_1, \ldots , x_N]]^G, &
	\quad B' &:= \operatorname{Spec}  \bigl( k[t][[x_1,\ldots,x_N]]^G/I' \bigr),\\
\widetilde{A}^{(\gamma)} &:= \operatorname{Spec} k[x_1, \ldots , x_N]^{C_{\gamma}}[[t]], &
	\quad \widetilde{B}^{(\gamma)} &:= \operatorname{Spec} \bigl( k[x_1,\ldots,x_N]^{C_{\gamma}}[[t]]/\widetilde{I}^{(\gamma)} \bigr), \\
\overline{A}^{(\gamma)} &:= \operatorname{Spec} k[x_1, \ldots , x_N][[t]], \quad &
	\overline{B}^{(\gamma)} &:= \operatorname{Spec} \bigl( k[x_1,\ldots,x_N][[t]]/\overline{I}^{(\gamma)} \bigr). 
\end{alignat*}
Then we have the following morphisms 
between the corresponding $k[t]$-schemes. 
\[
  \xymatrix{
A' & \widetilde{A}^{(\gamma)} \ar[l]^{\lambda _{\gamma}} & \overline{A}^{(\gamma)} \ar[l]^-q   \ar@/_18pt/[ll]_{\overline{\lambda} _{\gamma}} \\
B' \ar@{^{(}->}[u]^{\sigma} & \widetilde{B}^{(\gamma)} \ar@{^{(}->}[u] \ar[l]_{\mu _{\gamma}} &  \overline{B}^{(\gamma)} \ar[l]_-p \ar@/^18pt/[ll]^{\overline{\mu} _{\gamma}} \ar@{^{(}->}[u]_{\tau}
  }
\]

\begin{rmk}
\begin{enumerate}
\item 
Note that 
\begin{itemize}
\item $A'$ and $B'$ are affine schemes of the form $\operatorname{Spec} \bigl( k[t][[x_1, \ldots , x_M]]/J \bigr)$, and 
\item $\widetilde{A}^{(\gamma)}$, $\overline{A}^{(\gamma)}$, $\widetilde{B}^{(\gamma)}$ and $\overline{B}^{(\gamma)}$ 
are of the form $\operatorname{Spec} \bigl( k[x_1, \ldots , x_M][[t]]/J \bigr)$. 
\end{itemize}
We will use the notion of the sheaf $\Omega '_{X/k[t]}$ of special differentials for 
$X = A', B'$ defined in Section \ref{section:Omega'}, and use the notion of the sheaf $\widehat{\Omega}_{X/k[[t]]}$ for 
$X = \widetilde{A}^{(\gamma)}, \overline{A}^{(\gamma)}, \widetilde{B}^{(\gamma)}, \overline{B}^{(\gamma)}$ 
defined in Subsection \ref{subsection:formal}. 

Since $I'$, $\widetilde{I}^{(\gamma)}$ and $\overline{I}^{(\gamma)}$ are generated by $c$ elements, 
each irreducible component $W_i$ of $B'$, $\widetilde{B}^{(\gamma)}$ and $\overline{B}^{(\gamma)}$ has $\dim W_i \ge n+1$. 
Therefore, we can apply lemmas and propositions in Subsections \ref{subsection:ktjet} and \ref{subsection:formal} to their arc spaces. 

\item 
Lemma \ref{lem:isotrivial2}(3)(4) below show that $\overline{B}^{(\gamma)}$ has only one irreducible component that is flat over $k[[t]]$. 
Furthermore, the component $V$ has $\dim V = n+1$. 
\end{enumerate}
\end{rmk}

\begin{lem}\label{lem:ff}
Let $f: C_1 \to C_2$ be a  flat ring homomorphism of Noetherian rings. 
Then the following hold. 
\begin{enumerate}
\item 
Let $P$ be a prime ideal of $C_1$ such that $PC_2 \not = C_2$. 
Then $\operatorname{ht} P = \operatorname{ht} (PC_2)$ holds. 

\item 
Suppose that $f$ is faithfully flat. 
If $I \subsetneq C_1$ is a proper ideal of $C_1$, then $\operatorname{ht} I = \operatorname{ht} (IC_2)$ holds. 
\end{enumerate}
\end{lem}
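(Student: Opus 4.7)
The plan is to deduce both statements from the standard dimension formula for flat ring maps, namely: if $f\colon C_1\to C_2$ is flat and $Q\in\operatorname{Spec} C_2$ lies over $P := f^{-1}(Q)\in\operatorname{Spec} C_1$, then
\[
\dim (C_2)_Q \;=\; \dim (C_1)_P + \dim \bigl( (C_2)_Q/P(C_2)_Q \bigr)
\]
(see \cite[Theorem 15.1]{Mat89}), together with the going-down property of flat maps.

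For (1), pick any minimal prime $Q$ of $PC_2$ in $C_2$; such a $Q$ exists because $PC_2\ne C_2$. I claim $f^{-1}(Q)=P$. Clearly $f^{-1}(Q)\supseteq P$; if the inclusion were strict, going-down applied to $f$ would produce a prime $Q'\subsetneq Q$ with $f^{-1}(Q')=P$, contradicting the minimality of $Q$ over $PC_2$ since $PC_2\subseteq Q'$. Hence $Q$ lies over $P$, and minimality forces $\dim((C_2)_Q/P(C_2)_Q)=0$. The displayed dimension formula then gives $\operatorname{ht} Q=\operatorname{ht} P$. Since this holds for every minimal prime $Q$ of $PC_2$ and $\operatorname{ht}(PC_2)$ is the minimum of such $\operatorname{ht} Q$, we conclude $\operatorname{ht}(PC_2)=\operatorname{ht} P$.

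For (2), let $P$ be a minimal prime over $I$ with $\operatorname{ht} P=\operatorname{ht} I$. Faithful flatness of $f$ gives $IC_2\ne C_2$ (equivalently $PC_2\ne C_2$), so part (1) applies to $P$, yielding $\operatorname{ht}(PC_2)=\operatorname{ht} P=\operatorname{ht} I$, and $IC_2\subseteq PC_2$ then gives $\operatorname{ht}(IC_2)\le\operatorname{ht} I$. For the reverse inequality, choose a minimal prime $Q$ of $IC_2$ realizing $\operatorname{ht} Q=\operatorname{ht}(IC_2)$, and set $P':=f^{-1}(Q)$. Then $P'\supseteq I$ (since $Q\supseteq IC_2$), so $\operatorname{ht} P'\ge\operatorname{ht} I$. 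Applying (1) to $P'$ (again $P'C_2\ne C_2$ since $P'C_2\subseteq Q$) gives $\operatorname{ht} P'=\operatorname{ht}(P'C_2)\le\operatorname{ht} Q=\operatorname{ht}(IC_2)$, so $\operatorname{ht} I\le\operatorname{ht}(IC_2)$, and the two inequalities combine to the desired equality.

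There is no real obstacle here; the only subtle point is verifying that a minimal prime $Q$ of $PC_2$ actually contracts to $P$ (rather than to some strictly larger prime), which is where going-down for flat maps is essential. Everything else is a direct application of the dimension formula and the definition of height.
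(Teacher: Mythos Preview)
Your proof is correct and follows essentially the same route as the paper: for (1) you pick a minimal prime $Q$ of $PC_2$, use going-down to show it contracts to $P$, and apply \cite[Theorem 15.1]{Mat89}; for (2) you combine (1) applied to a minimal prime of $I$ with the reverse inequality. The only cosmetic difference is that for the inequality $\operatorname{ht} I \le \operatorname{ht}(IC_2)$ the paper invokes going-down directly, while you route it through (1) applied to $P'=f^{-1}(Q)$; both are valid and amount to the same thing.
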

\begin{proof}
We shall prove (1). 
Let $Q$ be a minimal prime of $PC_2$. 
Then by the going-down theorem, we have $P = Q \cap C_1$. 
Therefore, we have $\operatorname{ht} Q = \operatorname{ht} P$ by \cite[Theorem 15.1]{Mat89}, which proves (1). 

We shall prove (2). Note that $IC_2 \not = C_2$ holds by the faithfully flatness. 
First, the inequality $\operatorname{ht} I \le \operatorname{ht} (IC_2)$ follows from the going-down theorem. 
We shall prove the opposite inequality. 
Take a minimal prime $P$ of $I$ such that $\operatorname{ht} I = \operatorname{ht} P$. 
Then by (1), it follows that 
\[
\operatorname{ht} I = \operatorname{ht} P = \operatorname{ht} (PC_2) \ge \operatorname{ht} (IC_2), 
\]
which completes the proof. 
\end{proof}

\begin{lem}\label{lem:isotrivial2}
We denote $F_i := \overline{\lambda}^* _{\gamma} (f_i)$ for each $1 \le i \le c$. 
Consider the following diagram of rings. 
\[
\xymatrix{
S_1 := k[x_1, \ldots , x_N][[t]] \ar@{^{(}->}[d]_{h_1} \ar@{->>}[r] & C_1 := k[x_1, \ldots , x_N][[t]]/(F_1, \ldots , F_c) \ar[d] _{g_1} \\ 
S_2 := k[x_1, \ldots , x_N]((t)) \ar@{^{(}->}[d]_{h_2} \ar@{->>}[r] & C_2 := k[x_1, \ldots , x_N]((t))/(F_1, \ldots , F_c) \ar[d]_{g_2}  \\
S_3 := k[x_1, \ldots , x_N]((t^{1/d})) \ar@{->>}[r] & C_3 := k[x_1, \ldots , x_N]((t^{1/d}))/(F_1, \ldots , F_c)  \\
S_4 := k[[x_1, \ldots , x_N]] \ar@{^{(}->}[u]^{h_3}_{x_i \mapsto t^{e_i/d}x_i}  \ar@{->>}[r] & C_4 := k[[x_1, \ldots , x_N]]/(f_1, \ldots , f_c) \ar[u]^{g_3}_{x_i \mapsto t^{e_i/d}x_i} \\ 
}
\]
We denote $I_i := (F_1, \ldots, F_c) \subset S_i$ for $i \in \{1,2,3 \}$, and 
$I_4 := (f_1, \ldots, f_c) \subset S_4$. 
Then the following hold. 
\begin{enumerate}
\item $h_1$, $h_2$ and $h_3$ are regular, and hence so are $g_1$, $g_2$ and $g_3$. 
\item $h_2$ and $h_3$ are faithfully flat, and hence so are $g_2$ and $g_3$. 
\item $C_2$ and $C_3$ are normal domains. In particular, $I_2$ and $I_3$ are prime ideals. 
\item $\operatorname{ht}(I_2) = \operatorname{ht}(I_3) = c$. 
\end{enumerate}
\end{lem}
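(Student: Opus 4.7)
My plan tackles the four assertions in order, with the regularity of $h_3$ being the substantive step; the others follow by base change and general principles.

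For (1), the map $h_1$ is simply the localization of $S_1$ at $t$, hence regular. For $h_2$, with $u:=t^{1/d}$ one has $S_3 \cong S_2[u]/(u^d-t)$, which realizes the base change of the finite separable étale field extension $k((t^{1/d}))/k((t))$ (separability follows from $\operatorname{char} k=0$ together with $k$ containing a primitive $d$-th root of unity); hence $h_2$ is finite étale, in particular finite free of rank $d$, regular, and faithfully flat. The regularity of $h_3$ is the crux. My approach is geometric: consider the weighted morphism $\Phi\colon \operatorname{Spec} k[x_1,\dots,x_N,u^{\pm 1}] \to \operatorname{Spec} k[y_1,\dots,y_N]$ sending $y_i \mapsto u^{e_i}x_i$. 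The Jacobian of $\Phi$ contains a diagonal $N\times N$ block with entries $u^{e_i}$, so $\Phi$ is smooth on $\{u\ne 0\}$. The map $h_3$ arises from $\Phi$ by completing the source along the ideal $(u)$ and localizing at $u$ to obtain $S_3$, while completing the target at the origin to obtain $S_4$. Since smoothness is preserved by localization and regularity of homomorphisms is preserved under appropriate completions of excellent rings, this yields the regularity of $h_3$.

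Assertions (2) and (4) then proceed formally. For (2), given flatness of $h_3$ from (1), faithful flatness follows because $\mathfrak{m}_{S_4}\cdot S_3 = (x_1,\dots,x_N)S_3$ is a proper maximal ideal of $S_3$ (with residue field $k((u))$); the statements for $g_1,g_2,g_3$ then follow by base change along $\operatorname{Spec} C_i \hookrightarrow \operatorname{Spec} S_i$. For (4), $f_1,\dots,f_c$ is a regular sequence in the Cohen--Macaulay ring $S_4$, so $\operatorname{ht}(I_4)=c$; Lemma \ref{lem:ff}(2) applied to the faithfully flat $h_3$ gives $\operatorname{ht}(I_3)=\operatorname{ht}(I_4\cdot S_3)=c$, and the same applied to $h_2$ gives $\operatorname{ht}(I_2)=\operatorname{ht}(I_2\cdot S_3)=\operatorname{ht}(I_3)=c$.

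For (3), $C_3$ is normal because the regular faithfully flat map $g_3\colon C_4\to C_3$ preserves normality from its normal-domain source $C_4$ (established just before the lemma); to upgrade this to integral, I argue that the generic fiber of $g_3$ -- which coincides with the fiber of $h_3$ at the prime $I_4\subset S_4$ -- is obtained from the domain $S_3 = k[x][[u]][1/u]$ by localization and hence is a domain, forcing $\operatorname{Spec} C_3$ to be irreducible via the surjectivity inherent in faithful flatness. The argument for $C_2$ is analogous, via either $g_1$ or the étale base change $g_2\colon C_2\to C_3$. The principal obstacle is the rigorous verification of flatness of $h_3$: the naive lift $S_4 \to k[x_1,\dots,x_N][[u]]$ before inverting $u$ is \emph{not} flat when $N\ge 2$, since the sequence $u^{e_i}x_i$ fails to be regular in $k[[x,u]]$. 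Inverting $u$ is therefore essential, and I would complete the argument by applying the miracle-flatness criterion prime by prime in $\operatorname{Spec} S_3$, exploiting that each localization of $S_3$ at a prime is a regular local ring and that the preimage in $S_4$ has matching dimension via the geometric picture of $\Phi$.
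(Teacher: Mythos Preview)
Your treatment of $h_1$, $h_2$, part (2), and part (4) is essentially the paper's, and is fine.

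For the regularity of $h_3$, your geometric argument has a real gap. The smooth morphism $\Phi$ lives between $k[y_1,\dots,y_N]$ and $k[x_1,\dots,x_N,u^{\pm 1}]$, but to reach $h_3$ you must complete the \emph{target} at $(y_1,\dots,y_N)$ and the \emph{source} along $(u)$; these are not compatible in any base-change sense, so regularity does not propagate for free. Concretely, base-changing $\Phi$ along $k[y]\to k[[y]]$ produces $k[[y]]\to k[[y]][u^{\pm 1}]$, which is not $S_3=k[x]((u))$. Your fallback via miracle flatness needs the dimension equality $\operatorname{ht}\bigl(P(S_3)_Q\bigr)=\operatorname{ht} P$ for every $P\subset S_4$ and $Q\subset S_3$ over it, and ``the geometric picture of $\Phi$'' does not supply this. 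The paper proves exactly this equality by a direct Jacobian computation: for $r=\operatorname{ht} P$, pick $a_1,\dots,a_r\in P$ and derivations $D_i=\partial/\partial x_i$ with $\det\bigl(D_i(a_j)\bigr)\notin P$; then since $\partial_{x_i}\bigl(h_3(a_j)\bigr)=u^{e_i}\,h_3\bigl(\partial_{x_i}a_j\bigr)$, the corresponding $r\times r$ minor over $S_3$ is $u^{e_1+\cdots+e_r}$ times $h_3$ of the old one, hence not in $Q$. This simultaneously gives $\operatorname{ht}\bigl(P(S_3)_Q\bigr)\ge r$ (so equality, yielding flatness by miracle flatness) and regularity of the fiber $(S_3)_Q/P(S_3)_Q$.

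For (3), your claim that the generic fiber of $g_3$ is ``obtained from the domain $S_3$ by localization'' is false: the fiber of $h_3$ over the prime $I_4$ is a \emph{quotient} of a localization of $S_3$, namely $\bigl((S_4\setminus I_4)^{-1}S_3\bigr)/I_3$, and there is no a priori reason this is a domain. The paper argues quite differently, using the torus action $T_c\colon t\mapsto c^{-d}t,\ x_i\mapsto c^{e_i}x_i$ on $S_1$: if $I_2$ had two minimal primes $P_1\neq P_2$, normality of $C_2$ forces $P_1+P_2=S_2$, whence $t^s\in(P_1\cap S_1)+(P_2\cap S_1)$; but each $P_i\cap S_1$ is $T_c$-invariant, and a graded-degree argument then shows $P_i\cap S_1\subset(x_1,\dots,x_N,t^{s+1})$, a contradiction. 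If you prefer to stay in your framework, a correct variant is to look at the \emph{closed} fiber of $g_3$ rather than the generic one: since $C_3$ is normal it decomposes as a product of normal domains, each of which is faithfully flat over the local ring $C_4$, hence each meets the closed fiber $C_3/\mathfrak m_{C_4}C_3\cong k((u))$; as this fiber is a single point, there is only one factor.
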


\begin{proof}
We shall prove (1) and (2). 
Since $h_1$ is the localization by $t \in S_1$, it is regular. 
Note that the inclusion map
\[
h'_2: S'_2 := k[x_1, \ldots, x_N][t, t^{-1}] \to S'_3 := k[x_1, \ldots, x_N][t^{1/d}, t^{-1/d}]
\] 
is \'{e}tale and faithfully flat. 
Since $h_2$ is the base change ${\rm -} \otimes _{S'_2} S_2$ of $h'_2$, 
it follows that $h_2$ is \'{e}tale (in particular, regular) and faithfully flat. 

Let $P$ be a prime ideal of $S_4$ and let $Q$ be a prime ideal of $S_3$ such that $Q \cap S_4 = P$. 
To see that $h_3$ is regular, it is sufficient to show the following two conditions: 
\begin{itemize}
\item $\operatorname{ht} (P(S_3)_Q) = \operatorname{ht} P$. 
\item $S_3/PS_3$ is regular at $Q$. 
\end{itemize}
Note here that the first condition is equivalent to the flatness by \cite[Theorem 23.1]{Mat89} 
since $S_4$ is regular and $S_3$ is Cohen-Macaulay. 
We also note that the inequality $\operatorname{ht} (P(S_3)_Q) \le \operatorname{ht} P$ 
always holds by \cite[Theorem 15.1]{Mat89}. 

Let $r := \operatorname{ht} P$. 
Since $S_4/P$ is regular at $P$, by the Jacobian criterion of regularity, 
there exist $D_1, \ldots , D_r \in \operatorname{Der}_k(S_4)$ and $a_1, \ldots, a_r \in P$ such that 
$c := \det (D_i(a_j))_{ij} \not \in P$. 
Since $\operatorname{Der}_k(S_4)$ is generated by $\partial/\partial x_i$'s, 
we may assume that $D_i = \partial/\partial x_i$ holds for $1 \le i \le r$ by changing the indices of $x_i$. 
We note that $\partial/\partial x_i$ defines a derivation on $S_3$, and we denote it by $D_i' \in \operatorname{Der}_k(S_3)$. 
Then we have 
\[
D'_i (h_3(a_j)) = t^{\frac{e_i}{d}} h_3 (D_i (a_j)), 
\]
and hence we have
\[
\det \bigl( D' _i (h_3(a_j)) \bigr)_{ij} = t^{\frac{e'}{d}} h_3(c)
\]
where $e' := \sum _{i=1}^r e_i$. 
Here, we have $t^{\frac{e'}{d}} h_3(c) \not \in Q$ since $P = Q \cap S_4$ and $c \not \in P$. 
Therefore, it follows that $\operatorname{ht} (P(S_3)_Q) \ge r$ by \cite[Theorem 30.4(1)]{Mat89}. 
Hence, we have $\operatorname{ht} (P(S_3)_Q) = r$ and 
$S_3/PS_3$ is regular at $Q$ by \cite[Theorem 30.4(2)]{Mat89}. 
We have proved the regularity of $h_3$. 

To see the faithfully flatness of $h_3$, it is sufficient to show $\mathfrak{m}S_3 \not = S_3$ 
for the maximal ideal $\mathfrak{m} = (x_1, \ldots, x_N)$ of $S_4$. 
It is clear because $\mathfrak{m}S_3 \subset (x_1, \ldots, x_N) \not = S_3$.

We shall prove (3). 
Note that $C_4$ is normal by our assumption. 
Since the normality is preserved under faithfully flat regular ring homomorphisms (cf.\ \cite[Theorem 32.2]{Mat89}), 
$C_2$ and $C_3$ are normal. Therefore, it is sufficient to show that $C_2$ and $C_3$ are domains. 
In what follows, we shall only prove that $C_2$ is a domain since the same proof works for $C_3$. 
Suppose the contrary that $I_2$ has minimal primes $P_1$ and $P_2$ with $P_1 \not = P_2$. 
Then by the normality of $C_2$, we have $P_1 + P_2 = S_2$ (cf.\ \cite[Proposition 2.20]{Eis95}). 
Set $Q_1 := P_1 \cap S_1$ and $Q_2 := P_2 \cap S_1$. 
Since $P_1 + P_2 = S_2$, it follows that $t^{s} \in Q_1 + Q_2$ for some $s \ge 0$. 
To get a contradiction, we shall prove 
\begin{itemize}
\item $Q_1, Q_2 \subset (x_1, \ldots , x_N, t^{s+1})$. 
\end{itemize}
Suppose the contrary that 
\begin{itemize}
\item there exist $g \in k[[t]]$ and $h \in (x_1, \ldots , x_N)$ such that $g+h \in Q_1$ and $g \not \in (t^{s+1})$. 
\end{itemize}
Let $0 \le a \le s$ be the minimum $a$ such that $g \not \in (t^{a+1})$. 
Then $g - b t^a \in (t^{a+1})$ holds for some $b \in k^{\times}$. 
We may assume $b = 1$ by replacing $g$ and $h$ with $b^{-1}g$ and $b^{-1}h$. 
For $c \in k^{\times}$, we denote by $T_c: S_1 \to S_1$ the ring isomorphism
\[
T_c: k[x_1, \ldots, x_N][[t]] \to k[x_1, \ldots, x_N][[t]]; \quad t \mapsto c^{-d}t,\ x_i \mapsto c^{e_i}x_i. 
\]
Since $I_1$ is $T_c$-invariant, so is its minimal prime $Q_1$. 
Since the ideal $(t^{s+1})$ of $S_1$ is also $T_c$-invariant, 
$T_c$ induces the ring isomorphism $T_c' : S_1/(t^{s+1}) \to S_1/(t^{s+1})$. 
Hence, $S_1/(t^{s+1})$ has a graded ring structure satisfying $\deg t = -d$ and $\deg x_i = e_i$. 
Then $(Q_1 + (t^{s+1})) / (t^{s+1})$ is a homogeneous ideal. 
Therefore, the term $f_{- da}$ of $f:=g+h$ with degree $-da$ is contained in $Q_1 + (t^{s+1})$. 
Since $f_{-da} - t^a \in (x_1, \ldots , x_N)$, we have $f_{-da} - t^a \in (t^{a+1})$ by looking at the degrees of its terms. 
Therefore $f_{-da} = t^a (1 + f')$ holds for some $f' \in (t)$. 
Since $1 + f' \in S_1^{\times}$, we have $t^a \in Q_1 + (t^{s+1})$, and hence $t^a \in Q_1$. 
Therefore, we have $P_1 = S_2$ and we get a contradiction. 

Note that $\operatorname{ht}(I_4) = c$ by our assumption. 
Then, (4) follows from (2) and Lemma \ref{lem:ff}. 
\end{proof}

\begin{lem}\label{lem:isotrivial3}
Let $S_i$ and $h_i$ be as in Lemma \ref{lem:isotrivial2}. 
Let $\mathfrak{c} \subset S_4^G$ be an ideal of $S_4^G$. 
Let $\mathfrak{c}_1$ be the ideal of $S_1$ generated by the image of $\mathfrak{c}$ 
by the ring homomorphism $S_4^G \to S_1; \ x_i \mapsto t^{e_i/d} x_i$. 
We denote $\mathfrak{c}_2 := \mathfrak{c}_1 S_2$. 
Then we have $\operatorname{ht}(\mathfrak{c}_2) = \operatorname{ht} \mathfrak{c}$. 
\end{lem}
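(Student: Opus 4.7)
The plan is to chain together three height equalities, using $S_3$ as a faithfully flat overring of both $S_4$ and $S_2$. Concretely, I would establish $\operatorname{ht}(\mathfrak{c}) = \operatorname{ht}(\mathfrak{c} S_4) = \operatorname{ht}(\mathfrak{c}_3) = \operatorname{ht}(\mathfrak{c}_2)$, where $\mathfrak{c}_3 := \mathfrak{c}_2 S_3$.

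First, after reducing to the case that $\mathfrak{c}$ is proper (the improper case being trivial), I would use that $S_4^G \hookrightarrow S_4$ is a finite injective, hence integral, extension of Noetherian local rings. By lying-over, going-up, and incomparability, every minimal prime of $\mathfrak{c}$ in $S_4^G$ is the contraction of a minimal prime of $\mathfrak{c} S_4$ in $S_4$, and the dimension formula for integral extensions yields $\operatorname{ht}(\mathfrak{c}) = \operatorname{ht}(\mathfrak{c} S_4)$. One also needs $\mathfrak{c} S_4 \neq S_4$, which follows from the Reynolds-operator splitting of $S_4^G \hookrightarrow S_4$ in characteristic zero. Next, I would identify the two natural routes $S_4^G \to S_3$: the restriction of $h_3 \colon S_4 \to S_3$ to $S_4^G$ coincides with the composition $S_4^G \to S_1 \hookrightarrow S_2 \hookrightarrow S_3$, because both send a monomial $x^\alpha$ to $t^{\langle \alpha,e \rangle /d} x^\alpha$, and $\gamma$-invariance of elements of $S_4^G$ forces $\langle \alpha,e \rangle \in d\mathbb{Z}$ so that the image already lies in $S_1$. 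This gives $\mathfrak{c}_3 = (\mathfrak{c} S_4) S_3$. Finally, applying Lemma \ref{lem:ff}(2) to the faithfully flat maps $h_3 \colon S_4 \to S_3$ and $h_2 \colon S_2 \to S_3$ (both from Lemma \ref{lem:isotrivial2}(2)), I obtain $\operatorname{ht}(\mathfrak{c} S_4) = \operatorname{ht}(\mathfrak{c}_3)$ and $\operatorname{ht}(\mathfrak{c}_2) = \operatorname{ht}(\mathfrak{c}_3)$, closing the chain.

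The main obstacle is the first step, since $S_4^G \hookrightarrow S_4$ is integral but not flat in general, so Lemma \ref{lem:ff}(2) does not apply directly; one has to run the going-up/incomparability argument by hand and separately verify $\mathfrak{c} S_4 \neq S_4$ to avoid the convention $\operatorname{ht}(\cdot) = \infty$. The only other care needed is the routine but essential identification in the second step that the two $S_4^G \to S_3$ maps really agree; once that is pinned down, the remainder of the proof reduces entirely to applications of Lemmas \ref{lem:ff} and \ref{lem:isotrivial2}.
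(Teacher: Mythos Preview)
Your proposal is correct and follows essentially the same approach as the paper: define $\mathfrak{c}_3 := \mathfrak{c}_2 S_3$ and $\mathfrak{c}_4 := \mathfrak{c} S_4$, identify $\mathfrak{c}_3 = \mathfrak{c}_4 S_3$, use the integral extension $S_4^G \hookrightarrow S_4$ for $\operatorname{ht}\mathfrak{c} = \operatorname{ht}\mathfrak{c}_4$, and then apply Lemma~\ref{lem:ff}(2) to the faithfully flat maps $h_2$ and $h_3$ from Lemma~\ref{lem:isotrivial2}(2). You are simply more explicit than the paper about two points it leaves implicit: why the two routes $S_4^G \to S_3$ agree (your monomial check via $\gamma$-invariance), and why the integral extension preserves height (though note that going-down, available here since $S_4^G$ is normal, is what actually gives the equality rather than just going-up).
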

\begin{proof}
We define $\mathfrak{c}_3 := \mathfrak{c}_2 S_3$ and $\mathfrak{c}_4 := \mathfrak{c} S_4$.
Then we have $\mathfrak{c}_3 = \mathfrak{c}_4 S_3$. 
Since $S_4 ^G \hookrightarrow S_4$ is an integral extension, 
we have $\operatorname{ht} \mathfrak{c} = \operatorname{ht}(\mathfrak{c} S_4) = \operatorname{ht}(\mathfrak{c}_4)$. 
Furthermore, we have
\[
\operatorname{ht}(\mathfrak{c}_3) = \operatorname{ht}(\mathfrak{c}_4 S_3) = \operatorname{ht}(\mathfrak{c}_4), \quad 
\operatorname{ht}(\mathfrak{c}_3) = \operatorname{ht}(\mathfrak{c}_2 S_3) = \operatorname{ht}(\mathfrak{c}_2)
\]
by Lemma \ref{lem:isotrivial2}(2) and Lemma \ref{lem:ff}. 
We complete the proof. 
\end{proof}

\begin{defi}
\begin{enumerate}
\item 
We shall define sheaves $\omega' _{A'/k[t]}$ on $A'$ and $\omega' _{B'/k[t]}$ on $B'$ using 
the special canonical sheaves $\omega' _{A/k}$ and $\omega' _{B/k}$ defined in Section \ref{section:Omega'}. 
Let $A' \to A$ and $B' \to B$ be the morphisms induced by the natural ring inclusions
\begin{align*}
k[[x_1, \ldots, x_N]]^G   &\hookrightarrow k[t][[x_1, \ldots, x_N]]^G, \\
k[[x_1, \ldots, x_N]]^G/I &\hookrightarrow k[t][[x_1, \ldots, x_N]]^G/I'. 
\end{align*}
Then, we define 
\[
\omega' _{A'/k[t]} := \omega '_{A/k} \otimes _{\mathcal{O}_A} \mathcal{O}_{A'}, \quad 
\omega' _{B'/k[t]} := \omega '_{B/k} \otimes _{\mathcal{O}_B} \mathcal{O}_{B'}. 
\]
Note that they satisfy 
\[
\omega' _{B'/k[t]} \simeq \operatorname{det} ^{-1} (I'/I^{\prime 2}) \otimes _{\mathcal{O}_{B'}} \sigma^* \omega' _{A'/k[t]}. 
\]
Here, $\operatorname{det} ^{-1} (I'/I^{\prime 2}) := \bigl( \bigwedge ^{c} (I'/I^{\prime 2}) \bigr)^*$ is an invertible sheaf on $B'$.

\item 
The canonical map 
\[
\Omega _{B'/k[t]} ^{\prime n} \to \omega' _{B'/k[t]}
\] 
is induced by $\Omega _{B/k} ^{\prime n} \to \omega' _{B/k}$ and 
the isomorphism $\Omega' _{B'/k[t]} \simeq \Omega' _{B/k} \otimes _{\mathcal{O}_B} \mathcal{O}_{B'}$. 
The canonical map $\Omega _{A'/k[t]} ^{\prime n} \to \omega' _{A'/k[t]}$ is also defined. 
We define an ideal sheaf $\mathfrak{n}_{r,B'} \subset \mathcal{O}_{B'}$ by 
\[
\operatorname{Im} \Bigl( \bigl( \Omega _{B'/k[t]} ^{\prime n} \bigr)^{\otimes r} \to \omega^{\prime [r]} _{B'/k[t]} \Bigr) 
	= \mathfrak{n}_{r,B'} \otimes _{\mathcal{O}_{B'}} \omega^{\prime [r]} _{B'/k[t]}. 
\]
Then it satisfies $\mathfrak{n}_{r,B'} = \mathfrak{n}_{r,B} \mathcal{O}_{B'}$. 

\item 
We define ideal sheaves
$\operatorname{Jac}'_{B'/k[t]}$, 
$\operatorname{Jac}'_{\widetilde{B}^{(\gamma)}/k[[t]]}$ and 
$\operatorname{Jac}'_{\overline{B}^{(\gamma)}/k[[t]]}$ by
\begin{align*}
\operatorname{Jac}'_{B'/k[t]} &:= \operatorname{Fitt}^n \bigl( \Omega' _{B'/k[t]} \bigr) \subset \mathcal{O}_{B'}, \\
\operatorname{Jac}'_{\widetilde{B}^{(\gamma)}/k[[t]]} &:= \operatorname{Fitt}^n \bigl( \widehat{\Omega} _{\widetilde{B}^{(\gamma)}/k[[t]]} \bigr) \subset \mathcal{O}_{\widetilde{B}^{(\gamma)}}, \\
\operatorname{Jac}'_{\overline{B}^{(\gamma)}/k[[t]]} &:= \operatorname{Fitt}^n \bigl( \widehat{\Omega} _{\overline{B}^{(\gamma)}/k[[t]]} \bigr) \subset \mathcal{O}_{\overline{B}^{(\gamma)}}. 
\end{align*}
Here, we note that $\widetilde{B}^{(\gamma)}$ and $\overline{B}^{(\gamma)}$ are not necessarily equidimensional. 

\item 
We define an invertible sheaf $L_{\overline{B}^{(\gamma)}}$ on $\overline{B}^{(\gamma)}$ by
\[
L_{\overline{B}^{(\gamma)}} := \overline{\mu} _{\gamma}^* 
\bigl( \operatorname{det} ^{-1} (I'/I^{\prime 2}) \bigr) \otimes _{\mathcal{O}_{\overline{B}^{(\gamma)}}} \tau ^* \widehat{\Omega}^{N} _{\overline{A}^{(\gamma)}/k[[t]]}. 
\]
Then there exist canonical homomorphisms
\[
\widehat{\Omega}^{n} _{\overline{B}^{(\gamma)}/k[[t]]} \to L _{\overline{B}^{(\gamma)}}, \qquad 
\overline{\mu} _{\gamma}^* \omega^{\prime [r]} _{B'/k[t]} \to L^{[r]} _{\overline{B}^{(\gamma)}}
\]
such that the following diagram commutes (cf.\ \cite[Lemma 4.5(2)]{NS}). 
\[
  \xymatrix{
 \overline{\mu} _{\gamma}^* ( \Omega^{\prime n} _{B'/k[t]}) ^{\otimes r} \ar[r] \ar[d] & \bigl( \widehat{\Omega}^{n} _{\overline{B}^{(\gamma)}/k[[t]]} \bigr)^{\otimes r} \ar[d] \\
 \overline{\mu} _{\gamma}^* \omega^{\prime [r]} _{B'/k[t]}  \ar[r] & L^{[r]} _{\overline{B}^{(\gamma)}}
  }
\]
Furthermore, by the same argument as in \cite[Lemma 4.5(1)]{NS}, we have 
\[
\operatorname{Im}\bigl( \widehat{\Omega}^{n} _{\overline{B}^{(\gamma)}/k[[t]]} \to L _{\overline{B}^{(\gamma)}} \bigr)
= \operatorname{Jac}'_{\overline{B}^{(\gamma)}/k[[t]]} 
\otimes _{\mathcal{O}_{\overline{B}^{(\gamma)}}} L_{\overline{B}^{(\gamma)}}. 
\]

\item 
We define ideal sheaves $\mathfrak{n}'_{1,p}$ and $\mathfrak{n}'_{1,\overline{\mu} _{\gamma}}$ 
on $\overline{B}^{(\gamma)}$ by
\begin{align*}
\operatorname{Im} \bigl( p^* \widehat{\Omega}^n _{\widetilde{B}^{(\gamma)}/k[[t]]} \to L_{\overline{B}^{(\gamma)}} \bigr) 
	&= \mathfrak{n}'_{1,p} \otimes_{\mathcal{O}_{\overline{B}^{(\gamma)}}} L_{\overline{B}^{(\gamma)}}, \\
\operatorname{Im} \bigl( \overline{\mu} _{\gamma}^* \Omega^{\prime n} _{B'/k[t]} \to L_{\overline{B}^{(\gamma)}} \bigr) 
	&= \mathfrak{n}'_{1,\overline{\mu} _{\gamma}} \otimes _{\mathcal{O}_{\overline{B}^{(\gamma)}}} L_{\overline{B}^{(\gamma)}}. 
\end{align*}

\item 
We define $\operatorname{age}'(\gamma) := \sum _{i=1}^N \frac{e_i}{d}$. 
Note that we took $e_i$ to satisfy $0 < e_i \le d$. 
The age of $\gamma$ is usually defined by 
$\operatorname{age}(\gamma) = \operatorname{age}'(\gamma) - \# \{ 1 \le i \le N \mid e_i = d \}$. 
\end{enumerate}
\end{defi}

\begin{lem}\label{lem:L2_R}
Let $\alpha \in \overline{B}^{(\gamma)} _{\infty}$ be a $k$-arc with 
$\operatorname{ord}_{\alpha} \bigl( \operatorname{Jac}'_{\overline{B}^{(\gamma)}/k[[t]]} \bigr) < \infty$. 
Then the following hold. 
\begin{enumerate}
\item $\operatorname{ord}_{\alpha} (\operatorname{jac}_p) 
	+ \operatorname{ord}_{\alpha} \bigl( \operatorname{Jac}'_{\overline{B}^{(\gamma)}/k[[t]]} \bigr)
		= \operatorname{ord}_{\alpha} (\mathfrak{n}'_{1,p})$. 
\item $\operatorname{ord}_{\alpha} (\operatorname{jac}_{\overline{\mu} _{\gamma}}) 
	+ \operatorname{ord}_{\alpha} \bigl( \operatorname{Jac}'_{\overline{B}^{(\gamma)}/k[[t]]} \bigr)
		= \operatorname{ord}_{\alpha} (\mathfrak{n}'_{1,\overline{\mu} _{\gamma}})$. 
\end{enumerate}
\end{lem}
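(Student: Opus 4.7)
The plan is to follow the strategy of \cite[Lemma 4.6]{NS}, adapted to the formal power series setting. Parts (1) and (2) are formally identical, so I will outline (1); for (2), the only change is that the map $\overline{\mu}_\gamma^*\Omega'_{B'/k[t]} \to \widehat{\Omega}_{\overline{B}^{(\gamma)}/k[[t]]}$ from Lemma \ref{lem:omega'hat} replaces the map $p^*\widehat{\Omega}_{\widetilde{B}^{(\gamma)}/k[[t]]} \to \widehat{\Omega}_{\overline{B}^{(\gamma)}/k[[t]]}$ throughout, and the defining equation of $\mathfrak{n}'_{1,\overline{\mu}_\gamma}$ is used in place of that of $\mathfrak{n}'_{1,p}$.

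First I would pull back everything along $\alpha$. Set $\alpha' := p_\infty(\alpha)$, $V_1 := \alpha'^*\widehat{\Omega}_{\widetilde{B}^{(\gamma)}/k[[t]]}$, $V_2 := \alpha^*\widehat{\Omega}_{\overline{B}^{(\gamma)}/k[[t]]}$, $L' := \alpha^*L_{\overline{B}^{(\gamma)}} \simeq k[[t]]$, $e := \operatorname{ord}_\alpha\bigl(\operatorname{Jac}'_{\overline{B}^{(\gamma)}/k[[t]]}\bigr)$, and $e' := \operatorname{ord}_\alpha(\operatorname{jac}_p)$. By hypothesis $e < \infty$, so Lemma \ref{lem:order_k[[t]]} gives $V_2 \simeq k[[t]]^{\oplus n} \oplus T$ with $T$ the torsion submodule, of total length $e$; let $\pi \colon V_2 \twoheadrightarrow V_2/T \simeq k[[t]]^{\oplus n}$ be the projection. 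Setting $M' := \pi\bigl(\operatorname{Im}(V_1 \to V_2)\bigr)$, the cokernel $(V_2/T)/M'$ has length $e'$ by the very definition of $\operatorname{ord}_\alpha(\operatorname{jac}_p)$ in Definition \ref{defi:jac_k[[t]]}.

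Next I would analyze the factorization $\wedge^n V_1 \to \wedge^n V_2 \to L'$ obtained by pulling back the defining diagram of $\mathfrak{n}'_{1,p}$. Because $L'$ is torsion-free, the second arrow factors as $\wedge^n V_2 \twoheadrightarrow \wedge^n(V_2/T) \simeq k[[t]] \to L'$, and the relation $\operatorname{Im}(\widehat{\Omega}_{\overline{B}^{(\gamma)}/k[[t]]}^n \to L_{\overline{B}^{(\gamma)}}) = \operatorname{Jac}'_{\overline{B}^{(\gamma)}/k[[t]]} \otimes L_{\overline{B}^{(\gamma)}}$ identifies the last arrow with multiplication by a generator of $(t^e)$. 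For the first arrow, since $V_1 \twoheadrightarrow \operatorname{Im}(V_1 \to V_2)$ is surjective and $\wedge^n$ is right-exact, the image of $\wedge^n V_1$ in $\wedge^n(V_2/T) \simeq k[[t]]$ coincides with the image of $\wedge^n M' \to \wedge^n(V_2/T)$; applying Smith normal form to $M' \subset V_2/T \simeq k[[t]]^{\oplus n}$ over the DVR $k[[t]]$ shows that this ideal equals $(t^{e'})$ when $e' < \infty$, and vanishes when $e' = \infty$ (the latter occurring precisely when $M'$ has rank $<n$).

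Combining the two computations, the image of $\wedge^n V_1 \to L'$ equals $(t^{e+e'}) \cdot L'$, so the defining relation $\operatorname{Im}(p^*\widehat{\Omega}_{\widetilde{B}^{(\gamma)}/k[[t]]}^n \to L_{\overline{B}^{(\gamma)}}) = \mathfrak{n}'_{1,p} \otimes L_{\overline{B}^{(\gamma)}}$ yields $\operatorname{ord}_\alpha(\mathfrak{n}'_{1,p}) = e + e'$, which is (1). The only technical ingredient is the Smith-normal-form computation over $k[[t]]$, which is entirely standard; the remainder is formal manipulation of the pulled-back diagram, with the case $e' = \infty$ handled by the convention $\infty + e = \infty$ matching the vanishing of the image.
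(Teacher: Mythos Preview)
Your proposal is correct and follows essentially the same approach as the paper: the paper's proof simply states that the argument of \cite[Lemma~4.6]{NS} carries over verbatim once Lemma~\ref{lem:order_k[[t]]} is available, and what you have written is precisely an unpacking of that argument in the present formal-power-series setting. The pullback to $k[[t]]$-modules, the factorization through $\wedge^n(V_2/T)$, and the Smith-normal-form computation of the image of $\wedge^n M'$ are exactly the ingredients of the original proof.
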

\begin{proof}
The same proof as in \cite[Lemma 4.6]{NS} works due to Lemma \ref{lem:order_k[[t]]}. 
\end{proof}

\begin{lem}\label{lem:age2_R}
Let $\alpha \in \overline{B}^{(\gamma)}_{\infty}$ be a $k$-arc. 
Set $\alpha ' := \overline{\mu} _{\gamma \infty} (\alpha)$. 
Suppose that $\alpha ' \not \in Z_{\infty}$. 
Then it follows that 
\[
\operatorname{ord}_{\alpha} (\mathfrak{n}'_{1,\overline{\mu}_{\gamma}}) = \frac{1}{r} \operatorname{ord}_{\alpha '} (\mathfrak{n}_{r,B'})
+ 
\operatorname{age}'(\gamma).
\]
\end{lem}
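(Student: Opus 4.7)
The plan is to compute the image of the commutative square
\[
\xymatrix{
\overline{\mu}_{\gamma}^* (\Omega^{\prime n} _{B'/k[t]})^{\otimes r} \ar[r] \ar[d] & \bigl( \widehat{\Omega}^{n} _{\overline{B}^{(\gamma)}/k[[t]]} \bigr)^{\otimes r} \ar[d] \\
\overline{\mu}_{\gamma}^* \omega^{\prime [r]}_{B'/k[t]} \ar[r]^{\Phi_r} & L^{[r]}_{\overline{B}^{(\gamma)}}
}
\]
in two ways and then to take orders along $\alpha$. Chasing the top-then-right path, the image of the composite is $(\mathfrak{n}'_{1,\overline{\mu}_\gamma})^r \cdot L^{[r]}_{\overline{B}^{(\gamma)}}$ by the very definition of $\mathfrak{n}'_{1,\overline{\mu}_\gamma}$ and the behaviour of Fitting-type images under tensor powers. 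Chasing the left-then-bottom path, the image is $\overline{\mu}_\gamma^* (\mathfrak{n}_{r,B'}) \cdot \operatorname{Im}(\Phi_r)$, because the left vertical map is surjective onto $\overline{\mu}_\gamma^*(\mathfrak{n}_{r,B'}) \otimes \overline{\mu}_\gamma^* \omega^{\prime [r]}_{B'/k[t]}$ by definition of the Nash ideal.

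Next, the key step is to identify $\operatorname{Im}(\Phi_r)$ at the arc $\alpha$. Here is where the hypothesis $\alpha'\notin Z_\infty$ enters: the quotient $\overline{A}\to A$ is étale in codimension one and the trace of $\alpha'$ misses $Z$, so at a generic point of $\alpha'$ the sheaves $\omega^{\prime [r]}_{A'/k[t]}$ and $\widehat{\Omega}^N_{\overline{A}^{(\gamma)}/k[[t]]}$ are line bundles connected by the pullback of a $G$-invariant volume form. An explicit generator of $\omega'_{A/k}$ is (up to a local unit) the $N$-form $dx_1\wedge\cdots\wedge dx_N$, and the definition $\overline{\lambda}_\gamma^*(x_i)=t^{e_i/d}x_i$ forces
\[
\overline{\lambda}_\gamma^*(dx_1\wedge\cdots\wedge dx_N) \;=\; t^{\operatorname{age}'(\gamma)}\,(dx_1\wedge\cdots\wedge dx_N)
\]
(modulo a unit after restricting to the generic point where $\alpha'$ lies). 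Combining this with the adjunction identifications
$\omega'_{B'/k[t]}\simeq \det^{-1}(I'/I'^2)\otimes\sigma^*\omega'_{A'/k[t]}$ and the parallel identification for $L_{\overline{B}^{(\gamma)}}$, I conclude that $\Phi_r$ is locally multiplication by $t^{r\operatorname{age}'(\gamma)}$ times a unit along $\alpha$, so that $\operatorname{ord}_\alpha(\operatorname{Im}(\Phi_r)) = r\operatorname{age}'(\gamma)$.

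Equating the two image computations gives the ideal identity
\[
(\mathfrak{n}'_{1,\overline{\mu}_\gamma})^r \cdot L^{[r]}_{\overline{B}^{(\gamma)}}
\;=\;
\overline{\mu}_\gamma^*(\mathfrak{n}_{r,B'}) \cdot \operatorname{Im}(\Phi_r),
\]
and applying $\operatorname{ord}_\alpha$ yields
\[
r\operatorname{ord}_\alpha(\mathfrak{n}'_{1,\overline{\mu}_\gamma})
\;=\;
\operatorname{ord}_{\alpha'}(\mathfrak{n}_{r,B'}) + r\operatorname{age}'(\gamma),
\]
from which the claimed formula follows by dividing by $r$.

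The main obstacle is the middle step: pinning down $\operatorname{Im}(\Phi_r)$ as $t^{r\operatorname{age}'(\gamma)}$ in the formal power series setting. One has to check that the pullback-of-volume-form computation, which is classical in the $k[t]$-setting of \cite{NS}, remains valid after replacing $\Omega_{-/k[t]}$ with either $\Omega'_{-/k[t]}$ or $\widehat{\Omega}_{-/k[[t]]}$, and that the adjunction isomorphism for $\omega'_{B'/k[t]}$ (derived via Proposition \ref{prop:spOmega_exact}) is compatible with the analogous splitting used to define $L_{\overline{B}^{(\gamma)}}$. The étaleness of $\overline{A}\to A$ outside $Z$, together with $\alpha'\notin Z_\infty$, is precisely what makes all these identifications hold locally at the generic point of $\alpha'$, so the argument of \cite[Lemma 4.7]{NS} transfers with only the differential-module replacements dictated by Sections \ref{section:Omega'} and \ref{subsection:formal}.
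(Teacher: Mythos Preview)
Your proposal is correct and follows essentially the same approach as the paper, which simply cites \cite[Lemma 4.7]{NS} and notes that the argument carries over verbatim. You have in effect unpacked that referenced argument---computing the image of the commutative square in two ways, identifying the bottom map $\Phi_r$ as multiplication by $t^{r\operatorname{age}'(\gamma)}$ via the explicit pullback $\overline{\lambda}_\gamma^*(dx_1\wedge\cdots\wedge dx_N)=t^{\operatorname{age}'(\gamma)}\,dx_1\wedge\cdots\wedge dx_N$, and using $\alpha'\notin Z_\infty$ to ensure $\omega'^{[r]}_{A'/k[t]}$ is locally free along the arc---while making the differential-module substitutions ($\Omega'_{-/k[t]}$, $\widehat{\Omega}_{-/k[[t]]}$) required in the formal setting.
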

\begin{proof}
The same proof as in \cite[Lemma 4.7]{NS} works. 
\end{proof}

\begin{lem}\label{lem:thinset}
Let $I_Z \subset \mathcal{O}_A$ be the ideal sheaf defining $Z \subset A$. 
Let $J$ be one of the following ideal sheaves on $\overline{B}^{(\gamma)}$: 
\begin{align*}
I_Z \mathcal{O}_{\overline{B}^{(\gamma)}}, \quad
\operatorname{Jac}'_{\overline{B}^{(\gamma)}/k[[t]]}, \quad
\operatorname{Jac}'_{\widetilde{B}^{(\gamma)}/k[[t]]} \mathcal{O}_{\overline{B}^{(\gamma)}}, \\
\operatorname{Jac}'_{B'/k[t]} \mathcal{O}_{\overline{B}^{(\gamma)}}, \quad 
\mathfrak{n}'_{1,p}, \quad
\mathfrak{n}'_{1,\overline{\mu}_{\gamma}}, \quad
\mathfrak{n}_{r,B'} \mathcal{O}_{\overline{B}^{(\gamma)}}. 
\end{align*}
Let $W \subset \overline{B}^{(\gamma)}$ be the closed subscheme defined by $J$. 
Then $W_{\infty}$ is a thin subset of $\overline{B}^{(\gamma)}_{\infty}$.
\end{lem}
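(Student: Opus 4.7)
The plan is to reduce the proof, uniformly across the seven choices of $J$, to the single assertion that a certain distinguished irreducible component of $\overline{B}^{(\gamma)}$ is not contained in the subscheme $W = V(J)$.

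First I would identify the unique ``flat'' component. By Lemma \ref{lem:isotrivial2}(3), (4), the ideal $I_2 = \overline{I}^{(\gamma)} S_2$ is prime of height $c$ in $S_2 = k[x_1,\ldots,x_N]((t))$, and since $S_1 = k[x_1,\ldots,x_N][[t]] \hookrightarrow S_2$ is flat, its contraction $\mathfrak p_{\mathrm{flat}} := I_2 \cap S_1$ is the unique minimal prime of $\overline{I}^{(\gamma)} \subset S_1$ not containing $t$; Lemma \ref{lem:action} applied to the $T_c$-invariant ideal $\overline{I}^{(\gamma)}$ guarantees every other minimal prime does contain $t$. Set $V := V(\mathfrak p_{\mathrm{flat}}) \subset \overline{B}^{(\gamma)}$; by Lemma \ref{lem:ff}(1) and the catenarity of $S_1$ we have $\dim V = \dim S_1 - c = n+1$.

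Next I would show every $k$-arc of $\overline{B}^{(\gamma)}$ factors through $V$. A $k$-arc is a $k[t]$-ring homomorphism $\gamma^* : \mathcal O_{\overline{B}^{(\gamma)}} \to k[[t]]$, in particular $\gamma^*(t) = t \neq 0$; so $\gamma$ cannot factor through any component whose defining prime contains $t$, and by the first step the only remaining possibility is $V$. Hence $W_{\infty} \subset (V \cap W)_{\infty}$, and since $V$ is irreducible of dimension $n+1$, it will be enough to prove $V \not\subset W$ for each of the seven ideals, as then $Z := V \cap W$ is a proper closed subscheme of $V$ with $\dim Z \leq n$, witnessing thinness.

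The verification that $V \not\subset W$ proceeds case by case, equivalently that the image of $J$ in the domain $\mathcal O_V = S_1/\mathfrak p_{\mathrm{flat}}$ is non-zero, which after inverting $t$ becomes: $J \cdot S_2 \not\subset I_2$. For $J = I_Z \mathcal O_{\overline{B}^{(\gamma)}}$, Lemma \ref{lem:isotrivial3} gives $\operatorname{ht}(\overline{\lambda}_{\gamma}^*(I_Z) S_2) = \operatorname{ht}(I_Z) \geq 2$, and using that $\operatorname{codim}_B(B \cap Z) \geq 2$ (established when we verified the normality of $\overline{B}$), the height computation forces strict inequality $\operatorname{ht}(\overline{\lambda}_{\gamma}^*(I_Z)S_2 + I_2) > \operatorname{ht}(I_2) = c$, so $\overline{\lambda}_{\gamma}^*(I_Z) \not\subset \mathfrak p_{\mathrm{flat}}$. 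For the three Jacobian ideals $\operatorname{Jac}'_{\overline{B}^{(\gamma)}/k[[t]]}$, $\operatorname{Jac}'_{\widetilde B^{(\gamma)}/k[[t]]} \mathcal{O}_{\overline{B}^{(\gamma)}}$, $\operatorname{Jac}'_{B'/k[t]} \mathcal{O}_{\overline{B}^{(\gamma)}}$, I would apply the Jacobian criterion of regularity (Remark \ref{rmk:JC}(2)(c)) at the generic point of $V$ (resp.\ of its image), using the normality of $S_2/I_2$ in Lemma \ref{lem:isotrivial2}(3) and, in the third case, the smoothness of $B \setminus Z \hookrightarrow A \setminus Z$ combined with Lemma \ref{lem:isotrivial3}. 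For $\mathfrak n'_{1,p}$ and $\mathfrak n'_{1,\overline{\mu}_{\gamma}}$ the equalities of Lemma \ref{lem:L2_R} split the order into a Jacobian part and an $\operatorname{ord}(\operatorname{jac})$-part; the Jacobian part has already been handled, and the relative differential map is generically injective, so $\operatorname{ord}(\operatorname{jac})$ is finite at the generic arc of $V$. Finally, for $J = \mathfrak n_{r,B'} \mathcal O_{\overline{B}^{(\gamma)}}$, I would use the fact that $\mathfrak n_{r,B} = \mathcal O_B$ at the smooth points of $B$ (since there the canonical map $(\Omega'^n_B)^{\otimes r} \to \omega'^{[r]}_B$ is an isomorphism), and $B$ is smooth in codimension one so this locus maps dominantly to the generic point of $V$.

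The main obstacle will be the case $J = \operatorname{Jac}'_{B'/k[t]}\mathcal O_{\overline{B}^{(\gamma)}}$: here one must control the non-smooth locus of $B'$, which contains $Z \cap B'$, along the image of $V$. The way around this is to pass to the generic fibre over $k((t))$, where the image of $V$ in $B'_{k((t))}$ lies in the regular locus of $B'_{k((t))}$ thanks to $\operatorname{codim}_B(B \cap Z) \geq 2$, so the Jacobian ideal, being the ideal of the singular locus, is non-zero at the generic point of $V$.
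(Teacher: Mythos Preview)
Your approach is essentially the paper's: both reduce thinness to the single ideal-theoretic statement $J\cdot S_2 \not\subset I_2$ (equivalently, your $V \not\subset W$), and both verify this for the Jacobian ideals via the weak Jacobian condition on $S_2 = k[x_1,\ldots,x_N]((t))$ together with the normality from Lemma~\ref{lem:isotrivial2}(3) and the height transfer of Lemma~\ref{lem:isotrivial3}. Two small points: your appeal to Lemma~\ref{lem:action} is to the wrong ring (that lemma is for $k[t][[x_1,\ldots,x_N]]$, not $k[x_1,\ldots,x_N][[t]]$), though the conclusion you want already follows from the primality of $I_2$ you cited just before; and your treatment of $\mathfrak n'_{1,p}$ and $\mathfrak n'_{1,\overline\mu_\gamma}$ via ``the generic arc of $V$'' is vague---the paper instead first reduces these (and $I_Z$, $\mathfrak n_{r,B'}$) to the three Jacobian cases by arc-space inclusions deduced from Lemmas~\ref{lem:L2_R}, \ref{lem:age2_R}, and \ref{lem:additive_k[[t]]}, which is cleaner than arguing generic injectivity of the differential maps directly.
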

\begin{proof}
We set 
\begin{align*}
J_1 := I_Z \mathcal{O}_{\overline{B}^{(\gamma)}}, \quad
J_2 := \operatorname{Jac}'_{\overline{B}^{(\gamma)}/k[[t]]}, \quad
J_3 := \operatorname{Jac}'_{\widetilde{B}^{(\gamma)}/k[[t]]} \mathcal{O}_{\overline{B}^{(\gamma)}}, \\
J_4 := \operatorname{Jac}'_{B'/k[t]} \mathcal{O}_{\overline{B}^{(\gamma)}}, \quad 
J_5 := \mathfrak{n}'_{1,p}, \quad
J_6 := \mathfrak{n}'_{1,\overline{\mu}_{\gamma}}, \quad
J_7 := \mathfrak{n}_{r,B'} \mathcal{O}_{\overline{B}^{(\gamma)}}, 
\end{align*}
and we denote by $W_i \subset \overline{B}^{(\gamma)}$ the closed subscheme defined by $J_i$. 

Since $B \cap Z \subset B_{\rm sing}$, we have $(W_1)_{\rm red} \subset (W_4)_{\rm red}$. 
Since the map $\eta_r \colon (\Omega_{B/k} ^{\prime n})^{\otimes r} \to \omega _{B/k} ^{\prime [r]}$ 
in Definition \ref{defi:omega'}(5) is an isomorphism on the regular locus $B_{\rm reg}$, 
we have $(W_7)_{\rm red} \subset (W_4)_{\rm red}$. 
By Lemma \ref{lem:age2_R}, we have 
\[
(W_6)_{\infty} \cup (W_1)_{\infty} = (W_7)_{\infty} \cup (W_1)_{\infty}. 
\]
By Lemmas \ref{lem:L2_R} and \ref{lem:additive_k[[t]]}, we have 
\[
(W_5)_{\infty} \subset (W_2)_{\infty} \cup (W_3)_{\infty} \cup (W_6)_{\infty}. 
\]
Therefore, it is sufficient to show the assertion for
\[
J_2 = \operatorname{Jac}'_{\overline{B}^{(\gamma)}/k[[t]]}, \quad
J_3 = \operatorname{Jac}'_{\widetilde{B}^{(\gamma)}/k[[t]]} \mathcal{O}_{\overline{B}^{(\gamma)}}, \quad 
J_4 = \operatorname{Jac}'_{B'/k[t]} \mathcal{O}_{\overline{B}^{(\gamma)}}. 
\]

We set 
\begin{alignat*}{2}
S_1 &:= k[x_1, \ldots , x_N][[t]],& \quad S_2 :=& k[x_1, \ldots , x_N]((t)), \\
T_1 &:= k[x_1, \ldots , x_N]^{C_{\gamma}}[[t]],& \quad T_2 :=& k[x_1, \ldots , x_N]^{C_{\gamma}}((t)). 
\end{alignat*}

The ideal $J_2 = \operatorname{Jac}'_{\overline{B}^{(\gamma)}/k[[t]]}$ corresponds to the ideal 
\[
\mathfrak{r} := \mathcal{J}_{c} \Bigl( \overline{I}^{(\gamma)}; \operatorname{Der}_{k[[t]]}(S_1) \Bigr) + \overline{I}^{(\gamma)} \subset S_1
\]
of $S_1$. 
To show that $(W_2)_{\infty}$ is a thin set, it is sufficient to show $\operatorname{ht}(\mathfrak{r}S_2) \ge c+1$. 
Since $\overline{I}^{(\gamma)}S_2$ is a prime ideal of height $c$ by Lemma \ref{lem:isotrivial2}(3)(4), 
it is sufficient to show that $\mathfrak{r}S_2 \not \subset \overline{I}^{(\gamma)}S_2$. 
Since $S_2$ satisfies the weak Jacobian condition (WJ)$_{k((t))}$ over $k((t))$ by \cite[Theorem 46.3]{Nag62}, we have 
\[
\mathcal{J}_{c} \Bigl( \overline{I}^{(\gamma)}; \operatorname{Der}_{k[[t]]}(S_1) \Bigr) S_2 + \overline{I}^{(\gamma)}S_2
= \mathcal{J}_{c} \Bigl( \overline{I}^{(\gamma)}S_2 ; \operatorname{Der}_{k((t))}(S_2) \Bigr) + \overline{I}^{(\gamma)}S_2
\not \subset \overline{I}^{(\gamma)}S_2. 
\]
Here, the first equality follows from the fact that both $\operatorname{Der}_{k[[t]]}(S_1)$ and $\operatorname{Der}_{k((t))}(S_2)$ 
are generated by the derivations $\partial/\partial x_i$'s. 
We complete the proof of the assertion for $J_2$. 

Let $\mathfrak{r} \subset T_1$ be the ideal of $T_1$ corresponding to 
$\operatorname{Jac}'_{\widetilde{B}^{(\gamma)}/k[[t]]} \subset T_1/\widetilde{I}^{(\gamma)}$. 
Then it is sufficient to show $\operatorname{ht}(\mathfrak{r}S_2) \ge c+1$. 
Note that $\widetilde{I}^{(\gamma)} T_2 = \overline{I}^{(\gamma)} S_2 \cap T_2$ holds. 
Therefore, $\widetilde{I}^{(\gamma)} T_2$ is a prime ideal of height $c$. 
By the same argument as above, we have 
$\mathfrak{r} T_2 \not \subset \widetilde{I}^{(\gamma)} T_2$, 
and hence $\operatorname{ht}(\mathfrak{r}T_2) \ge c+1$. 
Since $T_2 \subset S_2$ is an integral extension, 
we have $\operatorname{ht}(\mathfrak{r}S_2) = \operatorname{ht}(\mathfrak{r}T_2) \ge c+1$, 
which completes the proof of the assertion for $J_3$. 

Let $\mathfrak{r} \subset k[[x_1, \ldots , x_N]]^G$ be the ideal corresponding to $\operatorname{Jac}'_{B/k} \subset k[[x_1, \ldots , x_N]]^G/I$. 
Since $J_4 = \mathfrak{r}\mathcal{O}_{\overline{B}^{(\gamma)}}$, it is sufficient to show $\operatorname{ht}(\mathfrak{r}S_2) \ge c+1$. 
We have $\operatorname{ht} \mathfrak{r} \ge c+2$ by the normality of $B$. 
Therefore, we have $\operatorname{ht}(\mathfrak{r}S_2) = \operatorname{ht} \mathfrak{r} \ge c+2$ by Lemma \ref{lem:isotrivial3}. 
We complete the proof of the assertion for $J_4$. 
\end{proof}

\begin{thm}\label{thm:mld_hyperquot_R}
Let $x = 0 \in B$ be the origin and let $\mathfrak{m}_x \subset \mathcal{O}_B$ be the corresponding maximal ideal. 
Let $\mathfrak{a} \subset  \mathcal{O}_B$ be a non-zero ideal sheaf and $\delta$ a positive real number. 
Then 
\begin{align*}
\operatorname{mld}_x(B,\mathfrak{a}^\delta) 
&= \inf _{w, b_1 \in \mathbb{Z}_{\ge 0}, \gamma \in G}
\bigl \{ 
\operatorname{codim} (C_{w, \gamma, b_1}) + \operatorname{age}'(\gamma)  - b_1-\delta w
\bigr \} \\
&= \inf _{w, b_1 \in \mathbb{Z}_{\ge 0}, \gamma \in G}
\bigl \{ 
\operatorname{codim} (C' _{w, \gamma, b_1}) + \operatorname{age}'(\gamma)  - b_1-\delta w
\bigr \}
\end{align*}
holds for 
\begin{align*}
C_{w, \gamma, b_1} 
& := \operatorname{Cont}^w  \bigl( \mathfrak{a} \mathcal{O}_{\overline{B}^{(\gamma)}} \bigr) \cap 
\operatorname{Cont}^{\ge 1} \bigl( \mathfrak{m}_x \mathcal{O}_{\overline{B}^{(\gamma)}} \bigr) \cap 
\operatorname{Cont}^{b_1} \bigl(\operatorname{Jac}'_{\overline{B}^{(\gamma)}/k[[t]]} \bigr), \\
C'_{w, \gamma, b_1}
& := \operatorname{Cont}^{\ge w}  \bigl(\mathfrak{a} \mathcal{O}_{\overline{B}^{(\gamma)}} \bigr) \cap 
\operatorname{Cont}^{\ge 1} \bigl(\mathfrak{m}_x \mathcal{O}_{\overline{B}^{(\gamma)}} \bigr) \cap 
\operatorname{Cont}^{b_1} \bigl(\operatorname{Jac}'_{\overline{B}^{(\gamma)}/k[[t]]} \bigr). 
\end{align*} 
\end{thm}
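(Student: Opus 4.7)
The plan is to follow the strategy of \cite[Theorem 4.8]{NS}, adapted to the formal power series setting developed in Sections \ref{section:Omega'}--\ref{section:DL}. I would begin by applying Theorem \ref{thm:mld_R} to $(B, \mathfrak{a}^\delta)$, which expresses $\operatorname{mld}_x(B, \mathfrak{a}^\delta)$ as the infimum of $\operatorname{codim}(D_{w,m}) - m/r - \delta w$, where $D_{w,m} := \operatorname{Cont}^w(\mathfrak{a}) \cap \operatorname{Cont}^m(\mathfrak{n}_{r,B}) \cap \operatorname{Cont}^{\ge 1}(\mathfrak{m}_x) \subset B_\infty$, along with the analogous formula using $\operatorname{Cont}^{\ge w}(\mathfrak{a})$. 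Under the identification of $B_\infty$ with the $k[t]$-arc space $B'_\infty$ (Remark \ref{rmk:pb}), each $D_{w,m}$ is a cylinder of $B'_\infty$ contained in $\operatorname{Cont}^{\ge 1}(\mathfrak{o}_{B'})$.

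Next I would transfer the computation to $\overline{B}^{(\gamma)}_\infty$ using the Denef--Loeser decomposition from Section \ref{section:DL}. By Proposition \ref{prop:DL_X}, outside of $Z_\infty$ the space $B_\infty$ is the disjoint union $\bigsqcup_{\langle \gamma\rangle \in \operatorname{Conj}(G)} \bigl(\overline{\mu}_\gamma(\overline{B}^{(\gamma)}_\infty) \setminus Z_\infty\bigr)$, with $\overline{\mu}_\gamma$ being $|C_\gamma|$-to-$1$ on the locus where it is injective. Pulling back along $\overline{\mu}_\gamma$ and then stratifying by the orders of $\operatorname{Jac}'_{\overline{B}^{(\gamma)}/k[[t]]}$, $\operatorname{Jac}'_{\widetilde{B}^{(\gamma)}/k[[t]]}\mathcal{O}_{\overline{B}^{(\gamma)}}$, $\operatorname{Jac}'_{B'/k[t]}\mathcal{O}_{\overline{B}^{(\gamma)}}$, and $I_Z\mathcal{O}_{\overline{B}^{(\gamma)}}$, I obtain countably many pairwise disjoint sub-cylinders whose union has thin complement in $\overline{\mu}_\gamma^{-1}(D_{w,m})$ by Lemma \ref{lem:thinset}. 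Proposition \ref{prop:negligible} then guarantees that the codimension of $D_{w,m}$ equals the infimum of codimensions over these strata.

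The heart of the argument is the change-of-variables formula applied to $\overline{\mu}_\gamma$, which factors as $\overline{B}^{(\gamma)} \xrightarrow{p} \widetilde{B}^{(\gamma)} \xrightarrow{\mu_\gamma} B'$. Here $p$ is a $C_\gamma$-quotient of type (a) to which Proposition \ref{prop:DL2_k[[t]]} applies, and $\mu_\gamma$ is a $k[t]$-morphism of type (b) to which Proposition \ref{prop:EM6.2_k[[t]]} applies (see also Remark \ref{rmk:codim_formulae}); additivity of Jacobian orders (Lemma \ref{lem:additive_k[[t]]}) handles the composition. Combining these yields
\[
\operatorname{codim}(D_{w,m}|_{\text{stratum}}) = \operatorname{codim}(\overline{\mu}_\gamma^{-1}(D_{w,m})|_{\text{stratum}}) + \operatorname{ord}_\alpha(\operatorname{jac}_{\overline{\mu}_\gamma})
\]
for any arc $\alpha$ in the stratum. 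Using Lemma \ref{lem:L2_R}(2) I rewrite $\operatorname{ord}_\alpha(\operatorname{jac}_{\overline{\mu}_\gamma}) = \operatorname{ord}_\alpha(\mathfrak{n}'_{1,\overline{\mu}_\gamma}) - b_1$, and then Lemma \ref{lem:age2_R} gives $\operatorname{ord}_\alpha(\mathfrak{n}'_{1,\overline{\mu}_\gamma}) = m/r + \operatorname{age}'(\gamma)$ on arcs where the Nash-ideal order is $m$. Substituting back into Theorem \ref{thm:mld_R}, the $m/r$ terms cancel, the $b_1$ term appears with the correct sign, and the infimum can be repackaged as an infimum over $(w, \gamma, b_1)$ of $\operatorname{codim}(C_{w,\gamma,b_1}) + \operatorname{age}'(\gamma) - b_1 - \delta w$, since the stratifying orders $b_2, b_3$ on the stratum are absorbed without changing the codimension (they only serve to pick a representative stratum inside each fiber of $\overline{\mu}_\gamma$).

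The main obstacle will be careful bookkeeping in the formal power series setting. In particular, because Question \ref{quest}(1) is open for $k[t][[x_1,\ldots,x_N]]$-schemes, I cannot freely discard arbitrary thin cylinders; instead I must ensure each stratum is contained in $\operatorname{Cont}^{\ge 1}(\mathfrak{o}_{\overline{B}^{(\gamma)}})$ so that Definition \ref{defi:codim_k[[t]]} applies, and I must use the weaker Lemma \ref{lem:thin_R} and Proposition \ref{prop:resol} in place of the clean polynomial argument of \cite{NS}. The equivalence between the $C_{w,\gamma,b_1}$ and $C'_{w,\gamma,b_1}$ versions of the formula then follows from the parallel two-version statement in Theorem \ref{thm:mld_R} by a standard comparison of $\operatorname{Cont}^w(\mathfrak{a})$ and $\operatorname{Cont}^{\ge w}(\mathfrak{a})$ strata.
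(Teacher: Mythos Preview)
Your plan is essentially the paper's own proof: start from Theorem \ref{thm:mld_R}, pull back along the Denef--Loeser maps $\overline{\mu}_\gamma$ via Proposition \ref{prop:DL_X}, stratify and apply the change-of-variables formulae (Propositions \ref{prop:EM6.2_k[[t]]}, \ref{prop:DL2_k[[t]]}, Lemma \ref{lem:additive_k[[t]]}), then use Lemmas \ref{lem:L2_R} and \ref{lem:age2_R} to convert Jacobian orders into $\operatorname{age}'(\gamma)$ and the Nash-ideal order, with Lemma \ref{lem:thinset} supplying the thin-set hypotheses for Proposition \ref{prop:negligible}.

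One clarification: your final paragraph worries about the open Question \ref{quest}(1) and invokes Lemma \ref{lem:thin_R} and Proposition \ref{prop:resol}, but those concerns belong to the $k[t][[x_1,\ldots,x_N]]$-setting of $\overline{B}^{\prime(\gamma)}$, not to $\overline{B}^{(\gamma)}$. Here $\overline{B}^{(\gamma)} = \operatorname{Spec}\bigl(k[x_1,\ldots,x_N][[t]]/\overline{I}^{(\gamma)}\bigr)$ is an affine formal $k[[t]]$-scheme, for which Question \ref{quest}(1) is known (Remark \ref{rmk:quest}(4)) and Proposition \ref{prop:negligible} applies with no extra restrictions; likewise Definition \ref{defi:codim_k[[t]]} imposes no $\operatorname{Cont}^{\ge 1}(\mathfrak{o})$ constraint. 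So you may drop those precautions for this theorem; they only enter later in Section \ref{section:PIA} when $\overline{B}^{\prime(\gamma)}$ appears.
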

\begin{proof}
The formula for $C_{w, \gamma , b_1}$ is the formal power series ring version of \cite[Theorem 4.8]{NS}. 
The same proof as in \cite[Theorem 4.8]{NS} works. 
First, \cite[Theorem 7.4]{EM09} plays an important role in the proof of \cite[Theorem 4.8]{NS} and 
it can be substituted by Theorem \ref{thm:mld_R}. 
Furthermore, Propositions 2.25, 2.33, 2.35, 3.4 and 3.8, and Lemmas 2.10, 4.6 and 4.7 in \cite{NS}, 
which are also the key ingredients of the proof of \cite[Theorem 4.8]{NS}, are substituted by 
Propositions \ref{prop:negligible}, \ref{prop:EM6.2_k[[t]]}, \ref{prop:DL2_k[[t]]} and \ref{prop:DL_X} and 
Lemmas \ref{lem:additive_k[[t]]}, \ref{lem:L2_R} and \ref{lem:age2_R} in this paper. 

We also note that Proposition \ref{prop:negligible} is applied to $Z_{\infty}$ and $W_{\infty}$, 
where $W$ is the closed subscheme of $\overline{B}^{(\gamma)}$ corresponding to one of the following ideals: 
\begin{align*}
\operatorname{Jac}'_{\overline{B}^{(\gamma)}/k[[t]]}, \ \ 
\operatorname{Jac}'_{\widetilde{B}^{(\gamma)}/k[[t]]} \mathcal{O}_{\overline{B}^{(\gamma)}}, \ \ 
\operatorname{Jac}'_{B'/k[t]} \mathcal{O}_{\overline{B}^{(\gamma)}}, \ \ 
\mathfrak{n}'_{1,p}, \ \ 
\mathfrak{n}'_{1,\overline{\mu}_{\gamma}}, \ \ 
\mathfrak{n}_{r,B'} \mathcal{O}_{\overline{B}^{(\gamma)}}. 
\end{align*}
By Lemma \ref{lem:thinset}, they are actually thin sets. 

The formula for $C' _{w, \gamma , b_1}$ is the formal power series ring version of \cite[Corollary 4.9]{NS}, and 
the same proof works. 
\end{proof}

\begin{rmk}\label{rmk:multi_index}
Theorem \ref{thm:mld_hyperquot_R} can be easily extended to $\mathbb{R}$-ideals $\mathfrak{a} = \prod _{i=1} ^r \mathfrak{a}_i ^{\delta_i}$, 
where $\mathfrak a_1, \ldots, \mathfrak a_r$ are non-zero ideal sheaves on $B$ and $\delta_1, \ldots ,\delta_r$ are positive real numbers. 
In this setting, we have 
\begin{align*}
&\operatorname{mld}_x \bigl( B,\prod _{i=1} ^r \mathfrak{a}_i ^{\delta_i} \bigr) \\
&= 
\inf _{w_1, \ldots, w_r, b_1 \in \mathbb{Z}_{\ge 0}, \gamma \in G}
\left \{ 
\operatorname{codim} (C_{w_1, \ldots , w_r, \gamma, b_1}) + \operatorname{age}'(\gamma)  - b_1 - \sum _{i=1} ^r \delta_i w_i
\right \} \\
&=
\inf _{w_1, \ldots, w_r, b_1 \in \mathbb{Z}_{\ge 0}, \gamma \in G}
\left \{ 
\operatorname{codim} (C'_{w_1, \ldots , w_r, \gamma, b_1}) + \operatorname{age}'(\gamma)  - b_1 - \sum _{i=1} ^r \delta_i w_i
\right \}
\end{align*}
for 
\begin{align*}
C_{w_1, \ldots , w_r, \gamma, b_1} & := 
\Bigl( \bigcap _{i=1} ^{r} \operatorname{Cont}^{w_i} \bigl(\mathfrak{a}_i \mathcal{O}_{\overline{B}^{(\gamma)}} \bigr) \Bigr) \cap 
\operatorname{Cont}^{\ge 1} \bigl( \mathfrak{m}_x \mathcal{O}_{\overline{B}^{(\gamma)}} \bigr) \cap \operatorname{Cont}^{b_1} \bigl( \operatorname{Jac}'_{\overline{B}^{(\gamma)}/k[[t]]} \bigr), \\
C' _{w_1, \ldots , w_r, \gamma, b_1} & := 
\Bigl( \bigcap _{i=1} ^{r} \operatorname{Cont}^{\ge w_i} \bigl(\mathfrak{a}_i \mathcal{O}_{\overline{B}^{(\gamma)}} \bigr) \Bigr) \cap 
\operatorname{Cont}^{\ge 1} \bigl( \mathfrak{m}_x \mathcal{O}_{\overline{B}^{(\gamma)}} \bigr) \cap \operatorname{Cont}^{b_1} \bigl( \operatorname{Jac}'_{\overline{B}^{(\gamma)}/k[[t]]} \bigr). 
\end{align*}
\end{rmk}

\subsection{Properties on $\overline{B}^{\prime (\gamma)}$}

In the remainder of this section, 
we define a scheme $\overline{B}^{\prime (\gamma)}$ and investigate its properties, which will be used in Section \ref{section:PIA}. 

We denote by $\overline{\lambda}_{\gamma} ^{\prime *}$ the $k[t]$-ring homomorphism
\[
\overline{\lambda}_{\gamma} ^{\prime *}: k[t][[x_1, \ldots, x_N]]^{G}  \to  k[t][[x_1, \ldots, x_N]];\quad 
x_i \mapsto t^{\frac{e_i}{d}}x_i. 
\]
We set 
\begin{align*}
\overline{I}^{\prime (\gamma)} &:= \bigl( \overline{\lambda}^{\prime *} _{\gamma} (f_1),\ldots,\overline{\lambda}^{\prime *} _{\gamma} (f_c) \bigr) \subset k[t][[x_1, \ldots, x_N]],  \\
\overline{B}^{\prime (\gamma)} &:= \operatorname{Spec} \bigl( k[t][[x_1,\ldots,x_N]]/\overline{I}^{\prime (\gamma)} \bigr). 
\end{align*}
Then $\overline{B}^{\prime (\gamma)}$ is a scheme of finite type over $R = k[t][[x_1,\ldots,x_N]]$. 
Let $\Omega '_{\overline{B}^{\prime (\gamma)}/k[t]}$ be the sheaf of special differentials defined in Definition \ref{defi:omega'}(1) with respect to $R$ and $R_0 = k[t]$. 
We set
\[
\operatorname{Jac}'_{\overline{B}^{\prime (\gamma)}/k[t]} := \operatorname{Fitt}^n \bigl( \Omega '_{\overline{B}^{\prime (\gamma)}/k[t]} \bigr). 
\]

First, we study the dimensions of the irreducible components of $\overline{B}^{\prime (\gamma)}$. 
\begin{lem}\label{lem:isotrivial}
We denote $F_i := \overline{\lambda}^{\prime *} _{\gamma} (f_i)$ for each $1 \le i \le c$. 
Consider the following diagram of rings. 
\[
\xymatrix{
S_1 := k[t][[x_1, \ldots , x_N]] \ar@{^{(}->}[d]_{h_1} \ar@{->>}[r] & C_1 := k[t][[x_1, \ldots , x_N]]/(F_1, \ldots , F_c) \ar[d] _{g_1} \\ 
S_2 := k[t,t^{-1}][[x_1, \ldots , x_N]] \ar@{^{(}->}[d]_{h_2} \ar@{->>}[r] & C_2 := k[t,t^{-1}][[x_1, \ldots , x_N]]/(F_1, \ldots , F_c) \ar[d]_{g_2} \\ 
S_3 := k[t^{1/d},t^{-1/d}][[x_1, \ldots , x_N]] \ar@{->>}[r] & C_3 := k[t^{1/d},t^{-1/d}][[x_1, \ldots , x_N]]/(F_1, \ldots , F_c) \\
S_4 := k[t^{1/d},t^{-1/d}][[x_1, \ldots , x_N]] \ar[u]^{\simeq}_{x_i \mapsto t^{e_i/d}x_i} \ar@{->>}[r] & C_4 := k[t^{1/d},t^{-1/d}][[x_1, \ldots , x_N]]/(f_1, \ldots , f_c) \ar[u]^{\simeq}_{x_i \mapsto t^{e_i/d}x_i} \\
S_5 := k[[x_1, \ldots , x_N]] \ar@{^{(}->}[u]^{h_3} \ar@{->>}[r] & C_5 := k[[x_1, \ldots , x_N]]/(f_1, \ldots , f_c) \ar[u]^{g_3} \\ 
}
\]
We denote $I_i := (F_1, \ldots, F_c) \subset S_i$ for $i \in \{ 1,2,3 \}$, and 
$I_i := (f_1, \ldots, f_c) \subset S_i$ for $i \in \{ 4, 5 \}$. 
Then the following hold. 
\begin{enumerate}
\item $h_1$, $h_2$ and $h_3$ are regular, and hence so are $g_1$, $g_2$ and $g_3$. 
\item $h_2$ and $h_3$ are faithfully flat, and hence so are $g_2$ and $g_3$. 
\item $C_2$, $C_3$ and $C_4$ are normal domains. In particular, $I_2$, $I_3$ and $I_4$ are prime ideals. 
\item $\operatorname{ht}(I_2) = \operatorname{ht}(I_3) = \operatorname{ht}(I_4) = c$. 
\item There exists only one minimal prime $P$ of $I_1$ of the form (2) in Lemma \ref{lem:action}. 
Furthermore, it satisfies $\operatorname{ht} P = c$ and $P = I_2 \cap S_1$. 
\end{enumerate}
\end{lem}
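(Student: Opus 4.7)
My plan is to closely parallel the proof of Lemma \ref{lem:isotrivial2}, with part (5) being the genuinely new ingredient.

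For (1) and (2), I would observe that $h_1$ is localization by $t$ (hence regular and flat); that $h_2$ is the base change of the étale faithfully flat extension $k[t, t^{-1}] \to k[t^{1/d}, t^{-1/d}]$ along $k[t, t^{-1}] \to S_2$, hence étale (so regular) and faithfully flat; and that $h_3$ is the $(x_1, \ldots, x_N)$-adic completion of the faithfully flat free extension $k[x_1, \ldots, x_N] \to k[t^{1/d}, t^{-1/d}][x_1, \ldots, x_N]$. Flatness of $h_3$ follows because adic completions of Noetherian rings are flat and are compatible with the base change, faithful flatness follows because $(x_1, \ldots, x_N) S_4$ is a proper ideal of $S_4$, and regularity follows since the fibers are obtained by faithfully flat base change from those of the smooth extension $k \to k[t^{1/d}, t^{-1/d}]$. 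The analogous statements for $g_1, g_2, g_3$ follow by passing to quotients.

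For (3), normality of $C_5$ (a consequence of the assumed normality of $B$, as discussed earlier in the section) propagates to $C_4$ through the faithfully flat regular extension $g_3$ by \cite[Theorem 32.2]{Mat89}; then $C_3 \simeq C_4$ is normal, and $C_2$ is normal by faithfully flat descent along $g_2$. For the domain property of $C_2$, $C_3$, $C_4$, I would follow the argument in the proof of Lemma \ref{lem:isotrivial2}(3): if $I_i$ had two distinct minimal primes $P_1 \ne P_2$, normality would force $P_1 + P_2 = S_i$, but the one-parameter family of automorphisms $T_c \colon t \mapsto c^{-d} t$, $x_j \mapsto c^{e_j} x_j$ preserves $I_1$ and hence (by extension) $I_i$ and each of its minimal primes; this induces a grading on a suitable truncation of $S_i$ and forces $t^a \in P_1$ or $t^a \in P_2$, a contradiction. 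For (4), $\operatorname{ht}(I_5) = c$ by the regular sequence assumption, and successive applications of Lemma \ref{lem:ff} together with the isomorphism $C_3 \simeq C_4$ yield $\operatorname{ht}(I_2) = \operatorname{ht}(I_3) = \operatorname{ht}(I_4) = c$.

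For (5), since $h_1$ is localization by $t$, the minimal primes $P$ of $I_1$ with $t \notin P$ correspond bijectively to the minimal primes of $I_2 = I_1 S_2$ via $P \mapsto P S_2$. Because $I_2$ is prime of height $c$ by (3) and (4), there is a unique such $P$, namely $P = I_2 \cap S_1$, and Lemma \ref{lem:ff}(1) gives $\operatorname{ht}(P) = c$. By Lemma \ref{lem:action}, $P$ must be of type (1), (2), or (3); since $t \notin P$, type (1) is excluded. To eliminate type (3), observe that each $f_i$ lies in the maximal ideal of the local ring $k[[x_1, \ldots, x_N]]^G$ (since $f_1, \ldots, f_c$ is a regular sequence), so $f_i \in (x_1, \ldots, x_N)$ and consequently $F_i = \overline{\lambda}_{\gamma}^{\prime *}(f_i) \in (x_1, \ldots, x_N) S_1$. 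Hence every element of $I_2$ lies in $(x_1, \ldots, x_N) S_2$, and every $f \in P = I_2 \cap S_1$ has zero image in $S_1 / (x_1, \ldots, x_N) \simeq k[t]$. This contradicts the form (3) requirement that some $f \in P$ have image $t^{\ell}$ modulo correction terms contained in $(x_1, \ldots, x_N)$. All remaining minimal primes of $I_1$ contain $t$ and are therefore of type (1); thus $P = I_2 \cap S_1$ is the unique minimal prime of type (2), completing (5).

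The main technical point will be handling the flatness and faithful flatness of $h_3$, since we are completing a non-local polynomial ring along the ideal $(x_1, \ldots, x_N)$ and must invoke the correct compatibility between adic completion and base change; once this is in place, the rest of (1)--(4) transports directly from Lemma \ref{lem:isotrivial2}, and (5) reduces to the height computation and the elementary observation on images modulo $(x_1, \ldots, x_N)$ described above.
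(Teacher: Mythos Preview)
Your identification $h_1 = $ ``localization by $t$'' is incorrect, and this breaks part (5). The ring $S_2 = k[t,t^{-1}][[x_1,\ldots,x_N]]$ is strictly larger than $(S_1)_t$: the element $\sum_{m\ge 0} t^{-m}x_1^m$ lies in $S_2$ (each coefficient $t^{-m}$ is in $k[t,t^{-1}]$) but not in $(S_1)_t$, since it cannot be written as $t^{-n}g$ with $g\in k[t][[x_1]]$ for any fixed $n$. In fact $S_2$ is the $(x_1,\ldots,x_N)$-adic completion of $(S_1)_t$. For (1) this is harmless---$h_1$ is still flat as a composite of a localization and a completion of a Noetherian ring---but it undermines your bijection in (5). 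With the correct $h_1$, a minimal prime $P$ of $I_1$ of type (3) in Lemma~\ref{lem:action} satisfies $t\notin P$ yet $PS_2=S_2$: the element $f\in P$ with $f-t^\ell\in (t^{\ell+1}x_1,\ldots,t^{\ell+1}x_N)+(x_1,\ldots,x_N)^{\ell+1}$ becomes a unit in $S_2$ because $(x_1,\ldots,x_N)$ lies in the Jacobson radical there. So ``minimal primes with $t\notin P$'' need not biject with minimal primes of $I_2$, and your uniqueness claim for such $P$ is unjustified. Consequently your exclusion of type (3), which rests on first knowing $P=I_2\cap S_1$, does not go through; the statement of (5) does not even assert that type (3) primes are absent.

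The paper's route to (5) avoids this bijection entirely. It shows directly that primes of type (1) or (3) satisfy $PS_2=S_2$, so some minimal prime of type (2) must exist; then for any such $P$ one has $PS_2\neq S_2$, and a height comparison via Lemma~\ref{lem:ff}(1) and Krull forces $\operatorname{ht}P=c$ and $PS_2=I_2$, whence $P=I_2\cap S_1$ by going-down. Uniqueness of the type (2) prime follows. For (3), your reference to the grading argument of Lemma~\ref{lem:isotrivial2}(3) does not transplant cleanly either, since $S_4=k[t^{1/d},t^{-1/d}][[x_1,\ldots,x_N]]$ is not a direct-sum graded ring under the $T_c$-action (degrees are unbounded across the power-series variables); the paper instead uses that all maximal ideals of $S_4$ have the form $(t^{1/d}-a,x_1,\ldots,x_N)$ and are permuted transitively by the automorphisms $t^{1/d}\mapsto ct^{1/d}$, $x_i\mapsto x_i$, which fixes each minimal prime of $I_4$ and immediately contradicts $P_1+P_2=S_4$. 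Your treatment of $h_2$ as a base change of the finite \'etale extension $k[t,t^{-1}]\to k[t^{1/d},t^{-1/d}]$ is correct and in fact cleaner than the paper's factorization through an intermediate completion.
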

\begin{proof}
We shall prove (1). We shall only see the regularity of $h_2$ since the other two can be proved more easily. 
Since the ring inclusion $k[t,t^{-1}] \to k[t^{1/d},t^{-1/d}]$ is \'{e}tale, so is its base change
\[
k[t,t^{-1}][[x_1, \ldots , x_N]] \longrightarrow 
k[t,t^{-1}][[x_1, \ldots , x_N]][t^{1/d},t^{-1/d}]. 
\]
Furthermore, the ring inclusion
\[
k[t,t^{-1}][[x_1, \ldots , x_N]][t^{1/d},t^{-1/d}]
\longrightarrow 
k[t^{1/d},t^{-1/d}][[x_1, \ldots , x_N]]
\]
is regular since it can be seen as the completion at the prime ideal $(x_1, \ldots , x_N)$ and the ring 
on the left side is an excellent ring, in particular a G-ring (cf.\ \cite[Theorem 79]{Mat80}). 
Therefore, their composition $h_2$ turns out to be regular. 

We shall prove (2) for $h_2$. 
Any maximal ideal $M$ of $S_2$ is of the form $M = (t - a, x_1, \ldots , x_N)$, where $a \in k^{\times}$. 
Therefore, we have $M S_3 \not = S_3$ and hence $h_2$ is faithfully flat. 
The same proof works for $h_3$. 

We shall prove (3). 
Note that $C_5$ is normal by our assumption. 
Therefore, the normality of $C_2$, $C_3$ and $C_4$ follows from 
(1), (2) and the fact that the normality is preserved under faithfully flat regular ring homomorphisms 
(cf.\ \cite[Theorem 32.2]{Mat89}). 
In what follows, we prove that $C_2$, $C_3$ and $C_4$ are domains. 
Since $h_2$ is faithfully flat, $g_2$ is injective (cf.\ \cite[Theorem 7.5]{Mat89}). 
Therefore, it is sufficient to show that $C_4$ is a domain. 
Let $P_1, \ldots, P_{\ell}$ be the minimal primes of $I_4$. 
Suppose the contrary that $\ell \ge 2$. 
Since $C_4$ is normal, we have $P_1 + P_2 = S_4$ (cf.\ \cite[Proposition 2.20]{Eis95}). 
Take maximal ideals $M_1$ and $M_2$ of $S_4$ such that $P_i \subset M_i$ for each $i \in \{ 1, 2 \}$. 
We may write $M_i = (t^{1/d} - a_i, x_1, \ldots , x_N)$ with $a_i \in k^{\times}$. 
For each $c \in k^{\times}$, we denote by $T_c$ the ring isomorphism 
\[
T_c : S_4 \to S_4; \qquad t^{1/d} \mapsto c t^{1/d}, \quad x_i \mapsto x_i.
\]
Then, $I_4$ is $T_c$-invariant for any $c \in k^{\times}$. 
Therefore, its minimal primes $P_1$ and $P_2$ are also $T_c$-invariant for any $c \in k^{\times}$. 
Therefore $P_1 \subset M_2$ holds, and hence $P_1 + P_2 \subset M_2$, a contradiction.

Note that $\operatorname{ht}(I_5) = c$ by our assumption. 
Therefore, (4) follows from (2) and Lemma \ref{lem:ff}. 

We shall prove (5). 
Let $P_1, \ldots, P_{\ell}$ be the minimal primes of $I_1$. 
Then $P_1 ^{a_1} \cap \cdots \cap P_{\ell}^{a_{\ell}} \subset I_1$ holds for some $a_1, \ldots , a_{\ell} \ge 1$. 
Since $h_1$ is flat, we have 
\[
I_2 =  I_1 S_2 \supset (P_1 ^{a_1} \cap \cdots \cap P_{\ell}^{a_{\ell}}) S_2 = P_1 ^{a_1} S_2 \cap \cdots \cap P_{\ell}^{a_{\ell}} S_2
\]
by \cite[Theorem 7.4]{Mat89}. 
If $P_i$ is of the form (1) or (3) in Lemma \ref{lem:action}, then we have $P_i S_2 = S_2$. 
Since $I_2 \not = S_2$, some $P_i$ has to be of the form (2) in Lemma \ref{lem:action}. 

Suppose that $P$ is a minimal prime of $I_1$ of the form (2) in Lemma \ref{lem:action}. 
Then $P + (t-a) \not = S_1$ holds for any $a \in k^{\times}$, and hence
we have $P S_2 \not = S_2$. 
Since we have 
\[
c = \operatorname{ht} I_2 \le \operatorname{ht} (PS_2) = \operatorname{ht} P \le c
\]
by Lemma \ref{lem:ff} and Krull's height theorem, 
it follows that $\operatorname{ht} P = c$ and $I_2 = PS_2$. 
Since we have
\[
c = \operatorname{ht} P \le \operatorname{ht} (I_2 \cap S_1) \le \operatorname{ht} I_2 = c
\]
by the going-down theorem, we have $P = I_2 \cap S_1$, which also shows the uniqueness of $P$. 
\end{proof}

\begin{rmk}
Let $\overline{B}^{\prime (\gamma)} = V_1 \cup \cdots \cup V_{\ell}$ be the irreducible decomposition. 
By Lemma \ref{lem:isotrivial}(5), $\overline{B}^{\prime (\gamma)}$ has 
the unique irreducible component $V$ of the form (2) in Lemma \ref{lem:action} and 
it satisfies $\dim V = \dim ' V = n+1$. 
Furthermore, any irreducible component $V'$ other than $V$ satisfies
\[
V'_{\infty} \cap \operatorname{Cont}^{\ge 1} \bigl( \mathfrak{o}_{\overline{B}^{\prime (\gamma)}} \bigr) = \emptyset
\] by Remark \ref{rmk:kpt}. 
Here, $\mathfrak{o}_{\overline{B}^{\prime (\gamma)}} \subset \mathcal{O}_{\overline{B}^{\prime (\gamma)}}$ 
denotes the ideal sheaf generated by $x_1, \ldots , x_N$. 
\end{rmk}

Next, we see the relationship between $\overline{B}^{(\gamma)}_{\infty}$ and $\overline{B}^{\prime (\gamma)}_{\infty}$. 

\begin{lem}\label{lem:RF}
Let $\mathfrak{o}_{\overline{B}^{(\gamma)}} \subset \mathcal{O}_{\overline{B}^{(\gamma)}}$ and 
$\mathfrak{o}_{\overline{B}^{\prime (\gamma)}} \subset \mathcal{O}_{\overline{B}^{\prime (\gamma)}}$
be the ideal sheaves generated by $x_1, \ldots , x_N$. 
Then the following hold. 
\begin{enumerate}
\item For $m \ge 0$, there exist canonical morphisms $\overline{B}^{\prime (\gamma)}_m \to \overline{B}^{(\gamma)}_m$ 
which commute with the truncation morphisms. 
In particular, they induce $\overline{B}^{\prime (\gamma)}_{\infty} \to \overline{B}^{(\gamma)}_{\infty}$. 

\item The map $\overline{B}^{\prime (\gamma)}_{\infty} \to \overline{B}^{(\gamma)}_{\infty}$ induces an isomorphism
\[
\overline{B}^{\prime (\gamma)}_{\infty} \cap 
\operatorname{Cont}^{\ge 1} \bigl( \mathfrak{o}_{\overline{B}^{\prime (\gamma)}} \bigr)  
\xrightarrow{\simeq} \overline{B}^{(\gamma)}_{\infty} \cap 
\operatorname{Cont}^{\ge 1} \bigl( \mathfrak{o}_{\overline{B}^{(\gamma)}} \bigr). 
\]

\item For a $k$-arc $\gamma \in \overline{B}^{\prime (\gamma)}_{\infty} \cap 
\operatorname{Cont}^{\ge 1} \bigl( \mathfrak{o}_{\overline{B}^{\prime (\gamma)}} \bigr)$
and the corresponding arc $\gamma ' \in \overline{B}^{(\gamma)}_{\infty} \cap 
\operatorname{Cont}^{\ge 1} \bigl( \mathfrak{o}_{\overline{B}^{(\gamma)}} \bigr)$, it follows that
\[
\operatorname{ord}_{\gamma} \Bigl( \operatorname{Jac}'_{\overline{B}^{\prime (\gamma)}/k[t]} \Bigr)
= \operatorname{ord}_{\gamma '} \Bigl( \operatorname{Jac}'_{\overline{B}^{(\gamma)}/k[[t]]} \Bigr). 
\]

\end{enumerate}
\end{lem}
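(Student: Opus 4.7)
I would prove the three parts in order. Part (1) is essentially an extension of the construction sketched in Remark \ref{rmk:RF}(2) from the ambient schemes $\overline{A}^{(\gamma)}, \overline{A}^{\prime(\gamma)}$ to the complete intersections $\overline{B}^{(\gamma)}, \overline{B}^{\prime(\gamma)}$. By Lemma \ref{lem:repl} applied with $M=0$, one has $\overline{A}^{\prime(\gamma)}_m \simeq \operatorname{Spec} k\bigl[\bigl[x_1^{(0)},\ldots,x_N^{(0)}\bigr]\bigr]\bigl[x_i^{(s)} \mid 1\le i\le N,\ 1\le s\le m\bigr]$, while a direct computation (using that a $k[t]$-ring homomorphism $k[x_1,\ldots,x_N][[t]] \to C[t]/(t^{m+1})$ automatically factors through $k[x_1,\ldots,x_N][t]/(t^{m+1})$ because $t$ is nilpotent in the target) shows $\overline{A}^{(\gamma)}_m \simeq \operatorname{Spec} k\bigl[x_i^{(s)} \mid 1\le i\le N,\ 0\le s\le m\bigr]$. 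The obvious ring inclusion produces $\overline{A}^{\prime(\gamma)}_m \to \overline{A}^{(\gamma)}_m$, compatible with truncation maps. Following the pattern of Lemma \ref{lem:repl2}, the defining ideals of $\overline{B}^{\prime(\gamma)}_m$ and $\overline{B}^{(\gamma)}_m$ are generated by the $t^\ell$-coefficients $F_i^{\prime(\ell)}$ and $F_i^{(\ell)}$ obtained by expanding the same formal expression $\overline{\lambda}_\gamma^*(f_i) = \overline{\lambda}_\gamma^{\prime *}(f_i) \in k[[t,x_1,\ldots,x_N]]$ after substituting $x_j \mapsto \sum_s x_j^{(s)}t^s$; hence the inclusion carries $F_i^{(\ell)}$ to $F_i^{\prime(\ell)}$ and restricts to $\overline{B}^{\prime(\gamma)}_m \to \overline{B}^{(\gamma)}_m$. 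Passing to the limit gives the map on arc spaces.

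For part (2), the contact loci $\operatorname{Cont}^{\ge 1}(\mathfrak{o})_m$ on both sides are cut out by $\bigl(x_1^{(0)},\ldots,x_N^{(0)}\bigr)$. Quotienting by this ideal collapses both coordinate rings to $k\bigl[x_i^{(s)} \mid 1\le i\le N,\ 1\le s\le m\bigr]$, modulo the common ideal generated by $F_i^{(\ell)}|_{x^{(0)}=0}$. Thus the morphism from part (1) becomes the identity after restriction to the contact locus at each finite jet level, and passing to the projective limit yields the claimed isomorphism.

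For part (3), by Remark \ref{rmk:jacobianmatrix}, $\operatorname{Jac}'_{\overline{B}^{\prime(\gamma)}/k[t]}$ is generated modulo $\overline{I}^{\prime(\gamma)}$ by the $c\times c$ minors of $(\partial F_i'/\partial x_j)$ in $k[t][[x_1,\ldots,x_N]]$, since $\operatorname{Der}_{k[t]}(k[t][[x_1,\ldots,x_N]])$ is free on the $\partial/\partial x_i$. By Remark \ref{rmk:hat}(\ref{item:Fitt_hat}), $\operatorname{Jac}'_{\overline{B}^{(\gamma)}/k[[t]]}$ is generated modulo $\overline{I}^{(\gamma)}$ by the $c\times c$ minors of $(\partial F_i/\partial x_j)$ in $k[x_1,\ldots,x_N][[t]]$. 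The entries of these two matrices are the same formal element of $k[[t,x_1,\ldots,x_N]]$ (the partial derivative commutes with the substitution $x_i^{(\gamma)} \mapsto t^{e_i/d} x_i^{(\gamma)}$). Corresponding arcs $\gamma, \gamma'$ send $x_i$ to the same $a_i(t) \in (t) \subset k[[t]]$, so evaluating either matrix at $\gamma$ or $\gamma'$ produces the same $c\times c$ matrix over $k[[t]]$. Hence the generating sets of $\gamma^*\operatorname{Jac}'_{\overline{B}^{\prime(\gamma)}/k[t]}$ and $\gamma^{\prime*}\operatorname{Jac}'_{\overline{B}^{(\gamma)}/k[[t]]}$ coincide in $k[[t]]$, giving equal orders.

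The main technical obstacle is formalizing the identification of $F_i \in k[x_1,\ldots,x_N][[t]]$ with $F_i' \in k[t][[x_1,\ldots,x_N]]$ as a single element of $k[[t,x_1,\ldots,x_N]]$ and checking that this identification is respected both by the jet-coefficient extraction and by partial differentiation in $x_i$. This rests on the $G$-invariance of $f_i$, which ensures that each monomial $(x^{(\gamma)})^\alpha$ acquires an \emph{integer} power of $t$ under the substitution $x_j^{(\gamma)} \mapsto t^{e_j/d} x_j^{(\gamma)}$, so the image simultaneously lies in both the polynomial-in-$x$ and polynomial-in-$t$ (per coefficient) subrings.
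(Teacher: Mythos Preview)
Your proposal is correct and follows essentially the same approach as the paper, which simply cites Remark \ref{rmk:RF}(2) for parts (1) and (2) and Remarks \ref{rmk:jacobianmatrix} and \ref{rmk:hat}(\ref{item:Fitt_hat}) for part (3); you have unpacked precisely what those references encode. One small wording issue: the phrase ``the partial derivative commutes with the substitution $x_i^{(\gamma)} \mapsto t^{e_i/d} x_i^{(\gamma)}$'' is not literally true (there is a factor of $t^{e_j/d}$), but your actual argument does not rely on this---what you use, correctly, is that $F_i$ and $F_i'$ coincide as elements of the common completion $k[[t,x_1,\ldots,x_N]]$, hence so do their partials $\partial/\partial x_j$, and an arc in $\operatorname{Cont}^{\ge 1}(\mathfrak{o})$ factors through this completion.
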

\begin{proof}
(1) and (2) follow from the discussion in Remark \ref{rmk:RF}(2). 
(3) follows from Remarks \ref{rmk:jacobianmatrix} and \ref{rmk:hat}(\ref{item:Fitt_hat}). 
\end{proof}

\begin{lem}\label{lem:thin}
Let $W \subset \overline{B}^{\prime (\gamma)}$ be the closed subscheme defined by 
$\operatorname{Jac}'_{\overline{B}^{\prime (\gamma)}/k[t]}$. 
Then $W_{\infty} \cap \operatorname{Cont}^{\ge 1} \bigl( \mathfrak{o}_{\overline{B}^{\prime (\gamma)}} \bigr)$ 
is a thin subset of $\overline{B}^{\prime (\gamma)}_{\infty}$. 
\end{lem}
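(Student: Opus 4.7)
The plan is to isolate the unique irreducible component of $\overline{B}^{\prime (\gamma)}$ whose arc space meets $\operatorname{Cont}^{\ge 1}(\mathfrak{o}_{\overline{B}^{\prime (\gamma)}})$, and to show that the Jacobian ideal restricts nontrivially to it. Write $S'_1 := k[t][[x_1,\ldots,x_N]]$. By Lemma~\ref{lem:isotrivial}(5), $\overline{I}^{\prime (\gamma)}$ has a unique minimal prime $P$ of type (2) in Lemma~\ref{lem:action}, with $\operatorname{ht} P = c$ and $P = I_2 \cap S'_1$, where $I_2 := \overline{I}^{\prime (\gamma)} S_2 \subset S_2 := k[t,t^{-1}][[x_1,\ldots,x_N]]$. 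Set $V := \operatorname{Spec}(S'_1/P)$; this is integral of dimension $n+1$. Every other irreducible component $V'$ corresponds to a minimal prime of type (1) or (3), so Remark~\ref{rmk:kpt} gives $V'_{\infty} \cap \operatorname{Cont}^{\ge 1}(\mathfrak{o}_{\overline{B}^{\prime (\gamma)}}) = \emptyset$. Consequently
\[
W_\infty \cap \operatorname{Cont}^{\ge 1}(\mathfrak{o}_{\overline{B}^{\prime (\gamma)}}) \subset (W \cap V)_\infty,
\]
and since $W \cap V$ is a closed subscheme of $\overline{B}^{\prime (\gamma)}$, it remains only to prove $\dim(W \cap V) \le n$.

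Because $V$ is integral of dimension $n+1$, the dimension bound will follow once we verify that the Jacobian ideal $\mathfrak{r} := \mathcal{J}_c(\overline{I}^{\prime (\gamma)}; \operatorname{Der}_{k[t]}(S'_1)) + \overline{I}^{\prime (\gamma)}$, which represents $\operatorname{Jac}'_{\overline{B}^{\prime (\gamma)}/k[t]}$ by Remark~\ref{rmk:jacobianmatrix}, is not contained in $P$. Using $P = I_2 \cap S'_1$, it suffices to show $\mathfrak{r}S_2 \not\subset I_2$; and by extending further to $S_3 := k[t^{1/d},t^{-1/d}][[x_1,\ldots,x_N]]$ with $I_3 := \overline{I}^{\prime (\gamma)} S_3$, it is enough to prove $\mathfrak{r}S_3 \not\subset I_3$.

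To do this, I will employ the ring isomorphism $\phi : S_4 \xrightarrow{\sim} S_3$, $x_i \mapsto t^{e_i/d} x_i$, of Lemma~\ref{lem:isotrivial}, under which $(f_1,\ldots,f_c) S_4$ corresponds to $I_3$. The chain rule identity $\partial \phi(f)/\partial x_i = t^{e_i/d} \phi(\partial f/\partial x_i)$, together with the fact that $t^{1/d}$ is a unit in $S_3$ and that $\operatorname{Der}_{k[t]}(S_3)$ and $\operatorname{Der}_{k[t]}(S_4)$ are both free on the $\partial/\partial x_i$'s, shows that $\phi$ identifies $\mathcal{J}_c((f_1,\ldots,f_c); \partial/\partial x_i) S_4$ with $\mathcal{J}_c(\overline{I}^{\prime (\gamma)}; \partial/\partial x_i) S_3$. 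The problem thereby reduces to the non-inclusion
\[
\mathcal{J}_c((f_1,\ldots,f_c); \partial/\partial x_i) \not\subset (f_1,\ldots,f_c) S_4.
\]
This follows from the Jacobian criterion (Remark~\ref{rmk:JC}(2)(c)) applied to the normal ring $\overline{B}$: the ideal $\mathcal{J}_c((f_1,\ldots,f_c); \partial/\partial x_i) + (f_1,\ldots,f_c)$ defines the singular locus of $\overline{B}$ in $S_5 := k[[x_1,\ldots,x_N]]$ and hence has height at least $c+2$ by normality; faithful flatness of $S_5 \to S_4$ (Lemma~\ref{lem:isotrivial}(2)) preserves this height by Lemma~\ref{lem:ff}, while $(f_1,\ldots,f_c) S_4$ is a prime of height $c$ by Lemma~\ref{lem:isotrivial}(3)(4).

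The main obstacle I anticipate is the bookkeeping around $\phi$: one must verify carefully that the modules $\operatorname{Der}_{k[t]}(S_i)$ and $\operatorname{Der}_{k[t^{1/d}]}(S_i)$ agree for $i = 3, 4$ (as they must, since $t^{1/d}$ is a unit), that the fractional factors $t^{e_i/d}$ arising in each $c \times c$ minor are uniformly absorbed as units in $S_3$ so that the two Jacobian ideals genuinely match, and that the non-inclusion descends through $S_5 \hookrightarrow S_4$ and then back along $\phi^{-1}$ to $S'_1$ without losing the primality of the image ideal at any step.
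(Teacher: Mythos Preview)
Your argument is correct. Both proofs ultimately hinge on the normality of $\overline{B}$, but they reach it by genuinely different paths.

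The paper applies the $T_c$-invariance of Lemma \ref{lem:action} directly to the Jacobian ideal $\overline{J}$ (rather than to $\overline{I}^{\prime(\gamma)}$), so that each minimal prime of $\overline{J}$ is itself of type (1), (2), or (3). For a type-(2) prime $P$ of $\overline{J}$, the paper simply specialises to the fibre $t=1$: since $F_i|_{t=1} = f_i$, the ideal $(\overline{J}+(t-1))/(t-1)$ is exactly $\operatorname{Jac}'_{\overline{B}/k}$, whose height is at least $c+2$ by normality. One then reads off $\operatorname{ht} P \ge c+1$ from $\operatorname{ht}(P+(t-1))$. This is short and avoids any passage through $S_2, S_3, S_4$.

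Your route instead fixes the unique type-(2) component $V = V(P)$ of $\overline{B}^{\prime(\gamma)}$ via Lemma \ref{lem:isotrivial}(5), and shows $\mathfrak{r} \not\subset P$ by pushing the Jacobian ideal along the chain $S'_1 \hookrightarrow S_2 \hookrightarrow S_3 \xrightarrow{\phi^{-1}} S_4 \hookleftarrow S_5$ and invoking the chain rule under $\phi$. This is equally valid, and it has the mild conceptual advantage of making explicit that the obstruction lives on the single dominant component $V$; but it requires tracking the Jacobian through several ring maps and the unit factors $t^{\sum e_i/d}$, whereas the paper's $t=1$ specialisation reaches $\overline{B}$ in one step.
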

\begin{proof}
Let $T_c : k[t][[x_1, \ldots, x_N]] \to k[t][[x_1, \ldots, x_N]]$ be the ring isomorphism defined in Lemma \ref{lem:action}.

Set $J := \operatorname{Jac}'_{\overline{B}^{\prime (\gamma)}/k[t]}$. 
Let $\overline{J} \subset k[t][[x_1, \ldots, x_N]]$ be the corresponding ideal. 
Since $\overline{I}^{\prime (\gamma)}$ is $T_c$-invariant, 
$\overline{J}$ is also $T_c$-invariant for each $c \in k^{\times}$. 
Therefore by Lemma \ref{lem:action}, each minimal prime $P$ of $\overline{J}$ satisfies one of the conditions in Lemma \ref{lem:action}. 
By Remark \ref{rmk:kpt}, it is sufficient to show that 
$\operatorname{ht} P \ge c+1 = N - n + 1$ for $P$ satisfying (2) in Lemma \ref{lem:action}. 
Since $P + (t-1) \not = k[t][[x_1, \ldots, x_N]]$ in this case, it is sufficies to show that 
\[
\operatorname{ht}(P + (t-1)) \ge N - n + 2. 
\]
Under the identification $k[t][[x_1, \ldots, x_N]]/(t-1) \simeq k[[x_1, \ldots, x_N]]$, 
the ideal $\bigl( \overline{J} + (t-1) \bigr)/(t-1)$ corresponds to $\operatorname{Jac}'_{\overline{B}/k}$. 
Since $\overline{B}$ is normal and hence regular in codimension one, 
it follows that 
\[
\operatorname{ht} \bigl( \overline{J} + (t-1) \bigr) \ge N - n + 3
\] 
by the Jacobian criterion of regularity. 
Therefore, we get the desired inequality for $P$. 
\end{proof}

\section{PIA formula for quotient singularities of non-linear action}\label{section:PIA}
In this section, we generalize Theorem 5.1 in \cite{NS} to non-linear group actions (Theorem \ref{thm:PIA_nonlin}). 
First, we clarify the definition of quotient singularities in this paper. 

\begin{defi}\label{defi:qs}
Let $X$ be a variety over $k$ and $x \in X$ a closed point. 
We say that $X$ has a \textit{quotient singularity} at $x$ 
if there exist a quasi-projective variety $\overline{M}$ over $k$, a finite subgroup $G \subset \operatorname{Aut} (\overline{M})$, 
and a smooth closed point $\overline{y} \in \overline{M}$ such that 
$\mathcal{O}_{X,x} \simeq \mathcal{O}_{M,y}$ holds, where $M:= \overline{M}/G$ is the quotient variety and $y \in M$ is the image of $\overline{y}$. 
\end{defi}

\begin{thm}\label{thm:PIA_nonlin}
Suppose that a variety $X$ has a quotient singularity at a closed point $x \in X$. 
Let $Y$ be a subvariety of $X$ of codimension $c$ that is locally defined by $c$ equations 
$h_1, \ldots, h_c \in \mathfrak{m}_{X,x}$ at $x$. 
Suppose that $Y$ is klt at $x$. 
Let $\mathfrak{a} \subset \mathcal{O}_X$ be an ideal sheaf and 
let $\delta$ be a positive real number. 
Suppose that $\mathfrak{b} := \mathfrak{a} \mathcal{O}_{Y} \not = 0$. 
Then it follows that
\[
\operatorname{mld}_x \bigl(X,(h_1 \cdots h_c) \mathfrak{a} ^{\delta} \bigr) 
= \operatorname{mld}_x (Y,\mathfrak{b}^{\delta}). 
\]
\end{thm}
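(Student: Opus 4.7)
The plan is to reduce the claim to the formal power series ring setting of Section \ref{section:mld_R} and then match both sides via their arc-theoretic descriptions, adapting the proof of Theorem 5.1 of \cite{NS} (the linear case) to the machinery developed in Sections \ref{section:Omega'}--\ref{section:mld_R}.

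First, I would reduce to completions. By Definition \ref{defi:qs} and the Cohen structure theorem, completing at a preimage $\bar y$ of $y$ in $\overline{M}$ yields $\widehat{\mathcal O}_{X,x} \simeq k[[x_1,\ldots,x_N]]^G$ for a finite subgroup $G \subset \operatorname{GL}_N(k)$; by factoring out the subgroup generated by pseudoreflections (Chevalley--Shephard--Todd), we may assume that $G$ acts freely in codimension one. By Remark \ref{rmk:compareK}, the minimal log discrepancies on both sides of the desired equality are preserved under completion, so we place ourselves in the setup of Section \ref{section:mld_R}: $A = \widehat X$ is a quotient and $B = \widehat Y$ is the complete intersection in $A$ cut out by the regular sequence $h_1,\ldots,h_c$. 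Note that $B$ is normal since $Y$ is klt.

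Second, I would express both minimal log discrepancies via arc spaces. For the right-hand side, Theorem \ref{thm:mld_hyperquot_R} combined with Remark \ref{rmk:multi_index} gives
\[
\operatorname{mld}_x(B,\mathfrak b^{\delta}) \;=\; \inf_{w,b_1,\gamma}\bigl\{\operatorname{codim}(C'_{w,\gamma,b_1})+\operatorname{age}'(\gamma)-b_1-\delta w\bigr\},
\]
with cylinders $C'_{w,\gamma,b_1}\subset\overline B^{(\gamma)}_\infty$. For the left-hand side, I would apply the same theorem in the $c=0$ case (so that $B=A$), to the $\mathbb R$-ideal $(h_1)\cdots(h_c)\mathfrak a^{\delta}$, obtaining
\[
\operatorname{mld}_x(A,(h_1\cdots h_c)\mathfrak a^{\delta}) \;=\; \inf_{\mathbf w,w,b_0,\gamma}\Bigl\{\operatorname{codim}(D'_{\mathbf w,w,\gamma,b_0})+\operatorname{age}'(\gamma)-b_0-\sum_{i=1}^{c}w_i-\delta w\Bigr\},
\]
where the cylinders $D'_{\mathbf w,w,\gamma,b_0}\subset\overline A^{(\gamma)}_\infty$ are the common intersection of contact loci along each $\overline\lambda_\gamma^{*}(h_i)$, along $\overline\lambda_\gamma^{*}(\mathfrak a)$, and with the prescribed Jacobian order.

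Third, I would match the two infima term-by-term for each fixed $\gamma$ through the closed immersion $\tau:\overline B^{(\gamma)}\hookrightarrow\overline A^{(\gamma)}$, which is a complete intersection of codimension $c$ cut out by $\overline\lambda_\gamma^{*}(h_1),\ldots,\overline\lambda_\gamma^{*}(h_c)$. The key comparison is that the dominant contribution to the $A$-side infimum comes from arcs in the simultaneous limit $w_i\to\infty$, i.e.\ arcs lying on $\overline B^{(\gamma)}_\infty$. For such arcs, iterating the codimension formula for hypersurfaces (the formal power series analog of the EM09 formula, via Lemma \ref{lem:EM4.1CI_R} and the codimension formula of Proposition \ref{prop:EM6.2_k[[t]]} for the restriction $\tau$) produces exactly the shift by $\sum_i w_i$, since each additional unit of $w_i$ corresponds to a piecewise trivial $\mathbb A^{1}$-fibration. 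Combined with Lemmas \ref{lem:L2_R}, \ref{lem:additive_k[[t]]} and the adjunction identification of canonical sheaves (Proposition \ref{prop:spOmega_exact}), this makes the two infima coincide. Arcs with some finite $w_i$ contribute terms at least equal to $\operatorname{mld}_x(Y,\mathfrak b^\delta)$, using that the klt hypothesis on $Y$ ensures this infimum is finite, so they do not alter the equality.

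The main technical obstacle will be the careful control of the limiting arcs and the accompanying thin-set arguments: one must verify that cylinders in $\overline B^{(\gamma)}_\infty$ realizing or arbitrarily approaching the $B$-side infimum lift to cylinders in $\overline A^{(\gamma)}_\infty$ with matching codimensions after the $\sum_i w_i$ shift, and that strata where Jacobian orders blow up can be discarded. This uses Proposition \ref{prop:negligible} together with Lemma \ref{lem:thinset} to eliminate negligible contributions from the singular loci of $\overline B^{(\gamma)}$. A further subtlety is that the comparison of Jacobian orders across $\overline A^{(\gamma)}$ and $\overline B^{(\gamma)}$ must match exactly, including the age terms $\operatorname{age}'(\gamma)$ that appear identically on both sides, so that the final formulas align without residual discrepancies.
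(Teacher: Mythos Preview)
Your overall strategy matches the paper's: reduce to completions, express both sides via Theorem \ref{thm:mld_hyperquot_R}, and compare term-by-term through the closed immersion $\tau:\overline B^{(\gamma)}\hookrightarrow\overline A^{(\gamma)}$. The reduction step and the arc-theoretic formulas are fine.

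The genuine gap is in your final paragraph. You claim the thin-set issues are handled by Proposition \ref{prop:negligible} and Lemma \ref{lem:thinset}, but those results only let you \emph{discard} thin sets. The hard direction of the equality requires the opposite: showing that certain cylinders are \emph{not} thin. Concretely, to prove $\operatorname{mld}_x(A,(h_1\cdots h_c)\mathfrak a^\delta)\le\operatorname{mld}_x(B,\mathfrak b^\delta)$ one must show that for each irreducible component $C'$ of a cylinder $C\subset\overline A^{(\gamma)}_\infty$ of the relevant form, the intersection $C'\cap\overline B^{(\gamma)}_\infty$ contains a $k$-arc with $\operatorname{ord}\bigl(\operatorname{Jac}'_{\overline B^{(\gamma)}/k[[t]]}\bigr)<\infty$. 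In \cite{NS} this is Claim 5.2, proved by lifting the trivial arc to a resolution and invoking Hacon--McKernan rational connectedness together with Graber--Harris--Starr. The paper stresses (see the Introduction and the proof of Theorem \ref{thm:PIA_nonlin}) that this argument \emph{fails} directly in the formal setting: the closed fibers of $\overline B^{(\gamma)}\to\operatorname{Spec} k[t]$ over $t=a\in k^\times$ are empty, so there is no variety to which \cite{HM07} and \cite{GHS03} apply.

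The paper's fix, which your proposal omits, is the entire machinery of $\overline B^{\prime(\gamma)}$ developed in Subsection 7.2: pass from $\overline B^{(\gamma)}$ to the $k[t][[x_1,\ldots,x_N]]$-scheme $\overline B^{\prime(\gamma)}$ via Lemma \ref{lem:RF}; use Temkin's functorial desingularization \cite{Tem12} to produce compatible resolutions over the chain $V_1,\ldots,V_5$ linking $\overline B^{\prime(\gamma)}$ to the finite-type germ $\operatorname{Spec}\mathcal O_{\overline N,\overline y}$; apply \cite{HM07} and \cite{GHS03} on that finite-type side to obtain a section of the exceptional fiber; and finally conclude with Proposition \ref{prop:resol} (which you do not invoke) that the cylinder containing the trivial arc cannot be thin. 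This is Claims \ref{claim:not_thin_F} and \ref{claim:not_thin_R}, and it is the technical heart of the theorem --- precisely the point where the non-linear case diverges from \cite{NS}.
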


\begin{proof}
Since $X$ has a quotient singularity at $x$, 
there exist a variety $\overline{M}$ with a smooth closed point $\overline{y} \in \overline{M}$ and 
a finite subgroup $G' \subset \operatorname{Aut} (\overline{M})$ such that 
$\mathcal{O}_{X,x} \simeq \mathcal{O}_{M',y'}$ holds, where $M' := \overline{M} / G'$ and $y' \in M'$ is the image of $\overline{y}$. 

We denote $G := \{ g \in G' \mid g(\overline{y}) = \overline{y} \}$ the stabilizer group of $\overline{y}$, 
$M := \overline{M}/G$ the quotient variety, and $y \in M$ the image of $\overline{y}$. 
Then we note that $M = \overline{M}/G \to M' = \overline{M}/G'$ is \'{e}tale at $y$ (cf.\ \cite[3.17]{Kol13}). 
Furthermore, $G$ acts on $\mathfrak{m}_{\overline{M}, \overline{y}} ^i$ for each $i \ge 0$, and hence the projection 
$s: \mathfrak{m}_{\overline{M}, \overline{y}} \to \mathfrak{m}_{\overline{M}, \overline{y}}/ \mathfrak{m}_{\overline{M}, \overline{y}} ^2$ 
becomes a $G$-equivariant $k$-linear map. 
Let $u$ be any $k$-linear section of $s$. 
Then the map $u' := \frac{1}{|G|} \sum _{g \in G} g^{-1} \circ u \circ g$ 
gives a $G$-equivariant $k$-linear section 
$u' : \mathfrak{m}_{\overline{M}, \overline{y}}/ \mathfrak{m}_{\overline{M}, \overline{y}} ^2 \to \mathfrak{m}_{\overline{M}, \overline{y}}$ of $s$. 
Let $N := \dim X$. Then $u'$ induces a ring homomorphism 
\[
k[x_1, \ldots, x_N] \to \mathcal{O}_{\overline{M}, \overline{y}}
\]
which is \'{e}tale. 
Furthermore, $G$ acts linearly on $k[x_1, \ldots, x_N]$ and the ring homomorphism above becomes $G$-equivariant. 
Since the ring homomorphism above is \'{e}tale, 
we get an isomorphism 
\[
k[[x_1, \ldots, x_N]] \overset{\simeq}{\longrightarrow} \widehat{\mathcal{O}}_{\overline{M}, \overline{y}}. 
\]
Note that $k[[x_1, \ldots, x_N]]^{G _{\rm pr}} \simeq k[[x_1, \ldots, x_N]]$ holds for 
$G _{\rm pr} \subset G$, where $G _{\rm pr}$ is the subgroup generated by the pseudo-reflections (cf.\ \cite[3.18]{Kol13}). 
Hence $\overline{M}/G_{\rm pr}$ is smooth at the image of $\overline{y}$. 
Therefore, by replacing $G$ with $G/G_{\rm pr}$ and $\overline{M}$ with $\overline{M}/G_{\rm pr}$, 
we may assume that $G$ does not contain a pseudo-reflection. 
Then we have the following diagram of rings. 
\[
\xymatrix{
\mathcal{O}_{X,x} \ar@{}[d]|{\text{\large \rotatebox{-90}{$\simeq$}}} && \\ 
\mathcal{O}_{M',y'} \ar[d]^{\text{\'{e}tale}} && \\
\mathcal{O}_{M,y} \ar@{}[d]|{\text{\large \rotatebox{-90}{$=$}}} \ar@{}[r]|{\text{\large $\subset$}} & \widehat{\mathcal{O}}_{M,y} \ar@{}[d]|{\text{\large \rotatebox{-90}{$=$}}} & \\
\mathcal{O}_{\overline{M}, \overline{y}} ^{G} \ar@{}[d]|{\text{\large \rotatebox{-90}{$\subset$}}} \ar@{}[r]|{\text{\large $\subset$}} & \widehat{\mathcal{O}}_{\overline{M}, \overline{y}} ^{G} \ar@{}[d]|{\text{\large \rotatebox{-90}{$\subset$}}} \ar@{}[r]|{\text{\large $\simeq$ \hspace{7mm}}} &  k[[x_1, \ldots , x_N]] ^G \\
\mathcal{O}_{\overline{M}, \overline{y}} \ar@{}[r]|{\text{\large $\subset$}} & \widehat{\mathcal{O}}_{\overline{M}, \overline{y}} \ar@{}[r]|{\text{\large $\simeq$ \hspace{7mm}}} & k[[x_1, \ldots , x_N]] \ar@{}[u]|{\text{\large \rotatebox{-90}{$\subset$}}}
}
\]

We denote by $(\overline{N}, \overline{y}) \subset (\overline{M}, \overline{y})$ the germ defined by the images of 
$h_1, \ldots , h_c \in \mathfrak{m}_{X,x}$ in $\mathcal{O}_{\overline{M}, \overline{y}}$. 
Let $f_1, \ldots , f_c \in k[[x_1, \ldots, x_N]]^G$ be the images of $h_1, \ldots , h_c \in \mathfrak{m}_{X,x}$. 
Then we have an isomorphism
\[
\widehat{\mathcal{O}}_{\overline{N}, \overline{y}} \simeq k[[x_1, \ldots, x_N]]/(f_1, \ldots , f_c). 
\]
We set
\[
A := \operatorname{Spec} k[[x_1, \ldots, x_N]]^G, \quad 
B := \operatorname{Spec} \bigl( k[[x_1, \ldots, x_N]]^G/(f_1, \ldots , f_c) \bigr), 
\]
and $x' \in A$ the origin. 
Let $\mathfrak{a} ^\prime \subset \mathcal{O}_A$ and $\mathfrak{b} ^\prime \subset \mathcal{O}_B$ 
be the ideal sheaves corresponding to $\mathfrak{a} \subset \mathcal{O}_X$ and $\mathfrak{b} \subset \mathcal{O}_Y$.
Since 
\[
\operatorname{mld}_x \bigl(X,(h_1 \cdots h_c) \mathfrak{a} ^{\delta} \bigr) 
= \operatorname{mld}_{x'} \bigl(A,(f_1 \cdots f_c) \mathfrak{a} ^{\prime \delta} \bigr), \quad 
\operatorname{mld}_x (Y,\mathfrak{b}^{\delta})
= \operatorname{mld}_{x'} (B,\mathfrak{b}^{\prime \delta})
\]
hold (cf.\ Remark \ref{rmk:compareK}), 
it is sufficient to show that 
\[
\operatorname{mld}_{x'} \bigl(A,(f_1 \cdots f_c) \mathfrak{a} ^{\prime \delta} \bigr) = \operatorname{mld}_{x'} (B,\mathfrak{b}^{\prime \delta}). 
\tag{$\diamondsuit$}
\]

Note that $G$, $A$ and $B$ satisfy all assumptions in Section \ref{section:mld_R}. 
Therefore we take over all notation. 
Since $Y$ is klt at $x$, so is $B$. 
Since $\overline{B} \to B$ is \'{e}tale in codimension one, 
it follows that the germ $(\overline{N}, \overline{y})$ is also klt. 
This fact will be used in the proof of Claim \ref{claim:not_thin_F}.

($\diamondsuit$) is proved for $k$-varieties in \cite[Theorem 5.1]{NS}. 
The key ingredients of the proof of \cite[Theorem 5.1]{NS} are Corollary 4.9, Lemma 2.34 and Claim 5.2 in \cite{NS}. 
First, Corollary 4.9 in \cite{NS} can be substituted by Theorem \ref{thm:mld_hyperquot_R} in the formal power series ring setting. 
Second, Lemma 2.34 in \cite{NS} is still true in our setting by replacing 
$k[t][x_1, \ldots , x_N]$ with $k[x_1, \ldots , x_N][[t]]$ (cf.\ Remark \ref{rmk:NS2.6_k[[t]]}). 
On the other hand, the proof of Claim 5.2 in \cite{NS} does not work directly because 
they use the result on the rational connectedness proved by Hacon and McKernan \cite{HM07}, 
which is not clear for the formal power series ring setting. 
We also note the lack of \cite[Lemma 2.27]{NS} in our setting. 
It will be substituted by Proposition \ref{prop:resol}. 

In what follows, we shall only prove Claim 5.2 in \cite{NS} in the formal power series ring setting.

\begin{claim}[{cf.\ \cite[Claim 5.2]{NS}}]\label{claim:not_thin_F}
Let $C \subset \overline{A}^{(\gamma)} _{\infty}$ be a cylinder that is the intersection of finitely many cylinders of the form 
$\operatorname{Cont}^{\ge \ell} \bigl( \mathfrak{c} \mathcal{O}_{\overline{A}^{(\gamma)}} \bigr)$, 
where $\mathfrak{c} \subset \mathcal{O}_A$ is an ideal sheaf on $A$ and $\ell$ is a non-negative integer. 
Let $C'$ be an irreducible component of $C$. 
Then $C' \cap \overline{B}^{(\gamma)}_\infty$ contains a $k$-arc $\delta$ such that
$\operatorname{ord}_{\delta} \bigl( \operatorname{Jac}'_{\overline{B}^{(\gamma)}/k[[t]]} \bigr) < \infty$. 
\end{claim}
\begin{proof}[Proof of Claim \ref{claim:not_thin_F}]
First, we introduce a $k$-action on the arc space $\overline{A}^{(\gamma)} _{\infty}$ as follows. 
For a $k$-arc $\alpha \in \overline{A}^{(\gamma)} _{\infty}$, we denote $g_{i}^{\alpha} := \alpha ^* (x_i) \in k[[t]]$, 
where $\alpha ^* : k[x_1, \ldots , x_N][[t]] \to k[[t]]$ is the corresponding $k[t]$-ring homomorphism. 
Let $\alpha \in \overline{A}^{(\gamma)} _{\infty}$ and $a \in k$. Then we define $a \cdot \alpha \in \overline{A}^{(\gamma)} _{\infty}$ by 
\[
g_{i}^{a \cdot \alpha} (t) := a^{e_i} g_i ^{\alpha} (a^d t). 
\]
Then for $f \in k[[x_1, \ldots , x_N]]^G$, we have $v(t) = u(a^d t)$ for
\[
u(t) := \alpha^* \bigl( \overline{\lambda}^*_{\gamma}(f) \bigr), \quad 
v(t) := (a \cdot \alpha)^* \bigl( \overline{\lambda}^*_{\gamma}(f) \bigr) \in k[[t]]. 
\]
Therefore, we have
\[
\operatorname{ord}_{\alpha} \bigl( \overline{\lambda}^*_{\gamma}(f) \bigr) = \operatorname{ord}_{a \cdot \alpha} \bigl( \overline{\lambda}^*_{\gamma}(f) \bigr)
\]
if $a \in k^{\times}$. 
Hence, any cylinder of the form $\operatorname{Cont}^{\ge \ell} \bigl( \mathfrak{c} \mathcal{O}_{\overline{A}^{(\gamma)}} \bigr)$ with 
an ideal $\mathfrak{c} \subset \mathcal{O}_A = k[[x_1, \ldots , x_N]]^G$ is invariant under the $k$-action. 
Therefore, $C$ in the statement and its irreducible component $C'$ are also invariant under the $k$-action. 

We denote by $\beta \in \overline{A}^{(\gamma)} _{\infty}$ the trivial arc determined by $g_i^{\beta} = 0$ for each $i$. 
Note that $\beta = 0 \cdot \alpha$ holds for any $k$-arc $\alpha \in \overline{A}^{(\gamma)} _{\infty}$. 
Therefore, we have $\beta \in C'$ and hence 
\[
\beta \in C' \cap \operatorname{Cont}^{\ge 1} \bigl( \mathfrak{o}_{\overline{B}^{(\gamma)}} \bigr) \not = \emptyset. 
\]
By Lemma \ref{lem:RF}(1)(2), there exists a cylinder $D \subset \overline{B}^{\prime (\gamma)}_{\infty}$ 
that is isomorphic to $C' \cap \operatorname{Cont}^{\ge 1} \bigl( \mathfrak{o}_{\overline{B}^{(\gamma)}} \bigr)$ under 
the map in Lemma \ref{lem:RF}(2). 
Then by Lemma \ref{lem:RF}(3), 
it is sufficient to show that $D$ contains a $k$-arc $\delta$ such that 
$\operatorname{ord}_{\delta} \bigl( \operatorname{Jac}'_{\overline{B}^{\prime (\gamma)}/k[t]} \bigr) < \infty$. 
Therefore, Claim \ref{claim:not_thin_F} follows from Claim \ref{claim:not_thin_R} below and Lemma \ref{lem:thin}. 
\end{proof}

\begin{claim}\label{claim:not_thin_R}
Let $D \subset \overline{B}^{\prime (\gamma)}_{\infty}$ be a cylinder contained in 
$\operatorname{Cont}^{\ge 1} \bigl( \mathfrak{o}_{\overline{B}^{\prime (\gamma)}} \bigr)$. 
If $D$ contains the trivial arc $\beta$, 
then $D$ is not a thin subset of $\overline{B}^{\prime (\gamma)}_{\infty}$. 
\end{claim}
\begin{proof}[Proof of Claim \ref{claim:not_thin_R}]
Let $T \subset \overline{B}^{\prime (\gamma)}$ be the closed subscheme defined 
by $\mathfrak{o}_{\overline{B}^{\prime (\gamma)}}$. 
First, we prove the following claim. 
\begin{itemize}
\item[($\spadesuit$)] $\overline{B}^{\prime (\gamma)}$ has a desingularization 
$r: W \to \overline{B}^{\prime (\gamma)}$ with the following conditions. 
\begin{enumerate}
\item For each $a \in k ^{\times}$, the closed subscheme $W_a \subset W$ defined by the ideal 
$(t-a)\mathcal{O}_{W} \subset \mathcal{O}_{W}$ 
is an integral regular scheme with $\dim' W_a = n$. 

\item $r|_{T'}: T' \to T$ has a section, where $T' := r^{-1}(T)$. 
\end{enumerate}
\end{itemize}

We set 
\[
F_i := \overline{\lambda}^{\prime *} _{\gamma} (f_i) = f_i \bigl( t^{\frac{e_1}{d}} x_1, \ldots , t^{\frac{e_N}{d}} x_N \bigr). 
\]
Then, we have the following natural morphisms. 
\[
\xymatrix{
V_1 := \overline{B}^{\prime (\gamma)} = \operatorname{Spec} \bigl( k[t][[x_1, \ldots , x_N]]/(F_1, \ldots , F_c) \bigr) \\
V_2 := \operatorname{Spec} \bigl( k[t,t^{-1}][[x_1, \ldots , x_N]]/(F_1, \ldots , F_c) \bigr) \ar[u] \\ 
V_3 := \operatorname{Spec} \bigl( k[t^{1/d},t^{-1/d}][[x_1, \ldots , x_N]]/(f_1, \ldots , f_c) \bigr) \ar[d] \ar[u] _{[x_i \mapsto t^{-e_i/d}x_i]} \\
V_4 := \operatorname{Spec} \bigl( \mathcal{O}_{\overline{N},\overline{y}}[t^{1/d},t^{-1/d}] \bigr) \ar[d] \\ 
V_5 := \operatorname{Spec} \bigl( \mathcal{O}_{\overline{N},\overline{y}} \bigr)
}
\]
Note that these four morphisms are regular morphisms (cf.\ Lemma \ref{lem:isotrivial}(1)). 
Hence by the functorial desingularization by Temkin \cite[Theorem 1.2.1]{Tem12}, 
there exist desingularizations $r_i: W_i \to V_i$ with the following Cartesian diagram. 
\[
\xymatrix{
W_1 \ar[r]^-{r_1} & V_1 := \operatorname{Spec} \bigl( k[t][[x_1, \ldots , x_N]]/(F_1, \ldots , F_c) \bigr) \\
W_2 \ar[r]^-{r_2}\ar[u] & V_2 := \operatorname{Spec} \bigl( k[t,t^{-1}][[x_1, \ldots , x_N]]/(F_1, \ldots , F_c) \bigr) \ar[u] \\ 
W_3 \ar[r]^-{r_3}\ar[d]\ar[u] & V_3 := \operatorname{Spec} \bigl( k[t^{1/d},t^{-1/d}][[x_1, \ldots , x_N]]/(f_1, \ldots , f_c) \bigr) \ar[d]\ar[u] \\
W_4 \simeq W_5 \times (\mathbb{A}^1_k \setminus \{ 0 \}) \ar[r]^-{r_4}\ar[d] & V_4 := \operatorname{Spec} \bigl( \mathcal{O}_{\overline{N},\overline{y}}[t^{1/d},t^{-1/d}] \bigr) = V_5 \times (\mathbb{A}^1_k \setminus \{ 0 \}) \ar[d]  \\
W_5 \ar[r]^-{r_5} & V_5 := \operatorname{Spec} \bigl( \mathcal{O}_{\overline{N},\overline{y}} \bigr)
}
\]
We shall prove that $r_1$ satisfies the conditions (1) and (2) in ($\spadesuit$). 

We shall prove (1). 
For  each $i \in \{ 1,2 \}$ and $a \in k^{\times}$, we denote by 
$(W_i)_a \subset W_i$ and $(V_i)_a \subset V_i$ the closed subschemes 
defined by $(t - a)\mathcal{O}_{W_i} \subset \mathcal{O}_{W_i}$ and 
$(t - a)\mathcal{O}_{V_i} \subset \mathcal{O}_{V_i}$, respectively. 
Similarly, for each $i \in \{ 3,4 \}$ and $a \in k^{\times}$, we denote by $(W_i)_a \subset W_i$ and $(V_i)_a \subset V_i$ the closed subschemes 
defined by $(t^{1/d} - a)\mathcal{O}_{W_i} \subset \mathcal{O}_{W_i}$ and 
$(t^{1/d} - a)\mathcal{O}_{V_i} \subset \mathcal{O}_{V_i}$, respectively. 
Then, the Cartesian diagram above induces the following Cartesian diagram for each $a \in k^{\times}$. 
\[
\xymatrix{
(W_1)_a \ar[r] & (V_1)_a \\
(W_2)_a \ar[r] \ar[u] &  (V_2)_a \ar[u] \\
\bigsqcup_{b^d = a} (W_3)_b \ar[r] \ar[d] \ar[u] & \bigsqcup_{b^d = a} (V_3)_b \ar[d] \ar[u] \\
\bigsqcup_{b^d = a} (W_4)_b  \ar[r] & \bigsqcup_{b^d = a} (V_4)_b
}
\]
Here, by construction, we have $(V_1)_a \simeq (V_2)_a \simeq (V_3)_b$ for any $a, b \in k^{\times}$ with $b^d = a$. 
Therefore, we also have $(W_1)_a \simeq (W_2)_a \simeq (W_3)_b$. 
Since $W_4 \simeq W_5 \times (\mathbb{A}^1_k \setminus \{ 0 \})$, 
$(W_4)_b$ turns out to be regular for any $b \in k^{\times}$. 
Since $(W_3)_b \to (W_4)_b$ is a regular morpshism, $(W_3)_b$ is also regular (cf.\ \cite[Theorem 32.2]{Mat89}). 
Note that the morphism $(V_3)_b \to (V_4)_b \simeq V_5$ is isomorphic to the completion map
\[
\overline{B} = \operatorname{Spec} \bigl( \widehat{\mathcal{O}}_{\overline{N}, \overline{y}} \bigr) \to 
\operatorname{Spec} \bigl( \mathcal{O}_{\overline{N}, \overline{y}} \bigr) = V_5. 
\]
Since $W_5 \to V_5$ is a birational map, 
so is $(W_3)_b \to (V_3)_b$. 
Since $(V_3)_b \simeq \overline{B}$ is integral, so is $(W_3)_b$. 
Furthermore, we have $\dim' (W_3)_b = \dim' (V_3)_b = \dim (V_3)_b = n$. 
Therefore, for any $a \in k^{\times}$, $(W_1)_a$ is an integral regular scheme with $\dim' (W_1)_a = n$.

We shall prove (2). 
Let $T_i \subset V_i$ be the closed subschemes defined by the ideals $(x_1, \ldots, x_N) \mathcal{O}_{V_i}$ for $i \in \{ 1,2,3 \}$ and 
by the ideal $\mathfrak{m}_{\overline{N},\overline{y}} \mathcal{O}_{V_4}$ for $i=4$. 
Let $T_i' := r_i ^{-1}(T_i)$. 
Then, we have the following Cartesian diagram.
\[
\xymatrix{
T'_1 \ar[r] & T_1 \simeq \mathbb{A}^1_k   \\
T'_2 \ar[r] \ar[u] & T_2 \ar@{^{(}->}[u] \simeq \mathbb{A}^1_k \setminus \{ 0 \} \\
T'_3 \ar[r] \ar[d] \ar[u] & T_3 \ar[d]^{\simeq} \ar@{->>}[u] \simeq \mathbb{A}^1_k \setminus \{ 0 \} \\
T'_4 \ar[r] & T_4 \simeq \mathbb{A}^1_k \setminus \{ 0 \}
}
\]
Since the above diagram forms a Cartesian diagram, any closed fiber of $T'_2 \to T_2$ is isomorphic to some fiber of $T'_4 \to T_4$. 
Since the germ $(\overline{N}, \overline{y})$ is klt, so is $V_4$. 
Therefore, $T'_4 \to T_4$ has rationally connected fibers by \cite[Corollary 1.7(1)]{HM07}, 
and so does $T'_2 \to T_2$. Therefore $T'_2 \to T_2$ has a section by \cite{GHS03}. 
Hence by the properness of $r_1$, the morphism $T'_1 \to T_1$ also has a section. 
We have proved the claim ($\spadesuit$). 

Note that $T_{\infty} = \{ \beta \}$ consists of only one arc $\beta$. 
By claim ($\spadesuit$), there exists $\beta' \in T'_{\infty} \subset W_{\infty}$ such that $r_{\infty} (\beta') = \beta$. 
Suppose the contrary that $D$ is a thin subset of $\overline{B}^{\prime (\gamma)}_{\infty}$. 
Since $\beta ' \in r^{-1}_{\infty} (D)$ satisfies $\operatorname{ord}_{\beta '} (\mathfrak{o}_{W}) = \infty$, 
to get a contradiction by Proposition \ref{prop:resol}, 
it is sufficient to show that $r^{-1}_{\infty} (D)$ is also a thin subset of $W_{\infty}$.

By Lemma \ref{lem:isotrivial}(5), there exists the unique irreducible component $U$
of $\overline{B}^{\prime (\gamma)}$ of the form (2) in Lemma \ref{lem:action}. 
We also note that $U'_{\infty} \cap D = \emptyset$ holds for any other irreducible component $U'$ of $\overline{B}^{\prime (\gamma)}$ since 
$D \subset \operatorname{Cont}^{\ge 1} \bigl( \mathfrak{o}_{\overline{B}^{\prime (\gamma)}} \bigr)$ (cf.\ Remark \ref{rmk:kpt}). 
Therefore, since $D$ is a thin set, there exists a closed subscheme $F \subset U$ such that $D \subset F_{\infty}$ and $\dim F \le n$. 
Let $W'$ be the connected component of $W$ that dominates $U$. We note that $\dim W' = \dim U = n+1$ 
by Lemma \ref{lem:isotrivial}(5). 
We set $F' := r^{-1} (F) \cap W'$. 
Then we have $r^{-1}_{\infty} (D) \subset F'_{\infty}$ and $\dim F' \le n$, 
and hence $r^{-1}_{\infty} (D)$ is also a thin set. 
We complete the proof of Claim \ref{claim:not_thin_R}. 
\end{proof}
We complete the proof of Theorem \ref{thm:PIA_nonlin}. 
\end{proof}

\begin{rmk}
Theorem \ref{thm:PIA_nonlin} can be generalized to $\mathbb{R}$-ideals due to Remark \ref{rmk:multi_index}. 
We have
\[
\operatorname{mld}_x \bigl(X,(h_1 \cdots h_c) \mathfrak{a}  \bigr) 
= \operatorname{mld}_x (Y,\mathfrak{b}).
\]
for an $\mathbb{R}$-ideal $\mathfrak{a}$ on $X$ and $\mathfrak{b} := \mathfrak{a} \mathcal{O}_Y$. 
\end{rmk}

\section{Proof of the main theorems}\label{section:maintheorem}

As a corollary of Theorem \ref{thm:PIA_nonlin}, 
we prove the PIA conjecture for quotients of locally complete intersection singularities. 
\begin{cor}\label{cor:PIA_general}
Suppose that a variety $X$ has a quotient singularity at a closed point $x \in X$. 
Let $Y$ be a subvariety of $X$ of codimension $c$ that is locally defined by $c$ equations at $x$. 
Suppose that $Y$ is klt at $x$. 
Let $\mathfrak{a}$ be an $\mathbb{R}$-ideal sheaf on $Y$. 
Let $D$ be a prime divisor on $Y$ through $x$ that is klt and Cartier at $x$. 
Suppose that $D$ is not contained in the cosupport of the $\mathbb{R}$-ideal sheaf $\mathfrak{a}$. 
Then it follows that
\[
\operatorname{mld}_x \bigl( Y, \mathfrak{a} \mathcal{O}_Y (-D) \bigr) = 
\operatorname{mld}_x (D, \mathfrak{a} \mathcal{O}_D). 
\]
\end{cor}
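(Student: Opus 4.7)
The plan is to deduce Corollary \ref{cor:PIA_general} from Theorem \ref{thm:PIA_nonlin} by applying it twice: once viewing $Y$ as a codimension $c$ complete intersection in $X$, and once viewing $D$ as a codimension $c+1$ complete intersection in $X$ obtained by adjoining one more equation.

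First I would localize around $x$: after shrinking $X$, the subvariety $Y$ is cut out by $c$ equations $h_1, \ldots, h_c \in \mathfrak{m}_{X,x}$, and after further shrinking, $D$ is cut out in $Y$ by a single equation $h \in \mathcal{O}_Y$ (using that $D$ is Cartier at $x$). Choose a lift $\tilde{h} \in \mathfrak{m}_{X,x}$ with $\tilde{h}|_Y = h$; then near $x$ the prime divisor $D$ is a codimension $c+1$ subvariety of $X$ locally defined by the $c+1$ equations $h_1, \ldots, h_c, \tilde{h}$. Similarly, lift each ideal factor of the $\mathbb{R}$-ideal $\mathfrak{a}$ on $Y$ to an ideal on $X$, producing an $\mathbb{R}$-ideal $\tilde{\mathfrak{a}}$ on $X$ with $\tilde{\mathfrak{a}} \mathcal{O}_Y = \mathfrak{a}$.

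Applying Theorem \ref{thm:PIA_nonlin} (in its $\mathbb{R}$-ideal form given by the remark following it) to the pair $(X, Y)$ with $\mathbb{R}$-ideal $\tilde{h} \tilde{\mathfrak{a}}$ on $X$ gives
\[
\operatorname{mld}_x\bigl(X, (h_1 \cdots h_c) \cdot \tilde{h} \tilde{\mathfrak{a}}\bigr)
= \operatorname{mld}_x\bigl(Y, \tilde{h} \tilde{\mathfrak{a}} \mathcal{O}_Y\bigr)
= \operatorname{mld}_x\bigl(Y, \mathfrak{a} \mathcal{O}_Y(-D)\bigr),
\]
the last equality using $\tilde{h} \mathcal{O}_Y = \mathcal{O}_Y(-D)$ near $x$. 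Applying the same theorem to the pair $(X, D)$, with $D$ defined by $h_1, \ldots, h_c, \tilde{h}$ and with $\mathbb{R}$-ideal $\tilde{\mathfrak{a}}$ on $X$, gives
\[
\operatorname{mld}_x\bigl(X, (h_1 \cdots h_c \cdot \tilde{h}) \cdot \tilde{\mathfrak{a}}\bigr)
= \operatorname{mld}_x\bigl(D, \tilde{\mathfrak{a}} \mathcal{O}_D\bigr)
= \operatorname{mld}_x(D, \mathfrak{a} \mathcal{O}_D).
\]
The two left-hand sides are equal as $\mathbb{R}$-ideals on $X$, so equating the right-hand sides yields the desired identity.

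The hypotheses of Theorem \ref{thm:PIA_nonlin} are verified in each application: $Y$ and $D$ are klt at $x$ by assumption; the $\mathbb{R}$-ideal $\tilde{h} \tilde{\mathfrak{a}} \mathcal{O}_Y = h \cdot \mathfrak{a}$ is nonzero because $D$ is a prime divisor (so $h \neq 0$) and $\mathfrak{a}$ is a nonzero $\mathbb{R}$-ideal; the $\mathbb{R}$-ideal $\tilde{\mathfrak{a}} \mathcal{O}_D = \mathfrak{a} \mathcal{O}_D$ is nonzero exactly because $D$ is not contained in the cosupport of $\mathfrak{a}$. There is essentially no technical obstacle here — the entire argument is a formal reduction to Theorem \ref{thm:PIA_nonlin} (and the extension to $\mathbb{R}$-ideals), with all the heavy lifting (arc space theory over formal power series rings, the Denef–Loeser formalism, and the rational connectedness via the functorial desingularization) already carried out in the preceding sections.
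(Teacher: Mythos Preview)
Your proposal is correct and follows essentially the same approach as the paper's proof: lift $\mathfrak{a}$ to an $\mathbb{R}$-ideal $\mathfrak{b}$ on $X$, choose local equations $h_1,\ldots,h_c$ for $Y$ and a lift $g$ of a local equation for $D$, and apply Theorem \ref{thm:PIA_nonlin} twice to obtain $\operatorname{mld}_x\bigl(Y,\mathfrak{a}\mathcal{O}_Y(-D)\bigr)=\operatorname{mld}_x\bigl(X,(h_1\cdots h_c\cdot g)\mathfrak{b}\bigr)=\operatorname{mld}_x(D,\mathfrak{a}\mathcal{O}_D)$. Your additional verification of the hypotheses (nonvanishing of the restricted $\mathbb{R}$-ideals) is a welcome detail but not a different idea.
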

\begin{proof}
Take an $\mathbb{R}$-ideal sheaf $\mathfrak{b}$ on $X$ such that $\mathfrak{a} = \mathfrak{b} \mathcal{O}_Y$, 
and take local equations $h_1, \ldots , h_c \in \mathcal{O}_{X,x}$ of $Y$ in $X$ at $x$. 
Furthermore, take $g \in \mathcal{O}_{X,x}$ such that its image $\overline{g} \in \mathcal{O}_{Y,x}$ defines $D$. 
Then we have
\[
\operatorname{mld}_x \bigl( Y, \mathfrak{a} \mathcal{O}_Y (-D) \bigr) = 
\operatorname{mld}_x \bigl( X, (h_1 \cdots h_{c} \cdot g)\mathfrak{b} \bigr) =
\operatorname{mld}_x ( D, \mathfrak{a} \mathcal{O}_D )
\]
by applying Theorem \ref{thm:PIA_nonlin} twice. 
\end{proof}

\begin{thm}\label{thm:LSC_general}
Let $X$ be a variety with only quotient singularities. 
Let $Y$ be a klt subvariety of $X$ of codimension $c$ that is locally defined by $c$ equations in $X$. 
Let $\mathfrak{a}$ be an $\mathbb{R}$-ideal sheaf on $Y$. 
Then the function 
\[
|Y| \to \mathbb{R}_{\ge 0} \cup \{ - \infty \}; \quad x \mapsto \operatorname{mld}_x(Y,\mathfrak{a})
\]
is lower semi-continuous, where we denote by $|Y|$ the set of all closed points of $Y$ with the Zariski topology. 
\end{thm}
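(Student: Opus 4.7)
The plan is to reduce lower semi-continuity on $Y$ to the known lower semi-continuity on the ambient variety $X$, using Theorem \ref{thm:PIA_nonlin} (in its $\mathbb{R}$-ideal form recorded in the remark immediately after its proof) as the bridge. Since $X$ has only quotient singularities, the LSC assertion on $|X|$ for an arbitrary $\mathbb{R}$-ideal is the main theorem of \cite{Nak16}. Because lower semi-continuity is a local property on $|Y|$ with respect to the Zariski topology, it suffices to verify it on the closed points of an open neighborhood of each closed point $x_0 \in Y$.

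Around such an $x_0$, I would first choose an affine open neighborhood $U \subset X$ together with elements $h_1, \ldots, h_c \in \mathcal{O}_X(U)$ whose images generate the ideal sheaf of $Y \cap U$ in $U$; this is possible by the local-$c$-equations hypothesis. Writing the given $\mathbb{R}$-ideal as $\mathfrak{a} = \prod_{i=1}^s \mathfrak{a}_i^{r_i}$ with each $\mathfrak{a}_i$ a nonzero coherent ideal sheaf on $Y$, I would then lift each $\mathfrak{a}_i|_{Y \cap U}$ under the surjection $\mathcal{O}_U \twoheadrightarrow \mathcal{O}_{Y \cap U}$ to a nonzero coherent ideal sheaf $\widetilde{\mathfrak{a}}_i \subset \mathcal{O}_U$, and set $\widetilde{\mathfrak{a}} := \prod_{i=1}^s \widetilde{\mathfrak{a}}_i^{r_i}$, an $\mathbb{R}$-ideal on $U$ satisfying $\widetilde{\mathfrak{a}} \mathcal{O}_{Y \cap U} = \mathfrak{a}|_{Y \cap U}$.

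At every closed point $y \in Y \cap U$, the hypotheses of Theorem \ref{thm:PIA_nonlin} are in force: $X$ has a quotient singularity at $y$ by assumption; the germs of $h_1, \ldots, h_c$ lie in $\mathfrak{m}_{X,y}$ and locally define $Y$ at $y$; and $Y$ is klt at $y$ by the global klt hypothesis. The $\mathbb{R}$-ideal version of that theorem therefore yields
\[
\operatorname{mld}_y \bigl( X, (h_1 \cdots h_c) \, \widetilde{\mathfrak{a}} \bigr) = \operatorname{mld}_y(Y, \mathfrak{a})
\]
for every such $y$. The left-hand side, viewed as a function on closed points of $U$, is lower semi-continuous on $|U|$ by \cite{Nak16}, and its restriction to the closed subset $|Y \cap U| \subset |U|$ is therefore also lower semi-continuous; by the above identity that restriction coincides with $y \mapsto \operatorname{mld}_y(Y, \mathfrak{a})$ on $|Y \cap U|$. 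As $x_0$ was arbitrary, this establishes lower semi-continuity on all of $|Y|$.

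All of the substantive effort has already been invested in Theorem \ref{thm:PIA_nonlin}, so the main obstacle is behind us and the argument above is essentially formal. The only trivial checks are that the lifts $\widetilde{\mathfrak{a}}_i$ may be chosen nonzero (automatic, since one may pull back any set of generators of $\mathfrak{a}_i|_{Y \cap U}$ along the surjection of structure sheaves) and that the subspace topology on $|Y \cap U|$ as a closed subset of $|U|$ agrees with its intrinsic Zariski topology.
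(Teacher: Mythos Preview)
Your proposal is correct and follows essentially the same route as the paper: lift $\mathfrak{a}$ to an $\mathbb{R}$-ideal on $X$, apply Theorem \ref{thm:PIA_nonlin} (in its $\mathbb{R}$-ideal form) to identify $\operatorname{mld}_y(Y,\mathfrak{a})$ with $\operatorname{mld}_y\bigl(X,(h_1\cdots h_c)\widetilde{\mathfrak{a}}\bigr)$, and then invoke the LSC result of \cite{Nak16} for varieties with quotient singularities. You have simply spelled out the local nature of the argument more explicitly than the paper does.
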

\begin{proof}
We take over the notation in the proof of Corollary \ref{cor:PIA_general}. 
Then we have 
\[
\operatorname{mld}_x(Y,\mathfrak{a}) = \operatorname{mld}_x \bigl( X, (h_1 \cdots h_{c})\mathfrak{b} \bigr) 
\]
by Theorem \ref{thm:PIA_nonlin}. 
Then the assertion follows from the fact proved in \cite[Corollary 1.3]{Nak16}
that the lower semi-continuity holds for the variety $X$ with only quotient singularities. 
\end{proof}


\begin{bibdiv}
\begin{biblist*}

\bib{92}{collection}{
   title={Flips and abundance for algebraic threefolds},
   note={Papers from the Second Summer Seminar on Algebraic Geometry held at
   the University of Utah, Salt Lake City, Utah, August 1991;
   Ast\'{e}risque No. 211 (1992) (1992)},
   publisher={Soci\'{e}t\'{e} Math\'{e}matique de France, Paris},
   date={1992}, 
}

\bib{Amb99}{article}{
   author={Ambro, Florin},
   title={On minimal log discrepancies},
   journal={Math. Res. Lett.},
   volume={6},
   date={1999},
   number={5-6},
   pages={573--580},
}

\bib{CLNS}{book}{
   author={Chambert-Loir, Antoine},
   author={Nicaise, Johannes},
   author={Sebag, Julien},
   title={Motivic integration},
   series={Progress in Mathematics},
   volume={325},
   publisher={Birkh\"{a}user/Springer, New York},
   date={2018},
}

\bib{dFEM11}{article}{
   author={de Fernex, Tommaso},
   author={Ein, Lawrence},
   author={Musta\c{t}\u{a}, Mircea},
   title={Log canonical thresholds on varieties with bounded singularities},
   conference={
      title={Classification of algebraic varieties},
   },
   book={
      series={EMS Ser. Congr. Rep.},
      publisher={Eur. Math. Soc., Z\"{u}rich},
   },
   date={2011},
   pages={221--257},
}

\bib{DL99}{article}{
   author={Denef, Jan},
   author={Loeser, Fran{\c{c}}ois},
   title={Germs of arcs on singular algebraic varieties and motivic
   integration},
   journal={Invent. Math.},
   volume={135},
   date={1999},
   number={1},
   pages={201--232},
}

\bib{DL02}{article}{
   author={Denef, Jan},
   author={Loeser, Fran\c{c}ois},
   title={Motivic integration, quotient singularities and the McKay
   correspondence},
   journal={Compositio Math.},
   volume={131},
   date={2002},
   number={3},
   pages={267--290},
}

\bib{EM04}{article}{
   author={Ein, Lawrence},
   author={Musta{\c{t}}{\v{a}}, Mircea},
   title={Inversion of adjunction for local complete intersection varieties},
   journal={Amer. J. Math.},
   volume={126},
   date={2004},
   number={6},
   pages={1355--1365},
}

\bib{EM09}{article}{
   author={Ein, Lawrence},
   author={Musta\c{t}\u{a}, Mircea},
   title={Jet schemes and singularities},
   conference={
      title={Algebraic geometry---Seattle 2005. Part 2},
   },
   book={
      series={Proc. Sympos. Pure Math.},
      volume={80},
      publisher={Amer. Math. Soc., Providence, RI},
   },
   date={2009},
   pages={505--546},
}

\bib{EMY03}{article}{
   author={Ein, Lawrence},
   author={Musta{\c{t}}{\u{a}}, Mircea},
   author={Yasuda, Takehiko},
   title={Jet schemes, log discrepancies and inversion of adjunction},
   journal={Invent. Math.},
   volume={153},
   date={2003},
   number={3},
   pages={519--535},
}

\bib{Eis95}{book}{
   author={Eisenbud, David},
   title={Commutative algebra},
   series={Graduate Texts in Mathematics},
   volume={150},
   note={With a view toward algebraic geometry},
   publisher={Springer-Verlag, New York},
   date={1995},
}

\bib{GHS03}{article}{
   author={Graber, Tom},
   author={Harris, Joe},
   author={Starr, Jason},
   title={Families of rationally connected varieties},
   journal={J. Amer. Math. Soc.},
   volume={16},
   date={2003},
   number={1},
   pages={57--67},
}

\bib{EGAIV1}{article}{
   author={Grothendieck, A.},
   title={\'{E}l\'{e}ments de g\'{e}om\'{e}trie alg\'{e}brique. IV. \'{E}tude locale des sch\'{e}mas et
   des morphismes de sch\'{e}mas. I},
   language={French},
   journal={Inst. Hautes \'{E}tudes Sci. Publ. Math.},
   number={20},
   date={1964},
   pages={259},
   issn={0073-8301},
   review={\MR{173675}},
}

\bib{HM07}{article}{
   author={Hacon, Christopher D.},
   author={Mckernan, James},
   title={On Shokurov's rational connectedness conjecture},
   journal={Duke Math. J.},
   volume={138},
   date={2007},
   number={1},
   pages={119--136},
}

\bib{Har77}{book}{
   author={Hartshorne, Robin},
   title={Algebraic geometry},
   note={Graduate Texts in Mathematics, No. 52},
   publisher={Springer-Verlag, New York-Heidelberg},
   date={1977},
}

\bib{Ish09}{article}{
   author={Ishii, Shihoko},
   title={Smoothness and jet schemes},
   conference={
      title={Singularities---Niigata--Toyama 2007},
   },
   book={
      series={Adv. Stud. Pure Math.},
      volume={56},
      publisher={Math. Soc. Japan, Tokyo},
   },
   date={2009},
   pages={187--199},
}

\bib{Kaw21}{article}{
   author={Kawakita, Masayuki},
   title={On equivalent conjectures for minimal log discrepancies on smooth
   threefolds},
   journal={J. Algebraic Geom.},
   volume={30},
   date={2021},
   number={1},
   pages={97--149},
}

\bib{Kol13}{book}{
   author={Koll{\'a}r, J{\'a}nos},
   title={Singularities of the minimal model program},
   series={Cambridge Tracts in Mathematics},
   volume={200},
   note={With a collaboration of S\'andor Kov\'acs},
   publisher={Cambridge University Press, Cambridge},
   date={2013},
}

\bib{KM98}{book}{
   author={Koll{\'a}r, J{\'a}nos},
   author={Mori, Shigefumi},
   title={Birational geometry of algebraic varieties},
   series={Cambridge Tracts in Mathematics},
   volume={134},
   publisher={Cambridge University Press, Cambridge},
   date={1998},
}

\bib{Kun86}{book}{
   author={Kunz, Ernst},
   title={K\"{a}hler differentials},
   series={Advanced Lectures in Mathematics},
   publisher={Friedr. Vieweg \& Sohn, Braunschweig},
   date={1986},
}

\bib{Mat77}{article}{
   author={Matsumura, Hideyuki},
   title={Noetherian rings with many derivations},
   conference={
      title={Contributions to algebra (collection of papers dedicated to
      Ellis Kolchin)},
   },
   book={
      publisher={Academic Press, New York},
   },
   date={1977},
   pages={279--294},
}

\bib{Mat80}{book}{
   author={Matsumura, Hideyuki},
   title={Commutative algebra},
   series={Mathematics Lecture Note Series},
   volume={56},
   edition={2},
   publisher={Benjamin/Cummings Publishing Co., Inc., Reading, Mass.},
   date={1980},
}

\bib{Mat89}{book}{
   author={Matsumura, Hideyuki},
   title={Commutative ring theory},
   series={Cambridge Studies in Advanced Mathematics},
   volume={8},
   edition={2},
   note={Translated from the Japanese by M. Reid},
   publisher={Cambridge University Press, Cambridge},
   date={1989},
}

\bib{Nag59}{article}{
   author={Nagata, Masayoshi},
   title={On the purity of branch loci in regular local rings},
   journal={Illinois J. Math.},
   volume={3},
   date={1959},
   pages={328--333},
}

\bib{Nag62}{book}{
   author={Nagata, Masayoshi},
   title={Local rings},
   series={Interscience Tracts in Pure and Applied Mathematics, No. 13},
   publisher={Interscience Publishers a division of John Wiley \& Sons\,
   New York-London},
   date={1962},
}

\bib{Nak16}{article}{
   author={Nakamura, Yusuke},
   title={On semi-continuity problems for minimal log discrepancies},
   journal={J. Reine Angew. Math.},
   volume={711},
   date={2016},
   pages={167--187},
}

\bib{NS}{article}{
   author={Nakamura, Yusuke},
   author={Shibata, Kohsuke},
   title={Inversion of adjunction for quotient singularities},
   journal={Algebr. Geom.},
   volume={9},
   date={2022},
   number={2},
   pages={214--251},
   issn={2313-1691},
}

\bib{Seb04}{article}{
   author={Sebag, Julien},
   title={Int\'{e}gration motivique sur les sch\'{e}mas formels},
   language={French, with English and French summaries},
   journal={Bull. Soc. Math. France},
   volume={132},
   date={2004},
   number={1},
   pages={1--54},
}

\bib{Sho04}{article}{
   author={Shokurov, V. V.},
   title={Letters of a bi-rationalist. V. Minimal log discrepancies and
   termination of log flips},
   journal={Tr. Mat. Inst. Steklova},
   volume={246},
   date={2004},
   number={Algebr. Geom. Metody, Svyazi i Prilozh.},
   pages={328--351},
   translation={
      journal={Proc. Steklov Inst. Math.},
      date={2004},
      number={3 (246)},
      pages={315--336},
   },
}

\bib{Sta}{article}{
author={The Stacks Project Authors},
   title={The Stacks Project},
   eprint={https://stacks.math.columbia.edu},
   label={Stack},
}

\bib{Tem12}{article}{
   author={Temkin, Michael},
   title={Functorial desingularization of quasi-excellent schemes in
   characteristic zero: the nonembedded case},
   journal={Duke Math. J.},
   volume={161},
   date={2012},
   number={11},
   pages={2207--2254},
}

\bib{Yas16}{article}{
   author={Yasuda, Takehiko},
   title={Wilder McKay correspondences},
   journal={Nagoya Math. J.},
   volume={221},
   date={2016},
   number={1},
   pages={111--164},
}

\bib{Yas}{article}{
   author={Yasuda, Takehiko},
   title={Motivic integration over wild Deligne-Mumford stacks},
   journal={Algebr. Geom.},
   volume={11},
   date={2024},
   number={2},
   pages={178--255},
}

\end{biblist*}
\end{bibdiv}

\end{document}